\theoremstyle{plain}
\newtheorem{theo}{Theorem}[section]
\newtheorem{lemme}[theo]{Lemma}
\newtheorem{cor}[theo]{Corollary}
\newtheorem{prop}[theo]{Proposition}
\newtheorem{rem}[theo]{Remark}
\def\p{{\mathbb P}}
\def\P{{\mathbb P}}
\def\e{{\mathbb E}}
\newcommand{\bc}{\boldsymbol{c}}
\newcommand{\1}{\ensuremath{\mathbbm{1}}}
\def\var {\mathop{\rm Var}\nolimits}    
\def\cov{\mathop{\rm Cov}\nolimits}    
\newcommand{\nbR}{\mathbb{R}}
\newcommand{\nbN}{\mathbb{N}}
\newcommand{\nbC}{\mathbb{C}}
\newcommand{\nbP}{\mathbb{P}}
\newcommand{\nbZ}{\mathbb{Z}}
\newcommand{\nbE}{\mathbb{E}}
  \newcommand{\zak}{\nobreak \ifvmode \relax \else
     \ifdim\lastskip<1.5em \hskip-\lastskip
     \hskip1.5em plus0em minus0.5em \fi \nobreak
     $\Box$\fi\\}
\def\PP{{\mathbb P}}
\def\EE{{\mathbb E}}
\newcommand{\Ss}{{\cal S}}
\begin{document}
\begin{frontmatter}

\title{Estimation of weak ARMA models with regime changes\\
}
\runtitle{Estimating weak ARMARC models}
%
\begin{aug}
\author{\fnms{\Large{Yacouba}}
\snm{\Large{Boubacar Ma\"{\i}nassara}}
\ead[label=e2]{yacouba.boubacar$\_$mainassara@univ-fcomte.fr}} \and \
\author{\fnms{\Large{Landy}}
\snm{\Large{Rabehasaina}}
\ead[label=e1]{lrabehas@univ-fcomte.fr}}
\runauthor{Y. Boubacar Ma\"{\i}nassara and L. Rabehasaina}
\affiliation{Universit\'e de Bourgogne Franche-Comt\'e}

\address{\hspace*{0cm}\\
Universit\'e Bourgogne Franche-Comt\'e, \\
Laboratoire de math\'{e}matiques de Besan\c{c}on, \\ UMR CNRS 6623, \\
16 route de Gray, \\ 25030 Besan\c{c}on, France.\\[0.2cm]
\printead{e2,e1}}
\end{aug}
\vspace{0.5cm}
\begin{abstract}
In this paper we derive the asymptotic properties of the
least squares estimator (LSE) of autoregressive
moving-average (ARMA) models with regime changes under the assumption that the errors are uncorrelated but not necessarily independent.
Relaxing the independence assumption considerably extends the range
of application of the class of ARMA models with regime changes. Conditions are given
for the consistency and asymptotic normality of the LSE. A particular attention is given to the estimation of the asymptotic covariance matrix,
which may be very different from that obtained in the standard framework.
The theoretical results are illustrated by means of Monte Carlo experiments.
\end{abstract}
\begin{keyword}[class=AMS]
\kwd[Primary ]{62M10}
\kwd{62F03}
\kwd{62F05}
\kwd[; secondary ]{91B84}
\kwd{62P05}
\end{keyword}
\begin{keyword}
Least square estimation, Random coefficients, weak ARMA models
\end{keyword}
\end{frontmatter}
%
\section{Introduction}
Since the works of \cite{Hamilton1988, Hamilton1989} and \cite{NQ1982}, the time series models with time-varying coefficients have become increasingly popular.
In statistical applications, a large part of the literature is devoted to the non-stationary  autoregressive
moving-average (ARMA) models with time-varying parameters (see \cite{AM1998, AM2006, BF2003, Dahlhaus1997}), see also the class of ARMA models with periodic coefficients (for instance
\cite{AM1997, BL2001}). But the most popular  class deals with the treatment of regime shifts and non-linear modeling strategies.  For instance, a Markov-switching model is a non-linear specification in which different states of the world affect the evolution of a time series (see, for examples, \cite{FR1997, Hamilton1990, HS1994autoregressive}). The asymptotic properties of Markov-switching ARMA models are well known in the literature (see, for instance, \cite{BMR1999, FR1998, FZ2001, FZ2002, KK2015} or \cite{Hamilton1994}).

The fact that changes in regimes may be very important for the evolution of interest rates has been emphasized in a number of recent studies.
Our attention here is focused on the class of ARMA models with regime changes (ARMARC for short); for instance, ARMA models with recurrent but non necessarily periodic changes in regime.
We consider a time series $(X_t)_{t\in\mathbb{Z}}$ exhibiting changes in regime at known dates and we
suppose that we have finite regimes. Contrarily to the famous Markov-switching approach, we  assume that the realization of the regimes is observed. Such a situation may be realistic, and would correspond e.g. to time series with periods of harsh and mild weather which are observed in practice. This model could also be applied to economic time series whose behaviour depends on worked days and public holidays, which are known in advance. Another motivating example would be financial times series, where regimes corresponding to typical known major events leading to high and quiet (low) volatility subperiods are observed, see e.g. Figure 1.2 p.7 in \cite{FZ2010_book} where the high volatility clusters corresponds to largely famous events such as September 11th 2001 or the 2008 financial crisis. Another example can be found for instance in \cite{FG2004jtsa}.

For such models, \cite{FG2004, FG2004jtsa} gave general conditions ensuring consistency and asymptotic
normality of least squares (LS) and quasi-generalized least-squares (QGLS) estimators under the assumption that the innovation processes is independent.
This independence assumption is often considered too restrictive by
practitioners. Relaxing the independence assumption considerably
extends the range of applications of the ARMARC models, and allows to cover
 general nonlinear processes. Indeed such nonlinearities
may arise for instance when the error process follows an autoregressive conditional
heteroscedasticity (ARCH) introduced by Engle \cite{E1982} and extended to the
generalized ARCH (GARCH) by \cite{B1986}, all-pass (see \cite{ADB2006}) or other models displaying
a second order dependence (see \cite{AF2009}). Other situations where the errors are
dependent can be found in \cite{FZ2005}, see also \cite{rt1996}.
This paper is devoted to the problem of estimating ARMARC representations under the assumption that the errors are uncorrelated but not necessarily independent.
These are called weak ARMARC models in contrast to the strong ARMARC models above-cited, in which the error terms are supposed to be independent
and identically distributed (iid).
Thus, the main goal of our paper is to complete the above-mentioned
results concerning the statistical analysis of ARMARC models, by considering the estimation problem under general error terms.
We establish the asymptotic distribution of the LS estimator of weak ARMARC models, under strongly mixing assumptions.

The paper is organized as follows. Section \ref{modelassumption} presents the
ARMARC models that we consider here. In Section \ref{result}, we established the strict stationarity condition and it is shown  that the LS estimator (LSE) is asymptotically normally distributed
when linear innovation process $(\epsilon_t)$ satisfies mild mixing assumptions. The
asymptotic covariance of the LSE may be very different in the weak and strong cases.
Particular attention is given to the estimation of
this covariance matrix. Modified version of the Wald test is proposed for testing linear restrictions on
the parameters. In Section \ref{examples}, we present two examples of weak ARMARC$(1,0)$ models with iid
and correlated realization of the regimes.  Numerical experiments are
presented in Section \ref{numericalIllustrations}. The proofs of the main results are collected in the appendix.

\section{Model and assumptions}\label{modelassumption}
Let $(\Delta_t)_{t\in \mathbb{Z}}$ be a stationary ergodic observed process with values in a finite set ${\cal S}$ of size $\mbox{Card} ({\cal S})=K$. We consider the ARMARC$(p,q)$ process $(X_t)_{t\in \mathbb{Z}}$ defined by
\begin{equation}\label{def_arma_pq0}
X_t-\sum_{i=1}^p a_i^0(\Delta_t)X_{t-i}=\epsilon_t-\sum_{j=1}^q
b_j^0(\Delta_t)\epsilon_{t-j}
\end{equation}
where the linear innovation process $\epsilon:=(\epsilon_t)_{t\in \mathbb Z}$ is assumed to be a stationary sequence satisfies $\EE (\epsilon_t)=0,\quad \EE(\epsilon_t \epsilon_{t'})=\sigma^2 \1_{[t=t']}$.  Under the above assumptions, the process  $\epsilon$ is called a weak white noise.

An important example of a weak white noise is the GARCH model (see \cite{FZ2010_book}). In the modeling of financial time series the GARCH assumption on the errors is often used
to capture the  conditional  heteroscedasticity. However, the multiplicative noise structure of this GARCH model is often too restrictive in practical situations. This is one motivation of this paper, which considers an even more general weak noise, where the error is subject to unknown conditional heteroscedasticity.

This representation is said to be a weak ARMARC$(p,q)$ representation under the assumption that $\epsilon$ is a weak white noise.
For the statistical inference of ARMA models, the weak white noise assumption is often replaced by the strong white noise
assumption,  {\em i.e.} the assumption that $\epsilon$
is an iid sequence of random variables with mean 0 and common variance.
Obviously the strong white noise assumption is more restrictive than the weak white noise assumption, because independence entails uncorrelatedness. Consequently  weak ARMARC representation is more general than the strong one.

The unknown parameter of interest denoted $\theta_0:=(a_i^0(s), b_j^0(s),\ i=1,\dots,p,\ j=1,\dots,q,\ s\in {\cal S})$ lies in a compact set of the form
$$
\Theta \subset \left\{(a_i(s), b_j(s),\ i=1,\dots,p,\ j=1,\dots,q,\ s\in {\cal S}) \in \mathbb{R}^{(p+q)\times K}\right\},
$$
with non empty interior, within which we suppose that $\theta_0$ lies.
The parameter $\sigma^2$ is considered as a nuisance parameter.
In order to estimate $\theta_0$, we thus have at our disposal the observations $(X_t,\Delta_t)$, $t=1,\dots,n$, from which we aim to build a strongly consistent and asymptotically normal estimator $\hat{\theta}_n$.  We now introduce, the strong mixing coefficients $(\alpha_Z (h))_{h\in\mathbb{Z}}$ of a stationary process $(Z_t)_{t\in\mathbb{Z}}$  defined by
\begin{equation}
\alpha_Z (h):=\sup_{A\in{\cal F}_{-\infty}^{t},\ B\in {\cal F}_{t+h}^{\infty}}  \left| \mathbb{P}(A\cap B)-\mathbb{P}(A)\cdot\mathbb{P}(B)\right|,
\label{def_mixing}
\end{equation}
measuring the temporal dependence of the process and
where ${\cal F}_{-\infty}^{t}$, and ${\cal F}_{t+h}^{\infty}$ be the $\sigma$-fields generated by $\{Z_u,\ u\le t\}$ and $\{Z_u,\ u\ge t+h\}$, respectively.
We will make an integrability assumption on the moment of the noise and a summability condition on
the strong mixing coefficients $\left(\alpha_Z (h)\right)_{h\geq0}$. Let us suppose the following assumptions. 
\begin{eqnarray*}
&\hspace{-1cm}{\bf (A1)} \text{ The processes }(\epsilon_t)_{t\in \mathbb{Z}}\mbox{ and }(\Delta_t)_{t\in \mathbb{Z}}\mbox{ are ergodic sequences, strictly  stationary,}\\
&\mbox{independent from each other}. 
\\
&\hspace{-.2cm}{\bf (A2)} \text{ For some } \nu>0, \text{ the processes }
 (\epsilon_t)_{t\in \mathbb{Z}}\mbox{ and } (\Delta_t)_{t\in \mathbb{Z}}\mbox{ satisfy } \sum_{h=0}^\infty \alpha_\epsilon (h)^{\frac{\nu}{\nu+2}}<+\infty\\&\mbox{ and }\sum_{h=0}^\infty \alpha_\Delta (h)^{\frac{\nu}{\nu+2}}<+\infty .&\\
&\hspace{-4.8cm}{\bf (A3)}  \text{ The process } (\epsilon_t)_{t\in \mathbb{Z}} \mbox{ also satisfies } \EE[|\epsilon_t|^{2\nu+4}]<+\infty .
&\\
&\hspace{-4.5cm}{\bf (A4)}  \text{  We have } \theta_0\in\stackrel{\circ}{{\Theta}}, \text{ where } \stackrel{\circ}{{\Theta}} \text{ denotes the interior of }{\Theta} .
\end{eqnarray*}
Note that the strong white noise assumption entails the ergodicity condition for $(\epsilon_t)_{t\in \mathbb{Z}}$. This is not the case if we impose the weak white noise assumption only, hence the assumption  {\bf (A1)}. Likewise, the ergodicity condition on $(\Delta_t)_{t\in \mathbb{Z}}$ is imposed in that assumption. For example, if $(\Delta_t)_{t\in \mathbb{Z}}$ is a finite Markov chain, then a necessary and sufficient condition for ergodicity is that it is irreducible, which ensures its positive recurrence (see Theorem 1.10.2 p.53 in \cite{Norris_book_Markov}), see for instance the example in Section \ref{examples}.

We introduce the following notations so as to emphasize dependence of unknown parameter $\theta_0$ in (\ref{def_arma_pq0}). For all $\theta= (a_i(s), b_j(s),\ i=1,\dots,p,\ j=1,\dots,q,\ s\in {\cal S})\in \Theta$, we let $\underline{a_i}:=(a_i(s),s\in \Ss)$, $i=1,\dots,p$ and $\underline{b_j}:=(b_j(s),s\in \Ss)$, $j=1,\dots,q$. Let ${\bf e}(s)$ be the row vector of size $1\times K $ such that the $i$th component is $\1_{[s=i]}$. Then we notice that $\forall t\in \nbZ$
$$
a_i(\Delta_t)=<{\bf e}(\Delta_t), \underline{a_i}>:= g_i^a(\Delta_t,\theta),\quad b_j(\Delta_t)=<{\bf e}(\Delta_t), \underline{a_j}>:= g_j^b(\Delta_t,\theta),\quad i=1,\dots,p,\ j=1,\dots,q,
$$
where $<\cdot ,\cdot >$ denotes the scalar product between vectors of appropriate dimension. Thus (\ref{def_arma_pq0}) reads
\begin{equation}\label{def_arma_pq}
X_t-\sum_{i=1}^p g_i^a(\Delta_t,\theta_0)
X_{t-i}=\epsilon_t-\sum_{j=1}^q
g_j^b(\Delta_t,\theta_0)\epsilon_{t-j}.
\end{equation}
Let us furthermore note that for all $i$, $j$ and $s$, $g_i^a(s,\theta)$ and $g_j^b(s,\theta)$ are {\it linear} in $\theta$. We thus introduce the following companion matrices
$$
A(s):=\left(
\begin{array}{cccc}
    g_1^a(s,\theta_0)& \cdots & \cdots & g_p^a(s,\theta_0)\\
    &&& 0 \\
    & I_{p-1}& & \vdots\\
    &&& 0
\end{array}
\right),\quad
B(s,\theta):=\left(
\begin{array}{cccc}
    g_1^b(s,\theta)& \cdots & \cdots & g_q^b(s,\theta)\\
    &&& 0 \\
    & I_{q-1}& & \vdots\\
    &&& 0
\end{array}
\right)
$$
for all $s\in\Ss$, $\theta\in\Theta$. A remark that will prove useful later on is that $\theta \mapsto B(s,\theta)$ is, for all $s\in \Ss$, an {\it affine} function.

We next introduce the residuals corresponding to parameter $\theta\in\Theta$ as the stationary process $(\epsilon_t(\theta))_{t\in \nbZ}$ satisfying
\begin{equation}\label{def_inov_theta}
\epsilon_t(\theta)-\sum_{j=1}^q g_j^b(\Delta_t,\theta)\epsilon_{t-j}(\theta) = X_t-\sum_{i=1}^p g_i^a(\Delta_t,\theta) X_{t-i},\quad \forall t\in \nbZ .
\end{equation}
This process is unique in $L^2$, as explained in Proposition \ref{prop_stationary}. In particular, we have $(\epsilon_t(\theta_0))_{t\in \nbZ}= (\epsilon_t)_{t\in \nbZ}$, the initial white noise.
We next define the approximating residuals as the process $(e_t(\theta))_{t\in \nbZ}$ verifying
\begin{align}
\label{e_t}
e_t(\theta)-\sum_{j=1}^q g_j^b(\Delta_t,\theta)e_{t-j}(\theta) = \tilde{X}_t-\sum_{i=1}^p g_i^a(\Delta_t,\theta) \tilde{X}_{t-i},\quad \forall t\in \nbZ ,
\end{align}
where values corresponding to negative indices are set to zero, i.e. the processes $(e_t(\theta))_{t\in \nbZ}$ and $(\tilde{X})_{t\in \nbZ}$ verify
$$
\begin{array}{rcl}
e_t(\theta) & =& 0,\quad t\le 0,\\
\tilde{X}_t &=& X_t \1_{[t\ge 1]},\quad \forall t\in \nbZ .
\end{array}
$$
The basic idea behind definition of $(e_t(\theta))_{t\in \nbZ}$ is that, given a realization $X_1,X_2,\dots{},X_n$ of length $n$, $\epsilon_t(\theta)$ is
approximated, for $0<t\leq n$, by $e_t(\theta)$.
Next, we define the cost function
\begin{align}
Q_n(\theta)& =\frac{1}{2n}\sum_{t=1}^n e_t^2(\theta). \label{qn}
\end{align}
Finally, we let for all $n\in\nbN$ the random variable $\hat{\theta}_n$ the least squared estimator that satisfies, almost surely,
\begin{align}
Q_n(\hat{\theta}_n)& = \min_{\theta\in \Theta} Q_n(\theta),\label{LSE_Qn}
\end{align}
We finish this section by giving some notations. In the following, $||.||$ will denote the norm of matrices or vectors of appropriate size, depending on the context, whereas $||.||_p$ will denote the $L^p$ norm defined by $||X||_p=\left[ \EE(|X|^p)\right]^{1/p}$ for all random variable $X$ admitting a $p-$th order moment, $p\ge 1$. For all matrix $M$, $M'$ will denote its transpose. For all three times differentiable function $f:\Theta\longrightarrow \nbR$, we will let $\nabla f(\theta)=\left( \frac{\partial}{\partial \theta_k}f(\theta)\right)_{k=1,...,(p+q)K}$,  $\nabla^2 f(\theta)=\left( \frac{\partial^2}{\partial \theta_i \partial \theta_j}f(\theta)\right)_{i,j=1,...,(p+q)K}$ and $\nabla^3 f(\theta)=\left( \frac{\partial^3}{\partial \theta_\ell \partial \theta_i \partial \theta_j}f(\theta)\right)_{\ell, i,j=1,...,(p+q)K}$ respectively the first, second and third order derivatives with respect to the variable $\theta$.
\section{Case of general correlated process $(\Delta_t)_{t\in \nbZ}$}\label{result}
In this section, we display our main results.
\subsection{Weak stationarity}\label{subsec_stationarity}
A first step consists in giving sufficient conditions such that the processes $(X_t)_{t\in \nbZ}$ and $(\epsilon_t(\theta))_{t\in \nbZ}$ defined in (\ref{def_arma_pq0}) and (\ref{def_inov_theta}) are strictly stationary and admits moments of sufficiently high order so as to obtain consistency and asymptotic normality results. This approach is standard, see e.g. \cite[Theorem 1 and Section 3]{FZ2001} and \cite[Theorems 2.1 and 4.1]{Stelzer09}.
Let $||.||$ be any norm on the set of matrices, and let us introduce the following notations
\begin{eqnarray*}
w_1&:=& (1,0,\dots,0)\in \nbR^{p+q} ,\\
w_{p+1}&:=& (w_{p+1,i})_{i=1,\dots,p+q},\quad w_{p+1,1}=1,\quad w_{p+1,i}=\1_{[i=p+1]},\ i=2,\dots,p+q,\\
M&:=& (m_{ij})_{i,j=1,\dots,p+q},\quad m_{i,j}=\1_{[i=q+1, j=1 \mbox{ or }i=1, j=1]},\\
\Phi(s,\theta)&:=& \left(
\begin{array}{c| ccc}
&&&\\
 & g_1^a (s,\theta) & \cdots & g_p^a (s,\theta)\\
B(s,\theta) & & 0 & \\
&&&\\
\hline
&&&\\
 & 0 & \cdots & 0\\
 0 & & I_{p-1} & \vdots\\
 & & &  0
\end{array}
\right),\quad s\in \Ss,\ \theta\in\Theta ,
\\
\Psi(s)&:=&\left(
\begin{array}{c| ccc}
&&&\\
 & g_1^b (s,\theta_0) & \cdots & g_q^b (s,\theta_0)\\
A(s) & & 0 & \\
&&&\\
\hline
&&&\\
 & 0 & \cdots & 0\\
 0 & & I_{q-1} & \vdots\\
 & & &  0
\end{array}
\right),\quad s\in \Ss .
\end{eqnarray*}
Let us note that the matrices $\Phi(s,\theta)$ and $\Psi(s)$ are, like $B(s,\theta)$ and $A(s)$, reminiscent of companion matrices. As for $B(s,\theta)$, we also notice in particular that $\theta \mapsto \Phi(s,\theta)$ is an affine function for all $s\in \Ss$. We have the following result.
\begin{prop}\label{prop_stationary}
Let us suppose that
$$
{\bf (A5a)}\quad \limsup_{t\to\infty}\frac{1}{t}\ln  \nbE\left( \sup_{\theta\in \Theta} \left| \left| \prod_{i=1}^t \Phi(\Delta_i,\theta)\right|\right|^{8}\right)<0,\quad \limsup_{t\to\infty}\frac{1}{t}\ln  \nbE\left( \left|\left|\prod_{i=1}^t \Psi(\Delta_i)\right|\right|^{8}\right)<0 ,
$$
then for all $t\in\nbZ$ and $\theta\in\Theta$, the unique stationary solution to (\ref{def_inov_theta}) is given by
\begin{align}
\label{dse_eps_theta}\epsilon_t(\theta)&= \sum_{i=0}^\infty c_i(\theta,\Delta_{t},\dots,\Delta_{t-i+1})\epsilon_{t-i},\quad \mbox{where} \\
c_i(\theta,\Delta_{t},\dots,\Delta_{t-i+1})&= \sum_{k=0}^i w_1\prod_{j=0}^{k-1}\Phi(\Delta_{t-j},\theta)M \prod_{j'=k}^{i-1}\Psi(\Delta_{t-j'})w_{p+1}' ,\label{expression_eps_theta}
\end{align}
with the usual convention $\prod_i^j=1$ if $i>j$.
Furthermore, for each $t\in\nbZ$, $(c_i(\theta,\Delta_{t},\dots,\Delta_{t-i+1}))_{i\in\nbN}$ is the unique sequence in the set of sequences of random variables
$$
{\cal H}:=\left\{ (d_i)_{i\in\nbN}\mbox{ independent from } (\epsilon_t)_{t\in\nbZ}\mbox{ s.t. } \nbE\left( \sum_{i=0}^\infty d_i^2\right) <+\infty \right\}
$$
satisfying the decomposition \eqref{dse_eps_theta}.
\end{prop}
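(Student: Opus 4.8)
The plan is to vectorise both defining recursions into first-order random-coefficient systems and then compose them. Reading off the block structure of $\Psi(s)$ and $\Phi(s,\theta)$, I would first check that \eqref{def_arma_pq} is equivalent to the companion recursion $V_t = \Psi(\Delta_t) V_{t-1} + w_{p+1}'\,\epsilon_t$, where $V_t = (X_t,\dots,X_{t-p+1},\epsilon_t,\dots,\epsilon_{t-q+1})'$ and the injection vector $w_{p+1}'$ places $\epsilon_t$ both in the $X_t$-slot and in the first noise slot; likewise that \eqref{def_inov_theta} is equivalent to $U_t = \Phi(\Delta_t,\theta) U_{t-1} + (e_1+e_{q+1})\,X_t$, with $U_t = (\epsilon_t(\theta),\dots,\epsilon_{t-q+1}(\theta),X_t,\dots,X_{t-p+1})'$ and $e_1,e_{q+1}$ the first and $(q+1)$-th canonical vectors of $\mathbb R^{p+q}$. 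Before iterating, I would record that the $L^8$ control in (A5a) implies, by Jensen, the same negativity of the limsup for the $L^2$ and $L^4$ norms, hence geometric decay of $\mathbb E\|\prod\Psi\|^r$ and $\mathbb E\sup_\theta\|\prod\Phi\|^r$ for $r\le 8$, and, via Markov's inequality and Borel--Cantelli, almost sure convergence of these matrix products to $0$.

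Iterating the two systems gives the moving-average representations $X_{t-k} = w_1 V_{t-k} = \sum_{\ell\ge0} w_1\prod_{j=0}^{\ell-1}\Psi(\Delta_{t-k-j})\,w_{p+1}'\,\epsilon_{t-k-\ell}$ and $\epsilon_t(\theta) = w_1 U_t = \sum_{k\ge0} w_1\prod_{j=0}^{k-1}\Phi(\Delta_{t-j},\theta)\,(e_1+e_{q+1})\,X_{t-k}$, both convergent in $L^2$ under the respective halves of (A5a). The key algebraic observation is that $w_1=e_1'$, so $(e_1+e_{q+1})\,X_{t-k} = (e_1+e_{q+1})\,w_1 V_{t-k} = M\,V_{t-k}$, i.e. the matrix $M$ is precisely the product of the injection vector of the $U$-system and the extraction vector of the $V$-system. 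Substituting the $X_{t-k}$ series into the $\epsilon_t(\theta)$ series and reindexing by the total lag $i=k+\ell$ then produces exactly \eqref{dse_eps_theta}--\eqref{expression_eps_theta}, the sum over the junction index $k$ counting how many factors $\Phi$ precede $M$.

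Next I would establish that $(c_i)_{i}\in{\cal H}$ and that the series converges. Since $c_i$ is a fixed measurable function of the block $(\Delta_t,\dots,\Delta_{t-i+1})$, it is independent of $(\epsilon_t)$ by (A1). Bounding $c_i^2$ by Cauchy--Schwarz over the $(i+1)$ terms of the $k$-sum, the one delicate point is that, $(\Delta_t)$ being correlated, the factors $\prod\Phi$ and $\prod\Psi$ depend on adjacent but \emph{non-independent} blocks of $\Delta$; I would therefore split $\mathbb E[\|\prod\Phi\|^2\|\prod\Psi\|^2]\le (\mathbb E\|\prod\Phi\|^4)^{1/2}(\mathbb E\|\prod\Psi\|^4)^{1/2}$, which is exactly where the higher moment in (A5a) is needed, to obtain $\mathbb E[c_i^2]\le C(i+1)^2\rho^i$ with $\rho<1$, summable. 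Independence and the weak-white-noise identity $\mathbb E(\epsilon_{t-i}\epsilon_{t-j})=\sigma^2\1_{[i=j]}$ give $\mathbb E(c_ic_j\epsilon_{t-i}\epsilon_{t-j})=\sigma^2\,\mathbb E(c_i^2)\,\1_{[i=j]}$, so $\mathbb E(\sum_i c_i^2)<\infty$ yields both $(c_i)\in{\cal H}$ and $L^2$-convergence of $\sum_i c_i\epsilon_{t-i}$; strict stationarity of the limit follows from that of $(\Delta,\epsilon)$ together with the time-shift covariance of the coefficients. That this limit solves \eqref{def_inov_theta} is built into the composition, but can also be checked directly by substitution using the companion identities.

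Finally, two uniqueness statements remain. For uniqueness of the stationary solution of \eqref{def_inov_theta}: the vectorised difference $D_t$ of two $L^2$-stationary solutions satisfies the homogeneous recursion $D_t=\Phi(\Delta_t,\theta)D_{t-1}$, hence $D_t=\prod_{j=0}^{N-1}\Phi(\Delta_{t-j},\theta)\,D_{t-N}$ for every $N$; since $\|\prod\Phi\|\to0$ almost surely while $\|D_{t-N}\|$ is tight by stationarity, the right-hand side tends to $0$ in probability, forcing $D_t=0$ a.s. This argument deliberately avoids any factorisation of expectations, which would be unavailable for correlated $\Delta$. For uniqueness of the coefficient sequence in ${\cal H}$: if $\sum_i(c_i-d_i)\epsilon_{t-i}=0$ in $L^2$ with both sequences independent of $(\epsilon_t)$, the same orthogonality computation gives $\sigma^2\,\mathbb E\sum_i(c_i-d_i)^2=0$, so $c_i=d_i$ a.s. I expect the real work to be the composition bookkeeping of the second step together with the correlated-$\Delta$ moment bound of the third step (the reason (A5a) is stated with the eighth power); the two uniqueness arguments are comparatively standard.
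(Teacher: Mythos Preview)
Your proposal is correct and follows essentially the same approach as the paper: vectorise \eqref{def_arma_pq} and \eqref{def_inov_theta} into the companion recursions driven by $\Psi$ and $\Phi$ respectively, iterate each to a moving-average form, and compose via the identity $\omega_t=M\tilde Z_t$ (your $(e_1+e_{q+1})w_1=M$) to read off \eqref{expression_eps_theta}. Your treatment is in fact slightly more explicit than the paper's in two respects: you spell out the Cauchy--Schwarz step $\mathbb E[\|\prod\Phi\|^2\|\prod\Psi\|^2]\le(\mathbb E\|\prod\Phi\|^4)^{1/2}(\mathbb E\|\prod\Psi\|^4)^{1/2}$ needed because the $\Phi$- and $\Psi$-blocks sit on overlapping, correlated segments of $(\Delta_t)$ (the paper invokes this only implicitly when it passes from the $\tilde Z_t$ series to the $Z_t(\theta)$ series and notes $X_t\in L^4$), and you supply a separate argument for uniqueness of the stationary $L^2$ solution via the homogeneous recursion and tightness, which the paper's proof does not write out---it only verifies uniqueness of the coefficient sequence in ${\cal H}$, exactly as you do.
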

The uniqueness property in this proposition can be seen as an identifiability property. Such a property is guaranteed in a similar context by Assumption {\bf A6} page 56 in \cite{Gautier04} (see also \cite{FG2003}) in the case of strong ARMA processes modulated by a Markov chain.
Note also that the decomposition \eqref{dse_eps_theta} is a slight generalization of the Wold decomposition of stationary processes which are squared integrable, see Theorem 5.7.1 p.187 of \cite{broc-d}. Remark that the stability condition {\bf (A5a)} is reminiscent of the one in \cite[Theorem 1]{FZ2001} and \cite[Theorem 2.1]{Stelzer09} (see also \cite{Brandt86}); it is however stronger as we need integrability conditions for the process $(\epsilon_t(\theta))_{t\in \nbZ}$ (as well as on its derivatives), uniformly on $\theta \in \Theta$. More precisely, we note that the right inequality condition in {\bf (A5a)} is equivalent to \cite[Remark 4.1 (a)]{Stelzer09}.
\begin{cor}\label{coro_decompo_e_t}
The process $(e_t(\theta))_{t\in \nbZ}$ defined by \eqref{e_t} has the following decomposition
\begin{eqnarray}
e_t(\theta)&=& \sum_{i=0}^\infty c_i^e(t,\theta,\Delta_{t},\dots,\Delta_{t-i+1})\epsilon_{t-i},\quad t\ge p+1,\mbox{ where} \label{dse_e_t_theta}\\
c_i^e(t,\theta,\Delta_{t},\dots,\Delta_{t-i+1})&=& \sum_{k=0}^{\min(t-1,i)} w_1\prod_{j=0}^{k-1}\Phi(\Delta_{t-j},\theta)M \prod_{j'=k}^{i-1}\Psi(\Delta_{t-j'})w_{p+1}' ,\label{expression_e_t_theta}
\end{eqnarray}
where the matrix $M$ and vectors $w_1$, $w_{p+1}$, are defined at the beginning of the section.
\end{cor}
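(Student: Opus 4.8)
The plan is to redo the computation behind Proposition \ref{prop_stationary}, this time starting from the definition \eqref{e_t} of $(e_t(\theta))$ and carrying the finite initial conditions $e_s(\theta)=0$, $\tilde X_s=0$ for $s\le 0$ throughout. The key structural remark is that \eqref{e_t} determines $(e_t(\theta))_{t\ge 1}$ uniquely by forward recursion once the values $e_s(\theta)=0$, $s\le 0$, are fixed, so I may obtain \eqref{expression_e_t_theta} by directly unfolding \eqref{e_t} and reading off the coefficient of each $\epsilon_{t-i}$; no separate verification step is then needed.

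First I would record why the hypothesis $t\ge p+1$ appears. For such $t$ one has $t\ge 1$ and $t-i\ge t-p\ge 1$ for $1\le i\le p$, hence $\tilde X_t=X_t$ and $\tilde X_{t-i}=X_{t-i}$, so the driving term of \eqref{e_t} coincides with that of \eqref{def_inov_theta}: the outermost autoregressive step involves only genuine, non-truncated data, and the whole discrepancy between $e_t(\theta)$ and the stationary solution $\epsilon_t(\theta)$ of Proposition \ref{prop_stationary} is then confined to the finite initial conditions $e_s(\theta)=0$ and $\tilde X_s=0$, $s\le 0$. I would then substitute, for each $s\ge 1$, the moving-average representation of $X_s$ in terms of $(\epsilon_u)_{u\le s}$ that underlies Proposition \ref{prop_stationary} through the companion matrices $\Psi(s)$ together with $M$ and $w_{p+1}$ (legitimate here since $\tilde X_s=X_s$ for $s\ge 1$), and unfold the moving-average recursion exactly as in \eqref{expression_eps_theta}, each unfolding step contributing one factor $\Phi(\Delta_{t-j},\theta)$.

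The crux, and the main obstacle, is to track precisely how the zero initial conditions truncate this expansion. In the stationary case the unfolding runs indefinitely and produces the full inner sum $\sum_{k=0}^{i}$ of \eqref{expression_eps_theta}; here every term in which the residual-extraction filter (the chain of $\Phi$'s together with the junction $M$) would have to read a datum $\tilde X_{s}$ with $s\le 0$ simply vanishes, since such data are set to zero. A careful count shows that this forces the number $k$ of applied $\Phi$-factors to obey $k\le t-1$ in addition to the trivial bound $k\le i$, i.e. $k\le\min(t-1,i)$, which is exactly the upper summation limit in \eqref{expression_e_t_theta}; the delicate point is to check that the cut-off is $t-1$ rather than $t$, and that it acts on the $\Phi$-count alone while leaving the $\Psi$-block intact. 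Finally, convergence in $L^2$ of the resulting series is inherited for free from Proposition \ref{prop_stationary}: each coefficient $c_i^e$ is, term by term, a sub-sum of $c_i$, hence dominated in norm by summands already controlled under {\bf (A5a)}, so \eqref{dse_e_t_theta} is well defined and matching the coefficient of $\epsilon_{t-i}$ on both sides concludes the proof.
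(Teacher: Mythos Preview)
Your approach is correct and is exactly the argument the paper has in mind: the corollary is stated without a separate proof, being an immediate adaptation of the computation in Proposition~\ref{prop_stationary}. Concretely, with $Z^e_t(\theta)=(e_t(\theta),\dots,e_{t-q+1}(\theta),\tilde X_t,\dots,\tilde X_{t-p+1})'$ and $\omega^e_t=(\tilde X_t,0,\dots,\tilde X_t,\dots,0)'$ one has $Z^e_t(\theta)=\Phi(\Delta_t,\theta)Z^e_{t-1}(\theta)+\omega^e_t$ for $t\ge 1$ with $Z^e_0(\theta)=0$; unfolding gives $Z^e_t(\theta)=\sum_{k=0}^{t-1}\prod_{j=0}^{k-1}\Phi(\Delta_{t-j},\theta)\,\omega^e_{t-k}$, and since $\omega^e_s=M\tilde Z_s$ for $s\ge 1$ with the full (untruncated) expansion \eqref{candidate_eq1} of $\tilde Z_s$, regrouping in $\epsilon_{t-i}$ yields \eqref{expression_e_t_theta} with the cut-off $k\le\min(t-1,i)$ you describe.
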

\begin{lemme}\label{c_i^2_expo_decrease}
The random coefficients $c_i(\theta,\Delta_{t},\dots,\Delta_{t-i+1})$, $i\in\nbZ$, $t\in\nbZ$,  verify the following properties:
\begin{itemize}
\item $\theta\mapsto c_i(\theta,\Delta_{t},\dots,\Delta_{t-i+1})$, $\theta\mapsto \nabla [c_i(\theta,\Delta_{t},\dots,\Delta_{t-i+1})]^2$ and $\theta\mapsto \nabla^2 [c_i(\theta,\Delta_{t},\dots,\Delta_{t-i+1})]^2$  are a.s. polynomial functions,
\item Let us assume, instead of {\bf (A5a)}, that the stronger assumption
$$
{\bf (A5b)}\limsup_{t\to\infty}\frac{1}{t}\ln  \nbE\left( \sup_{\theta\in \Theta} \left| \left| \prod_{i=1}^t \Phi(\Delta_i,\theta)\right|\right|^{4\nu+8}\right)<0,\quad \limsup_{t\to\infty}\frac{1}{t}\ln  \nbE\left( \left|\left|\prod_{i=1}^t \Psi(\Delta_i)\right|\right|^{4\nu+8}\right)<0
$$
holds. Then we have
\begin{equation}\label{coeff_c_i_expo_decrease}
\begin{array}{rcl}
\limsup_{i\to\infty}\frac{1}{i}\ln  \nbE\left(\sup_{\theta\in \Theta}[ c_i(\theta,\Delta_{i},\dots,\Delta_{1})]^{2\nu+4}\right)&<&0 ,\\
\limsup_{i\to\infty}\frac{1}{i}\ln  \nbE\left(\sup_{\theta\in \Theta}\left| \left| \nabla^j [c_i(\theta,\Delta_{i},\dots,\Delta_{1})]\right|\right|^{2\nu+4}\right)&<& 0 ,\quad j=2,3.
\end{array}
\end{equation}
\end{itemize}
Furthemore, the coefficients $c_i^e(t,\theta,\Delta_{t-1},\dots,\Delta_{t-i})$, $i\in\nbZ$, $t\ge 0$, satisfy
\begin{equation}\label{coeff_c_i_e_expo_decrease}
\begin{array}{rcl}
\limsup_{i\to\infty}\frac{1}{i}\ln  \sup_{t\ge 0}\nbE\left(\sup_{\theta\in \Theta}[ c_i^e(t,\theta,\Delta_{t},\dots,\Delta_{t-i+1})]^{2\nu+4}\right)&<&0 ,\\
\limsup_{i\to\infty}\frac{1}{i}\ln  \sup_{t\ge 0}\nbE\left(\sup_{\theta\in \Theta}\left| \left| \nabla^j [c_i^e(t,\theta,\Delta_{t},\dots,\Delta_{t-i+1})]\right|\right|^{2\nu+4}\right)&<& 0 ,\quad j=2,3.
\end{array}
\end{equation}
\end{lemme}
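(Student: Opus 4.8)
The plan is to reduce all four assertions to the exponential control of the pure products $\prod_{i=1}^t\Phi(\Delta_i,\theta)$ and $\prod_{i=1}^t\Psi(\Delta_i)$ granted by \textbf{(A5b)}, the only genuine difficulty being the passage to the derivatives $\nabla^j c_i$. Write $P_k(\theta):=\prod_{j=0}^{k-1}\Phi(\Delta_{t-j},\theta)$ and $Q_m:=\prod_{j'=k}^{k+m-1}\Psi(\Delta_{t-j'})$, so that by \eqref{expression_eps_theta} one has $c_i=\sum_{k=0}^{i}w_1\,P_k(\theta)\,M\,Q_{i-k}\,w_{p+1}'$. For the polynomiality claim I would only use that $\theta\mapsto\Phi(s,\theta)$ is affine while $\Psi(s)$ does not depend on $\theta$ (both noted when these matrices were introduced): for a fixed realisation of the $\Delta$'s each summand is then a product of $k$ affine matrix-valued functions of $\theta$ times constant matrices, so its entries are polynomials in $\theta$ of degree at most $k$, and summing over $k\le i$ shows that $\theta\mapsto c_i$ is a.s. a polynomial of degree at most $i$. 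Squaring and differentiating preserve polynomiality, which settles the statements for $c_i$, $\nabla[c_i]^2$ and $\nabla^2[c_i]^2$.

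For the decay of $c_i$ itself, set $U_k:=\sup_\theta\|P_k(\theta)\|$ and $V_m:=\|Q_m\|$, bound $\sup_\theta|c_i|\le C\sum_{k=0}^i U_k V_{i-k}$, and apply Minkowski's inequality in $L^{2\nu+4}$ to the sum over $k$. The decisive point is that the exponent in \textbf{(A5b)} is exactly $4\nu+8=2(2\nu+4)$, which is precisely what allows a single Cauchy-Schwarz inequality to decouple the $\theta$-dependent $\Phi$-product from the $\Psi$-product, $\|U_k V_{i-k}\|_{2\nu+4}\le(\nbE[U_k^{4\nu+8}])^{\frac1{2(2\nu+4)}}(\nbE[V_{i-k}^{4\nu+8}])^{\frac1{2(2\nu+4)}}$. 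By stationarity of $(\Delta_t)_{t\in\nbZ}$ these two expectations are exactly the quantities in \textbf{(A5b)} for products of length $k$ and $i-k$, hence each is bounded by $C\lambda^{k}$, respectively $C\lambda^{i-k}$, for some $\lambda\in(0,1)$; a geometric summation over $k$ then produces a polynomial in $i$ times $\gamma^i$ with $\gamma\in(0,1)$, i.e. the first line of \eqref{coeff_c_i_expo_decrease}.

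The hard part is the control of $\nabla^2 c_i$ and $\nabla^3 c_i$. Differentiating $P_k$ via Leibniz replaces $j$ of its $k$ factors by constant (because $\Phi$ is affine) derivative matrices; bounding the resulting product by the product of the norms of its $j+1$ $\Phi$-sub-blocks would force a H\"older inequality with $j+2$ factors, hence moments of order $(j+2)(2\nu+4)>4\nu+8$, which \textbf{(A5b)} does not supply — the moment budget affords only the single Cauchy-Schwarz used above. I would bypass this by exploiting the polynomial structure rather than the product structure: since every entry of $P_k(\theta)$ is a polynomial in $\theta$ of degree at most $k$, Markov's inequality for polynomials on the parameter domain (assumed box-shaped, or enclosed in a box, the open condition \textbf{(A5b)} persisting on a slightly larger compact set) gives $\sup_\theta\|\nabla^j P_k(\theta)\|\le C\,k^{2j}\sup_\theta\|P_k(\theta)\|$, so that each differentiation costs merely a polynomial factor $k^{2j}$ instead of an extra block. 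Applying this to $\nabla^j c_i=\sum_{k}w_1(\nabla^j P_k)M Q_{i-k}w_{p+1}'$ and repeating verbatim the Minkowski--Cauchy-Schwarz argument of the previous paragraph — the only change being the harmless factor $k^{2j}$, absorbed into the geometric decay — yields the second line of \eqref{coeff_c_i_expo_decrease}.

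Finally, the bounds for $c_i^e$ follow by domination. Comparing \eqref{expression_e_t_theta} with \eqref{expression_eps_theta}, $c_i^e(t,\cdots)$ is obtained from $c_i$ by truncating the inner sum at $\min(t-1,i)$, so $\sup_\theta|c_i^e|$ and $\sup_\theta\|\nabla^j c_i^e\|$ are dominated termwise by the very sums already estimated for $c_i$; since by stationarity the $L^{2\nu+4}$-norm of each summand does not depend on $t$, the estimates hold uniformly in $t\ge0$, which gives \eqref{coeff_c_i_e_expo_decrease}.
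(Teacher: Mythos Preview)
Your treatment of the polynomiality claim and of the exponential decay of $c_i$ itself is exactly the paper's argument: Minkowski over the sum in $k$, followed by a single Cauchy--Schwarz that decouples the $\Phi$-product from the $\Psi$-product, using that the exponent $4\nu+8$ in \textbf{(A5b)} is precisely $2(2\nu+4)$. The paper's handling of $c_i^e$ is likewise the termwise domination you describe: the truncated sum in \eqref{expression_e_t_theta} is bounded above by the full sum in \eqref{expression_eps_theta}, and stationarity of $(\Delta_t)_{t\in\nbZ}$ makes the resulting estimate uniform in $t$.

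Where you depart from the paper is on the derivative bounds, and the departure is substantive. The paper's entire proof of the second line of \eqref{coeff_c_i_expo_decrease} is the sentence ``The inequalities for $\nabla^j[c_i(\theta,\Delta_i,\dots,\Delta_1)]$, $j=2,3$, are proved similarly,'' with no further detail. You correctly point out that the ``similar'' argument does not go through unchanged: differentiating $P_k(\theta)$ by Leibniz splits it into $j+1$ $\Phi$-sub-blocks separated by bounded derivative matrices, and together with the $\Psi$-block a $(j+2)$-fold H\"older would demand moments of order $(j+2)(2\nu+4)$, strictly more than the $4\nu+8$ that \textbf{(A5b)} supplies, already for $j=1$. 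Your remedy --- use Markov's inequality for polynomials to trade $\sup_\theta\|\nabla^jP_k(\theta)\|$ for a harmless factor $k^{2j}$ times $\sup_\theta\|P_k(\theta)\|$, then rerun the \emph{same} Minkowski--Cauchy--Schwarz step --- is a clean and genuine addition that fills a gap the paper leaves open. The only residual caveat, which you yourself flag, is that Markov-type inequalities need $\Theta$ to be a box or a convex body with nonempty interior (or \textbf{(A5b)} to hold on a box enclosing $\Theta$); this mild geometric hypothesis is not stated in the paper but is what your argument requires.
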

Note that one of the differences with \cite{Gautier04,FG2003} (apart for the obvious one where the noise is weak here) is that {\bf (A5b)} leads to the exponential decrease \eqref{coeff_c_i_expo_decrease} for the coefficient $[c_i(\theta,\Delta_{i},\dots,\Delta_{1})]^{2\nu+4}$ (uniformly in $\theta$) as well as its derivatives. This is to be compared with Condition {\bf A8} page 56 of \cite{Gautier04} (see also \cite{FG2003}), where the exponent is $4$ instead of $2\nu+4$. This $\nu>0$ is what makes the difference between weak and strong noise, as this is the parameter that measures the dependence among the random variables in the (non iid) sequence $(\epsilon_t)$. Also note that \eqref{coeff_c_i_expo_decrease} and \eqref{coeff_c_i_e_expo_decrease} are akin to Conditions $({\bf A2})$ and $({\bf A8})$ in \cite{FG2004}.

\subsection{Preliminary results}
We define the cost function
\begin{align}
O_n(\theta)&=\frac{1}{2n}\sum_{t=1}^n \epsilon_t^2(\theta).\label{on}
\end{align}
Similarly to $\hat{\theta}_n$, let us introduce  $\check{\theta}_n$ the least squared estimators corresponding to the cost function $O_n(\theta)$:
\begin{equation}
O_n(\check{\theta}_n) = \min_{\theta\in \Theta} O_n(\theta).\label{LSE}
\end{equation}
The following results are necessary in order to prove the asymptotic properties for the estimators $\hat{\theta}_n$ and $\check{\theta}_n$ defined in \eqref{LSE} and \eqref{LSE_Qn}. We first justify that $e_t(\theta)$ asymptotically behaves as $\epsilon_t(\theta)$ as $t\to\infty$ for all $\theta$ as follows:
\begin{lemme}\label{lemme_prelim}
Let us suppose that {\bf (A1)} and that stationarity condition {\bf (A5a)} hold. Sequences $(\epsilon_t(\theta))_{t\in\nbZ}$ and $(e_t(\theta))_{t\in\nbZ}$ satisfy
\begin{enumerate}
\item $\left|\left| \sup_{\theta\in \Theta} |\epsilon_0(\theta)|\right| \right|_4<+\infty$ and $\sup_{t\ge 0}\left|\left| \sup_{\theta\in \Theta} |e_t(\theta)|\right| \right|_4<+\infty$,
\item $\left|\left| \sup_{\theta\in \Theta} |\epsilon_t(\theta)-e_t(\theta)|\right| \right|_2$ tends to $0$ exponentially fast as $t\to \infty$,
\item For all $\alpha>0$, $t^\alpha \sup_{\theta\in \Theta} |\epsilon_t(\theta)-e_t(\theta)|\longrightarrow 0$ a.s. as $t\to\infty$,
\item For all $j=1, 2,3$, $\left|\left| \sup_{\theta\in \Theta} ||\nabla^j\epsilon_0(\theta)||\right| \right|_4<+\infty$, $\sup_{t\ge 0}\left|\left| \sup_{\theta\in \Theta} ||\nabla^j e_t(\theta)||\right| \right|_4<+\infty$ and we have $t^\alpha\left|\left| \sup_{\theta\in \Theta} ||\nabla (e_t- \epsilon_t)(\theta)||\right| \right|_{8/5}\longrightarrow 0$ , $t^\alpha\left|\left| \sup_{\theta\in \Theta} ||\nabla^2 (e_t- \epsilon_t)(\theta)||\right| \right|_{4/3}\longrightarrow 0$  and  $t^\alpha\left|\left| \sup_{\theta\in \Theta} ||\nabla^3 (e_t- \epsilon_t)(\theta)||\right| \right|_{1}\longrightarrow 0$ as $t\to\infty$ for all $\alpha>0$.
\end{enumerate}
\end{lemme}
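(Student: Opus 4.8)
The plan is to reduce all four assertions to decay and moment estimates on the random coefficients $c_i$, $c_i^e$ and their $\theta$-derivatives, working from the explicit series \eqref{dse_eps_theta} and \eqref{dse_e_t_theta}. Two facts drive everything. By \textbf{(A1)} the coefficients $c_i(\theta,\Delta_t,\dots)$ and $c_i^e(t,\theta,\Delta_t,\dots)$ depend on $(\Delta_u)$ only, hence are independent of the white noise $(\epsilon_u)$; this lets me factor the moments of each product $c_i\,\epsilon_{t-i}$ and treat the level bounds by Minkowski's inequality. And \textbf{(A3)} gives $\|\epsilon_0\|_4<+\infty$ since $2\nu+4\ge 4$. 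The decay of the coefficients themselves comes from \textbf{(A5a)}: a summand of $c_i$ in \eqref{expression_eps_theta} is a product of a $\Phi$-chain and a $\Psi$-chain, so Cauchy--Schwarz applied to the two exponent-$8$ bounds of \textbf{(A5a)} makes $\mathbb{E}(\sup_\theta|c_i|^4)$ decrease exponentially in $i$, which is the kind of estimate recorded in Lemma~\ref{c_i^2_expo_decrease}.

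For item (1) I would bound $\sup_\theta|\epsilon_0(\theta)|\le\sum_i\sup_\theta|c_i|\,|\epsilon_{-i}|$, take the $L^4$ norm, and use Minkowski and then independence to get $\|\sup_\theta|\epsilon_0|\|_4\le\|\epsilon_0\|_4\sum_i\|\sup_\theta|c_i|\|_4<+\infty$; the same computation based on \eqref{expression_e_t_theta}, uniform in $t$ (the finitely many indices $t\le p$ handled directly), gives the bound for $e_t$. Item (2) rests on the observation that, comparing \eqref{expression_eps_theta} and \eqref{expression_e_t_theta}, one has $c_i^e=c_i$ for $i\le t-1$, so $\epsilon_t(\theta)-e_t(\theta)=\sum_{i\ge t}(c_i-c_i^e)\epsilon_{t-i}$ and every surviving summand carries at least $t$ factors $\Phi$. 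Cauchy--Schwarz and \textbf{(A5a)} bound its $L^2$ norm by $C\,r^{k}s^{\,i-k}$ with $r,s<1$; summing over $t\le k\le i$ and then over $i\ge t$ leaves a bound of order $\rho^{t}$ for some $\rho<1$, which is item (2). Item (3) follows from (2) by Borel--Cantelli: Markov's inequality gives $\mathbb{P}(t^\alpha\sup_\theta|\epsilon_t-e_t|>\eta)\le\eta^{-2}t^{2\alpha}\rho^{2t}$, summable in $t$, so for every $\eta$ the event occurs finitely often a.s., whence the a.s. convergence to $0$.

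Item (4) is the delicate one. I would first legitimise termwise differentiation of \eqref{dse_eps_theta}: by Lemma~\ref{c_i^2_expo_decrease} the maps $\theta\mapsto c_i$ are a.s. polynomial and the derivatives of the coefficients decay exponentially uniformly on the compact $\Theta$, so the differentiated series converge uniformly and $\nabla^j\epsilon_t(\theta)=\sum_i\nabla^jc_i\,\epsilon_{t-i}$, and likewise for $e_t$. The $L^4$ level bounds then reproduce item (1) with $\nabla^jc_i$ in place of $c_i$, which requires controlling $\mathbb{E}(\sup_\theta\|\nabla^jc_i\|^4)$; here differentiation fragments the $\Phi$-chain and the estimate is genuinely more delicate. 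The difference estimates reproduce item (2) with $\nabla^j(c_i-c_i^e)$, and the drop in the exponents is exactly the cost of differentiation: each of the $j$ derivatives splits the $\Phi$-chain of a summand into one further sub-chain, so $\nabla^j$ of a summand is a product of $j+1$ $\Phi$-sub-chains and one $\Psi$-chain, i.e. $j+2$ random chains controlled only in $L^8$ by \textbf{(A5a)}. Estimating such a product together with the noise factor $\epsilon\in L^4$ by H\"older forces $\tfrac1p=\tfrac{j+2}{8}+\tfrac14$, i.e. $p=8/5,\,4/3,\,8/7$ for $j=1,2,3$; since $L^{8/7}\subseteq L^1$ this yields the three stated estimates, each carrying the factor $\rho^{t}$ coming from the $\ge t$ surviving $\Phi$-factors, and $t^\alpha\rho^t\to0$.

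The main obstacle is this derivative moment bookkeeping under the single hypothesis \textbf{(A5a)}: one must track, uniformly in $\theta\in\Theta$, how up to three differentiations fragment the $\Phi$-product, how the fragments recombine with the $\Psi$-chain and with the noise through H\"older, and check that the polynomial-in-$i$ prefactors generated by the product rule are absorbed by the geometric decay while the factor $\rho^t$ is still extracted from the difference $c_i-c_i^e$. Obtaining the $L^4$ bounds on the \emph{derivatives} of $\epsilon_0$ and $e_t$ from the exponent-$8$ control of the matrix products, and justifying the uniform-in-$\theta$ termwise differentiation of the infinite series, are the two points needing real care; the remainder is routine Minkowski, Cauchy--Schwarz/H\"older and independence.
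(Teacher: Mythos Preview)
Your arguments for Points 1 and 3 coincide with the paper's. For Point 2 and the difference estimates in Point 4, however, you take a genuinely different route. You compare the series coefficients $c_i$ and $c_i^e$ directly, observe that they agree for $i\le t-1$, and then differentiate the series termwise, which fragments each $\Phi$-chain into $j+1$ sub-chains and forces a $(j+2)$-factor H\"older estimate. The paper instead works with the state-space vectors $Z_t(\theta),Z^e_t(\theta)$: for $t\ge r+1$ the recursion $Z^e_t-Z_t=\Phi(\Delta_t,\theta)[Z^e_{t-1}-Z_{t-1}]$ iterates to a \emph{single} $\Phi$-chain of length $\approx t$ times a fixed random vector with bounded $L^4$ norm, which gives Point 2 in one H\"older step. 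For Point 4 the paper differentiates this recursion rather than the series, obtaining $\partial_i[Z^e_t-Z_t]=\Phi\,\partial_i[Z^e_{t-1}-Z_{t-1}]+(\partial_i\Phi)[Z^e_{t-1}-Z_{t-1}]$, iterates in $t$, and bounds by a single H\"older $\|\cdot\|_{8/5}\le\|\text{one }\Phi\text{-chain}\|_8\cdot\|Z^e-Z\|_2$, bootstrapping from Point 2. Repeating for the next two derivatives lowers the exponent by $1/8$ each time, producing exactly your cascade $2\to8/5\to4/3\to8/7$ (hence $L^1$) but without ever splitting a $\Phi$-chain or tracking product-rule combinatorics. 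Your direct approach is correct and recovers the same exponents; what the recursive bootstrap buys is that each step involves only two random factors and reuses the previous level's estimate, so the bookkeeping is lighter. For the $L^4$ \emph{level} bounds on $\nabla^j\epsilon_0$ and $\nabla^j e_t$ the paper does proceed via the series (invoking the coefficient estimates \eqref{coeff_c_i_expo_decrease}--\eqref{coeff_c_i_e_expo_decrease}), so your flagging of that point as the one requiring most care is accurate.
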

We then show that the LSE is asymptotically equivalent to $Q_n(\theta)$:
\begin{prop}\label{prop_prelim}
Under the same assumptions in Lemma \ref{lemme_prelim}, we have that, for all $\alpha\in (0,1)$,
\begin{enumerate}
\item $\sup_{\theta\in \Theta} |Q_n(\theta)-O_n(\theta)|$ converges a.s. to $0$, and $n^\alpha \left|\left| \sup_{\theta\in \Theta} |Q_n(\theta)-O_n(\theta)|\right| \right|_1 $ tends to $0$ as $n\to\infty$,
\item  $\sup_{\theta\in \Theta} ||\nabla(Q_n(\theta)-O_n(\theta))||$ and $\sup_{\theta\in \Theta} ||\nabla^j(Q_n(\theta)-O_n(\theta))||, \text{ for }j=2,3$ converge a.s. to
$0$,
\item $n^\alpha \left|\left| \sup_{\theta\in \Theta} |\nabla(Q_n-O_n)(\theta)|\right| \right|_1 \longrightarrow 0$ as $n\to\infty$.
\end{enumerate}
\end{prop}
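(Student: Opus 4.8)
The plan is to reduce all six assertions to the control of the single difference $e_t^2(\theta)-\epsilon_t^2(\theta)$ and its $\theta$-derivatives, averaged over $t$. Indeed $Q_n-O_n=\frac{1}{2n}\sum_{t=1}^n(e_t^2-\epsilon_t^2)(\theta)$, and since these functions are a.s. polynomial in $\theta$ (Lemma \ref{c_i^2_expo_decrease}) the gradient commutes with the finite sum, giving $\nabla^j(Q_n-O_n)(\theta)=\frac{1}{2n}\sum_{t=1}^n\nabla^j(e_t^2-\epsilon_t^2)(\theta)$. The basic algebraic device is a telescoping that isolates in each term one \emph{decaying} factor and one factor \emph{bounded} uniformly in $t$: at order $0$, $e_t^2-\epsilon_t^2=(e_t-\epsilon_t)(e_t+\epsilon_t)$, and Leibniz-expanding $\nabla^j(e_t^2)=\sum_{a+b=j}\binom{j}{a}\nabla^a e_t\,\nabla^b e_t$ one writes each bilinear difference as $\nabla^a e_t\,\nabla^b e_t-\nabla^a\epsilon_t\,\nabla^b\epsilon_t=\nabla^a(e_t-\epsilon_t)\,\nabla^b e_t+\nabla^a\epsilon_t\,\nabla^b(e_t-\epsilon_t)$. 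After taking $\sup_{\theta\in\Theta}$ and using subadditivity of the supremum, every summand is dominated by a finite sum of products of a derivative of $e_t-\epsilon_t$ against a derivative of $e_t$ or $\epsilon_t$.

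For the $L^1$-rate claims (the second halves of parts (1) and (3)) I would bound the $L^1$ norm of each summand by H\"older's inequality, pairing the decaying factor with the bounded one using exactly the exponents recorded in Lemma \ref{lemme_prelim}. At order $0$, Cauchy--Schwarz gives $\|\sup_\theta|e_t^2-\epsilon_t^2|\|_1\le\|\sup_\theta|e_t-\epsilon_t|\|_2\,\|\sup_\theta|e_t+\epsilon_t|\|_2$, which is exponentially small since the first factor decays exponentially by part (2) and the second is bounded uniformly in $t$ by part (1). At order $1$, the term $(e_t-\epsilon_t)\nabla e_t$ is controlled by $\|e_t-\epsilon_t\|_2\|\nabla e_t\|_2$ (exponential times bounded), while $\epsilon_t\nabla(e_t-\epsilon_t)$ is controlled by $\|\epsilon_t\|_{8/3}\,\|\nabla(e_t-\epsilon_t)\|_{8/5}$, the conjugacy $\frac38+\frac58=1$ being exactly why part (4) is stated in $L^{8/5}$; here $\|\epsilon_t\|_{8/3}\le\|\epsilon_t\|_4$ is bounded and $\|\sup_\theta\|\nabla(e_t-\epsilon_t)\|\|_{8/5}$ decays faster than any power of $t$. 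In both orders the summands are summable in $t$, so the Cesàro averages are $O(1/n)$ in $L^1$, and the factor $n^\alpha$ with $\alpha\in(0,1)$ is absorbed.

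For the almost-sure claims at all orders $j=0,1,2,3$ (first halves of part (1) and all of part (2)) I would upgrade the $L^{p}$ decay rates of Lemma \ref{lemme_prelim} to almost-sure rates via Markov's inequality and Borel--Cantelli. Since part (4) yields $\|\sup_\theta\|\nabla^a(e_t-\epsilon_t)\|\|_{p_a}=o(t^{-\alpha})$ for every $\alpha>0$, with $p_a=2,8/5,4/3,1$ for $a=0,1,2,3$, the events $\{\sup_\theta\|\nabla^a(e_t-\epsilon_t)\|>t^{-m}\}$ are summable for every $m$, so each difference factor decays faster than any power of $t$ almost surely; symmetrically, the uniform $L^4$ bounds of parts (1) and (4), together with the stationarity of $\epsilon$, make each bounded factor $O(t^{1/4+\delta})$ almost surely. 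Consequently every summand $\sup_\theta\|\nabla^j(e_t^2-\epsilon_t^2)\|$ is super-polynomially small, almost surely summable, and its average tends to $0$ almost surely. I expect the only delicate step to be the bookkeeping of the conjugate exponents: $8/5,4/3,1$ are conjugate to $8/3,4,\infty$, and whereas the first two pair with the $L^4$ bounds (since $8/3<4$ and $4\le4$), the top order $j=3$ forces the $(0,3)$ term $\epsilon_t\,\nabla^3(e_t-\epsilon_t)$ to require an $L^\infty$ control of $\epsilon_t$ that is unavailable; this mismatch is precisely why the almost-sure conclusion at order $3$ must be reached through Borel--Cantelli rather than through a direct $L^1$ H\"older estimate.
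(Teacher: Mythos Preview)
Your proposal is correct and the overall telescoping-plus-H\"older strategy is the same as the paper's, but the execution differs in two respects.

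For the $L^1$-rate parts, the paper bounds the C\'esaro average $\frac{1}{n^{1-\alpha}}\sum_{t=1}^n a_t$ (with $a_t=\|\sup_\theta|e_t-\epsilon_t|\|_2$ or $\|\sup_\theta\|\nabla(e_t-\epsilon_t)\|\|_{4/3}$) by writing it against the weights $t^{1-\alpha}-(t-1)^{1-\alpha}$ and invoking Toeplitz's lemma, using only that $t^\alpha a_t\to0$. You instead observe that $a_t$ decays faster than any power by Lemma~\ref{lemme_prelim}, so $\sum_t a_t<\infty$ and the average is outright $O(1/n)$; this is simpler and uses a bit more of the lemma's strength. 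Note also that the paper pairs $e_t$ with $\nabla(e_t-\epsilon_t)$ via the H\"older exponents $(4,4/3)$ rather than your $(8/3,8/5)$ pairing of $\epsilon_t$ with $\nabla(e_t-\epsilon_t)$; both telescopings are valid.

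For the almost-sure parts, the paper does \emph{not} go through Borel--Cantelli pointwise bounds. Instead it applies Cauchy--Schwarz to the empirical averages themselves, e.g.
\[
\frac{1}{n}\sum_{t=1}^n \sup_\theta|e_t-\epsilon_t|\,\sup_\theta\|\nabla\epsilon_t\|
\le \Big(\frac{1}{n}\sum_t\sup_\theta|e_t-\epsilon_t|^2\Big)^{1/2}
\Big(\frac{1}{n}\sum_t\sup_\theta\|\nabla\epsilon_t\|^2\Big)^{1/2},
\]
and handles the first factor by Ces\`aro (since the summands $\to0$ a.s.) and the second by the ergodic theorem. Your route, upgrading the $L^{p_a}$ decay of $\nabla^a(e_t-\epsilon_t)$ to a.s.\ super-polynomial decay via Markov/Borel--Cantelli and coupling it with $O(t^{1/4+\delta})$ a.s.\ growth of the ``bounded'' factors, is a legitimate alternative; it trades the ergodic theorem for a slightly longer bookkeeping argument, and in particular lets you handle the $(0,3)$ term at order $j=3$ cleanly, exactly as you anticipated.
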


\subsection{Asymptotic properties}
We now turn to the main results of the paper, i.e. the strong consistency and normality of the estimator $\hat{\theta}_n$.
\begin{prop}\label{theo_consistency}
Let {(\bf A1)}, {(\bf A4)} as well as stationarity condition {\bf (A5a)} hold. The estimator $\check{\theta}_n$ defined by (\ref{LSE}) converges a.s. towards $\theta_0$.
\end{prop}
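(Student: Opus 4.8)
The plan is to treat $\check\theta_n$ as an M-estimator and run the classical two-ingredient consistency argument: a uniform strong law of large numbers for the criterion $O_n$, together with identifiability of $\theta_0$ as the unique minimizer of the limiting criterion.

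First I would establish that $O_n(\theta)$ converges, uniformly in $\theta\in\Theta$ and almost surely, to the deterministic limit $O(\theta):=\frac12\EE[\epsilon_0^2(\theta)]$. Since $\epsilon_t(\theta)$ is a measurable functional of the past of the stationary ergodic process $(\epsilon_s,\Delta_s)_{s\le t}$ (by {\bf (A1)} and the Wold-type representation \eqref{dse_eps_theta}), the process $(\epsilon_t(\theta))_t$ is itself stationary ergodic for each fixed $\theta$, so Birkhoff's theorem gives the pointwise a.s. convergence. To upgrade this to uniform convergence I would invoke a uniform ergodic theorem, whose hypotheses hold here: $\Theta$ is compact; $\theta\mapsto\epsilon_0(\theta)$ is a.s. continuous, being an a.s. uniformly convergent series of the polynomial coefficients $c_i(\theta,\cdot)$ of Lemma \ref{c_i^2_expo_decrease} (uniform convergence furnished by the exponential decay \eqref{coeff_c_i_expo_decrease}); and the domination $\EE[\sup_\theta\epsilon_0^2(\theta)]<\infty$ follows from $\|\sup_\theta|\epsilon_0(\theta)|\|_4<\infty$ in Lemma \ref{lemme_prelim}(1).

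Second I would compute and minimize $O$. Plugging \eqref{dse_eps_theta} into $\EE[\epsilon_0^2(\theta)]$, using the independence of the coefficient sequence $(c_i(\theta,\cdot))_i$ from $(\epsilon_t)_t$ granted by {\bf (A1)}, the white-noise property $\EE(\epsilon_t\epsilon_{t'})=\sigma^2\1_{[t=t']}$, and Tonelli (legitimate since $\EE(\sum_i c_i^2)<\infty$ by Proposition \ref{prop_stationary}), every cross term vanishes and I obtain
$$
O(\theta)=\tfrac12\sigma^2\sum_{i=0}^\infty\EE\big[c_i^2(\theta,\Delta_0,\dots,\Delta_{-i+1})\big].
$$
Because $c_0\equiv1$ while $c_i(\theta_0,\cdot)=0$ for $i\ge1$ (so that $\epsilon_t(\theta_0)=\epsilon_t$), this gives $O(\theta)\ge\frac12\sigma^2=O(\theta_0)$ for every $\theta$, with equality precisely when $c_i(\theta,\cdot)=0$ a.s. for all $i\ge1$, that is, when $\epsilon_t(\theta)=\epsilon_t$ a.s. for all $t$.

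The main obstacle is the remaining identifiability step: showing that $\epsilon_t(\theta)=\epsilon_t$ a.s. forces $\theta=\theta_0$. Subtracting the defining relations \eqref{def_arma_pq} and \eqref{def_inov_theta} yields, on the time set $\{t:\Delta_t=s\}$ for each regime $s\in\Ss$, the identity $\sum_{j=1}^q[b_j^0(s)-b_j(s)]\epsilon_{t-j}=\sum_{i=1}^p[a_i^0(s)-a_i(s)]X_{t-i}$; since every $s\in\Ss$ is visited infinitely often (ergodicity of $(\Delta_t)$ in {\bf (A1)}), I would argue regime by regime, projecting successively onto the innovations $\epsilon_{t-1},\epsilon_{t-2},\dots$, that a nontrivial such relation is incompatible with $\epsilon$ being the linear innovation of $(X_t)$ — this is where the usual coprimeness/invertibility structure underlying the uniqueness in Proposition \ref{prop_stationary} is genuinely used — and hence conclude that all coefficient differences vanish. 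Once uniqueness of the minimizer is secured, the conclusion is routine: for any neighborhood $V$ of $\theta_0$, continuity of $O$ and compactness give $\inf_{\Theta\setminus V}O>O(\theta_0)$, and combining $O_n(\check\theta_n)\le O_n(\theta_0)$ with the uniform convergence of the first step forces $\check\theta_n\in V$ for all large $n$, whence $\check\theta_n\to\theta_0$ a.s., with {\bf (A4)} available to ensure $\theta_0$ lies in the interior.
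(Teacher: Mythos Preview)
Your argument is correct and lands on the same two ingredients as the paper: ergodic convergence of $O_n$ to $O_\infty(\theta)=\tfrac{\sigma^2}{2}\sum_{i\ge 0}\EE[c_i^2(\theta,\Delta_0,\dots,\Delta_{-i+1})]$, and identifiability of $\theta_0$ as the unique minimizer (which the paper, like you, attributes to the uniqueness statement in Proposition~\ref{prop_stationary}). The one genuine technical difference is how you pass from convergence of the criterion to convergence of the argmin. You invoke a uniform ergodic theorem (compact $\Theta$, a.s.\ continuity, and the domination from Lemma~\ref{lemme_prelim}(1)) and then run the standard $\inf_{\Theta\setminus V}O>O(\theta_0)$ argument. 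The paper instead avoids uniform convergence entirely: it takes a convergent subsequence $\check\theta_{n_k}\to\theta^*$, uses only the \emph{pointwise} ergodic limit $O_{n_k}(\theta^*)\to O_\infty(\theta^*)$ and $O_{n_k}(\theta_0)\to O_\infty(\theta_0)$, and bridges $O_{n_k}(\check\theta_{n_k})$ to $O_{n_k}(\theta^*)$ via a first-order Taylor bound
\[
|O_{n_k}(\check\theta_{n_k})-O_{n_k}(\theta^*)|\le \|\check\theta_{n_k}-\theta^*\|\cdot \frac{1}{n_k}\sum_{t=1}^{n_k}\sup_{\theta}\big[|\epsilon_t(\theta)|\,\|\nabla\epsilon_t(\theta)\|\big],
\]
the average being a.s.\ bounded by the ergodic theorem and Lemma~\ref{lemme_prelim}. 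Your route is cleaner and closer to textbook M-estimation; the paper's route is slightly more elementary in that it needs only the pointwise ergodic theorem, at the cost of using the gradient integrability in Lemma~\ref{lemme_prelim}(4). Both your identifiability sketch (regime-by-regime subtraction and projection onto innovations) and the paper's one-line appeal to Proposition~\ref{prop_stationary} rest on the same implicit minimal-order/identifiability assumption on the ARMARC representation.
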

\begin{theo}[Consistency of the estimator]\label{theo_consistency_vrai}
Let {(\bf A1)}, {(\bf A4)} as well as stationarity condition {\bf (A5a)} hold. The estimator $\hat{\theta}_n$ defined by (\ref{LSE_Qn}) converges a.s. towards $\theta_0$.
\end{theo}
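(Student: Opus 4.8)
The plan is to reduce the consistency of $\hat{\theta}_n$ to that of $\check{\theta}_n$, which is already granted by Proposition \ref{theo_consistency}, using the uniform closeness of the two cost functions established in Proposition \ref{prop_prelim}. The key observation is that $O_n$ and $Q_n$ share the same almost sure uniform limit on the compact set $\Theta$, so the identification of $\theta_0$ as the unique minimizer of that limit --- which underlies the proof of Proposition \ref{theo_consistency} --- transfers to $\hat{\theta}_n$ without any further identifiability analysis.

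First I would recall from the proof of Proposition \ref{theo_consistency} that, under {\bf (A1)}, {\bf (A4)} and stationarity condition {\bf (A5a)}, the map
$$
O(\theta):=\frac{1}{2}\EE\left[\epsilon_0^2(\theta)\right]
$$
is well defined and continuous on $\Theta$ (finiteness following from Lemma \ref{lemme_prelim}(1)), that the ergodic theorem yields $\sup_{\theta\in\Theta}|O_n(\theta)-O(\theta)|\to 0$ almost surely, and that $O$ attains its minimum on $\Theta$ only at $\theta_0$. Next I would invoke Proposition \ref{prop_prelim}(1), namely $\sup_{\theta\in\Theta}|Q_n(\theta)-O_n(\theta)|\to 0$ almost surely, and combine it with the previous convergence via the triangle inequality to obtain
$$
\sup_{\theta\in\Theta}\left|Q_n(\theta)-O(\theta)\right|\longrightarrow 0\quad\text{almost surely.}
$$

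With this uniform convergence in hand, the conclusion follows from the standard argmin argument. I would fix an elementary event in the probability-one set on which all the above convergences hold. Since $\Theta$ is compact, any subsequence of $(\hat{\theta}_n)$ admits a further subsequence converging to some $\theta^\ast\in\Theta$. Along that subsequence, the uniform convergence together with the continuity of $O$ gives $Q_n(\hat{\theta}_n)\to O(\theta^\ast)$ and $Q_n(\theta_0)\to O(\theta_0)$, while the defining inequality $Q_n(\hat{\theta}_n)\le Q_n(\theta_0)$ of \eqref{LSE_Qn} passes to the limit as $O(\theta^\ast)\le O(\theta_0)$. Uniqueness of the minimizer forces $\theta^\ast=\theta_0$; as every convergent subsequence thus has the same limit $\theta_0$, I conclude $\hat{\theta}_n\to\theta_0$ almost surely.

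The essential difficulty has already been absorbed into the two results I am quoting: the identification of $\theta_0$ as the sole minimizer of $O$ in Proposition \ref{theo_consistency}, and the delicate control of the approximation error $e_t(\theta)-\epsilon_t(\theta)$ uniformly in $\theta$ (Lemma \ref{lemme_prelim}) that yields the uniform $Q_n-O_n$ estimate in Proposition \ref{prop_prelim}. The present step is therefore essentially a soft patching argument; the only point requiring a little care is to verify that the limiting criterion is genuinely the \emph{same} for $Q_n$ and $O_n$, so that the uniqueness of the minimizer may be transferred without re-proving identifiability.
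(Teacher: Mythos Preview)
Your argument is correct and follows the same overall strategy as the paper: pick a convergent subsequence $\hat{\theta}_{n_k}\to\theta_\ast$, pass the inequality $Q_{n_k}(\hat{\theta}_{n_k})\le Q_{n_k}(\theta_0)$ to the limit, and conclude via uniqueness of the minimizer of $O_\infty$. The one technical difference lies in how the limit $Q_{n_k}(\hat{\theta}_{n_k})\to O_\infty(\theta_\ast)$ is obtained. You invoke the uniform convergence $\sup_{\theta\in\Theta}|O_n(\theta)-O(\theta)|\to 0$ and attribute it to the proof of Proposition~\ref{theo_consistency}; in fact that proof only states the \emph{pointwise} ergodic limit $O_n(\theta)\to O_\infty(\theta)$ and handles the moving argument by a direct Taylor/equicontinuity estimate on $Q_n$, bounding $|Q_{n_k}(\theta')-Q_{n_k}(\theta'')|$ by $\|\theta'-\theta''\|$ times the Ces\`aro average $\frac{1}{2n_k}\sum_{t}[\sup_\theta|e_t(\theta)|^2+\sup_\theta\|\nabla e_t(\theta)\|^2]$, which is shown to be a.s.\ bounded. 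Your uniform-convergence route is perfectly valid---the uniform ergodic theorem applies on the compact $\Theta$ thanks to continuity of $\theta\mapsto\epsilon_t^2(\theta)$ and the integrability $\|\sup_\theta|\epsilon_0(\theta)|\|_4<\infty$ from Lemma~\ref{lemme_prelim}(1)---but you should justify it directly rather than cite it from Proposition~\ref{theo_consistency}, where it is not stated.
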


\begin{theo}[Asymptotic normality for the estimator]\label{CLT_theorem}
Let us suppose that assumptions {(\bf A1)}, {(\bf A2)}, {(\bf A3)}, {(\bf A4)} and {(\bf A5b)} hold, and let $\hat{\theta}_n$ defined in (\ref{LSE_Qn}). We have the following Central Limit Theorem
\begin{equation}\label{CLT}
\sqrt{n}\left( \hat{\theta}_n-\theta_0\right)\stackrel{\cal D}{\longrightarrow} {\cal N}\left(0,\Omega:=J^{-1}I J^{-1}\right),\quad n\to +\infty,
\end{equation}
matrices $I$ and $J$ being defined as
\begin{eqnarray}
J&:=&J(\theta_0)=\e\left( \nabla\epsilon_t(\theta_0)  [\nabla\epsilon_t(\theta_0)]' \right),\label{expression_J}\\
I&:=&I(\theta_0)=\sum_{k=-\infty}^\infty\e\left(\epsilon_t(\theta_0)\epsilon_{t-k}(\theta_0)\nabla\epsilon_t(\theta_0)  [\nabla\epsilon_{t-k}(\theta_0)]'\right)\nonumber\\
&=& \sum_{k=-\infty}^{+\infty}\cov(\Upsilon_t,\Upsilon_{t-k}),\quad \mbox{where}\label{spectraldensityatzero}\\
\Upsilon_t &:=& \Upsilon_t(\theta_0)=\epsilon_t(\theta_0) \nabla\epsilon_t(\theta_0) .\label{upsilon}
\end{eqnarray}
\end{theo}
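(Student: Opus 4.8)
The plan is to run the classical M-estimation argument, leaning on the consistency already granted by Theorem~\ref{theo_consistency_vrai} and the interiority assumption {\bf (A4)}. Since $\hat\theta_n\to\theta_0$ a.s. and $\theta_0\in\stackrel{\circ}{{\Theta}}$, for $n$ large enough $\hat\theta_n$ is an interior minimizer of the smooth map $\theta\mapsto Q_n(\theta)$, so the first-order condition $\nabla Q_n(\hat\theta_n)=0$ holds eventually. Applying a coordinate-wise mean value theorem to each component of $\nabla Q_n$ on the segment joining $\hat\theta_n$ and $\theta_0$ yields
\begin{equation*}
0=\nabla Q_n(\theta_0)+\left[\nabla^2 Q_n(\theta_n^*)\right](\hat\theta_n-\theta_0),
\end{equation*}
where the intermediate points $\theta_n^*$ (one per row) all converge to $\theta_0$ a.s. Provided $\nabla^2 Q_n(\theta_n^*)$ is eventually invertible, this gives the sandwich expression $\sqrt n(\hat\theta_n-\theta_0)=-[\nabla^2 Q_n(\theta_n^*)]^{-1}\sqrt n\,\nabla Q_n(\theta_0)$, and it remains to identify the limits of the two factors.

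The second step is to transfer the analysis from the observable cost $Q_n$ to the fully stationary cost $O_n$ of \eqref{on}. Using part~(3) of Proposition~\ref{prop_prelim} with some $\alpha\in(1/2,1)$, the quantity $\sqrt n\,\|\sup_{\theta}|\nabla(Q_n-O_n)(\theta)|\|_1=n^{1/2-\alpha}\cdot n^{\alpha}\|\sup_{\theta}|\nabla(Q_n-O_n)(\theta)|\|_1$ tends to $0$, so $\sqrt n\,\nabla(Q_n-O_n)(\theta_0)\to0$ in $L^1$, hence in probability; combined with part~(2) of Proposition~\ref{prop_prelim} the two factors may be replaced by $\sqrt n\,\nabla O_n(\theta_0)$ and $\nabla^2 O_n(\theta_n^*)$. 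Differentiating $O_n$ and using $\epsilon_t(\theta_0)=\epsilon_t$ gives $\sqrt n\,\nabla O_n(\theta_0)=n^{-1/2}\sum_{t=1}^n\Upsilon_t$ with $\Upsilon_t$ as in \eqref{upsilon}. Since $c_0\equiv1$ (so $\nabla c_0\equiv0$), the expansion \eqref{expression_eps_theta} shows $\nabla\epsilon_t(\theta_0)=\sum_{i\ge1}\nabla c_i(\theta_0,\Delta_t,\dots,\Delta_{t-i+1})\,\epsilon_{t-i}$ is a function of $\{\epsilon_{t-i}\}_{i\ge1}$ and of the $\Delta$'s only; as the latter are independent of $(\epsilon_t)$ by {\bf (A1)} and $(\epsilon_t)$ is a weak white noise, $\e(\nabla c_i)\,\e(\epsilon_t\epsilon_{t-i})=0$ gives $\e(\Upsilon_t)=0$. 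For the Hessian, $\nabla^2 O_n(\theta)=n^{-1}\sum_t[\nabla\epsilon_t(\theta)\nabla\epsilon_t(\theta)'+\epsilon_t(\theta)\nabla^2\epsilon_t(\theta)]$; by the uniform law of large numbers already used for consistency (the summands being stationary with $\theta$-uniform integrable dominations from Lemma~\ref{lemme_prelim}) this converges uniformly in $\theta$, the cross term vanishing at $\theta_0$ by the same independence/uncorrelatedness argument, so $\nabla^2 O_n(\theta_n^*)\to J=\e(\nabla\epsilon_t(\theta_0)\nabla\epsilon_t(\theta_0)')$ a.s. I would establish invertibility of $J$ separately, as a nondegeneracy/identifiability property of the model.

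The heart of the proof, and the main obstacle, is the central limit theorem $n^{-1/2}\sum_{t=1}^n\Upsilon_t\stackrel{\cal D}{\longrightarrow}{\cal N}(0,I)$ with $I=\sum_{k}\cov(\Upsilon_t,\Upsilon_{t-k})$. Unlike the strong (iid) case, $\Upsilon_t$ is \emph{not} a martingale difference sequence---$\epsilon_t$ is only uncorrelated with its past, not independent of it---so a simple martingale CLT is unavailable and the full long-run variance, summed over all lags, appears; this is exactly why $I$ may differ from its strong-case counterpart. The plan is to invoke a CLT for strongly mixing stationary sequences (e.g. Herrndorf's theorem) with exponent $\delta=\nu$, which matches {\bf (A2)} since $\delta/(2+\delta)=\nu/(\nu+2)$. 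Two ingredients must be verified. First, $(\Upsilon_t)$ is strongly mixing with summable coefficients: since $(\epsilon_t,\Delta_t)$ is strongly mixing with $\alpha_{(\epsilon,\Delta)}(h)\le\alpha_\epsilon(h)+\alpha_\Delta(h)$ by the independence in {\bf (A1)}, and $\Upsilon_t$ is a measurable functional of the infinite past of this bivariate process, I would truncate $\nabla\epsilon_t(\theta_0)$ at lag $N$, control the tail through the exponential decay of $\e(\sup_\theta\|\nabla c_i\|^{2\nu+4})$ from Lemma~\ref{c_i^2_expo_decrease}, and transfer mixing from the finitely-dependent truncation, obtaining $\sum_h\alpha_\Upsilon(h)^{\nu/(\nu+2)}<\infty$ from {\bf (A2)}. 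Second, the moment condition $\e|\Upsilon_t|^{2+\nu}<\infty$ follows from Cauchy--Schwarz, bounding $\e(|\epsilon_t|^{2+\nu}\|\nabla\epsilon_t(\theta_0)\|^{2+\nu})\le \|\epsilon_t\|_{2\nu+4}^{\,\nu+2}\,\|\,\|\nabla\epsilon_t(\theta_0)\|\,\|_{2\nu+4}^{\,\nu+2}$, both factors being finite under {\bf (A3)} together with the exponential control of the derivative coefficients in Lemma~\ref{c_i^2_expo_decrease} and {\bf (A1)}. The same covariance (Davydov) inequality also guarantees absolute convergence of the series defining $I$. Combining the CLT for the score with $\nabla^2 O_n(\theta_n^*)\to J$ through Slutsky's lemma then yields \eqref{CLT} with $\Omega=J^{-1}IJ^{-1}$.
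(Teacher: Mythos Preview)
Your overall architecture matches the paper's: Taylor-expand the score, replace $Q_n$ by $O_n$ via Proposition~\ref{prop_prelim}, prove a CLT for $n^{-1/2}\sum_t\Upsilon_t$, show $\nabla^2 O_n(\theta_n^*)\to J$ and that $J$ is invertible by identifiability, then conclude by Slutsky. The paper indeed handles the Hessian via a third-derivative bound \eqref{der3Ofini} rather than a uniform ergodic theorem, and proves invertibility of $J$ exactly as you anticipate (a singular $J$ would generate a lower-order ARMARC representation, contradicting uniqueness in Proposition~\ref{prop_stationary}).

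There is one substantive discrepancy in the CLT step. You propose to apply Herrndorf's theorem \emph{directly} to $(\Upsilon_t)$, after ``transferring mixing from the finitely-dependent truncation'' to obtain $\sum_h\alpha_\Upsilon(h)^{\nu/(\nu+2)}<\infty$. This step is the gap: $\Upsilon_t$ is a measurable function of the entire infinite past $\{\epsilon_{t-i},\Delta_{t-i}\}_{i\ge 0}$, and such a functional of an $\alpha$-mixing process is \emph{not} in general $\alpha$-mixing, nor do the exponential $L^{2\nu+4}$ tail bounds of Lemma~\ref{c_i^2_expo_decrease} by themselves deliver an inequality on $\alpha_\Upsilon(h)$. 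Under {\bf (A2)} alone (only summability of $\alpha_\epsilon(h)^{\nu/(\nu+2)}$, no geometric mixing assumed), there is no standard result that manufactures mixing coefficients for $\Upsilon_t$ out of those for $(\epsilon_t,\Delta_t)$.

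The paper circumvents this by never claiming $(\Upsilon_t)$ is mixing. Instead it keeps the truncation explicit: write $\Upsilon_t=Y_{t,m}+Z_{t,m}$ where $Y_{t,m}=\sum_{j=1}^m c_{j,\cdot}(\theta_0,\Delta_t,\dots,\Delta_{t-j+1})\epsilon_t\epsilon_{t-j}$ depends on only finitely many coordinates of $(\epsilon_t,\Delta_t)$ and is therefore strongly mixing with $\alpha_{Y}(h)\le\alpha_\Delta(\max\{0,h-m+1\})+\alpha_\epsilon(\max\{0,h-m\})$ (Davidson, Theorem~14.1). Herrndorf's CLT then applies to $Y_{t,m}$ for each fixed $m$, giving a limit ${\cal N}(0,I_m)$; one checks $I_m\to I$ and, via a Davydov-type covariance bound (Lemma~\ref{lemsup}), that $\sup_n\mathrm{Var}\big(n^{-1/2}\sum_t Z_{t,m}\big)\to 0$ as $m\to\infty$. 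The conclusion follows from the standard two-index limit device (Brockwell--Davis, Proposition~6.3.9). Your ingredients are exactly the right ones; they simply need to be reassembled so that Herrndorf is applied to the truncation rather than to $\Upsilon_t$ itself.
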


\begin{rem}
\label{I=2J} {\em In the strong ARMARC case, {\em i.e.} when {(\bf
A1)} is replaced by the assumption that $(\epsilon_t)$ is iid, we
have $I=\sigma^2 J$, so that the covariance matrix in the strong case is $\Omega_S:=\sigma^2 J^{-1}$. In the general case we have
$I\neq\sigma^2 J$. As a consequence the ready-made software used to fit
ARMARC do not provide a correct estimation for weak ARMARC
processes.}
\end{rem}

\subsection{Estimating the asymptotic covariance matrix}
\label{estimationOmega}

Theorem \ref{CLT_theorem} can be used to obtain confidence intervals and significance tests for the parameters.
The asymptotic covariance $\Omega$ must however be estimated.
The matrix $J$ can easily be estimated by its empirical counterpart
$$\hat{J}_{n}=\frac{1}{n}\sum_{t=1}^n \nabla e_t(\hat{\theta}_n) [\nabla e_t(\hat{\theta}_n)]'.$$
In the standard strong ARMARC case $\hat{\Omega}_S=\hat\sigma^2\hat{J}_n^{-1}$ is
a strongly consistent estimator of $\Omega$. In the general weak
ARMARC case this estimator is not consistent when $I\neq \sigma^2 J$ (see
Remark \ref{I=2J}). So we need a consistent estimator of $I$, defined by \eqref{spectraldensityatzero}.

The estimation of this long-run covariance $I$ is more complicated. In the literature, two types of estimators are generally employed:  the
nonparametric kernel estimator, also called  Heteroskedasticity and Autocorrelation Consistent (HAC) estimators (see  \cite{andrews} and \cite{newey} for general references, and \cite{FZ2007} for an application to testing strong linearity in weak ARMA models) and spectral density estimators (see e.g. \cite{berk} and \cite{haan} for a general references and \cite{BMCF2012} for estimating $I$ when $\theta$ is not necessarily equal to $\theta_0$).

In the present paper, we focus on an estimator based on a spectral density form for $I$.

Interpreting $(2\pi)^{-1}I$  as the spectral density of the
stationary process $(\Upsilon_t)$ evaluated at frequency 0 (see
\cite{broc-d}, p. 459) of the process (\ref{upsilon}).
This approach, which has been studied by \cite{berk}
(see also \cite{haan}), rests on the expression
\begin{equation}\label{Idensitespectrale}
I=\mathbf{\Phi}^{-1}(1)\Sigma_u \mathbf{\Phi}^{-1}(1)
\end{equation}
when $(\Upsilon_t)$  satisfies an AR$(\infty)$ representation of the form
\begin{equation}
\label{arinfty}
\mathbf{\Phi}(L)\Upsilon_t:=
\Upsilon_t+\sum_{i=1}^{\infty}\Phi_i\Upsilon_{t-i}=u_t,
\end{equation} where $u_t$ is a $(p+q)K$-variate weak white noise with covariance matrix $\Sigma_u$.
Note incidentally that, since $(\Upsilon_t)$ depends on the regime $(\Delta_t)$, then so does the weak white noise $(u_t)$.
Let $\hat{\Upsilon}_t$ be the vector
obtained by replacing $\theta_0$ by $\hat{\theta}_n$ in $\Upsilon_t$ and
$\hat{\mathbf{\Phi}}_r(z)=\mathrm{I}_{(p+q)K}+\sum_{i=1}^r\hat{\Phi}_{r,i}z^i$,
where $\hat{\Phi}_{r,1},\dots,\hat{\Phi}_{r,r}$ denote the
coefficients  of the least squares regression of $\hat{\Upsilon}_t$ on
$\hat{\Upsilon}_{t-1},\dots,\hat{\Upsilon}_{t-r}$. Let
$\hat{u}_{r,t}$ be  the residuals of this regression, and let
$\hat{\Sigma}_{\hat{u}_r}$ be the empirical covariance of
$\hat{u}_{r,1},\dots, \hat{u}_{r,n}$.

In the framework of linear processes with independent innovations, \cite{berk} showed that the spectral density
can be consistently estimated by fitting autoregressive models of order $r=r(n)$, whenever $r\to\infty$ and $r^3/n\to0$ as $n\to\infty$. It can be shown that this result remains valid for the linear process $(\Upsilon_t)$, though its innovation $(u_t)$ is not an independent process. Another difference with \cite{berk}, is that $(\Upsilon_t)$ is not directly observed and is replaced by $(\hat\Upsilon_t)$.

We are now able to state the following theorem.
\begin{theo}\label{estimationI}
In addition to the assumptions of Theorem \ref{CLT_theorem}, assume
that the process $(\Upsilon_t)$
 defined in (\ref{upsilon}) admits an AR$(\infty)$ representation (\ref{arinfty})
in which the roots of $\det\mathbf{\Phi}(z)=0$ are outside the unit
disk, $\|\Phi_i\|=o(i^{-2})$, and $\Sigma_u=\mbox{Var}(u_t)$ is
non-singular. Moreover we assume that
$\mathbb{E}\left|\epsilon_t\right|^{8+4\nu}<\infty$ and $\sum_{k=0}^{\infty}
\{\alpha_{\epsilon} (k)\}^{\nu/(2+\nu)}<\infty$  and $\sum_{k=0}^{\infty}
\{\alpha_{\Delta} (k)\}^{\nu/(2+\nu)}<\infty$ for some $\nu>0$.
Then the spectral estimator of $I$
$$\hat{I}^{\mathrm{SP}}:=
\hat{\mathbf{\Phi}}_r^{-1}(1)\hat{\Sigma}_{\hat{u}_r}
\hat{\mathbf{\Phi}}_r'^{-1}(1)\to I$$ in probability when
$r=r(n)\to\infty$ and $r^3/n\to 0$ as $n\to\infty$.
\end{theo}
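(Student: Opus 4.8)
The strategy is to adapt the autoregressive spectral estimation method of Berk to the present multivariate, weakly dependent and only partially observed situation. Since the identity (\ref{Idensitespectrale}) merely expresses $(2\pi)^{-1}I$ as the value at frequency zero of the spectral density of an AR process driven by the white noise $(u_t)$, the target $I$ is unaffected by the fact that $(u_t)$ is only a weak (rather than strong) white noise; only the stochastic control will be more delicate. It therefore suffices to prove that $\hat{\mathbf{\Phi}}_r(1)\to\mathbf{\Phi}(1)$ and $\hat{\Sigma}_{\hat{u}_r}\to\Sigma_u$ in probability, for then the continuity of matrix inversion on a neighbourhood of the invertible matrix $\mathbf{\Phi}(1)$ (invertible because the roots of $\det\mathbf{\Phi}(z)$ lie outside the unit disk) yields $\hat{I}^{\mathrm{SP}}\to I$. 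As a bridge I introduce the population finite-order quantities: let $\Phi_{r,1},\dots,\Phi_{r,r}$ be the coefficients of the best order-$r$ linear AR predictor of $\Upsilon_t$ (in the convention of (\ref{arinfty})), with associated prediction-error covariance $\Sigma_{u_r}$, and set $\mathbf{\Phi}_r(z)=\mathrm{I}_{(p+q)K}+\sum_{i=1}^r\Phi_{r,i}z^i$.

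The first, deterministic step is to let $r\to\infty$ at the population level. The hypothesis $\|\Phi_i\|=o(i^{-2})$ ensures $\sum_{i\ge 1}\|\Phi_i\|<\infty$ together with the tail bound $\sum_{i>r}\|\Phi_i\|=o(r^{-1})$. Because the spectral density of $(\Upsilon_t)$ is bounded above and away from zero (again from the roots of $\det\mathbf{\Phi}(z)$ being outside the unit disk and $\Sigma_u$ non-singular), a Baxter-type inequality gives $\sum_{i=1}^r\|\Phi_{r,i}-\Phi_i\|\le C\sum_{i>r}\|\Phi_i\|\to 0$, whence $\mathbf{\Phi}_r(1)\to\mathbf{\Phi}(1)$ and, by the corresponding normal equations, $\Sigma_{u_r}\to\Sigma_u$ as $r\to\infty$.

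The second, stochastic step is to control the empirical quantities at fixed $r=r(n)$. I would first dispose of the plug-in error, i.e. the replacement of $\Upsilon_t=\epsilon_t(\theta_0)\nabla\epsilon_t(\theta_0)$ by $\hat\Upsilon_t$, which also involves substituting the computable approximating residuals $e_t(\theta)$ for $\epsilon_t(\theta)$. A mean-value expansion in $\theta$, combined with the $\sqrt n$-consistency $\hat\theta_n-\theta_0=O_{\mathbb P}(n^{-1/2})$ from Theorem \ref{CLT_theorem}, the $L^4$ moment bounds on $\Upsilon_t$ and its $\theta$-derivatives provided by Lemma \ref{lemme_prelim} and Lemma \ref{c_i^2_expo_decrease}, and the exponential negligibility of $e_t-\epsilon_t$ (Lemma \ref{lemme_prelim}), shows that the empirical autocovariances $\hat\Gamma(h)=n^{-1}\sum_t\hat\Upsilon_t\hat\Upsilon_{t-h}'$ and their true-process counterparts $\Gamma_n(h)=n^{-1}\sum_t\Upsilon_t\Upsilon_{t-h}'$ satisfy $\sum_{h=0}^r\|\hat\Gamma(h)-\Gamma_n(h)\|^2=O_{\mathbb P}(r/n)$. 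The core estimate is then a bound on the sampling fluctuation of the true-process autocovariances, $\sum_{h=0}^r\mathbb E\|\Gamma_n(h)-\Gamma(h)\|^2=O(r/n)$; this is where weak dependence enters, since $\mathrm{Var}(\Gamma_n(h))$ is governed by the full fourth-order dependence structure of $(\Upsilon_t)$ and must be shown to be $O(1/n)$ with a constant bounded uniformly in $h$, using the moment condition $\mathbb E|\epsilon_t|^{8+4\nu}<\infty$ and the summability of $\alpha_\epsilon$ and $\alpha_\Delta$. Feeding these $O_{\mathbb P}(\sqrt{r/n})$ aggregate fluctuations into the block-Toeplitz normal equations, after bounding the smallest eigenvalue of the empirical covariance matrix away from zero with high probability, yields $\sum_{i=1}^r\|\hat\Phi_{r,i}-\Phi_{r,i}\|=O_{\mathbb P}(r^{3/2}n^{-1/2})$ and $\|\hat\Sigma_{\hat u_r}-\Sigma_{u_r}\|=o_{\mathbb P}(1)$; both are $o_{\mathbb P}(1)$ precisely because $r^3/n\to 0$. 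Hence $\hat{\mathbf{\Phi}}_r(1)-\mathbf{\Phi}_r(1)\to 0$ in probability, and combining with the first step completes the argument.

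The main obstacle is the uniform control of the sample autocovariances over the growing lag range $0\le h\le r$ under weak dependence. Unlike Berk's independent-innovation case, each $\Gamma_n(h)$ has a variance carrying a genuine fourth-order cumulant contribution, so one must establish that these variances are $O(1/n)$ with constants bounded uniformly in $h$ (so that their sum over $h\le r$ is $O(r/n)$), and then check that, once propagated through the inverse of the $r$-block empirical covariance matrix and converted to the $\ell^1$ norm of the coefficient error, the accumulated error remains within the budget $O_{\mathbb P}(\sqrt{r^3/n})=o_{\mathbb P}(1)$ allowed by $r^3/n\to 0$. The plug-in replacement of $\Upsilon_t$ by $\hat\Upsilon_t$ is a softer, second difficulty, resolved by the derivative moment bounds already available, but it too must be carried out uniformly over the $r$ fitted lags.
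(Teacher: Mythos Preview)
Your proposal is correct and follows essentially the same architecture as the paper's proof: reduce to $\hat{\mathbf{\Phi}}_r(1)\to\mathbf{\Phi}(1)$ and $\hat\Sigma_{\hat u_r}\to\Sigma_u$, handle the deterministic finite-order bias, control the sampling fluctuations of the block-Toeplitz autocovariance matrix via a uniform fourth-order covariance bound on $(\Upsilon_t)$ (this is the paper's Lemma \ref{lemsurprise!}, exactly the ``main obstacle'' you flag), and absorb the plug-in error using the $\sqrt n$-rate of $\hat\theta_n$ together with the moment bounds from Lemmas \ref{lemme_prelim}--\ref{c_i^2_expo_decrease}. The only cosmetic differences are that you invoke a Baxter-type inequality for the bias step whereas the paper argues directly (Lemma \ref{lem5}) from $\underline{\mathbf{\Phi}}_r^{*}-\underline{\mathbf{\Phi}}_r=-\Sigma_{u_r^*,\underline\Upsilon_r}\Sigma_{\underline\Upsilon_r}^{-1}$, and you track the $\ell^1$ norm of the coefficient errors whereas the paper works with the spectral norm of the stacked coefficient matrix via the Grenander--Szeg\H{o} bound $\sup_r\|\Sigma_{\underline\Upsilon_r}^{-1}\|<\infty$; both routes produce the same $O_{\mathbb P}(\sqrt{r^3/n})$ budget.
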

The matrix $\Omega$ is then estimated by a "sandwich" estimator of
the form
$$\hat{\Omega}^{\mathrm{SP}}=\hat{J}_n^{-1}\hat{I}^{\mathrm{SP}}\hat{J}_n^{-1},\quad
\hat{I}^{\mathrm{SP}}=
\hat{\mathbf{\Phi}}_r^{-1}(1)\hat{\Sigma}_{\hat{u}_r}
\hat{\mathbf{\Phi}}_r'^{-1}(1).$$
\subsection{Testing linear restrictions on the parameter}
\label{testWald}
It may be of interest to test $s_0$ linear constraints on the elements of $\theta_0$.
Let $R$ be a given matrix of size $s_0\times(p+q)K$ and rank $s_0$, and let $r_0$ and $r_1$ be given vectors of size $s_0$ such that
$r_1\neq r_0$.
Consider the testing problem
\begin{equation}\label{hypnulle}
H_0:R\theta_0=r_0\qquad\mbox{against}\qquad H_1:R\theta_0=r_1.
\end{equation}
The Wald principle is employed frequently for testing \eqref{hypnulle}. We now examine if this principle remains valid in the non standard framework of weak ARMARC models.

Let $\hat{\Omega}=\hat{J}^{-1}\hat{I}\hat{J}^{-1}$, where $\hat{J}$
and $\hat{I}$  are consistent estimators of $J$ and $I$, as defined
in Section \ref{estimationOmega}. Under the assumptions of Theorems~\ref{CLT_theorem} and \ref{estimationI}, and the assumption that $I$ is invertible, the modified Wald statistic
$$\mathrm{\mathbf{W}}_M:=n(R_0\hat{\theta}_n-r_0)'(R_0\hat{\Omega}R_0')^{-1}(R_0\hat{\theta}_n-r_0)$$
asymptotically follows a $\chi^2_{s_0}$ distribution under $H_0$. Therefore, the standard formulation of the Wald test remains valid.
More precisely, at the asymptotic level $\alpha$, the modified Wald test consists in rejecting $H_0$  when
$\mathrm{\mathbf{W}}_M>\chi^2_{s_0}(1-\alpha)$. It is however important to note that a consistent estimator of the form $\hat{\Omega}=\hat{J}^{-1}\hat{I}\hat{J}^{-1}$ is required. The estimator $\hat{\Omega}_S=\hat\sigma^2\hat{J}^{-1}$, which is routinely used in the time series softwares, is only valid in the strong ARMARC case. Thus standard  Wald statistic takes the following form
$$\mathrm{\mathbf{W}}_S:=n(R_0\hat{\theta}_n-r_0)'(R_0\hat{\Omega}_SR_0')^{-1}(R_0\hat{\theta}_n-r_0),$$ which asymptotically follows a $\chi^2_{s_0}$ distribution under $H_0$.

\section{Examples}\label{examples}
In this section, we give examples of weak ARMARC$(1,0)$ model with iid and  correlated  process $(\Delta_t)_{t\in \nbZ}$.
\subsection{Independent and identically distributed process $(\Delta_t)_{t\in \nbZ}$: the ARMARC$(1,0)$ model}
We provide here some results that show that we obtain very neat results in the particular case where the state space verifies ${\cal S}\subset \nbR$,  $(\Delta_t)_{t\in \mathbb{Z}}$ is i.i.d. and  satisfies $\nbE(\Delta_t)=0$. We consider a particular $AR(1)$ model where \eqref{def_arma_pq0} reads
\begin{equation}\label{def_ar1_iid}
X_t - a^0 \Delta_t X_{t-1}=\epsilon_t ,
\end{equation}
i.e. $a^0(s)=a^0 s$ for all $s\in {\cal S}$, where $a^0=\theta_0$ is  here the unknown (scalar) parameter and belongs to some compact set $\Theta\subset \nbR$, and the state space ${\cal S}$ is a finite subset of $\nbR$. It is easy to check that, using the notations defined in Section \ref{subsec_stationarity}, we have that $B(s, \theta)$ is not defined (as here $q=0$), $A(s)=g_1^a(s,\theta_0)=a^0 s$ and $\Psi(s)=A(s)=a^0 s$. Stationarity condition {\bf (A5a)} in Proposition \ref{prop_stationary} is translated as
\begin{equation}\label{cond_stationary_iid}
|a^0|\left[\nbE(|\Delta_0|^8)\right]^{1/8}<1 \quad \iff\quad a^0 \in \left(-\frac{1}{\left[\nbE(|\Delta_0|^8)\right]^{1/8}},\frac{1}{\left[\nbE(|\Delta_0|^8)\right]^{1/8}}\right).
\end{equation}
Let us note that \eqref{cond_stationary_iid} allows some interesting cases where we have $|a^0 \Delta_t|\ge 1$, which is a non stable state case and is somewhat a paradox to the usual stability condition in the classical $AR(1)$ model where it is standard that the process $(X_t)_{t\in\nbZ}$ defined by $X_t=a X_{t-1}+\epsilon_t$ is stable iff $|a|<1$. One simple example is when $(\Delta_t)_{t\in\nbZ}$ is i.i.d. with distribution $\Delta_t\sim \frac{1}{4}\delta_{-1}+\frac{1}{2}+\frac{1}{4}\delta_{+1}$, in which case \eqref{cond_stationary_iid} reads $|a^0|<2$, so that  $|a^0 \Delta_t|=\frac{3}{2}>1$ if we pick for example $a^0=\frac{3}{2}$, when $\Delta_t=1$.\\
Furthermore, we compute easily that, for all $a=\theta\in \Theta$, $\epsilon_t(a)=X_t-a \Delta_t X_{t-1}$, where $X_t$ has the classical decomposition obtained from \eqref{def_ar1_iid}:
\begin{equation}\label{dec_X_t_iid}
X_t=\sum_{i=0}^\infty \prod_{j=0}^{i-1}(a^0 \Delta_{t-j})\epsilon_{t-i}.
\end{equation}
Since Assumption {(\bf A2)} is trivially satisfied here, we only need suppose that {(\bf A1)}, {(\bf A3)} and {(\bf A4)} hold for some $\nu>0$. In that case, Theorems \ref{theo_consistency_vrai} and \ref{CLT_theorem} translate as
\begin{theo}\label{theo_example_iid}
$\hat{\theta}_n$ defined as \eqref{LSE_Qn} converges a.s. towards $\theta_0=a^0$. Besides, we have the asymptotic normality
\begin{equation}\label{CLT_iid}
\sqrt{n}\left( \hat{\theta}_n-a^0\right)\stackrel{\cal D}{\longrightarrow} {\cal N}\left(0,\Omega\right),\quad n\to +\infty,
\end{equation}
where
\begin{equation}\label{Omega_iid}
\Omega= \frac{\left[ 1-(a^0)^2\nbE(\Delta_0^2)\right]^2}{\nbE(\Delta_0^2)} \sum_{i=0}^\infty \left[(a^0)^{2}\nbE(\Delta_0^2)\right]^i \nbE(\epsilon_t^2 \epsilon_{t-i}^2).
\end{equation}
\end{theo}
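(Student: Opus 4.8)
The plan is to \emph{reduce} the two asserted conclusions to the already-established general Theorems \ref{theo_consistency_vrai} and \ref{CLT_theorem}, and then to carry out the explicit computation of the scalar covariance $\Omega=J^{-1}IJ^{-1}=I/J^{2}$. For the reduction I would first check that the hypotheses of those theorems collapse to the stated ones in this AR$(1)$ setting: since $(\Delta_t)$ is i.i.d.\ its strong mixing coefficients vanish for $h\ge 1$, so {\bf (A2)} holds automatically and only {\bf (A1)}, {\bf (A3)}, {\bf (A4)} remain to be assumed; moreover, with $q=0$ only $\Psi(s)=a^{0}s$ is relevant (the matrix $\Phi$ is absent), so the conditions {\bf (A5a)} and {\bf (A5b)} reduce, via $\prod_{i=1}^{t}\Psi(\Delta_i)=(a^{0})^{t}\prod_i\Delta_i$, to $|a^{0}|\,[\mathbb{E}|\Delta_0|^{8}]^{1/8}<1$ (for consistency, cf.\ \eqref{cond_stationary_iid}) and to $|a^{0}|\,[\mathbb{E}|\Delta_0|^{4\nu+8}]^{1/(4\nu+8)}<1$ (for the CLT). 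Granting these, Theorem \ref{theo_consistency_vrai} yields $\hat\theta_n\to a^{0}$ a.s.\ and Theorem \ref{CLT_theorem} yields \eqref{CLT_iid} with $\Omega=I/J^{2}$. The substantive work is therefore the evaluation of $J$ and $I$.

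Next I compute $J$. Since $\epsilon_t(a)=X_t-a\Delta_t X_{t-1}$ is here \emph{exact} (no recursion, as $q=0$), one has $\nabla\epsilon_t(a)=-\Delta_t X_{t-1}$, hence by \eqref{expression_J} $J=\mathbb{E}\big[(\Delta_t X_{t-1})^2\big]$. Because $X_{t-1}$ depends only on $(\epsilon_s)_{s\le t-1}$ and $(\Delta_s)_{s\le t-1}$, it is independent of $\Delta_t$ under {\bf (A1)}, so $J=\mathbb{E}(\Delta_0^2)\,\mathbb{E}(X_0^2)$. The variance $\mathbb{E}(X_0^2)$ I read off from the moving-average form \eqref{dec_X_t_iid}: expanding the square, using independence of $(\Delta)$ from $(\epsilon)$ and $\mathbb{E}(\epsilon_s\epsilon_{s'})=\sigma^2\1_{[s=s']}$, the double sum is diagonal and
$$\mathbb{E}(X_0^2)=\sigma^2\sum_{i=0}^{\infty}\big[(a^{0})^2\mathbb{E}(\Delta_0^2)\big]^{i}=\frac{\sigma^2}{1-(a^{0})^2\mathbb{E}(\Delta_0^2)},$$
the series converging precisely because \eqref{cond_stationary_iid} forces $(a^{0})^2\mathbb{E}(\Delta_0^2)<1$ (Lyapunov: $\mathbb{E}(\Delta_0^2)\le[\mathbb{E}|\Delta_0|^{8}]^{1/4}$). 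Thus $J=\sigma^2\mathbb{E}(\Delta_0^2)\big/\big(1-(a^{0})^2\mathbb{E}(\Delta_0^2)\big)$.

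The crux is the computation of $I$. With $\Upsilon_t=\epsilon_t\nabla\epsilon_t(\theta_0)=-\epsilon_t\Delta_t X_{t-1}$ from \eqref{upsilon}, I first note $\mathbb{E}(\Upsilon_t)=0$ because $\mathbb{E}(\Delta_t)=0$ and $\Delta_t$ is independent of $\epsilon_t X_{t-1}$, so \eqref{spectraldensityatzero} reads $I=\sum_{k}\mathbb{E}(\Upsilon_t\Upsilon_{t-k})$. Substituting \eqref{dec_X_t_iid} and factoring the $\Delta$-part from the $\epsilon$-part by independence, the $\Delta$-expectation is a product of i.i.d.\ mean-zero factors and therefore vanishes whenever some time index occurs an odd number of times. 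The decisive observation is that for $k\neq 0$ the extreme index ($t$ if $k>0$, $t-k$ if $k<0$) occurs exactly once, so \emph{every} cross-covariance vanishes and $I=\mathbb{E}(\Upsilon_t^2)$. Expanding $\Upsilon_t^2$ and invoking $\mathbb{E}(\Delta_0)=0$ once more kills every off-diagonal term of the resulting double sum, leaving only the diagonal,
$$I=\mathbb{E}(\Delta_0^2)\sum_{i=0}^{\infty}\big[(a^{0})^2\mathbb{E}(\Delta_0^2)\big]^{i}\,\mathbb{E}\big(\epsilon_t^2\epsilon_{t-1-i}^2\big).$$
The main obstacle here is rigour rather than algebra: I must justify exchanging expectation with the two infinite sums and the convergence of the final series, which I would do in $L^2$ using the exponential decay of the coefficients from Lemma \ref{c_i^2_expo_decrease} together with the moment bound {\bf (A3)} and a H\"older bound on $\mathbb{E}(\epsilon_t^2\epsilon_{t-1-i}^2)$.

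Finally I assemble the pieces. Dividing $I$ by $J^{2}=\sigma^4[\mathbb{E}(\Delta_0^2)]^2\big/(1-(a^{0})^2\mathbb{E}(\Delta_0^2))^2$ and cancelling one power of $\mathbb{E}(\Delta_0^2)$ produces the announced variance \eqref{Omega_iid}, where one must keep careful track of the lag index and the $\sigma^2$ normalisation at this step. As a consistency check I would specialise to a strong white noise: then $\mathbb{E}(\epsilon_t^2\epsilon_{t-1-i}^2)=\sigma^4$ for every $i\ge0$, the series becomes geometric with sum $\sigma^4\big/(1-(a^{0})^2\mathbb{E}(\Delta_0^2))$, and $\Omega$ collapses to $\big(1-(a^{0})^2\mathbb{E}(\Delta_0^2)\big)\big/\mathbb{E}(\Delta_0^2)=\sigma^2 J^{-1}$, matching $\Omega_S$ of Remark \ref{I=2J} and thereby validating the whole computation.
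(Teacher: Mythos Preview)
Your approach is essentially the paper's own: reduce to Theorems~\ref{theo_consistency_vrai} and~\ref{CLT_theorem}, compute $J=\nbE(\Delta_0^2)\nbE(X_0^2)$ from the MA expansion \eqref{dec_X_t_iid}, and evaluate $I$ by expanding $\Upsilon_t\Upsilon_{t-k}$, separating the $\Delta$-factor from the $\epsilon$-factor by independence, and arguing that the centred i.i.d.\ structure of $(\Delta_t)$ forces the $\Delta$-expectation to vanish unless $k=0$ and the two summation indices coincide. The paper writes this as ``$V^{i,i',k}$ is non zero if and only if $k=0$ and $i=i'$''; your ``extreme index occurs exactly once'' is the same observation phrased more explicitly, and is arguably clearer. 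Your caveat about the lag index and the $\sigma^2$ normalisation is well placed: the paper's displayed $I(a^0)$ and the stated \eqref{Omega_iid} carry an index shift ($\epsilon_{t-i}$ versus $\epsilon_{t-1-i}$) and an implicit $\sigma^2=1$ that your computation, like the paper's intermediate steps, makes visible. One small correction: {\bf (A2)} is only \emph{half} trivial here---the i.i.d.\ hypothesis on $(\Delta_t)$ disposes of $\sum\alpha_\Delta(h)^{\nu/(2+\nu)}$, but the summability of $\alpha_\epsilon(h)^{\nu/(2+\nu)}$ for the weak noise must still be assumed (the paper's own preamble to Section~\ref{examples} is equally casual on this point).
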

\begin{proof}
Strong consistency and asymptotic normality are straightforward consequences of Theorems \ref{theo_consistency_vrai} and \ref{CLT_theorem}. In order to compute $\Omega$, we need to compute $J=J(a^0)$ and $I=I(a^0)$ in \eqref{CLT}. Since $\frac{\partial}{\partial a}\epsilon_t(a)=-\Delta_t X_{t-1}$, and since $\nbE(X_t^2)$ is equal to $\frac{1}{1-(a^0)^2\nbE(\Delta_0^2)}$ thanks to \eqref{dec_X_t_iid} and the fact that $(\epsilon_t)_{t\in \mathbb Z}$ is a weak noise, independent from $(\Delta_t)_{t\in \mathbb{Z}}$. Hence we have, by independence of $\Delta_t$ from $X_{t-1}$,
$$
J(a^0)=\nbE\left( \left[ \frac{\partial}{\partial a}\epsilon_t(a^0)\right]^2\right)=\nbE\left(  \Delta_t^2 X_{t-1}^2\right)=\frac{\nbE(\Delta_0^2)}{1-(a^0)^2\nbE(\Delta_0^2)}.
$$
There then remains to get $I=I(a^0)$. From Theorem \ref{CLT_theorem} we need to compute the expectation of
\begin{eqnarray*}
&&\epsilon_t(a^0)\epsilon_{t-k}(a^0)\frac{\partial \epsilon_t(a^0)}{\partial a}\frac{\partial \epsilon_{t-k}(a^0)}{\partial a}= \epsilon_t \epsilon_{t-k} \Delta_t X_{t-1} \Delta_{t-k} X_{t-k-1}\\
&=& \epsilon_t \epsilon_{t-k} \Delta_t \left[\sum_{i=0}^\infty \prod_{j=0}^{i-1}(a^0 \Delta_{t-1-j})\epsilon_{t-1-i}\right] \Delta_{t-k} \left[\sum_{i'=0}^\infty \prod_{j'=0}^{i'-1}(a^0 \Delta_{t-k-1-j'})\epsilon_{t-k-1-i'}\right]
\end{eqnarray*}
for all $k\in \nbN$. Using independence of the processes $(\epsilon_t)_{t\in \mathbb Z}$ and $(\Delta_t)_{t\in \mathbb{Z}}$, we have
\begin{equation}\label{expectation_compute_iid}
\nbE \left( \epsilon_t(a^0)\epsilon_{t-k}(a^0)\frac{\partial \epsilon_t(a^0)}{\partial a}\frac{\partial \epsilon_{t-k}(a^0)}{\partial a}\right)=\sum_{i,i'=0}^\infty V^{i,i',k} d(k,1+i,k+1+i')
\end{equation}
where $d(n,m,r):=\nbE(\epsilon_0 \epsilon_{-n}\epsilon_{-m}\epsilon_{-r})$ for  all $n$, $m$, $r$ in $\nbN$, and
$$
V^{i,i',k}:=(a^0)^{i+i'+2}\nbE \left( \prod_{j=-1}^{i-1} \Delta_{t-1-j}.\prod_{j'=-1}^{i'-1} \Delta_{t-k-1-j'} \right).
$$
Since $\Delta_t$ is centered, we check immediately that $V^{i,i',k}$ is non zero if and and only if $k=0$ and $i=i'$, in which case we have $V^{i,i,0}=\left[(a^0)^{2}\nbE(\Delta_0^2)\right]^{i+1}$. Hence \eqref{expectation_compute_iid} is in that case equal to $(a^0)^{2}\nbE(\Delta_0^2)\sum_{i=0}^\infty \left[(a^0)^{2}\nbE(\Delta_0^2)\right]^i \nbE(\epsilon_t^2 \epsilon_{t-1-i}^2)$, which is also the expression for $I(a^0)$, yielding \eqref{Omega_iid}.
\end{proof}

\subsection{Modulating Markov chain}\label{example_Markov}
We now give an example of process $(\Delta_t)_{t\in\nbZ}$ with correlated trajectories by considering a discrete time stationary irreducible finite Markov chain (hence, ergodic) with state space $\mathcal{S}=\{1,2\}$ and transition probabilities matrix
$$
P=(p(i,j))_{i,j=1,2}=\left(\begin{array}{cc} 0 &1\\p& 1-p\end{array}\right),
$$
where $p$ lies in $(0,1)$, and with stationary distribution
\begin{equation}\label{stationary_distrib_Markov}
(\nbP(\Delta_t=1),\ \nbP(\Delta_t=2))=(\pi_1,\pi_2)=\left(\frac{p}{p+1},\frac{1}{p+1}\right).
\end{equation}
We also consider, as in the previous section, an ARMARC$(1,0)$ model of the form
\begin{equation}\label{def_ar1_Markov}
X_t - a^0 (\Delta_t) X_{t-1}=\epsilon_t ,
\end{equation}
where parameter $\theta_0=(a^0(1),a^0(2))$ verifies $a^0(1)=0$, in order to have nice expressions later for asymptotic normality. In order to establish the stationarity condition {\bf (A5a)} we need to compute $\nbE\left[ || \prod_{k=1}^t a^0(\Delta_k)||^8\right]$ which, because of $a^0(1)=0$, simplifies to
$$
\nbE\left[ || \prod_{k=1}^t a^0(\Delta_k)||^8\right]= |a^0(2)|^{8t} \nbP(\Delta_1=...=\Delta_t=2)=|a^0(2)|^{8t}\pi_2 (1-p)^{t-1},
$$
so that stationarity condition {\bf (A5a)} here reads
\begin{equation}\label{cond_stability_Markov}
a^0(2) \in \left( -\frac{1}{(1-p)^{1/8}}, \frac{1}{(1-p)^{1/8}}\right).
\end{equation}
Here again, as in the i.i.d. case for $(\Delta_t)_{t\in\nbZ}$, and since $\frac{1}{(1-p)^{1/8}} >1$, we can allow $ |a^0(2)|$ to be larger than $1$ so that state $2\in {\cal S}$ is non stable, although the process is stationary. Let us furthermore note that the Markov chain $(\Delta_t)_{t\in\nbZ}$ verifies the Doeblin condition so is geometrically ergodic, hence has exponentially fast strong mixing property (see \cite{Jones2004}), so that {\bf (A2)} is satisfied. We furthermore suppose that {\bf (A1)}, {\bf (A3)} and {\bf (A4)} hold for some $\nu>0$. As in \eqref{dec_X_t_iid}, we have
\begin{equation}\label{dec_X_t_Markov}
X_t=\sum_{i=0}^\infty \prod_{j=0}^{i-1}a^0 (\Delta_{t-j})\epsilon_{t-i},
\end{equation}
and $\epsilon_t(a)=X_t-a(\Delta_t) X_{t-1}$ for all $\theta=(a(1),a(2))\in \Theta$. We introduce matrices $Q(l)$, $l\in{\cal S}=\{1,2\}$ as well as vector $\pi_V$ defined by
\begin{equation}\label{def_matrix_Q(l)}
Q(1)
=\left(\begin{array}{cc}
0 & p\\
0 & 0
\end{array}
\right),\quad
Q(2)=
\left(\begin{array}{cc}
0& 0\\
1 & 1-p
\end{array}
\right),\quad \pi_V=(0,\pi_2)'.
\end{equation}
Theorems \ref{theo_consistency_vrai} and \ref{CLT_theorem} read
\begin{theo}
$\hat{\theta}_n$ defined as \eqref{LSE_Qn} converges a.s. towards $\theta_0=(a^0(1), a^0(2))$. Besides, we have the asymptotic normality
\begin{equation}\label{CLT_Markov}
\sqrt{n}\left( \hat{\theta}_n-\theta_0\right)\stackrel{\cal D}{\longrightarrow} {\cal N}\left(0,\Omega\right),\quad n\to +\infty,
\end{equation}
where $
\Omega=J^{-1} IJ^{-1}$, matrices $J=(J(l,l'))_{l,l'\in {\cal S}^2}$ and $I=(I(l,l'))_{l,l'\in {\cal S}^2}$ being defined by
\begin{equation}\label{Matrix_J_Markov}
\begin{array}{rcl}
J(1,1)&=& \sigma^2 \frac{p}{p+1}\frac{1+a^0(2)^2 p}{1-a^0(2)^2(1-p)},\\
J(1,2) &=& J(2,1)=0,\\
J(2,2)&=& \sigma^2 \frac{1}{p+1}\frac{1}{1-a^0(2)^2(1-p)}
\end{array}
\end{equation}
and $I(l,l')=I(l,l',0)+2\sum_{k=1}^\infty I(l,l',k)$, where
\begin{equation}\label{Matrix_I_Markov}
I(l,l',k)=\left\{
\begin{array}{cl}
\sum_{i'=0}^\infty \1' \left[ \sum_{i<k} a^0(2)^{i+i'} Q(l) Q(2)^i {P'} ^{k-i-1} Q(l') Q(2)^{i'} d(k,i+1,k+i'+1) \right. & \\
 \sum_{k\le i\le k+i'}  a^0(2)^{i+i'} Q(l) Q(2)^{k+i'}d(k,i+1,k+i'+1) & \\
 \left. + \sum_{i>k+i'}a^0(2)^{i+i'}Q(l) Q(2)^id(k,i+1,k+i'+1)\right]\pi_V, & l'=2,\\
\sum_{i'=0}^\infty \1'  \left[\sum_{i<k} a^0(2)^{i+i'} Q(l) Q(2)^i {P'} ^{k-i-1} Q(l') Q(2)^{i'} d(k,i+1,k+i'+1)\right]\pi_V, & l'=1,
\end{array}
\right.
\end{equation}
where $d(i,i',i''):=\nbE(\epsilon_{t}\epsilon_{t-i}\epsilon_{t-i'}\epsilon_{t-i''})$, $i$, $i'$, $i''$ in $\nbN$.
\end{theo}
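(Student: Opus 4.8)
The plan is to obtain the strong consistency and the asymptotic normality directly from Theorems \ref{theo_consistency_vrai} and \ref{CLT_theorem}: one only has to check that assumptions {\bf (A1)}--{\bf (A5b)} hold in the present setting, which has essentially been done in the preceding discussion (geometric ergodicity of the Doeblin chain gives {\bf (A2)}, the stability condition \eqref{cond_stability_Markov} is exactly {\bf (A5a)}/{\bf (A5b)}, and {\bf (A1)}, {\bf (A3)}, {\bf (A4)} are assumed). Hence the convergence $\sqrt n(\hat\theta_n-\theta_0)\to\mathcal N(0,J^{-1}IJ^{-1})$ is immediate, and the whole task reduces to evaluating $J$ and $I$ explicitly.

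For $J$ I would use that $\epsilon_t(\theta)=X_t-a(\Delta_t)X_{t-1}$, so the gradient has components $\partial\epsilon_t/\partial a(l)=-\1_{[\Delta_t=l]}X_{t-1}$ and, by \eqref{expression_J}, $J(l,l')=\nbE(\1_{[\Delta_t=l]}\1_{[\Delta_t=l']}X_{t-1}^2)$. The off-diagonal entries vanish because $\1_{[\Delta_t=1]}\1_{[\Delta_t=2]}=0$, which gives $J(1,2)=J(2,1)=0$. For the diagonal I would condition on the whole trajectory $(\Delta_s)_{s\in\nbZ}$ and use the independence of $(\epsilon_t)$ and $(\Delta_t)$ together with the weak white noise property, which turns \eqref{dec_X_t_Markov} into $\nbE(X_{t-1}^2\mid(\Delta_s)_s)=\sigma^2\sum_{i\ge0}\prod_{j=0}^{i-1}a^0(\Delta_{t-1-j})^2$. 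Since $a^0(1)=0$, only the runs $\Delta_{t-1}=\dots=\Delta_{t-i}=2$ survive, each contributing $a^0(2)^{2i}$ weighted by the Markov path probability $\nbP(\Delta_t=l,\Delta_{t-1}=\dots=\Delta_{t-i}=2)=\pi_2(1-p)^{i-1}p(2,l)$ for $i\ge1$; summing the resulting geometric series in $a^0(2)^2(1-p)$, with $\pi_1,\pi_2$ from \eqref{stationary_distrib_Markov}, yields the closed forms \eqref{Matrix_J_Markov}.

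For $I$ I would start from $\Upsilon_t^{(l)}=\epsilon_t\,\partial\epsilon_t/\partial a(l)=-\epsilon_t\1_{[\Delta_t=l]}X_{t-1}$, so that (the mean being zero) $I(l,l',k)=\cov(\Upsilon_t^{(l)},\Upsilon_{t-k}^{(l')})=\nbE(\epsilon_t\epsilon_{t-k}\1_{[\Delta_t=l]}\1_{[\Delta_{t-k}=l']}X_{t-1}X_{t-k-1})$. Substituting the two expansions \eqref{dec_X_t_Markov} of $X_{t-1}$ and $X_{t-k-1}$ and using independence of $(\epsilon_t)$ and $(\Delta_t)$ factorises each $(i,i')$ term into a pure noise fourth moment $d(k,i+1,k+i'+1)$ and a pure Markov expectation of the product of the two indicators (at ages $0$ and $k$) and of the factors $a^0(\Delta_\cdot)$ along the two blocks of lengths $i$ and $i'$; as $a^0(1)=0$, the latter is nonzero only when all product positions are in state $2$, which pulls out the scalar $a^0(2)^{i+i'}$ and leaves the probability of a constrained trajectory. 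I would evaluate that probability by the transfer-matrix method: with $D_l=\mathrm{diag}(\1_{[\cdot=1]=l},\1_{[\cdot=2]=l})$ the writing $Q(l)=D_lP'$ bundles ``the state equals $l$'' with one transition, a free position contributes a plain $P'$, and the oldest constrained position is carried by $\pi_V$, so that reading the matrix product from the most recent age (contracted with $\1'$) to the oldest (contracted with $\pi_V$) reproduces \eqref{Matrix_I_Markov}. The three cases there come from the relative placement of the first block ($t-1,\dots,t-i$), the lag-$k$ indicator at $t-k$, and the second block ($t-k-1,\dots,t-k-i'$): the first block may end strictly above $t-k$ ($i<k$, leaving a gap of free transitions $P'^{\,k-i-1}$), straddle the interval down to the end of the second block ($k\le i\le k+i'$, giving a single run $Q(2)^{k+i'}$), or run past it ($i>k+i'$, giving $Q(2)^{i}$); a convenient check is that when $l'=1$ any overlap would force $t-k$ to be simultaneously in state $1$ and, being covered by a product block, in state $2$, which is why the $l'=1$ formula keeps only the $i<k$ term. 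Finally, $I(l,l')=I(l,l',0)+2\sum_{k\ge1}I(l,l',k)$ follows from $I=\sum_{k\in\nbZ}\cov(\Upsilon_t,\Upsilon_{t-k})$ by folding the lags $k$ and $-k$ via stationarity.

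The main obstacle is the computation of $I$, and within it the transfer-matrix bookkeeping: correctly matching the overlapping ranges of the two infinite sums over $i$ and $i'$ to the matrix factors, and in particular handling the boundary at the oldest age, where the pairing of the last block with $\pi_V$ is delicate. Establishing the elementary identities $Q(l)=D_lP'$, $\pi_V=D_2\pi$ and $P'\pi=\pi$ is what makes the three regimes collapse to the compact products displayed in \eqref{Matrix_I_Markov}; everything else is then a routine, if lengthy, summation.
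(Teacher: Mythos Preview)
Your proposal is correct and follows essentially the same route as the paper's own proof: both invoke Theorems~\ref{theo_consistency_vrai} and~\ref{CLT_theorem} for the abstract conclusion and then compute $J$ and $I$ by writing $\partial\epsilon_t/\partial a(l)=-\1_{[\Delta_t=l]}X_{t-1}$, factorising the resulting expectations into a noise fourth moment times a Markov-chain path probability, and evaluating the latter by the transfer-matrix device $Q(l)=D_lP'$ with the same three-case split on $i<k$, $k\le i\le k+i'$, $i>k+i'$. The only cosmetic difference is that the paper outsources the transfer-matrix identity to Lemma~1 of \cite{FG2004}, whereas you spell out $Q(l)=D_lP'$ and $\pi_V=D_2\pi$ directly.
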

\begin{proof}
It is not hard to check that, for all $i\in {\cal S}=\{1,2\}$ and $a=(a(1),a(2))$, $\frac{\partial}{\partial a(i)} \epsilon_t (a)=- \1_{[\Delta_t=i]}X_{t-1}$. We compute easily
$$
 \nabla\epsilon_t(\theta_0)   [\nabla\epsilon_t(\theta_0)]'
=\left(
\begin{array}{cc}
\1_{[\Delta_t=1]}X_{t-1}^2 &0 \\
0 & \1_{[\Delta_t=2]}X_{t-1}^2
\end{array}
\right),
$$
so that it suffices to compute  $\nbE(\1_{[\Delta_t=l]}X_{t-1}^2)$ for all $l=1,2$, in order to compute $J$. By the usual argument of independence of the Markov chain from the weak white noise, and since $a^0(1)=0$, we get, for $l=1,2$,
\begin{eqnarray*}
\nbE(\1_{[\Delta_t=l]}X_{t-1}^2)&=&\sigma^2 \sum_{i=0}^\infty \nbE\left( \1_{[\Delta_t=l]}\prod_{j=0}^{i-1}(a^0 (\Delta_{t-1-j}))^2\right)\\
&=&\sigma^2 \pi_l + \sigma^2 \sum_{i=1}^\infty a^0(2)^{2i}\pi_2 (1-p)^{i-1} p(2,l)=  \sigma^2  \pi_l +\sigma^2 \frac{a^0(2)^2 \pi_2}{1-a^0(2)^2(1-p)}p(2,l),
\end{eqnarray*}
so that those quantities along with \eqref{stationary_distrib_Markov} yield the expression for the for matrix $J$ in \eqref{Matrix_J_Markov}.\\
In order to compute $I$, we need to take the expectation of $\epsilon_t(\theta_0)\epsilon_{t-k}(\theta_0) \frac{\partial}{\partial a(l)} \epsilon_t (\theta_0)\frac{\partial}{\partial a(l')} \epsilon_{t-k} (\theta_0)=\epsilon_t \epsilon_{t-k} \1_{[\Delta_t=l]} X_{t-1}\1_{[\Delta_{t-k}=l']} X_{t-1-k}$ for all $l$, $l'$ in ${\cal S}$ and $k\in\nbN$. As in \eqref{expectation_compute_iid} in the proof of Theorem \ref{theo_example_iid}, this expectation is equal to $\sum_{i,i'=0}^\infty V^{i,i',k}(l,l') d(k,1+i,k+1+i')$ where
$$
V^{i,i',k}(l,l'):=\nbE \left( \1_{[\Delta_t=l]} \prod_{j=0}^{i-1} a^0(\Delta_{t-1-j}). \1_{[\Delta_{t-k}=l']}\prod_{j'=0}^{i'-1} a^0(\Delta_{t-k-1-j'}) \right).
$$
This quantity can be obtained straightforwardly using e.g. Lemma 1 of \cite{FG2004}. Remembering that $Q(1)$, $Q(2)$ and $\pi_V$ are defined by \eqref{def_matrix_Q(l)}, we then have the following expression for $V^{i,i',k}(l,l')$, according to whether $t-i>t-k\iff i<k$, $t-k\ge t-i \ge t-k-i'\iff k\le i \le k+i'$ or $t-k-i' \ge t-i\iff k+i'<i$:
$$
V^{i,i',k}(l,l')=\left\{
\begin{array}{cl}
a^0(2)^{i+i'}\1' Q(l) Q(2)^i {P'} ^{k-i-1} Q(l') Q(2)^{i'} \pi_V, & i<k, \\
a^0(2)^{i+i'}\1'  Q(l)Q(2)^{k+i'}\pi_V, & k\le i \le k+i', \ l'=2,\\
0, & k\le i \le k+i', \ l'=1,\\
a^0(2)^{i+i'}\1' Q(l)Q(2)^{i}\pi_V,& k+i'<i, \ l'=2,\\
0, & k+i'<i, \ l'=1 ,
\end{array}
\right.
$$
yielding \eqref{Matrix_I_Markov}.
\end{proof}

\section{Numerical illustrations}
\label{numericalIllustrations}
We study numerically the behaviour of our estimator for
strong and weak ARMARC models. We consider the following ARMARC$(1,1)$ model
\begin{eqnarray}
\label{ARMA11MonteCarlo}
X_{t}=a_1^0(\Delta_t)X_{t-1}+\epsilon_{t}+b_1^0(\Delta_t)\epsilon_{t-1},
\end{eqnarray}
where the innovation process  $(\epsilon_t)$ follows a strong or a weak white noise. This model is to be compared with the example in Section 3.4 of \cite{Gautier04}
or Section 4 of \cite{FG2003}.
The process $(\Delta_t)$ is simulated (independently of $(\epsilon_t)$) according to the law of a stationary Markov chain with state-space $\mathcal{S}=\{1,2\}$ and transition probabilities matrix
$$\left(\begin{array}{cc}p(1,1)&1-p(1,1)\\1-p(2,2)&p(2,2)\end{array}\right)=
\left(\begin{array}{cc}0.95&0.05\\0.05&0.95\end{array}\right).$$
By an argument similar to the one explained in the example in Section \ref{example_Markov}, one has that this Markov chain is geometrically ergodic, so that Condition {\bf (A2)} is satisfied. We first consider the strong ARMARC case. To generate this model, we assume  the innovation process $(\epsilon_t)$ in
(\ref{ARMA11MonteCarlo}) is
defined by an iid sequence such that
\begin{equation} \label{bruitfort}
\epsilon_{t} \  \overset{\cal D}{=} \ {\cal
N}(0,1).
\end{equation}
Following \cite{rt1996}, we propose a set of two experiments for weak ARMARC with innovation processes $\epsilon_t$ in (\ref{ARMA11MonteCarlo}) defined by
\begin{align}
\label{RT}
\epsilon_{t}& =\eta_{t}(|\eta_{t-1}|+1)^{-1}, \\
\label{PT}
\epsilon_{t}& =\eta_{t}^2\eta_{t-1},
\end{align}
where  $(\eta_t)_{t\ge 1}$ is a sequence of iid standard Gaussian random variable.
The noises defined by  \eqref{RT} and \eqref{PT} are a direct extension of the
weak noises  in Examples 2.1 and 2.2 defined by \cite{rt1996}. Thus we easily check that those weak noises meet the requirements of assumptions {\bf (A1)} to {\bf (A4)} for all $\nu>0$. We also note that the innovation process \eqref{RT} is a martingale difference, as opposed to \eqref{PT}.

The numerical illustrations of this section are made with the free
statistical software {\tt R} (see {\tt http://cran.r-project.org/}). We
simulated $N=1,000$ independent trajectories of size $n=2,000$ of
Model (\ref{ARMA11MonteCarlo}), first with the strong Gaussian noise
(\ref{bruitfort}), second with the weak noise (\ref{RT}) and
third with the weak noise (\ref{PT}).

Recall that the regimes $(\Delta_t)$ are supposed to be known. For each of these $N$ replications,
we estimate the coefficient $\theta_0=(a_1^0(1),a_1^0(2),b_1^0(1),b_1^0(2))'=(0.90,-0.45,0.10,0.85)'$.

Figures~\ref{strongRCARMA} and \ref{weakRCARMA} display the realization  of length 400 of Model (\ref{ARMA11MonteCarlo})  in the strong (\ref{bruitfort}) and weak (\ref{PT}) noises  cases. Note that here stationarity condition {\bf (A5a)} in Proposition \ref{prop_stationary} is trivially satisfied as all coefficients $a_1^0(1)$, $a_1^0(2)$, $b_1^0(1)$, $b_1^0(2)$ are all less than $1$ in modulus.

Figure~\ref{fig1} compares the distribution of the least squares estimators (LSE) in the strong and the two weak noises
 cases. The distributions of $\hat{a}_1^0(1)$, $\hat{a}_1^0(2)$ and $\hat{b}_1^0(2)$ are similar in all cases, whereas the LSE of $\hat{b}_1^0(1)$ is  more accurate in the weak case with noise
 \eqref{RT} than in the strong one.
 Similar simulation experiments reveal that the
situation is opposite, that is the LSE is more accurate in the
strong case than in the weak case, when the weak noise is defined by
\eqref{PT}.
This is in accordance with the results of \cite{rt1996} who showed
that, with similar noises, the asymptotic covariance of the sample
autocorrelations can be greater (for noise \eqref{PT}) or less (for noise \eqref{RT}) than 1 as well (1 is the
asymptotic covariance for strong white noises).

Figure~\ref{fig2} compares the standard estimator
$\hat{\Omega}_S=\hat\sigma^2 \hat{J}^{-1}$ and the sandwich estimator
$\hat{\Omega}=\hat{J}^{-1}\hat{I}^{\mathrm{SP}}\hat{J}^{-1}$ of the LSE asymptotic covariance $\Omega$. We used the spectral estimator
$\hat{I}:=\hat{I}^{\mathrm{SP}}$ defined in Theorem~\ref{estimationI},
and the AR order $r$ is automatically selected by AIC, using the
function {\tt VARselect()} of the {\bf vars} R package. In the
strong ARMARC case we know that the two estimators are consistent. In
view of the two top panels of Figure~\ref{fig2}, it seems that the
sandwich estimator is less accurate in the strong case. This is not
surprising because the sandwich estimator is more robust, in the
sense that this estimator continues to be consistent in the weak
ARMARC case, contrary to the standard estimator. It is clear that in
the weak cases $n\mbox{Var}\left\{\hat{b}_1^0(1)-{b}_1^0(1)\right\}^2$
is better estimated by $\hat{\Omega}^{\mathrm{SP}}(3,3)$ (see the
box-plot (c) of the right-middle and right-bottom panel of Figure~\ref{fig2}) than by
$\hat{\Omega}_S(3,3)$ (box-plot (c) of the left-middle and left-bottom panel). The
failure of the standard estimator of $\Omega$ in the weak ARMARC
framework may have important consequences in terms of identification
or hypothesis testing and validation.

Table \ref{wald} displays the relative percentages of rejection of the standard and modified Wald tests ($\mathrm{\mathbf{W}}_S$ and $\mathrm{\mathbf{W}}_M$) proposed in Section \ref{testWald}
 for testing the null hypothesis $H_0: b_1^0(1)=0$.
We simulated $N=1,000$ independent trajectories of size $n=500$, $n=2,000$ and $n=10,000$ of the  strong ARMARC$(1,1)$ model (\ref{ARMA11MonteCarlo})--\eqref{bruitfort} and  of two  weak ARMARC$(1,1)$ model (\ref{ARMA11MonteCarlo}) with first noise \eqref{RT} and second \eqref{PT}.
The nominal asymptotic level of the tests is $\alpha=5\%$  and the empirical  size over the $N$ independent replications should vary between the significant limits 3.6\% and 6.4\% with probability 95\%.
The line in bold corresponds to the null hypothesis $H_0$.
For the strong ARMARC model \eqref{ARMA11MonteCarlo}--\eqref{bruitfort}, the relative rejection
frequencies of the $\mathrm{\mathbf{W}}_S$ and $\mathrm{\mathbf{W}}_M$ tests are close to the nominal 5\% level when $b_1^0(1)=0$, and are close to 100\% under the alternative when $n$ is large. In this strong ARMARC example, the $\mathrm{\mathbf{W}}_S$ and $\mathrm{\mathbf{W}}_M$ tests have very similar powers  under the alternative for all sizes.
As expected, for the two weak ARMARC models \eqref{ARMA11MonteCarlo}--\eqref{RT} and \eqref{ARMA11MonteCarlo}--\eqref{PT}, the relative rejection frequencies of the standard  $\mathrm{\mathbf{W}}_S$  Wald test is definitely outside the significant limits. Thus the error of first kind is well controlled by all the tests in the strong case, but only by the $\mathrm{\mathbf{W}}_M$ modified version test in the weak cases (Model \eqref{ARMA11MonteCarlo}--\eqref{RT}) and (Model \eqref{ARMA11MonteCarlo}--\eqref{PT}, for $n$ large) when $b_1^0(1)=0$. Note also that for Models \eqref{ARMA11MonteCarlo}--\eqref{RT}
and \eqref{ARMA11MonteCarlo}--\eqref{PT}, the relative rejection frequencies of the  $\mathrm{\mathbf{W}}_M$ test tend rapidly to 100\% as $n$ increases  under the alternative. By contrast the empirical powers of the standard $\mathrm{\mathbf{W}}_S$  test is hardly interpretable for Models \eqref{ARMA11MonteCarlo}--\eqref{RT} and \eqref{ARMA11MonteCarlo}--\eqref{PT}. This is not surprising because we have already seen in Table \ref{wald} that the standard  version of the $\mathrm{\mathbf{W}}_S$ test does not correctly control the error of first kind in the weak ARMARC frameworks.

From these simulation experiments and from the asymptotic theory, we
draw the conclusion that the standard methodology, based on the
LSE, allows to fit ARMARC representations of a wide class of
nonlinear time series. This standard methodology, including in particular the significance tests on the parameters,
needs however to be adapted to take into account the possible lack
of independence of the errors terms. In future works, we intend to
study how the existing identification  and diagnostic checking procedures should be adapted in the weak ARMARC framework considered in the present paper.

\begin{figure}[hbtp]
\vspace*{12cm} \protect \includegraphics{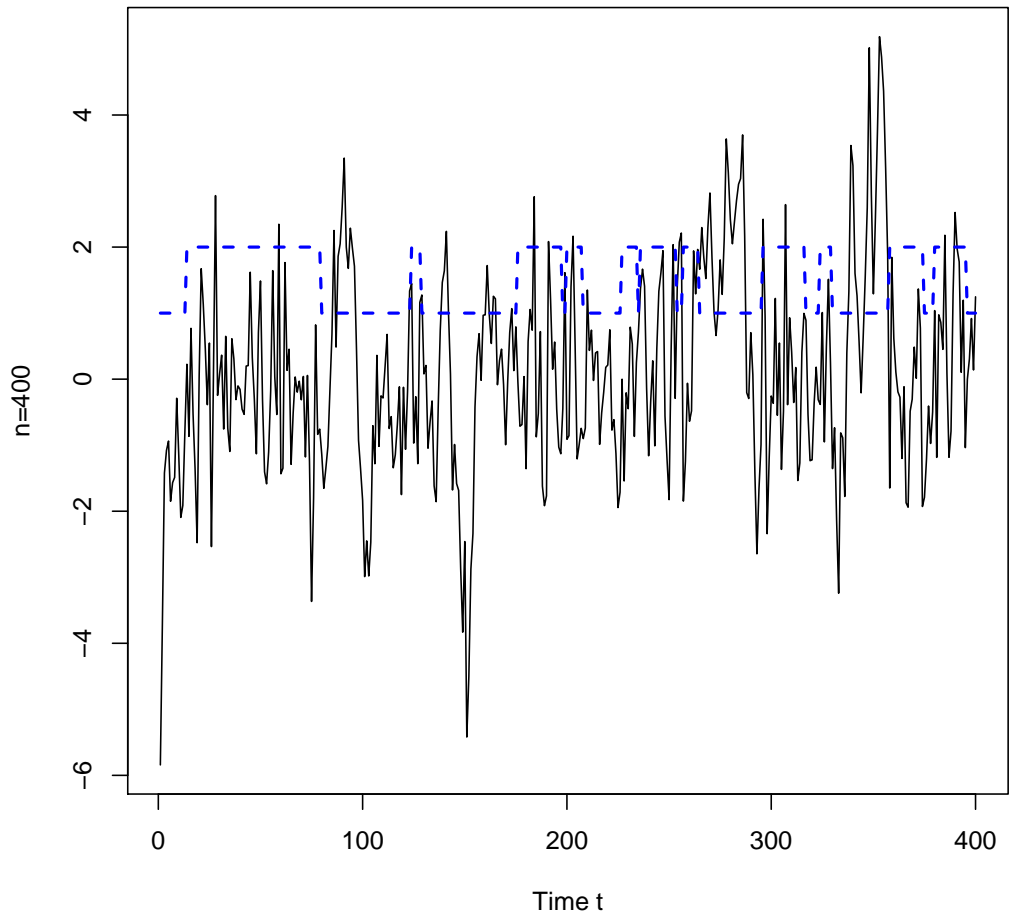} \vspace*{2.5cm}
\caption{\label{strongRCARMA} {\footnotesize Simulation of length 400 of Model (\ref{ARMA11MonteCarlo})--(\ref{bruitfort}) with $\theta_0=(a_1^0(1),a_1^0(2),b_1^0(1),b_1^0(2))'=(0.90,-0.45,0.10,0.85)'$, . The process $(X_t)$ is drawn in full line, the Markov chain $(\Delta_t)$ is plotted in dotted line. }}
\end{figure}

\begin{figure}[hbtp]
\vspace*{12cm} \protect \includegraphics{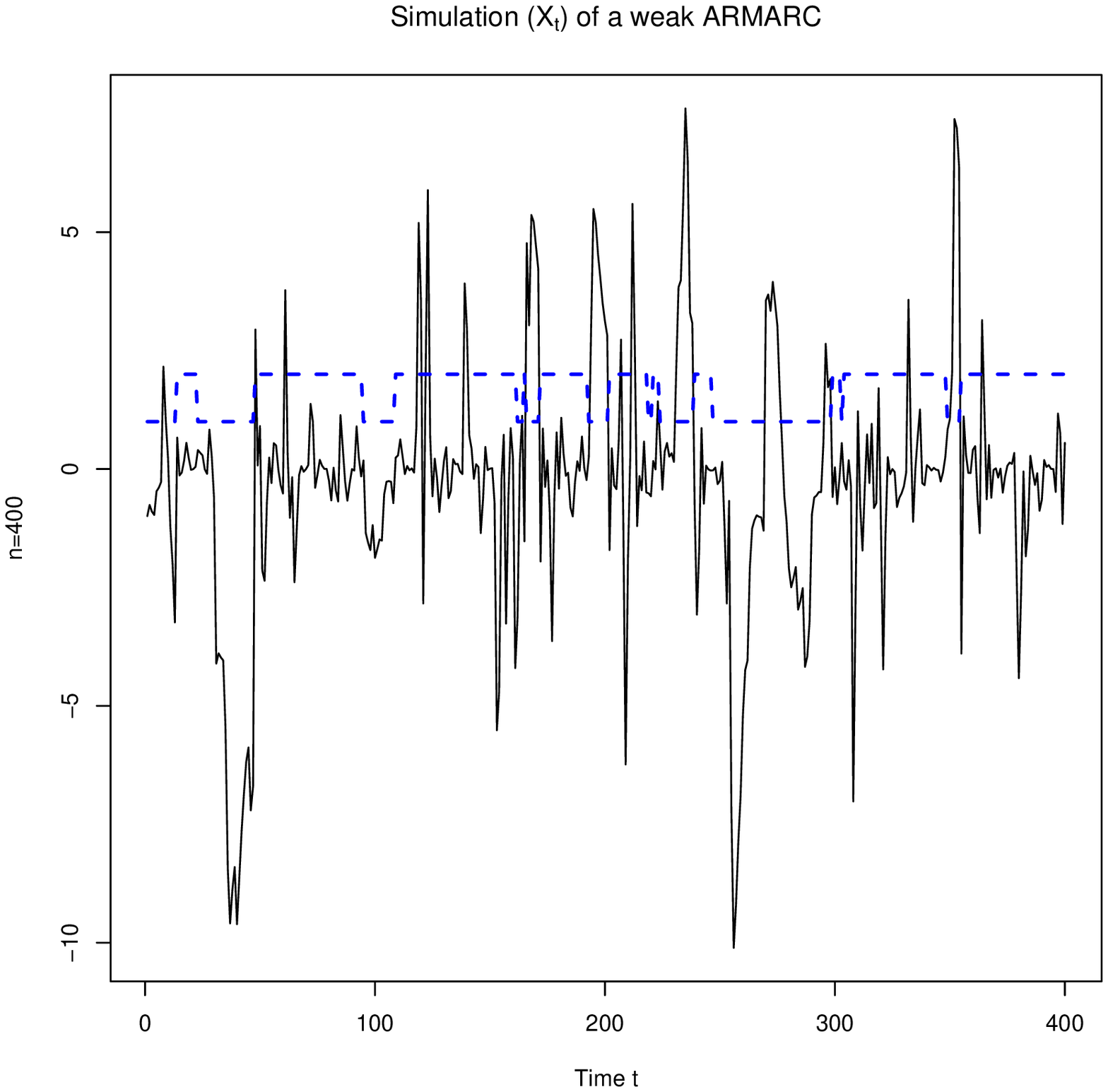} \vspace*{2.5cm}
\caption{\label{weakRCARMA} {\footnotesize Simulation of length 400 of Model (\ref{ARMA11MonteCarlo})--(\ref{PT}) with $\theta_0=(a_1^0(1),a_1^0(2),b_1^0(1),b_1^0(2))'=(0.90,-0.45,0.10,0.85)'$. The process $(X_t)$ is drawn in full line, the Markov chain $(\Delta_t)$ is plotted in dotted line. }}
\end{figure}

\begin{figure}[hbtp]
\vspace*{12cm} \protect \includegraphics{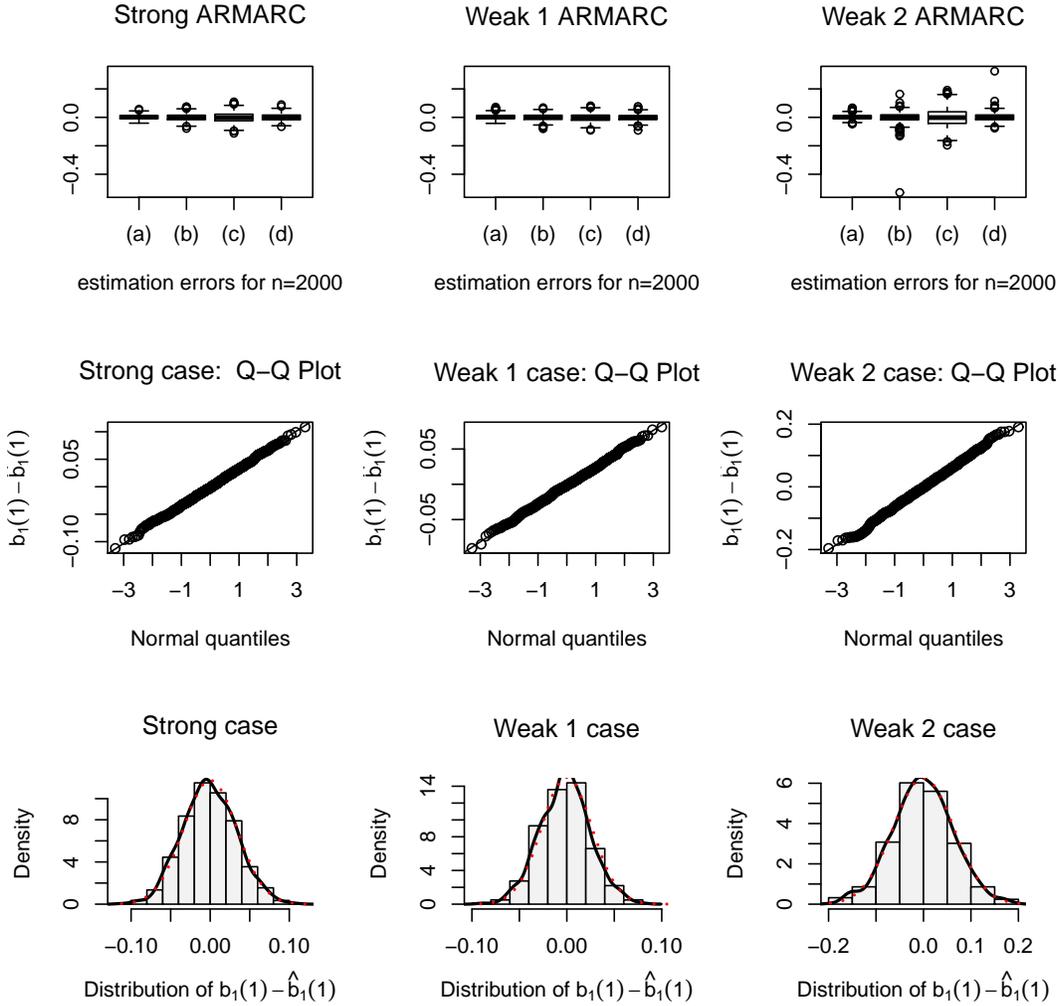} \vspace*{2.5cm}
\caption{\label{fig1} {\footnotesize LSE of $N=1,000$ independent
simulations of the model (\ref{ARMA11MonteCarlo}) with size $n=2,000$ and unknown parameter
$\theta_0=(a_1^0(1),a_1^0(2),b_1^0(1),b_1^0(2))'=(0.90,-0.45,0.10,0.85)'$, when the noise is respectively the strong one defined by (\ref{bruitfort}) (left panel), the weak one defined by \eqref{RT} (middle panel) and the weak one defined by (\ref{PT}) (right panels). Points (a)-(d), in the box-plots of the top panels,  display the distribution of the
 estimation errors $\hat{\theta}(i)-\theta_0(i)$ for $i=1,\dots,4$.
 The panels of the middle present the Q-Q plot of the estimates
 $\hat{\theta}(3)=\hat{b}_1^0(1)$ of the last parameter. The bottom panels  display  the distribution of the same estimates. The kernel density estimate is displayed in full line, and the centered Gaussian density   with the same variance is plotted in dotted line. }}
\end{figure}
\begin{figure}[hbtp]
\vspace*{12cm} \protect \includegraphics{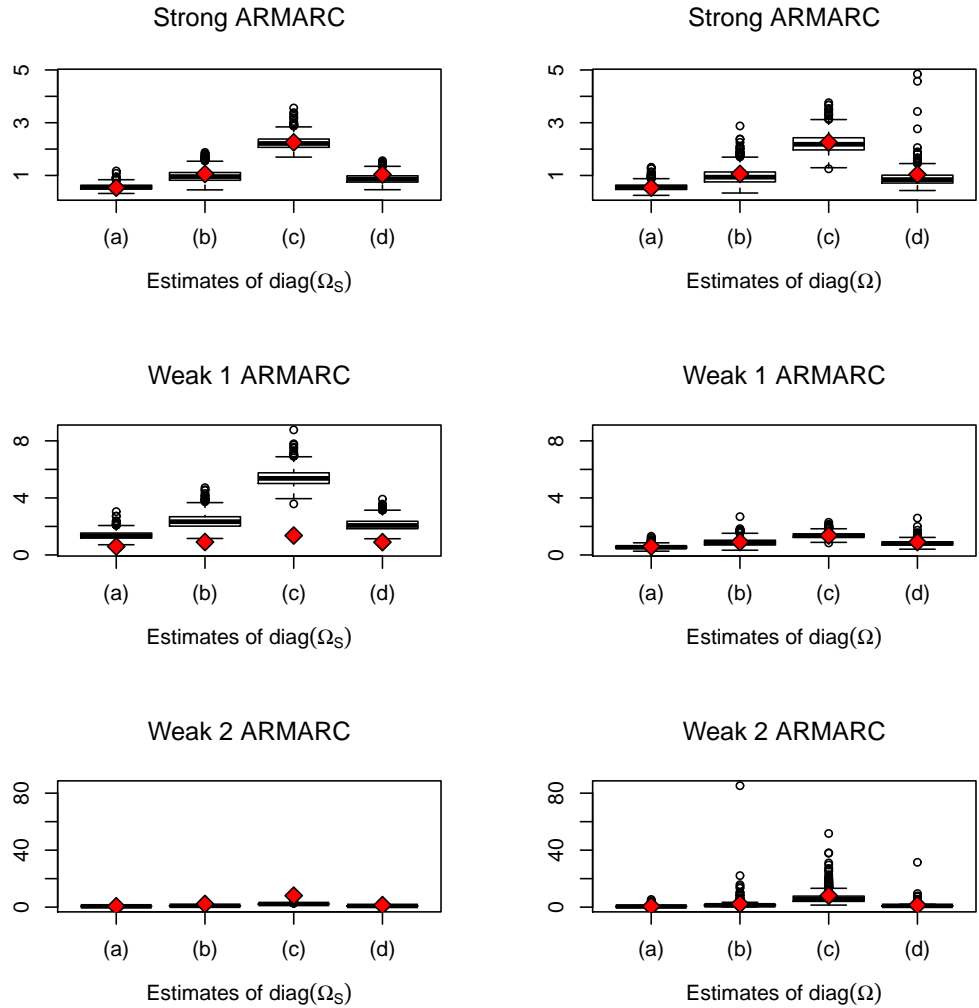} \vspace*{2.5cm}
\caption{\label{fig2} {\footnotesize Comparison of standard and
modified estimates of the asymptotic covariance matrix $\Omega$ of the LSE,
on the simulated models presented in Figure \ref{fig1}.
Weak 1 ARMARC corresponds to Model \eqref{ARMA11MonteCarlo}--\eqref{RT}
and Weak 2 to Model \eqref{ARMA11MonteCarlo}--\eqref{PT}.
The diamond
symbols represent the mean, over the $N=1,000$ replications, of the
standardized squared errors $n\left\{\hat{a}_1^0(1)-0.90\right\}^2$
for (a) (0.54 in the strong case and 0.60 (resp. 0.59) in the weak 1 case (resp. weak 2 case)),
$n\left\{\hat{a}_1^0(2)+0.45\right\}^2$ for (b) (1.06 in the strong
case and 0.91 (resp. 2.24) in the weak 1 case (resp. weak 2 case)), $n\left\{\hat{b}_1^0(1)-0.10\right\}^2$ for (c) (2.25 in the strong case and 1.36 (resp. 8.05) in the weak 1 case (resp. weak 2 case)) and
$n\left\{\hat{b}_1^0(2)-0.85\right\}^2$ for (d) (1.04 in the strong case and 0.90 (resp. 1.41) in the weak 1 case (resp. weak 2 case)).
}}
\end{figure}

\begin{table}[!h]
 \caption{\small{Percentages of rejection of standard $\mathbf{W}_S$ and modified  $\mathbf{W}_M$ Wald tests for testing the null hypothesis $H_0: b_1^0(1)=0$, in the ARMARC$(1,1)$ model (\ref{ARMA11MonteCarlo}). The nominal asymptotic level of the tests is $\alpha=5\%$. The number of replications is $N=1,000$. The line in bold corresponds to the null hypothesis $H_0$.}}
\begin{center}
\begin{tabular}{ ccccccc}
\hline\hline
&\multicolumn{2}{c}{$n=500$}\quad& \multicolumn{2}{c}{$n=2,000$}\quad& \multicolumn{2}{c}{$n=10,000$} \\
$b_1^0(1)$&$\mathrm{\mathbf{W}}_S$&$\mathrm{\mathbf{W}}_M$\quad&$\mathrm{\mathbf{W}}_S$&$\mathrm{\mathbf{W}}_M$\quad&$\mathrm{\mathbf{W}}_S$&$\mathrm{\mathbf{W}}_M$\\
\hline
\\
&\multicolumn{6}{c}{Strong ARMARC-Model \eqref{ARMA11MonteCarlo}--\eqref{bruitfort}}\\
\\
 0.9&100.0&100.0\quad& 100.0&100.0\quad& 100.0&100.0\\
 0.4&100.0&100.0\quad& 100.0&100.0\quad& 100.0&100.0\\
 0.2&84.7 &84.5\quad& 100.0&100.0\quad& 100.0&100.0\\
0.1&34.6& 36.4\quad& 85.5 &85.2\quad& 100.0&100.0\\
\textbf{0.0}&\textbf{5.9} &\textbf{8.6}\quad& \textbf{4.7 }&\textbf{5.2}\quad&\textbf{5.8}& \textbf{6.0}\\
  -0.1&27.4 &29.4\quad& 78.8 &79.2\quad& 100.0&100.0\\
 -0.2&73.6& 74.0\quad& 100.0&100.0\quad& 100.0&100.0\\
 -0.4&99.1 &98.9\quad& 100.0&100.0\quad& 100.0&100.0\\
-0.9&86.7& 86.6\quad& 99.6& 99.6\quad& 100.0&100.0\\
\hline
\\
&\multicolumn{6}{c}{Weak ARMARC-Model \eqref{ARMA11MonteCarlo}--\eqref{RT}}\\
\\
 0.9&100.0&100.0\quad& 100.0&100.0\quad& 100.0&100.0\\
 0.4&99.7 &100.0\quad& 100.0&100.0\quad& 100.0&100.0\\
 0.2&57.4& 96.2\quad& 100.0&100.0\quad& 100.0&100.0\\
 0.1&3.5 &52.4\quad& 50.3 &98.0\quad& 100.0&100.0\\
\textbf{0.0}&\textbf{0.2}& \textbf{5.8}\quad& \textbf{0.0} &\textbf{4.7}\quad&\textbf{0.0}& \textbf{5.6}\\
 -0.1&2.8 &39.5\quad& 37.6 &93.8\quad& 100.0&100.0\\
 -0.2&34.1& 89.6\quad& 99.9 &100.0\quad& 100.0&100.0\\
 -0.4&96.0 &99.6\quad& 100.0&100.0\quad& 100.0&100.0\\
-0.9&86.1 &89.7\quad& 99.7& 99.7\quad& 100.0&100.0\\
\hline
\\
&\multicolumn{6}{c}{Weak ARMARC-Model \eqref{ARMA11MonteCarlo}--\eqref{PT}}\\
\\
 0.9&100.0&100.0\quad& 100.0&100.0\quad& 100.0&100.0\\
 0.4&99.7& 96.9\quad& 100.0&100.0\quad& 100.0&100.0\\
 0.2&86.4& 63.7\quad& 99.6& 92.4\quad& 100.0&100.0\\
 0.1&62.4 &31.5\quad& 85.0 &48.3\quad& 99.8& 92.5\\
\textbf{0.0}&\textbf{46.8}& \textbf{14.1}\quad& \textbf{53.6} &\textbf{9.5}\quad&\textbf{54.2}& \textbf{5.3}\\
 -0.1&60.2& 26.2\quad&84.1 &44.1\quad& 99.9& 92.0\\
 -0.2&80.9 &52.9\quad& 97.8 &87.6\quad& 100.0&99.9\\
 -0.4&98.9& 89.2\quad& 100.0&99.4\quad& 100.0&100.0\\
-0.9&74.0& 67.3\quad& 95.7& 93.1\quad& 100.0&100.0\\
\hline\hline \\
\end{tabular}
\end{center}
\label{wald}
\end{table}

\section{Conclusion}\label{sec:conclusion}
We considered in this paper an ARMA model modulated by an exterior (observed) regime $\{\Delta_t,\ t\ge 0\}$ with possibly dependent errors. Under some technical assumptions, we proved the consistency and the asymptotic normality of the LSE. An efficient weak estimator for the asymptotic covariance matrix has been given. Numerical illustrations corroborate our theoretical results. Some future works include how to extend those results to the case of vector ARMA (VARMA) models, as well as how the existing identification (see \cite{yac_jtsa, BMK2016} ) and diagnostic checking (\cite{BMS2018, yac}) procedures could be adapted to the present model.

\appendix
\section{Proofs}\label{proof}
\subsection{Proofs of Proposition \ref{prop_stationary} and Lemma \ref{c_i^2_expo_decrease}}
{\bf Proof of Proposition \ref{prop_stationary}.}
Let us first note that Condition {\bf (A5a)} is equivalent to
\begin{equation}\label{ineg_expo}
\nbE\left(\sup_{\theta\in \Theta}\left| \left| \prod_{i=1}^t \Phi(\Delta_i,\theta)\right|\right|^{8}\right)\le C \rho^t,\quad \nbE\left( \left|\left|\prod_{i=1}^t \Psi(\Delta_i)\right|\right|^{8}\right)\le C \rho^t,
\end{equation}
for some constant $C>0$ and $0<\rho<1$ (independent from $\theta$). Let us first introduce the processes $(\tilde{Z}_t)_{t\in\nbZ}$ and $(\tilde{\omega}_t)_{t\in\nbZ}$ by
$$
\tilde{Z}_t=(X_t,\dots,X_{t-p+1},\epsilon_t,\dots,\epsilon_{t-q+1})'\in \nbR^{(p+q)\times 1},\quad \tilde{\omega}_t= (\epsilon_t,0,\dots,\epsilon_t,\dots,0)'\in \nbR^{(p+q)\times 1}
$$
where $\epsilon_t$ in the latter is in $(p+1)$th position in $\tilde{\omega}_t$. Then it is clear that we have the following equation for $\tilde{Z}_t$:
$$
\tilde{Z}_t=\Psi(\Delta_{t})\tilde{Z}_{t-1} + \tilde{\omega}_t ,\quad \forall t\in\nbZ ,
$$
of which a candidate for the solution of the above equation is, with the usual convention $\prod_{j=0}^{-1}=1$,
\begin{equation}\label{candidate_eq1}
\tilde{Z}_t= \sum_{k=0}^\infty \prod_{j=0}^{k-1}\Psi(\Delta_{t-j})\tilde{\omega}_{t-k},\quad t\in\nbZ ,
\end{equation}
a stationary process, provided that the series converges, which we prove now. Let us pick for $||\cdot||$ a subordinate norm on the set of matrices. By independence of the processes $(\Delta_t)_{t\in \nbZ}$ and $(\epsilon_t)_{t\in \nbZ}$, and using the fact that the latter is square integrable, we easily get, for $k\ge 1$,
\begin{multline*}
\nbE\left( \left|\left|\Psi(\Delta_{t})\dots\Psi(\Delta_{t-k+1})\tilde{\omega}_{t-k}\right|\right|^2\right)\le \nbE\left( \left|\left|\Psi(\Delta_{t})\dots\Psi(\Delta_{t-k+1}) \right|\right|^2 .\left|\left|\tilde{\omega}_{t-k}\right|\right|^2\right)\\
=\nbE\left( \left|\left|\Psi(\Delta_{t})\dots\Psi(\Delta_{t-k+1}) \right|\right|^2\right) \nbE\left(\left|\left|\tilde{\omega}_{t-k}\right|\right|^2\right)\le C \nbE\left(\left|\left|\tilde{\omega}_{0}\right|\right|^2\right) \rho^k,
\end{multline*}
the last inequality stemming from (\ref{ineg_expo}), so that series (\ref{candidate_eq1}) converges in $L^2$. Note that we prove that $\tilde{Z}_t$ (hence $X_t$) is in $L^4$ by replacing $||\cdot||^2$ by $||\cdot||^4$ in the above inequalities,
using again (\ref{ineg_expo}) and the fact that $(\epsilon)_{t\in\nbZ}$ is in $ L^4$, see assumption ({\bf A3}). Similarly, defining
\begin{equation}\label{def_Z_t_theta}
Z_t(\theta):=(\epsilon_t(\theta),\dots,\epsilon_{t-q+1}(\theta),X_t,\dots,X_{t-p+1})' , \quad \omega_t= (X_t,0,\dots,X_t,\dots,0)'
\end{equation}
where $X_t$ in the latter is in $(q+1)$th position, we also get that $Z_t(\theta)$ satisfies
$$
Z_t(\theta)=\Phi(\Delta_{t},\theta)Z_{t-1}(\theta)+\omega_t .
$$
A solution candidate to the above solution is
\begin{equation}\label{candidate_eq2}
Z_t(\theta)= \sum_{k=0}^\infty \prod_{j=0}^{k-1}\Phi(\Delta_{t-j},\theta)\omega_{t-k},\quad t\in\nbZ .
\end{equation}
Similarly to the proof leading to (\ref{candidate_eq1}), convergence of (\ref{candidate_eq2}) is obtained thanks to (\ref{ineg_expo}) as well as stationarity of $(X_t)_{t\in \nbZ}$ and the fact that $X_t\in L^4$.

We check that $\omega_t=M\tilde{Z}_t$ and $\epsilon_t(\theta)=w_1Z_t(\theta)$, which, plugged into (\ref{candidate_eq1}) and (\ref{candidate_eq2}) yields (\ref{dse_eps_theta}) with coefficients $c_i(\theta,\Delta_{t},\dots,\Delta_{t-i+1})$ given by (\ref{expression_eps_theta}). Finally, let us verify that $(c_i(\theta,\Delta_{t},\dots,\Delta_{t-i+1}))_{i\in\nbN}$ is the unique sequence verifying (\ref{dse_eps_theta}). Let us then pick a sequence of r.v. $(d_i)_{i\in\nbN}$ in ${\cal H}$ such that $\epsilon_t(\theta)= \sum_{i=0}^\infty c_i(\theta,\Delta_{t},\dots,\Delta_{t-i+1})\epsilon_{t-i}= \sum_{i=0}^\infty d_i \epsilon_{t-i}$. We then get, by independence from $ (\epsilon_t)_{t\in\nbZ}$ as well as by the fact that the latter is a weak white noise:
$$
0=\nbE\left( \left[\sum_{i=0}^\infty (c_i(\theta,\Delta_{t},\dots,\Delta_{t-i+1})-d_i)\epsilon_{t-i}\right]^2\right)=\sigma^2\nbE\left( \sum_{i=0}^\infty (c_i(\theta,\Delta_{t},\dots,\Delta_{t-i+1})-d_i)^2\right)
$$
hence $(c_i(\theta,\Delta_{t},\dots,\Delta_{t-i+1}))_{i\in\nbN}=(d_i)_{i\in\nbN}$ a.s.\zak

{\bf Proof of Lemma \ref{c_i^2_expo_decrease}.}
The fact that the $\theta\mapsto c_i(\theta,\Delta_{t},\dots,\Delta_{t-i+1})$, $\theta\mapsto \nabla [c_i(\theta,\Delta_{t},\dots,\Delta_{t-i+1})]^2$ and $\theta\mapsto \nabla^2 [c_i(\theta,\Delta_{t},\dots,\Delta_{t-i+1})]^2$ are polynomial functions (of several variables) can be verified easily using the fact that, for all $s\in {\cal S}$, $\theta \mapsto  \Phi(s,\theta)$ and $\theta \mapsto  \Psi(\theta)$ are affine functions. We turn to (\ref{coeff_c_i_expo_decrease}). Using Minkovski's inequality, the fact that the matrix norm $||\cdot||$ is submultiplicative entails
\begin{multline}
\left|\left|\sup_{\theta\in \Theta} |c_i(\theta,\Delta_{i},\dots,\Delta_{1})|\right|\right|_{2\nu+4} \le   \sum_{k=0}^i \left|\left| \sup_{\theta\in \Theta} |w_1\Phi(\Delta_{i},\theta)\dots\Phi(\Delta_{i-k+1},\theta)M \Psi(\Delta_{i-k})\dots\Psi(\Delta_{1})w_{p+1}'|\right|\right|_{2\nu+4}\\
\le  C \sum_{k=0}^i \left[\nbE\left(\sup_{\theta\in \Theta}\left| \left| \Phi(\Delta_{i},\theta)\dots\Phi(\Delta_{i-k+1},\theta) \right|\right|^{2\nu+4} \left| \left| \Psi(\Delta_{i-k})\dots\Psi(\Delta_{1})\right|\right|^{2\nu+4}\right)\right]^{1/(2\nu+4)}
\label{lemma_coeff_c_i_expo_decrease0}
\end{multline}
for some constant $C>0$. The Cauchy-Schwarz inequality as well as {\bf (A5a)} yields
\begin{multline*}
\left[\nbE\left(\sup_{\theta\in \Theta}\left| \left| \Phi(\Delta_{i},\theta)\dots\Phi(\Delta_{i-k+1},\theta) \right|\right|^{2\nu+4} \left| \left| \Psi(\Delta_{i-k})\dots\Psi(\Delta_{1})\right|\right|^{2\nu+4}\right)\right]^{1/(2\nu+4)}\\
\le \left[\nbE\left(\sup_{\theta\in \Theta}\left| \left| \Phi(\Delta_{i},\theta)\dots\Phi(\Delta_{i-k+1},\theta) \right|\right|^{4\nu+8} \right)\right]^{\frac{1}{(4\nu+8 )}} \left[\nbE\left(  \left| \left| \Psi(\Delta_{i-k})\dots\Psi(\Delta_{1})\right|\right|^{4\nu+8} \right)\right]^{\frac{1}{(4\nu+8 )}}\le \kappa \rho^{\frac{i}{(2\nu+4 )}}
\end{multline*}
which, plugged in (\ref{lemma_coeff_c_i_expo_decrease0}), yields inequality (\ref{coeff_c_i_expo_decrease}) for $c_i(\theta,\Delta_{i},\dots,\Delta_{1})$. The inequalities for $\nabla^j [c_i(\theta,\Delta_{i},\dots,\Delta_{1})]$, $j=2,3$, are proved similarly. As to $c_i^e(t,\theta,\Delta_{t},\dots,\Delta_{t-i+1})$, \eqref{expression_e_t_theta} yields the upper bound
\begin{multline*}
\left|\left|\sup_{\theta\in \Theta} |c_i^e(t,\theta,\Delta_{t},\dots,\Delta_{t-i+1})|\right|\right|_{2\nu+4} \\
\le   \sum_{k=0}^i \left|\left| \sup_{\theta\in \Theta} |w_1\Phi(\Delta_{t},\theta)\dots\Phi(\Delta_{t-k+1},\theta)M \Psi(\Delta_{t-k})\dots\Psi(\Delta_{t-i+1})w_{p+1}'|\right|\right|_{2\nu+4},
\end{multline*}
so that upper bound \eqref{coeff_c_i_e_expo_decrease} for $c_i^e(t,\theta,\Delta_{t-1},\dots,\Delta_{t-i})$ follows again by a Cauchy-Schwarz argument. The upper bound \eqref{coeff_c_i_e_expo_decrease} for $\nabla c_i^e(t,\theta,\Delta_{t-1},\dots,\Delta_{t-i})$ is obtained similarly.\zak 
\subsection{Proofs of Lemma \ref{lemme_prelim} and Proposition \ref{prop_prelim}}\label{appendix_proof_lemma_prop_prelim}
{\bf Proof of Lemma \ref{lemme_prelim}.}
We first prove Point 1. Using decomposition \eqref{dse_eps_theta} of $\epsilon_t(\theta)$, independence of the white noise from the modulating process, as well as stationarity of the former, we obtain
$$
 \left|\left| \sup_{\theta\in \Theta} |\epsilon_0(\theta)|\right| \right|_4\le \sum_{i=0}^\infty \left|\left|\sup_{\theta\in \Theta} |c_i(\theta,\Delta_{t},\dots,\Delta_{t-i+1})|\right|\right|_4. ||\epsilon_{0}||_4
$$
which is a converging series because of \eqref{coeff_c_i_expo_decrease}. As to  $e_t(\theta)$, we use this time decomposition \eqref{dse_e_t_theta} as well as \eqref{coeff_c_i_e_expo_decrease} in order to get
$$
 \sup_{t\ge 0}\left|\left| \sup_{\theta\in \Theta} |e_t(\theta)|\right| \right|_4\le \sum_{i=0}^\infty \sup_{t\ge 0}\left|\left|\sup_{\theta\in \Theta}|c_i^e(\theta,\Delta_{t},\dots,\Delta_{t-i+1})|\right|\right|_4. ||\epsilon_{0}||_4<+\infty .
$$
In order to prove Point 2, we remind the following notations. From (\ref{def_inov_theta}) and (\ref{e_t}),
we have
$$Z_t(\theta)=\omega_t+ \Phi(\Delta_{t},\theta)Z_{t-1}(\theta)\qquad \forall t\in \mathbb{Z},$$
and
$$Z^e_t(\theta)=\omega^e_t+ \Phi(\Delta_{t},\theta)Z^e_{t-1}(\theta)\qquad  t=1,\dots,n,$$
where $Z^e_t(\theta):=(e_t(\theta),\dots,e_{t-q+1}(\theta),\tilde{X}_t,\dots,\tilde{X}_{t-p+1})' , \quad \omega^e_t= (\tilde{X}_t,0,\dots,\tilde{X}_t,\dots,0)',
$ so that
$\omega^e_t=\omega_t$ for $t\geq r+1$ (where $r=\max(p,q)$), $\omega^e_t(\theta)=0_{p+q}$ for $t\leq 0$. We recall that the processes $(\tilde{X}_t)_{t\in \nbZ}$ and $(e_t(\theta))_{t\in \nbZ}$ verify \eqref{e_t}.
Note that $\left|\left| \sup_{\theta\in \Theta} |\epsilon_t(\theta)-e_t(\theta)|\right| \right|_2\longrightarrow 0$ is equivalent to $\left|\left| \sup_{\theta\in \Theta} ||Z^e_t(\theta)-Z_t(\theta)||\right| \right|_2\longrightarrow 0$ as $t\to \infty$. Now, since $\tilde{X}_t=X_t$ for $t\ge 1$, we easily see that
\begin{align}\label{eq_Z^e_t}
Z^e_t(\theta)-Z_t(\theta)&=\Phi(\Delta_{t},\theta)[Z^e_{t-1}(\theta)-Z_{t-1}(\theta)],\quad \forall t\ge r+1,\\
\label{eq_Z^e_r}
Z^e_t(\theta)-Z_t(\theta)&=\omega^e_t-\omega_t+\Phi(\Delta_{t},\theta)[Z^e_{t-1}(\theta)-Z_{t-1}(\theta)],\mbox{ for }t=1,\dots,r.
\end{align}
Now, using \eqref{eq_Z^e_t} and \eqref{eq_Z^e_r} we obtain
\begin{eqnarray}
  &&Z^e_t(\theta)-Z_t(\theta) =\prod_{j=0}^{t-r-1}\Phi(\Delta_{t-j},\theta)[Z^e_{r}(\theta)-Z_{r}(\theta)],\quad \forall t\ge r+1,\nonumber\\
   &=& \prod_{j=0}^{t-r-1}\Phi(\Delta_{t-j},\theta)\left(\sum_{i=0}^{r-1}\prod_{j=0}^{i-1}\Phi(\Delta_{r-j},\theta)[\omega^e_{r-i}-\omega_{r-i}]
   \prod_{j=0}^{r-1}\Phi(\Delta_{r-j},\theta)\omega_{0} \right). \label{differenceZ}
\end{eqnarray}
Let us furthermore note that $$
\left|\left|\sup_{\theta\in \Theta} |\tilde{X}_t-X_t|\right|\right|_4= \left|\left|\sup_{\theta\in \Theta} |\sum_{i=t}^{r} g_i^a(\Delta_t,\theta){X}_{t-i}+\sum_{j=t}^{r} g_j^b(\Delta_t,\theta)\epsilon_{t-i}(\theta)|\right|\right|_4<+\infty
\mbox{ for }t=1,\dots,r
$$
as indeed $X_t\in L^4$ (as proved in the proof of Proposition \ref{prop_stationary}) and $|| \sup_{\theta\in\Theta} \epsilon_t(\theta)||_4 <+\infty$  as proved in Point 1.
In view of \eqref{differenceZ}, using Minkowski's and H{\"o}lder's inequalities and {\bf (A5a)}, we thus have
$$
\left|\left|\sup_{\theta\in \Theta} ||Z^e_t(\theta)-Z_t(\theta)||\right|\right|_2\le C\rho^t,
$$
for some constant $C>0$ and $0<\rho<1$ (independent from $\theta$).
\\
Let us turn to Point 3. This is due to
$$
\PP\left(t^\alpha \sup_{\theta\in \Theta} |\epsilon_t(\theta)-e_t(\theta)|>\eta \right)\le \frac{t^{2+2\alpha}\left|\left| \sup_{\theta\in \Theta} |\epsilon_t(\theta)-e_t(\theta)|\right| \right|_2^2 }{t^{2} \eta^2}= o\left( \frac{1}{t^{2}}\right),\quad \forall \eta >0 ,
$$
the last equality thanks to Point 2, and using Borel-Cantelli's lemma.\\
We now turn to Point 4. The fact that $\left|\left| \sup_{\theta\in \Theta} ||\nabla^j\epsilon_0(\theta)||\right| \right|_4$ and $\sup_{t\ge 0}\left|\left| \sup_{\theta\in \Theta} ||\nabla^j e_t(\theta)||\right| \right|_4$ are finite is proved similarly to Point 1 and using estimates \eqref{coeff_c_i_expo_decrease} and \eqref{coeff_c_i_e_expo_decrease}.
We then pass on to the limit of $t^\alpha\left|\left| \sup_{\theta\in \Theta} ||\nabla (e_t- \epsilon_t)(\theta)||\right| \right|_{4/3}$ as $t\to\infty$. Let $i\in {\cal S}$. Deriving \eqref{eq_Z^e_t} with respect to $\theta_i$ yields
\begin{equation}\label{proof_lemma_prelim_derive}
\frac{\partial}{\partial \theta_i}[Z^e_t(\theta)-Z_t(\theta)]=\Phi(\Delta_{t},\theta)\frac{\partial}{\partial \theta_i}[Z^e_{t-1}(\theta)-Z_{t-1}(\theta)] + \frac{\partial}{\partial \theta_i}\Phi(\Delta_{t},\theta) [Z^e_{t-1}(\theta)-Z_{t-1}(\theta)],\quad \forall t\ge p+1,
\end{equation}
hence we may write
$$
\frac{\partial}{\partial \theta_i}[Z^e_t(\theta)-Z_t(\theta)]=\sum_{k=0}^{t-p}\prod_{j=0}^{k-1}\Phi(\Delta_{t-j},\theta) \frac{\partial}{\partial \theta_i}\Phi(\Delta_{t-k},\theta) [Z^e_{t-k}(\theta)-Z_{t-k}(\theta)],
$$
hence, using Minkovski's and H{\"o}lder's inequalities, and letting $M_\Phi:=\max_{s\in {\cal S},\theta\in \Theta}\left| \frac{\partial}{\partial \theta_i}\Phi(s,\theta)\right| $, we get
\begin{multline}\label{lemm_prelim_ineq_derive_proof}
t^\alpha \left|\left| \sup_{\theta\in \Theta} ||\frac{\partial}{\partial \theta_i}[Z^e_t(\theta)-Z_t(\theta)]||\right|\right|_{{8/5}} \le M_\Phi \sum_{k=0}^{t-p} \left|\left|\sup_{\theta\in \Theta} |\prod_{j=0}^{k-1}\Phi(\Delta_{t-j},\theta)| \right|\right|_{{8}}\\
. t^\alpha \left|\left|\ \sup_{\theta\in \Theta}|| Z^e_{t-k}(\theta)-Z_{t-k}(\theta)|| \right|\right|_2 .
\end{multline}
Now, since $\left|\left|\sup_{\theta\in \Theta} ||\prod_{j=0}^{k-1}\Phi(\Delta_{t-j},\theta)|| \right|\right|_{{8}}\le \kappa \rho^k$ for some $\kappa>0$ and $\rho<1$ thanks to {\bf (A5a)}, and since $t^\alpha \left|\left|\ \sup_{\theta\in \Theta}|| Z^e_{t-k}(\theta)-Z_{t-k}(\theta)|| \right|\right|_2 $ is uniformly bounded in $t$ and $k\le t$, and tends to $0$ as $t\to \infty$, the dominated convergence theorem yields that $t^\alpha \left|\left| \sup_{\theta\in \Theta} ||\frac{\partial}{\partial \theta_i}[Z^e_t(\theta)-Z_t(\theta)]||\right|\right|_{{8/5}} \longrightarrow 0$ as $t\to\infty$, proving $t^\alpha \left|\left| \sup_{\theta\in \Theta} ||\nabla (e_t- \epsilon_t)(\theta)||\right| \right|_{{8/5}}\longrightarrow 0$ as $t\to\infty$ in Point 4. Let us now prove that $t^\alpha\left|\left| \sup_{\theta\in \Theta} ||\nabla^2 (e_t- \epsilon_t)(\theta)||\right| \right|_{{4/3}}\longrightarrow 0$. Deriving again \eqref{proof_lemma_prelim_derive} with respect to $\theta_\ell$, $\ell\in {\cal S}$, we obtain
\begin{multline}\label{lemm_prelim_proof_derive2}
\frac{\partial^2}{\partial \theta_\ell\partial \theta_i}[Z^e_t(\theta)-Z_t(\theta)]=\Phi(\Delta_{t},\theta)\frac{\partial^2}{\partial \theta_\ell\partial \theta_i}[Z^e_{t-1}(\theta)-Z_{t-1}(\theta)]+ \frac{\partial}{\partial \theta_\ell}\Phi(\Delta_{t},\theta) \frac{\partial}{\partial \theta_i}[Z^e_{t-1}(\theta)-Z_{t-1}(\theta)]\\
+\frac{\partial}{\partial \theta_i}\Phi(\Delta_{t},\theta) \frac{\partial}{\partial \theta_\ell}[Z^e_{t-1}(\theta)-Z_{t-1}(\theta)] + \frac{\partial^2}{\partial \theta_\ell\partial \theta_i}\Phi(\Delta_{t},\theta) [Z^e_{t-1}(\theta)-Z_{t-1}(\theta)],\quad \forall t\ge p+1,
\end{multline}
so that, in the same spirit as \eqref{proof_lemma_prelim_derive}, we obtain
\begin{multline}\label{ineg_derive}
t^\alpha \left|\left| \sup_{\theta\in \Theta} ||\frac{\partial^2}{\partial \theta_\ell\partial \theta_i}[Z^e_t(\theta)-Z_t(\theta)]||\right|\right|_{{4/3}} \le M_\Phi' \sum_{k=0}^{t-p} \left|\left|\sup_{\theta\in \Theta} |\prod_{j=0}^{k-1}\Phi(\Delta_{t-j},\theta)| \right|\right|_{{8}}\\
.t^\alpha \left[ \left|\left|\ \sup_{\theta\in \Theta}|| Z^e_{t-k}(\theta)-Z_{t-k}(\theta)|| \right|\right|_{{8/5}}+\left|\left|\ \sup_{\theta\in \Theta}|| \frac{\partial}{\partial \theta_\ell}[Z^e_{t-k}(\theta)-Z_{t-k}(\theta)]|| \right|\right|_{{8/5}}\right.\\
\left. +\left|\left|\ \sup_{\theta\in \Theta}|| \frac{\partial}{\partial \theta_i}[Z^e_{t-k}(\theta)-Z_{t-k}(\theta)]|| \right|\right|_{{8/5}} \right],
\end{multline}
for some positive constant $M_\Phi'$. Using Point 2 (so that $t^\alpha \left|\left|\ \sup_{\theta\in \Theta}|| Z^e_{t-k}(\theta)-Z_{t-k}(\theta)|| \right|\right|_{{8/5}}$ tends to $0$ as $t\to\infty$, since $8/5<2$) and the previous estimate $$t^\alpha \left|\left| \sup_{\theta\in \Theta} ||\frac{\partial}{\partial \theta_i}[Z^e_t(\theta)-Z_t(\theta)]||\right|\right|_{{8/5}} \longrightarrow 0$$ for all $i\in {\cal S}$, we conclude by a dominated convergence theorem that $$t^\alpha \left|\left| \sup_{\theta\in \Theta} ||\frac{\partial^2}{\partial \theta_\ell\partial \theta_i}[Z^e_t(\theta)-Z_t(\theta)]||\right|\right|_{{4/3}},\text{ hence }t^\alpha\left|\left| \sup_{\theta\in \Theta} ||\frac{\partial^2}{\partial \theta_\ell\partial \theta_i} (e_t- \epsilon_t)(\theta)||\right| \right|_{{4/3}},$$ tends to $0$.\\
We finish by sketching the proof leading to $t^\alpha\left|\left| \sup_{\theta\in \Theta} ||\nabla^3 (e_t- \epsilon_t)(\theta)||\right| \right|_{1}\longrightarrow 0$. The starting point is again deriving \eqref{lemm_prelim_proof_derive2} with respect to $\theta_{\ell '}$, $\ell '\in {\cal S}$, which yields, as in \eqref{ineg_derive}, the following estimate:
\begin{multline*}
t^\alpha \left|\left| \sup_{\theta\in \Theta} ||\frac{\partial^3}{\partial \theta_\ell' \partial \theta_\ell\partial \theta_i}[Z^e_t(\theta)-Z_t(\theta)]||\right|\right|_{1} \le M_\Phi'' \sum_{k=0}^{t-p} \left|\left|\sup_{\theta\in \Theta} |\prod_{j=0}^{k-1}\Phi(\Delta_{t-j},\theta)| \right|\right|_{8}\\
.t^\alpha \left[ \left|\left|\ \sup_{\theta\in \Theta}|| Z^e_{t-k}(\theta)-Z_{t-k}(\theta)|| \right|\right|_{4/3}+\left|\left|\ \sup_{\theta\in \Theta}|| \frac{\partial}{\partial \theta_\ell}[Z^e_{t-k}(\theta)-Z_{t-k}(\theta)]|| \right|\right|_{4/3}\right.\\
+\left|\left|\ \sup_{\theta\in \Theta}|| \frac{\partial}{\partial \theta_i}[Z^e_{t-k}(\theta)-Z_{t-k}(\theta)]|| \right|\right|_{4/3}  +\left|\left|\ \sup_{\theta\in \Theta}|| \frac{\partial^2}{\partial \theta_\ell\partial \theta_i}[Z^e_{t-k}(\theta)-Z_{t-k}(\theta)]|| \right|\right|_{4/3} \\
\left. + \left|\left|\ \sup_{\theta\in \Theta}|| \frac{\partial^2}{\partial \theta_\ell'\partial \theta_i}[Z^e_{t-k}(\theta)-Z_{t-k}(\theta)]|| \right|\right|_{4/3}+\left|\left|\ \sup_{\theta\in \Theta}|| \frac{\partial^2}{\partial \theta_\ell'\partial \ell}[Z^e_{t-k}(\theta)-Z_{t-k}(\theta)]|| \right|\right|_{4/3}\right],
\end{multline*}
for some constant $M_\Phi''$, so that we conclude similarly.\zak
{\bf Proof of Proposition \ref{prop_prelim}.} In this proof, $C$ will denote a generic positive constant that will change from line to line. Let us start with Point 1. The fact that $Q_n(\theta)$ converges a.s. to $O_\infty(\theta)=\EE(\epsilon_0(\theta))$ as $n\to \infty$ is a consequence of the fact that $ \sup_{\theta\in \Theta} |\epsilon_t(\theta)-e_t(\theta)|^2\longrightarrow 0$ (itself a consequence of Point 3 of Lemma \ref{lemme_prelim}) and is justified by the same exact proof of Lemma 7 in \cite{fz98}. We now prove that $n^\alpha \left|\left| \sup_{\theta\in \Theta} |Q_n(\theta)-O_n(\theta)|\right| \right|_1 $. Let $\alpha\in (0,1)$. Using the upper bound $\sup_{\theta\in \Theta}|e_t(\theta)^2-\epsilon_t(\theta)^2|\le   \left[ \sup_{\theta\in \Theta} |e_t(\theta)|  + \sup_{\theta\in \Theta} |\epsilon_t(\theta)|\right] .\sup_{\theta\in \Theta} |e_t(\theta)- \epsilon_t(\theta)|$, as well as Cauchy-Schwarz and Minkovski's inequalities, we get the following
$$
n^\alpha \left|\left| \sup_{\theta\in \Theta} |Q_n(\theta)-O_n(\theta)|\right| \right|_1 \le \frac{1}{n^{1-\alpha}}\sum_{t=1}^n \left[ \left|\left|\sup_{\theta\in \Theta} |e_t(\theta)| \right| \right|_2 + \left|\left|\sup_{\theta\in \Theta} |\epsilon_t(\theta)|\right| \right|_2\right] .\left|\left|\sup_{\theta\in \Theta} |e_t(\theta)- \epsilon_t(\theta)| \right| \right|_2 .
$$
Since $ \left|\left|\sup_{\theta\in \Theta} |e_t(\theta)| \right| \right|_2$ is upper bounded by Point 1 of Lemma \ref{lemme_prelim}, and $\left|\left|\sup_{\theta\in \Theta} |\epsilon_t(\theta)|\right| \right|_2$ is constant in $t$ and finite, there thus exists some constant $C>0$ such that
\begin{equation}\label{demo_Prop_prelim1}
n^\alpha \left|\left| \sup_{\theta\in \Theta} |Q_n(\theta)-O_n(\theta)|\right| \right|_1 \le C \frac{1}{n^{1-\alpha}}\sum_{t=1}^n  \left|\left|\sup_{\theta\in \Theta} |e_t(\theta)- \epsilon_t(\theta)| \right| \right|_2 .
\end{equation}
Let us write the right hand side of the above inequality in the form $\frac{1}{n^{1-\alpha}}\sum_{t=1}^n   [t^{1-\alpha}-(t-1)^{1-\alpha}] \frac{1}{t^{1-\alpha}-(t-1)^{1-\alpha}} \left|\left|\sup_{\theta\in \Theta}|e_t(\theta)- \epsilon_t(\theta)| \right| \right|_2 $. Since
$$
\frac{1}{t^{1-\alpha}-(t-1)^{1-\alpha}} \left|\left|\sup_{\theta\in \Theta}|e_t(\theta)- \epsilon_t(\theta)| \right| \right|_2\sim_{t\to\infty } \frac{1}{(1-\alpha)t^{-\alpha}}\left|\left|\sup_{\theta\in \Theta}|e_t(\theta)- \epsilon_t(\theta)| \right| \right|_2,
$$
which tends to $0$ as $t\to \infty$ (a consequence of Point 2 of Lemma \ref{lemme_prelim}), Toeplitz's lemma implies that the right hand side of \eqref{demo_Prop_prelim1} tends to $0$ as $n\to \infty$, and this proves Point 1.\\
We now prove Point 2. We have for all $\theta\in\Theta$
\begin{multline}\label{derive_inequality}
||\nabla [e_t(\theta)^2-\epsilon_t(\theta)^2]||=|| 2 e_t(\theta) \nabla[e_t(\theta)-\epsilon_t(\theta)]+ 2[e_t(\theta)-\epsilon_t(\theta)] \nabla \epsilon_t(\theta)||\\
\le 2 || e_t(\theta) \nabla[e_t(\theta)-\epsilon_t(\theta)]|| + 2|e_t(\theta)- \epsilon_t(\theta)|.|| \nabla \epsilon_t(\theta) ||.
\end{multline}
so that
\begin{multline}\label{Point2_ineq1}
\sup_{\theta\in \Theta} ||\nabla(Q_n(\theta)-O_n(\theta))||\le \frac{1}{n}\sum_{t=1}^n \sup_{\theta\in \Theta} |e_t(\theta)|.\sup_{\theta\in \Theta}|| \nabla[e_t(\theta)-\epsilon_t(\theta)]|| \\
+ \frac{1}{n}\sum_{t=1}^n \sup_{\theta\in \Theta} |e_t(\theta)-\epsilon_t(\theta)|. \sup_{\theta\in \Theta}|| \nabla\epsilon_t(\theta)||.
\end{multline}
Lemma \ref{lemme_prelim}, Points 2 and 4, along with Borel-Cantelli's lemma, yields that $\sup_{\theta\in \Theta} |\epsilon_t(\theta)-e_t(\theta)|$ and $\sup_{\theta\in \Theta} ||\nabla(\epsilon_t-e_t)(\theta)||$ a.s. tend to $0$ as $t\to\infty$. The second term on the right hand side of \eqref{Point2_ineq1} if then a.s. upper bounded thanks to Cauchy-Scwharz inequality by
$$
 \left[ \frac{1}{n}\sum_{t=1}^n \sup_{\theta\in \Theta} |e_t(\theta)-\epsilon_t(\theta)|^2\right]^{1/2}.\left[ \frac{1}{n}\sum_{t=1}^n  \sup_{\theta\in \Theta}|| \nabla\epsilon_t(\theta)||^2\right]^{1/2},
$$
which tends to zero thanks to Cesaro's Lemma and the ergodic theorem. And since, by Minkowski's inequality,
$$
\left[ \frac{1}{n}\sum_{t=1}^n  \sup_{\theta\in \Theta}|e_t(\theta)|^2\right]^{1/2}\le \left[ \frac{1}{n}\sum_{t=1}^n  \sup_{\theta\in \Theta}|e_t(\theta)-\epsilon_t(\theta)|^2\right]^{1/2} + \left[ \frac{1}{n}\sum_{t=1}^n  \sup_{\theta\in \Theta}|\epsilon_(\theta)|^2\right]^{1/2},
$$
we have that $\left[ \frac{1}{n}\sum_{t=1}^n  \sup_{\theta\in \Theta}|e_t(\theta)|^2\right]^{1/2}$ is a.s. upper bounded in $n\ge 1$, again by a Cesaro and ergodic theorem argument. The first term on the right hand side of \eqref{Point2_ineq1} if then again a.s. upper bounded thanks to Cauchy-Scwharz inequality by
$$
\left[ \frac{1}{n}\sum_{t=1}^n \sup_{\theta\in \Theta} ||\nabla(e_t-\epsilon_t)(\theta)||^2\right]^{1/2}.\left[ \frac{1}{n}\sum_{t=1}^n  \sup_{\theta\in \Theta}| e_t(\theta)|^2\right]^{1/2},
$$
which tends to zero as $t\to\infty$. Hence \eqref{Point2_ineq1} implies that $\sup_{\theta\in \Theta} ||\nabla(Q_n(\theta)-O_n(\theta))||$ a.s. tends to $0$ as $n\to\infty$. Proof of a.s. convergence of $\sup_{\theta\in \Theta} ||\nabla^j(Q_n(\theta)-O_n(\theta))||$ to $0$ for $j=2,3$ is obtained similarly, using arguments related to Points 3 and 4 from Lemma \ref{lemme_prelim}.
\\
Let us now prove Point 3. Let $\alpha\in (0,1)$. We deduce from \eqref{derive_inequality}, using Minkowski and H{\"o}lder inequalities, that
\begin{multline}\label{demo_Prop_prelim2}
n^\alpha \left|\left| \sup_{\theta\in \Theta} ||\nabla(Q_n(\theta)-O_n(\theta))||\right| \right|_1 \le \frac{C}{n^{1-\alpha}}\sum_{t=1}^n \left|\left|\sup_{\theta\in \Theta} |e_t(\theta)| \right| \right|_4 \left|\left|\sup_{\theta\in \Theta}|| \nabla[e_t(\theta)-\epsilon_t(\theta)]|| \right| \right|_{4/3}  \\
+ \frac{C}{n^{1-\alpha}}\sum_{t=1}^n \left|\left|\sup_{\theta\in \Theta} |e_t(\theta)-\epsilon_t(\theta)| \right| \right|_2 \left|\left|\sup_{\theta\in \Theta}|| \nabla\epsilon_t(\theta)|| \right| \right|_{2} .
\end{multline}
Using Point 1 of Lemma \ref{lemme_prelim}, we have that $\left|\left|\sup_{\theta\in \Theta} |e_t(\theta)| \right| \right|_4$ is upper bounded by some constant $C$. The first term in the righthandside of \eqref{demo_Prop_prelim2} may thus be upper bounded by
$$
C\frac{1}{n^{1-\alpha}}\sum_{t=1}^n  \left|\left|\sup_{\theta\in \Theta}|| \nabla[e_t(\theta)-\epsilon_t(\theta)]|| \right| \right|_{4/3}.
$$
Noting that $\left|\left|\sup_{\theta\in \Theta}|| \nabla[e_t(\theta)-\epsilon_t(\theta)]|| \right| \right|_{4/3}\le C' \left|\left|\sup_{\theta\in \Theta}|| \nabla[e_t(\theta)-\epsilon_t(\theta)]|| \right| \right|_{8/5}$ for some constant $C'$, the above expression is, similarly to the argument in \eqref{demo_Prop_prelim1}, a quantity that tends to $0$ as $n\to\infty$ thanks to Point 4 in Lemma \ref{lemme_prelim} coupled with Toeplitz's lemma. Hence the first term in the right hand side of \eqref{demo_Prop_prelim2}  tends to $0$ as $n\to \infty$. Again using Point 1 and Point 2 of the same lemma, and with the same argument, we also have that the second term in the right hand side of \eqref{demo_Prop_prelim2}  tends to $0$ as $n\to \infty$, which proves Point 2.\zak
\subsection{Proofs of Proposition \ref{theo_consistency} and Theorem \ref{theo_consistency_vrai}}
{\bf Proof of Proposition \ref{theo_consistency}.}
Independence of the processes $(\Delta_t)_{t\in \nbZ}$ and $(\epsilon_t)_{t\in \nbZ}$ as well their ergodicity yields that, for fixed $j\in \nbN$, the process $\left( (\Delta_{t-1},...,\Delta_{t-j},\epsilon_{t-j}) \right)$ is ergodic. We thus deduce from Expression (\ref{dse_eps_theta}), and using the fact that $(\epsilon_t)_{t\in \nbZ}$ is a weak white noise, that $O_n(\theta)$ defined by (\ref{on}) verifies
\begin{equation}\label{O_infinity}
2 O_n(\theta) \longrightarrow 2 O_\infty(\theta):=\sigma^2 \sum_{j=0}^\infty \nbE \left( [c_j(\theta,\Delta_0,...,\Delta_{-j})]^2\right)= \sigma^2 + \sigma^2 \sum_{j=1}^\infty \nbE \left( [c_j(\theta,\Delta_0,...,\Delta_{-j})]^2\right)\quad \mbox{a.s.}
\end{equation}
as $n\to\infty$ (remember that $c_0(\theta,\Delta_0)=1$). By uniqueness of decomposition (\ref{dse_eps_theta}) in Proposition \ref{prop_stationary}, and since $\epsilon_t(\theta_0)=\epsilon_t$, we have that $(c_i(\theta,\Delta_{t-1},\dots,\Delta_{t-i}))_{i\in\nbN}=(1,0,...)$ if and only if $\theta=\theta_0$, and that $O_\infty(\theta)$ given in (\ref{O_infinity}) is minimum at $\theta=\theta_0$, with minimum given by $O_\infty(\theta_0)=\sigma^2$.
Let us then deduce that the estimator $\check{\theta}_n$ defined in (\ref{LSE}) converges a.s. towards $\theta_0$. For this we let a subsequence $(\check{\theta}_{n_k})_{k\in\nbN}$ converging to some $\theta^*$ in the compact set $\Theta$ and we prove that $\theta^*=\theta_0$. Indeed, by definition of the estimator $\check{\theta}_{n_k}$ we have
\begin{equation}\label{proof_consistency0}
O_{n_k}(\theta_0)\ge O_{n_k}(\check{\theta}_{n_k})
\end{equation}
for all $k\in\nbN$. A Taylor expansion yields the inequality
\begin{equation}\label{proof_consistency1}
| O_{n_k}(\check{\theta}_{n_k})- O_{n_k}(\theta^*)|\le || \check{\theta}_{n_k} - \theta^* ||.  \frac{1}{n_k}\sum_{t=1}^{n_k}\sup_{\theta\in\Theta}[|\epsilon_t(\theta)|.|| \nabla \epsilon_t(\theta)||].
\end{equation}
But, using the ergodic theorem, we have
\begin{multline*}
 \frac{1}{n_k}\sum_{t=1}^{n_k}\sup_{\theta\in\Theta}[|\epsilon_t(\theta)|.|| \nabla \epsilon_t(\theta)||]\le \frac{1}{2 n_k}\sum_{t=1}^{n_k}\left[\sup_{\theta\in\Theta}|\epsilon_t(\theta)|^2 + \sup_{\theta\in\Theta}||\nabla \epsilon_t(\theta)||^2 \right]\\
\longrightarrow \frac{1}{2} \left|\left|\sup_{\theta\in\Theta}|\epsilon_0(\theta)| \right|\right|_2^2 + \frac{1}{2} \left|\left|\sup_{\theta\in\Theta}||\nabla\epsilon_0(\theta)|| \right|\right|_2^2<+\infty ,
\end{multline*}
so that we get from \eqref{proof_consistency1} that $O_{n_k}(\check{\theta}_{n_k})- O_{n_k}(\theta^*)\longrightarrow 0$ as $k\to\infty$. Since $O_{n_k}(\theta^*)\longrightarrow O_{\infty}(\theta^*)$, we obtain, passing to the limit in \eqref{proof_consistency0}, that
$$
O_{\infty}(\theta_0)\ge O_{\infty}(\theta^*),
$$
hence $ \theta^*=\theta_0$ thank to uniqueness of the minimum of $O_{\infty}(\theta)$.\zak
{\bf Proof of Theorem \ref{theo_consistency_vrai}.}
Similarly to the proof of the previous theorem, we let a subsequence $(\hat{\theta}_{n_k})_{k\in\nbN}$ converging to some $\theta_*$ in the compact set $\Theta$ and we prove that $\theta_*=\theta_0$ by proving that $O_{\infty}(\theta_0)= O_{\infty}(\theta_*)$. By definition of $\hat{\theta}_{n_k}$ we have
\begin{equation}\label{proof_consistency_vrai1}
Q_{n_k}(\theta_0)\ge Q_{n_k}(\hat{\theta}_{n_k}),\quad \forall k \ge 0.
\end{equation}
Now, a Taylor expansion yields, for all $\theta'$ and $\theta''$ in $\Theta$, similarly to the argument in the proof of Proposition \ref{theo_consistency},
\begin{equation}\label{proof_consistency_vrai2}
| Q_{n_k}(\theta')- Q_{n_k}(\theta'')|\le || \theta' - \theta'' ||.
\frac{1}{2 n_k}\sum_{t=1}^{n_k}\left[\sup_{\theta\in\Theta}|e_t(\theta)|^2 + \sup_{\theta\in\Theta}||\nabla e_t(\theta)||^2 \right].
\end{equation}
Using inequality $(a+b)^2\le 2 (a^2+b^2)$ for all $a$ and $b$, we deduce that $\sup_{\theta\in\Theta}|e_t(\theta)|^2\le 2 (\sup_{\theta\in\Theta}|e_t(\theta)-\epsilon_t(\theta)|^2) + \sup_{\theta\in\Theta}|\epsilon_t(\theta)|^2$. Since a consequence of Point 3 of Lemma \ref{lemme_prelim} is that $\sup_{\theta\in\Theta}|e_t(\theta)-\epsilon_t(\theta)|^2$ tends to $0$ as $t\to\infty$, the ergodic theorem yields that
$$
\frac{1}{n_k}\sum_{t=1}^{n_k}\left[\sup_{\theta\in\Theta}|e_t(\theta)|^2 + \sup_{\theta\in\Theta}||\nabla e_t(\theta)||^2 \right]\longrightarrow \left|\left|\sup_{\theta\in\Theta}|\epsilon_0(\theta)| \right|\right|_2^2 + \left|\left|\sup_{\theta\in\Theta}||\nabla\epsilon_0(\theta)|| \right|\right|_2^2<+\infty
$$
as $k\to\infty$. Thanks to \eqref{proof_consistency_vrai2} and Point 1 of Proposition \ref{prop_prelim}, we thus deduce that $Q_{n_k}(\theta_0)\longrightarrow O_\infty(\theta_0)$ and $Q_{n_k}(\hat{\theta}_{n_k})\longrightarrow O_\infty(\theta_*)$ as $k\to\infty$, and we conclude in the same way as in proof of Theorem \ref{theo_consistency}.\zak
\subsection{Proofs of Theorem \ref{CLT_theorem}}
Let us introduce the following matrices and vectors
\begin{eqnarray}
I_n(\theta)&:=& \var\left( \sqrt{n}\nabla O_n(\theta)\right)= \left(I_n(l,r)(\theta)\right)_{l,r=1\dots(p+q)K}\in \nbR^{(p+q)K\times (p+q)K},\quad n\in\nbN, ,\label{def_I_n_theta}\\
Y_k(\theta)&:=& \epsilon_k(\theta) \nabla \epsilon_k(\theta)=(Y_k(l)(\theta))_{l=1\dots(p+q)K} \in \nbR^{(p+q)K\times 1},\quad k\in \nbZ, \label{def_Y_k_theta}
\end{eqnarray}
Theorem \ref{CLT_theorem} can be established using the following lemmas.
\begin{lemme}[\cite{davy1968}]
\label{Davydov}
Let $p$, $q$ and $r$ three positive numbers such that
$p^{-1}+q^{-1}+r^{-1}=1$. Then 
\begin{equation}\label{DavydovInequality}
  \left|\mbox{Cov}(X,Y)\right|\leq K_0\|X\|_p\|Y\|_q\left[\alpha\left\{\sigma(X),\sigma(Y)\right\}\right]^{1/r},
\end{equation}
where $\|X\|_p^p=\e(X^p)$, $K_0$ is an universal constant, and $\alpha\left\{\sigma(X),\sigma(Y)\right\}$
denotes the strong mixing coefficient between the $\sigma$-fields $\sigma(X)$ and $\sigma(Y)$
generated by the random variables $X$ and $Y$, respectively.
\end{lemme}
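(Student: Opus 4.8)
The plan is to establish the inequality in two stages: first a covariance bound for \emph{bounded} random variables that is linear in the mixing coefficient, and then the removal of boundedness by an interpolation argument that produces the exponent $1/r$ and accounts for the constraint $p^{-1}+q^{-1}+r^{-1}=1$. Throughout I write $\mathcal{A}=\sigma(X)$, $\mathcal{B}=\sigma(Y)$ and abbreviate $\alpha:=\alpha\{\mathcal{A},\mathcal{B}\}$, the strong mixing coefficient in \eqref{def_mixing}.

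First I would treat the bounded case: if $|X|\le M_1$ and $|Y|\le M_2$ almost surely, then $|\cov(X,Y)|\le 4M_1M_2\,\alpha$. By bilinearity of the covariance it suffices to take $M_1=M_2=1$. The argument rests on the identity $\cov(X,Y)=\e[X\,U]$, where $U:=\e[Y\mid\mathcal{A}]-\e[Y]$ is $\mathcal{A}$-measurable with $\e[U]=0$, whence $|\cov(X,Y)|\le\e|U|$. Introducing the $\mathcal{A}$-measurable sign $f:=\mathrm{sign}(U)$ gives $\e|U|=\e[fU]=\cov(f,Y)$, and applying the same device on the $\mathcal{B}$-side reduces the estimate to covariances of indicators $\cov(\1_{A},\1_{B})$ with $A\in\mathcal{A}$, $B\in\mathcal{B}$. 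Each of these equals $\PP(A\cap B)-\PP(A)\PP(B)$, bounded by $\alpha$ directly from \eqref{def_mixing}; the four sign combinations arising in the decompositions of $f$ and its dual account for the factor $4$.

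Next I would remove the boundedness. The cleanest route to the sharp exponent is to upgrade the bounded estimate to the quantile form of the covariance inequality, namely $|\cov(X,Y)|\le 2\int_0^{2\alpha}Q_{|X|}(u)\,Q_{|Y|}(u)\,du$, where $Q_{|X|}$ is the quantile function associated with the tail $u\mapsto\PP(|X|>u)$. This sharper estimate follows from the indicator bound of the previous step by slicing $X$ and $Y$ along their level sets, writing $X=\int_0^\infty(\1_{\{X>u\}}-\1_{\{-X>u\}})\,du$ and likewise for $Y$, substituting into $\cov(X,Y)$, and estimating each elementary covariance $\cov(\1_{\{\pm X>u\}},\1_{\{\pm Y>v\}})$ by the minimum of $\alpha$ and the two level-set probabilities, the resulting double integral collapsing to the stated quantile integral. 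Once this bound is in hand, Davydov's inequality follows in one line from the three-factor Hölder inequality with exponents $(p,q,r)$: since $\int_0^1 Q_{|X|}(u)^p\,du=\e|X|^p=\|X\|_p^p$, one obtains $\int_0^{2\alpha}Q_{|X|}Q_{|Y|}\le\|X\|_p\|Y\|_q(2\alpha)^{1/r}$, which is exactly \eqref{DavydovInequality} with a universal constant (one may take $K_0=4$, using $2^{1/r}\le2$). This is precisely where $p^{-1}+q^{-1}+r^{-1}=1$ enters: it is the admissibility condition for the three-factor Hölder inequality, and the factor $(2\alpha)^{1/r}$ comes from integrating the constant $1$ over the interval $[0,2\alpha]$ whose length is governed by the mixing coefficient.

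The main obstacle is the passage from the bounded covariance bound to the quantile integral inequality. A purely ``truncate and optimize'' alternative is available — split $\cov(X,Y)=\cov(X_M,Y_N)+\cov(X-X_M,Y)+\cov(X_M,Y-Y_N)$, bound the first term by the bounded case ($\le 4MN\alpha$) and the two tail terms by Hölder using $\e|X|^p<\infty$ and $\e|Y|^q<\infty$, and then choose the truncation levels $M,N$ as suitable powers of $\alpha$ — but this recovers the exponent $1/r$ only up to constants and with some loss. Obtaining the stated inequality with the exact exponent and a universal constant requires the level-set/quantile step, which is the delicate technical heart of the argument; the bounded case and the concluding Hölder estimate are routine.
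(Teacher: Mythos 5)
The paper does not prove this lemma at all: as the bracketed citation in the statement indicates, it is quoted from Davydov (1968) as a known external result, and the text proceeds immediately to apply it in the proof of Lemma \ref{existenceI}. So there is no in-paper proof to compare yours against; judged on its own terms, your argument is correct and complete in outline. The three stages are sound: the bounded-case bound $|\cov(X,Y)|\le 4M_1M_2\,\alpha$ via conditioning and $\mathcal{A}$-, $\mathcal{B}$-measurable signs is the classical Ibragimov argument; the level-set representation combined with the elementary bound $|\cov(\1_A,\1_B)|\le\min\left\{\alpha,\PP(A),\PP(B)\right\}$ and the Tonelli identity
\begin{equation*}
\int_0^\infty\!\!\int_0^\infty \min\left\{\alpha,\PP(|X|>u),\PP(|Y|>v)\right\}du\,dv=\int_0^\alpha Q_{|X|}(w)\,Q_{|Y|}(w)\,dw
\end{equation*}
does yield a quantile covariance inequality of Rio's type (up to the exact constants, which is all you need since only a universal $K_0$ is claimed); and the three-factor H\"older step, using $\int_0^1 Q_{|X|}^p(u)\,du=\e|X|^p$ and $2\alpha\le 1$, correctly produces the factor $\alpha^{1/r}$, with the condition $p^{-1}+q^{-1}+r^{-1}=1$ entering exactly where you say it does. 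One remark: your closing comment undersells the ``truncate and optimize'' alternative. With truncation levels chosen as suitable powers of $\alpha$, that route gives precisely \eqref{DavydovInequality} with the exact exponent $1/r$ and a (larger) universal constant, and it is in fact Davydov's original 1968 proof --- the one the paper implicitly invokes; the quantile route you follow is Rio's later refinement, which buys sharper constants but is not needed to reach the stated inequality.
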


\begin{lemme}\label{existenceI}
 Let the assumptions of Theorem \ref{CLT_theorem} be
satisfied. For all $l$, $r$ in $1$,\dots,$(p+q)K$ and $\theta\in \Theta$ we have
$$
I_n(l,r)(\theta)\longrightarrow I(l,r)(\theta):=\sum_{k=-\infty}^\infty  c_k (l,r)(\theta),\quad n\to +\infty ,
$$
where $c_k(l,r)(\theta)=\cov\left(Y_t(l)(\theta),Y_{t-k}(r)(\theta)\right)$, $k\in\mathbb{Z}$, the former being a convergent series.
\end{lemme}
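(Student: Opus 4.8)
The plan is to rewrite $I_n(l,r)(\theta)$ as a Ces\`aro-type weighted sum of the covariances $c_k(l,r)(\theta)$, then establish the absolute summability $\sum_{k\in\nbZ}|c_k(l,r)(\theta)|<+\infty$, and finally pass to the limit by dominated convergence. First I would note that $\nabla O_n(\theta)=\frac1n\sum_{t=1}^n Y_t(\theta)$, so that $\sqrt n\,\nabla O_n(\theta)=\frac{1}{\sqrt n}\sum_{t=1}^n Y_t(\theta)$ and, by stationarity of $(Y_t(\theta))_{t\in\nbZ}$ (which follows from Proposition \ref{prop_stationary} and the stationarity of $(\epsilon_t,\Delta_t)$), the $(l,r)$ entry of $I_n(\theta)$ is
$$
I_n(l,r)(\theta)=\frac1n\sum_{t,s=1}^n\cov\big(Y_t(l)(\theta),Y_s(r)(\theta)\big)=\sum_{|k|<n}\Big(1-\frac{|k|}{n}\Big)c_k(l,r)(\theta).
$$
Once $\sum_{k}|c_k(l,r)(\theta)|<+\infty$ is known, since $(1-|k|/n)\1_{[|k|<n]}$ is bounded by $1$ and converges to $1$ for each fixed $k$, dominated convergence on the counting measure over $\nbZ$ yields $I_n(l,r)(\theta)\to\sum_{k\in\nbZ}c_k(l,r)(\theta)$.

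The heart of the proof is therefore the absolute summability, and the obstacle is that $Y_t(\theta)=\epsilon_t(\theta)\nabla\epsilon_t(\theta)$ depends on the whole past $(\epsilon_s,\Delta_s)_{s\le t}$, so Lemma \ref{Davydov} cannot be applied directly to $Y_t(\theta)$ and $Y_{t-k}(\theta)$, whose generating $\sigma$-fields overlap. By stationarity it suffices to treat $k\ge 0$ (since $c_{-k}(l,r)=c_k(r,l)$). To circumvent the overlap I would truncate: using the expansion $\epsilon_t(\theta)=\sum_{i\ge 0}c_i(\theta,\Delta_t,\dots,\Delta_{t-i+1})\epsilon_{t-i}$ of Proposition \ref{prop_stationary} (and the analogous expansion for $\nabla\epsilon_t(\theta)$), set $m=\lfloor k/2\rfloor$, define $\epsilon_t^{(m)}(\theta)$ and $\nabla\epsilon_t^{(m)}(\theta)$ by cutting these series at index $m$, and put $Y_t^{(m)}(\theta)=\epsilon_t^{(m)}(\theta)\nabla\epsilon_t^{(m)}(\theta)$. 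Then $Y_t^{(m)}(\theta)$ is measurable with respect to $(\epsilon_s,\Delta_s)$, $t-m\le s\le t$. Splitting
$$
c_k(l,r)(\theta)=\cov\big(Y_t^{(m)}(l),Y_{t-k}^{(m)}(r)\big)+\cov\big(Y_t(l)-Y_t^{(m)}(l),Y_{t-k}(r)\big)+\cov\big(Y_t^{(m)}(l),Y_{t-k}(r)-Y_{t-k}^{(m)}(r)\big),
$$
the two remainder covariances are bounded by Cauchy--Schwarz through $\|\sup_{\theta}|Y_t-Y_t^{(m)}|\|_2$, which decays exponentially in $m$ because the coefficients $c_i$ and $\nabla c_i$ decay exponentially in $L^{2\nu+4}$ by Lemma \ref{c_i^2_expo_decrease}, together with the uniform moment bounds of Lemma \ref{lemme_prelim}.

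For the leading term, the blocks $\{t-m,\dots,t\}$ and $\{t-k-m,\dots,t-k\}$ are separated by a gap $\lceil k/2\rceil$, so Lemma \ref{Davydov} with exponents $p=q=\nu+2$ and $r=(\nu+2)/\nu$ (which satisfy $2/p+1/r=1$ and $1/r=\nu/(\nu+2)$) gives $|\cov(Y_t^{(m)}(l),Y_{t-k}^{(m)}(r))|\le K_0\,C\,\alpha_{(\epsilon,\Delta)}(\lceil k/2\rceil)^{\nu/(\nu+2)}$, the $L^{\nu+2}$ norms of $Y_t^{(m)}$ being uniformly bounded via $\|\sup_\theta|\epsilon_t(\theta)|\|_{2\nu+4}<\infty$ and $\|\sup_\theta\|\nabla\epsilon_t(\theta)\|\|_{2\nu+4}<\infty$, which hold under {\bf (A3)} and {\bf (A5b)} by the same argument as Lemma \ref{c_i^2_expo_decrease}.

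Finally I would sum these bounds over $k\ge 0$. Using independence of $(\epsilon_t)$ and $(\Delta_t)$ one has $\alpha_{(\epsilon,\Delta)}(h)\le \alpha_\epsilon(h)+\alpha_\Delta(h)$, hence $\alpha_{(\epsilon,\Delta)}(h)^{\nu/(\nu+2)}\le \alpha_\epsilon(h)^{\nu/(\nu+2)}+\alpha_\Delta(h)^{\nu/(\nu+2)}$ by subadditivity of $x\mapsto x^{\nu/(\nu+2)}$; since each value $\alpha_{(\epsilon,\Delta)}(j)^{\nu/(\nu+2)}$ is counted at most twice in $\sum_{k}\alpha_{(\epsilon,\Delta)}(\lceil k/2\rceil)^{\nu/(\nu+2)}$, this series converges by {\bf (A2)}, while the remainder contribution $\sum_{k}\rho^{\lfloor k/2\rfloor}$ converges since $\rho<1$. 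This gives $\sum_{k\in\nbZ}|c_k(l,r)(\theta)|<+\infty$ and closes the argument. The delicate points I expect are the bookkeeping of the H\"older/Davydov exponents so that the mixing power matches exactly the summability in {\bf (A2)}, and checking that the window $m=\lfloor k/2\rfloor$ simultaneously controls the truncation remainder (exponentially) and leaves a gap growing linearly in $k$ for the application of Davydov's inequality.
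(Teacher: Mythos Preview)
Your argument is correct and follows the same overall scheme as the paper --- Ces\`aro representation, a truncation at depth $m=\lfloor k/2\rfloor$, exponential control of the remainder, and Davydov's inequality for the finite-range piece --- but the bookkeeping differs. The paper does not truncate $Y_t(\theta)$ as a whole; instead it expands $\epsilon_t(\theta)$ and $\partial_{\theta_l}\epsilon_t(\theta)$ via \eqref{dse_eps_theta}, writes $c_k(l,r)(\theta)$ as a quadruple sum $\sum_{i,j,i',j'}\beta_{i,j,i',j',k}$, and uses independence of $(\epsilon_t)$ and $(\Delta_t)$ to split each $\beta$ into an ``$\epsilon$-covariance $\times$ $\Delta$-expectation'' term plus a ``$\Delta$-covariance $\times$ $\epsilon$-expectation'' term. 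It then partitions the index ranges into eight pieces $g_1,\dots,g_5,h_1,h_2,h_3$ (according to whether $i,j,i',j'$ exceed $[k/2]$) and applies Davydov separately to $\alpha_\epsilon$ in $g_5$ and to $\alpha_\Delta$ in $h_3$. Your route is more compact: you apply Davydov once to the joint process and recover the two mixing series via $\alpha_{(\epsilon,\Delta)}(h)^{\nu/(\nu+2)}\le \alpha_\epsilon(h)^{\nu/(\nu+2)}+\alpha_\Delta(h)^{\nu/(\nu+2)}$ (an inequality the paper itself invokes later, in the proof of Lemma \ref{NAscore}). The paper's decomposition makes the separate roles of the $\epsilon$- and $\Delta$-mixing fully explicit at the cost of more terms; your truncation at the level of $Y_t$ buys brevity while requiring the joint-mixing bound. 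Both need exactly the moment order $2\nu+4$ from {\bf (A3)}/{\bf (A5b)} to get $\|Y_t^{(m)}\|_{\nu+2}$ (respectively $\|\epsilon_{t-i}\epsilon_{t-j}\|_{2+\nu}$ and $\|c_ic_{i,l}\|_{2+\nu}$) finite, so your H\"older exponents are aligned with the paper's.
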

{\bf Proof of Lemma \ref{existenceI}:}
Let us write
$$\nabla\epsilon_t(\theta) =
\left(\frac{\partial\epsilon_t(\theta)}{\partial\theta_1},\dots,
\frac{\partial\epsilon_t(\theta)}{\partial\theta_{(p+q)K}}\right)',
$$
where $\epsilon_t(\theta)$ is given by \eqref{dse_eps_theta}. The
process $\left(Y_k(\theta)\right)_k$ is strictly stationary and
ergodic. Moreover, we have
\begin{eqnarray*}
I_n(\theta)=\var\left(\sqrt{n}\frac{\partial}{\partial\theta}O_n(\theta)\right)
&=&\var\left(\frac{1}{\sqrt{n}}\sum_{t=1}^{n}Y_t(\theta)\right)
=\frac{1}{n}\sum_{t,s=1}^{n}\mbox{Cov}\left(Y_t(\theta),Y_s(\theta)\right)\\
&=&\frac{1}{n}\sum_{k=-n+1}^{n-1}(n-|k|)\mbox{Cov}\left(Y_t(\theta),Y_{t-k}(\theta)\right).
\end{eqnarray*}
From Proposition \ref{prop_stationary} and Lemma \ref{coeff_c_i_expo_decrease},  we have
$$\epsilon_t(\theta)= \sum_{i=0}^\infty c_i(\theta,\Delta_{t},\dots,\Delta_{t-i+1})\epsilon_{t-i}\text{ and }\frac{\partial\epsilon_t(\theta)}{\partial\theta_l}=\sum_{i=0}^\infty c_{i,l}(\theta,\Delta_{t},\dots,\Delta_{t-i+1}) \epsilon_{t-i},\text{ for }l=1,\dots,(p+q)K,$$
where we recall that $c_i(\theta,\Delta_{t},\dots,\Delta_{t-i+1})$ is defined by \eqref{expression_eps_theta}, and
\begin{eqnarray*}
c_{i,l}(\theta,\Delta_{t},\dots,\Delta_{t-i+1})&=&\frac{\partial}{\partial \theta_l} c_i(\theta,\Delta_{t},\dots,\Delta_{t-i+1})\\
&=&\frac{\partial}{\partial\theta_l} \left( \sum_{k=0}^i w_1\Phi(\Delta_{t},\theta)\dots\Phi(\Delta_{t-k+1},\theta)M \Psi(\Delta_{t-k})\dots\Psi(\Delta_{t-i+1})w_{p+1}'\right),
\end{eqnarray*}
with the following upper bound holding thanks to \eqref{coeff_c_i_e_expo_decrease}:
\begin{eqnarray*}
\mathbb{E}\sup_{\theta\in\Theta}(c_i(\theta,\Delta_{t},\dots,\Delta_{t-i+1}))^2\leq C\rho^i \text{ and }\mathbb{E}\sup_{\theta\in\Theta}(
c_{i,l}(\theta,\Delta_{t},\dots,\Delta_{t-i+1}))^2\leq C\rho^i,\quad \forall i.
\end{eqnarray*}
Let
\begin{eqnarray}
\beta_{i,j,i',j',k}(l,r)(\theta)&=& \e\left[c_i(\theta,\Delta_{t},\dots,\Delta_{t-i+1})c_{j,l}(\theta,\Delta_{t},\dots,\Delta_{t-j+1})
c_{i'}(\theta,\Delta_{t-k},\dots,\Delta_{t-k-i'+1})\right.\nonumber \\
&&\left. c_{j',r}(\theta,\Delta_{t-k},\dots,\Delta_{t-k-j'+1})\right]\e\left[\epsilon_{t-i}
\epsilon_{t-j}\epsilon_{t-k-i'}\epsilon_{t-k-j'}\right]\nonumber \\
&&-
\e\left[c_i(\theta,\Delta_{t},\dots,\Delta_{t-i+1})c_{j,l}(\theta,\Delta_{t},\dots,\Delta_{t-j+1})\right]\nonumber \\
&&\times
\e\left[c_{i'}(\theta,\Delta_{t-k},\dots,\Delta_{t-k-i'+1})c_{j',r}(\theta,\Delta_{t-k},\dots,\Delta_{t-k-j'+1})\right]\e\left[\epsilon_{t-i}
\epsilon_{t-j}\right]
\nonumber \\
&&\times\e\left[\epsilon_{t-k-i'}\epsilon_{t-k-j'}\right]
\nonumber \\
&=& \e\left[c_i(\theta,\Delta_{t},\dots,\Delta_{t-i+1})c_{j,l}(\theta,\Delta_{t},\dots,\Delta_{t-j+1})
c_{i'}(\theta,\Delta_{t-k},\dots,\Delta_{t-k-i'+1})\right. \nonumber\\
&&\left. c_{j',r}(\theta,\Delta_{t-k},\dots,\Delta_{t-k-j'+1})\right]\mbox{Cov}\left(\epsilon_{t-i}
\epsilon_{t-j},\epsilon_{t-k-i'}\epsilon_{t-k-j'}\right)\nonumber \\
&&+
\mbox{Cov}\left(c_i(\theta,\Delta_{t},\dots,\Delta_{t-i+1})c_{j,l}(\theta,\Delta_{t},\dots,\Delta_{t-j+1}),c_{i'}(\theta,\Delta_{t-k},\dots,\Delta_{t-k-i'+1})\right.\nonumber \\
&&\left. c_{j',r}(\theta,\Delta_{t-k},\dots,\Delta_{t-k-j'+1})\right)
\e\left[\epsilon_{t-i}
\epsilon_{t-j}\right]\e\left[\epsilon_{t-k-i'}\epsilon_{t-k-j'}\right].\label{difference_strong_weak_noise}
\end{eqnarray}
We then obtain
$$c_k(l,r)(\theta)=\sum_{i=0}^\infty\sum_{j=0}^\infty\sum_{i'=0}^\infty\sum_{j'=0}^\infty\beta_{i,j,i',j',k}(l,r)(\theta),\quad k\in\mathbb{Z}.$$
The Cauchy-Schwarz inequality implies that
\begin{eqnarray}
\nonumber&&\left|\e[c_i(\theta,\Delta_{t},\dots,\Delta_{t-i+1})c_{j,l}(\theta,\Delta_{t},\dots,\Delta_{t-j+1})
c_{i'}(\theta,\Delta_{t-k},\dots,\Delta_{t-k-i'+1})\right.\\\nonumber&\times&\left. c_{j',r}(\theta,\Delta_{t-k},\dots,\Delta_{t-k-j'+1})]\right|\leq
\left(\e[c_i(\theta,\Delta_{t},\dots,\Delta_{t-i+1})c_{j,l}(\theta,\Delta_{t},\dots,\Delta_{t-j+1})]^2\right)^{1/2}\\\nonumber&\times&\left(\mathbb{E}
[c_{i'}(\theta,\Delta_{t-k},\dots,\Delta_{t-k-i'+1}) c_{j',r}(\theta,\Delta_{t-k},\dots,\Delta_{t-k-j'+1})]^2\right)^{1/2}
\leq
\left(\e[c_i(\theta,\Delta_{t},\dots,\Delta_{t-i+1})]^4\right.\\&\times&\left. \e[c_{j,l}(\theta,\Delta_{t},\dots,\Delta_{t-j+1})]^4\right)^{1/4}\left(\mathbb{E}
[c_{i'}(\theta,\Delta_{t-k},\dots,\Delta_{t-k-i'+1})]^4\e[c_{j',r}(\theta,\Delta_{t-k},\dots,\Delta_{t-k-j'+1})]^4\right)^{1/4}\nonumber\\&\leq&C\rho^{i+j+i'+j'}.\label{c_i-Cauchy-Schw}
\end{eqnarray}

First, suppose that $k\geq0$, for all $l$, $r$ in $1$,\dots,$(p+q)K$ and $\theta\in \Theta$, in view of \eqref{c_i-Cauchy-Schw} it follows that
\begin{eqnarray*}
\left| c_k(l,r)(\theta)\right|&=&\left|\mbox{cov}\left(Y_t(l)(\theta),Y_{t-k}(r)(\theta)\right)\right|=\left|\sum_{i=0}^\infty\sum_{j=0}^\infty\sum_{i'=0}^\infty\sum_{j'=0}^\infty\beta_{i,j,i',j',k}(l,r)(\theta)\right|\\&\leq& g_1+g_2+g_3+g_4+g_5+h_1+h_2+h_3 ,
\end{eqnarray*}
where
\begin{eqnarray*}
g_1&=&\sum_{i>[k/2]}\sum_{j=0}^\infty\sum_{i'=0}^\infty\sum_{j'=0}^\infty\kappa\rho^{i+j+i'+j'}\left|\mbox{Cov}\left(\epsilon_{t-i}
\epsilon_{t-j},\epsilon_{t-k-i'}\epsilon_{t-k-j'}\right)\right|,\\
g_2&=&\sum_{i=0}^\infty\sum_{j>[k/2]}\sum_{i'=0}^\infty\sum_{j'=0}^\infty\kappa\rho^{i+j+i'+j'}\left|\mbox{Cov}\left(\epsilon_{t-i}
\epsilon_{t-j},\epsilon_{t-k-i'}\epsilon_{t-k-j'}\right)\right|\\
g_3&=&\sum_{i=0}^\infty\sum_{j=0}^\infty\sum_{i'>[k/2]}\sum_{j'=0}^\infty\kappa\rho^{i+j+i'+j'}\left|\mbox{Cov}\left(\epsilon_{t-i}
\epsilon_{t-j},\epsilon_{t-k-i'}\epsilon_{t-k-j'}\right)\right|,\\
g_4&=&\sum_{i=0}^\infty\sum_{j=0}^\infty\sum_{i'=0}^\infty\sum_{j'>[k/2]}\kappa\rho^{i+j+i'+j'}\left|\mbox{Cov}\left(\epsilon_{t-i}
\epsilon_{t-j},\epsilon_{t-k-i'}\epsilon_{t-k-j'}\right)\right|
\\g_5&=&\sum_{i=0}^{[k/2]}\sum_{j=0}^{[k/2]}\sum_{i'=0}^{[k/2]}\sum_{j'=0}^{[k/2]}\kappa\rho^{i+j+i'+j'}\left|\mbox{Cov}\left(\epsilon_{t-i}
\epsilon_{t-j},\epsilon_{t-k-i'}\epsilon_{t-k-j'}\right)\right|,
\\h_1&=&\sigma^4\sum_{i>[k/2]}\sum_{i'=0}^{\infty}\left|\mbox{Cov}(c_i(\theta,\Delta_{t},\dots,\Delta_{t-i+1})c_{i,l}(\theta,\Delta_{t},\dots,\Delta_{t-i+1}),\right.\\&&\left. c_{i'}(\theta,\Delta_{t-k},\dots,\Delta_{t-k-i'+1})c_{i',r}(\theta,\Delta_{t-k},\dots,\Delta_{t-k-i'+1}))\right|,
\\h_2&=&\sigma^4\sum_{i=0}^{\infty}\sum_{i'>[k/2]}\left|\mbox{Cov}(c_i(\theta,\Delta_{t},\dots,\Delta_{t-i+1})c_{i,l}(\theta,\Delta_{t},\dots,\Delta_{t-i+1}),\right.\\&&\left. c_{i'}(\theta,\Delta_{t-k},\dots,\Delta_{t-k-i'+1})c_{i',r}(\theta,\Delta_{t-k},\dots,\Delta_{t-k-i'+1}))\right|,
\\h_3&=&\sigma^4\sum_{i=0}^{[k/2]}\sum_{i'=0}^{[k/2]}\left|\mbox{Cov}(c_i(\theta,\Delta_{t},\dots,\Delta_{t-i+1})c_{i,l}(\theta,\Delta_{t},\dots,\Delta_{t-i+1}),\right.\\&&\left. c_{i'}(\theta,\Delta_{t-k},\dots,\Delta_{t-k-i'+1})c_{i',r}(\theta,\Delta_{t-k},\dots,\Delta_{t-k-i'+1}))\right|.
\end{eqnarray*}
Note that, in the strong noise case, we easily check  that the $\mbox{Cov}\left(\epsilon_{t-i}
\epsilon_{t-j},\epsilon_{t-k-i'}\epsilon_{t-k-j'}\right)$ term in \eqref{difference_strong_weak_noise} is non zero only for indices $i$, $j$, $i'$, $j'$ such that $i=j=k+i'=k+j'$. This fact entails that, instead of considering five sums $g_1$,..., $g_5$, we only need to consider one sum in the form $ \kappa \sum_{j=k}^\infty \rho^{2(2j-k)}$, which is a $\mathrm{O}(\rho^k)$.

Because
\begin{eqnarray*}
\left|\mbox{Cov}\left(\epsilon_{t-i}
\epsilon_{t-j},\epsilon_{t-k-i'}\epsilon_{t-k-j'}\right)\right|&\leq&
\sqrt{\e\left[\epsilon_{t-i}
\epsilon_{t-j}\right]^2
\e\left[\epsilon_{t-k-i'}\epsilon_{t-k-j'}\right]^2}\leq\e\left|\epsilon_t\right|^4<\infty
\end{eqnarray*}
by Assumption ${(\bf A3)}$, we have
$$g_1=\sum_{i>[k/2]}\sum_{j=0}^\infty\sum_{i'=0}^\infty\sum_{j'=0}^\infty\kappa\rho^{i+j+i'+j'}\left|\mbox{Cov}\left(\epsilon_{t-i}
\epsilon_{t-j},\epsilon_{t-k-i'}\epsilon_{t-k-j'}\right)\right|\leq \kappa_1\rho^{k/2},$$ for some positive constant $\kappa_1$. Using the same arguments we obtain that $g_i\quad(i=2,3,4)$ is bounded by $\kappa_i\rho^{k/2}$. Furthermore, {\bf (A3)} and the Cauchy-Schwarz inequality yields that $\left\|\epsilon_{i}
\epsilon_{i'}\right\|_{2+\nu}<+\infty$ for any $i$ and $i'$ in $\nbZ$. Lemma \ref{Davydov} thus entails that
\begin{eqnarray*}
g_5&=&\sum_{i=0}^{[k/2]}\sum_{j=0}^{[k/2]}\sum_{i'=0}^{[k/2]}\sum_{j'=0}^{[k/2]}\kappa\rho^{i+j+i'+j'}\left|\mbox{Cov}\left(\epsilon_{t-i}
\epsilon_{t-j},\epsilon_{t-k-i'}\epsilon_{t-k-j'}\right)\right|\\
&\leq& \sum_{i=0}^{[k/2]}\sum_{j=0}^{[k/2]}\sum_{i'=0}^{[k/2]}\sum_{j'=0}^{[k/2]}\kappa_5\rho^{i+j+i'+j'}\left\|\epsilon_{t-i}
\epsilon_{t-j}\right\|_{2+\nu}\left\|\epsilon_{t-k-i'}\epsilon_{t-k-j'}\right\|_{2+\nu}\\&&\times\left\{\alpha_{\epsilon}\left(\min\left[k+j'-i,k+i'-i,k+j'-j,k+i'-j\right]\right)\right\}^{\nu/(2+\nu)}\leq\kappa'
\alpha_{\epsilon}^{\nu/(2+\nu)}\left(\left[k/2\right]\right).
\end{eqnarray*}
Since
\begin{eqnarray*}
&&\left|\mbox{Cov}(c_i(\theta,\Delta_{t},\dots,\Delta_{t-i+1})c_{i,l}(\theta,\Delta_{t},\dots,\Delta_{t-i+1}), c_{i'}(\theta,\Delta_{t-k},\dots,\Delta_{t-k-i'+1})\right.\\&\times&\left.c_{i',r}(\theta,\Delta_{t-k},\dots,\Delta_{t-k-i'+1}))\right|\leq C\rho^{i+i'},
\end{eqnarray*}
we have
\begin{eqnarray*}
h_1&=&\sigma^4\sum_{i>[k/2]}\sum_{i'=0}^{\infty}\left|\mbox{Cov}(c_i(\theta,\Delta_{t},\dots,\Delta_{t-i+1})c_{i,l}(\theta,\Delta_{t},\dots,\Delta_{t-i+1}),\right.\\&&\left. c_{i'}(\theta,\Delta_{t-k},\dots,\Delta_{t-k-i'+1})c_{i',r}(\theta,\Delta_{t-k},\dots,\Delta_{t-k-i'+1}))\right|\leq \kappa'_1\rho^{k/2},
\end{eqnarray*}
for some positive constant $\kappa'_1$. Using the same arguments we obtain that $h_2$ is bounded by $\kappa'_2\rho^{k/2}$. The $\alpha-$mixing property (see Theorem 14.1 in \cite{davidson1994}, p. 210) and Lemma \ref{Davydov}, along with \eqref{coeff_c_i_expo_decrease}, entail that
\begin{eqnarray*}
h_3&=&\sigma^4\sum_{i=0}^{[k/2]}\sum_{i'=0}^{[k/2]}\left|\mbox{Cov}(c_i(\theta,\Delta_{t},\dots,\Delta_{t-i+1})c_{i,l}(\theta,\Delta_{t},\dots,\Delta_{t-i+1}),\right.\\&&\left. c_{i'}(\theta,\Delta_{t-k},\dots,\Delta_{t-k-i'+1})c_{i',r}(\theta,\Delta_{t-k},\dots,\Delta_{t-k-i'+1}))\right|\\
&\leq& \sum_{i=0}^{[k/2]}\sum_{i'=0}^{[k/2]}\kappa_6\left\|c_i(\theta,\Delta_{t},\dots,\Delta_{t-i+1})c_{i,l}(\theta,\Delta_{t},\dots,\Delta_{t-i+1})\right\|_{2+\nu}\\&&\times\left\|c_{i'}(\theta,\Delta_{t-k},\dots,\Delta_{t-k-i'+1})c_{i',r}(\theta,\Delta_{t-k},\dots,\Delta_{t-k-i'+1}))\right\|_{2+\nu}\\&&\times\left\{\alpha_{\Delta}\left(k+1-i\right)\right\}^{\nu/(2+\nu)}\leq\kappa'_3
\alpha_{\Delta}^{\nu/(2+\nu)}\left(\left[k/2\right]\right).
\end{eqnarray*}
It follows that
$$\sum_{k=0}^{\infty}\left|c_k(l,r)(\theta)\right|
\leq
\kappa\sum_{k=0}^{\infty}\rho^{|k|/2}+\kappa'\sum_{k=0}^{\infty}\alpha_{\epsilon}^{\nu/(2+\nu)}
\left(\left[k/2\right]\right)+\kappa''\sum_{k=0}^{\infty}\alpha_{\Delta}^{\nu/(2+\nu)}
\left(\left[k/2\right]\right)<\infty,$$ by Assumption ${(\bf A2)}$. The
same bounds clearly holds for
$$\sum_{k=-\infty}^{0}\left|c_k(l,r)(\theta)\right|,$$
which shows that $$\sum_{k=-\infty}^{\infty}\left|c_k(l,r)(\theta)\right|<\infty.$$
Then, the dominated convergence theorem gives
$$
I_n(l,r)(\theta)=\frac{1}{n}\sum_{k=-n+1}^{n-1}(n-|k|)c_k(l,r)(\theta)\longrightarrow I(l,r)(\theta):=\sum_{k=-\infty}^\infty  c_k (l,r)(\theta),\quad n\to +\infty ,
$$
and completes the proof. \zak

\begin{lemme}\label{NAscore}
Under the assumptions of Theorem \ref{CLT_theorem}, we have convergence in distribution of the random vector
$$\sqrt{n}\nabla
Q_n(\theta_0) \stackrel{{\cal
D}}{\to}{\cal N}(0,I),\text{ as } n\to\infty$$
where we recall that matrix $I$ is given by \eqref{spectraldensityatzero}.
\end{lemme}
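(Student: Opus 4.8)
The plan is to reduce the statement to a central limit theorem for the exact score $\sqrt{n}\,\nabla O_n(\theta_0)$, and then to prove the latter by a Cram\'er--Wold reduction combined with a central limit theorem for a weakly (near-epoch) dependent sequence. First I would replace $\sqrt{n}\,\nabla Q_n(\theta_0)$ by $\sqrt{n}\,\nabla O_n(\theta_0)$: applying Proposition~\ref{prop_prelim}, Point~3, with $\alpha=1/2$ gives $n^{1/2}\big|\big|\sup_{\theta\in\Theta}||\nabla(Q_n-O_n)(\theta)||\big|\big|_1\to 0$, so that $\sqrt{n}\,(\nabla Q_n-\nabla O_n)(\theta_0)\to 0$ in $L^1$, hence in probability. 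By Slutsky's theorem it then suffices to show $\sqrt{n}\,\nabla O_n(\theta_0)=n^{-1/2}\sum_{t=1}^n\Upsilon_t \stackrel{\cal D}{\to}{\cal N}(0,I)$, where $\Upsilon_t=Y_t(\theta_0)=\epsilon_t(\theta_0)\nabla\epsilon_t(\theta_0)$ is the strictly stationary, ergodic, centred process of \eqref{upsilon} (centredness follows from $c_{0,l}\equiv 0$ together with independence of $(\Delta_t)$ from the weak white noise). By the Cram\'er--Wold device this amounts to proving, for every fixed $\lambda\in\nbR^{(p+q)K}$, that $S_n:=n^{-1/2}\sum_{t=1}^n \lambda'\Upsilon_t \stackrel{\cal D}{\to}{\cal N}(0,\lambda'I\lambda)$, where the limiting variance equals $\lim_n \lambda' I_n(\theta_0)\lambda=\lambda'I\lambda$ by Lemma~\ref{existenceI}.

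The core of the argument is a central limit theorem for the scalar stationary sequence $(\lambda'\Upsilon_t)_t$. The difficulty is that $\Upsilon_t$ is a function of the whole past $(\epsilon_s,\Delta_s)_{s\le t}$ through the infinite expansion \eqref{dse_eps_theta}, so it is not itself strongly mixing with usable coefficients; I would therefore use a truncation / near-epoch dependence scheme. The base bivariate process $V_t:=(\epsilon_t,\Delta_t)$ is strongly mixing, with $\alpha_V(h)\le \alpha_\epsilon(h)+\alpha_\Delta(h)$ by independence of its components, so $\sum_h \alpha_V(h)^{\nu/(2+\nu)}<\infty$ by {\bf (A2)}. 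Truncating the expansion at lag $m$ produces $\Upsilon_t^{(m)}$, a measurable function of $(V_s)_{t-m\le s\le t}$ only, and the exponential decay of $c_i$ and its derivatives furnished by Lemma~\ref{c_i^2_expo_decrease} yields $||\lambda'(\Upsilon_t-\Upsilon_t^{(m)})||_2\le C\rho^{m}$, i.e. $(\lambda'\Upsilon_t)$ is $L^2$-near-epoch dependent on $V$ with geometrically decreasing coefficients. Each truncated array $(\lambda'\Upsilon_t^{(m)})_t$ is strongly mixing with $\alpha(h)\le \alpha_V(h-m)$ for $h>m$, and H\"older's inequality together with {\bf (A3)} gives $||\lambda'\Upsilon_t^{(m)}||_{2+\nu}<\infty$ (using $||\epsilon_t||_{4+2\nu}<\infty$ and $||\nabla\epsilon_t||_{4+2\nu}<\infty$); a classical central limit theorem for strongly mixing sequences (as in \cite{davidson1994}, with Lemma~\ref{Davydov} to control the covariances) then yields $n^{-1/2}\sum_{t=1}^n \lambda'\Upsilon_t^{(m)}\stackrel{\cal D}{\to}{\cal N}(0,\sigma_m^2)$ with $\sigma_m^2=\sum_{k}\cov(\lambda'\Upsilon_0^{(m)},\lambda'\Upsilon_k^{(m)})$.

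Finally I would let $m\to\infty$. Repeating the covariance-summation estimates of the proof of Lemma~\ref{existenceI}, now applied to the bound $||\lambda'(\Upsilon_t-\Upsilon_t^{(m)})||_2\le C\rho^m$, gives $\limsup_n \var\big(n^{-1/2}\sum_{t=1}^n \lambda'(\Upsilon_t-\Upsilon_t^{(m)})\big)\to 0$ and $\sigma_m^2\to \lambda'I\lambda$ as $m\to\infty$. The standard triangular-array approximation theorem for weak convergence then upgrades the convergence of the truncated sums to $S_n\stackrel{\cal D}{\to}{\cal N}(0,\lambda'I\lambda)$, which completes the Cram\'er--Wold argument and the proof. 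I expect the main obstacle to be exactly this weak-dependence step: since the innovations are only uncorrelated and not independent, $\Upsilon_t$ cannot be reduced to a finitely dependent sequence, and the whole argument rests on combining the geometric near-epoch dependence coming from Lemma~\ref{c_i^2_expo_decrease} with the mixing-rate and moment budget provided by {\bf (A2)}--{\bf (A3)}, so that a mixing central limit theorem becomes applicable.
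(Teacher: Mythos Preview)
Your proposal is correct and follows essentially the same route as the paper: reduce $Q_n$ to $O_n$ via Proposition~\ref{prop_prelim}, then truncate the infinite expansion of $\Upsilon_t$ at lag $m$, apply a CLT for strongly mixing sequences (the paper invokes Herrndorf's CLT, \cite{herr}) to the truncated part, show the remainder is asymptotically negligible in $L^2$ uniformly in $n$ (this is the paper's Lemma~\ref{lemsup}), and finally let $m\to\infty$ using the geometric decay of the coefficients together with the standard approximation device (Proposition~6.3.9 in \cite{broc-d}). Your ``near-epoch dependence'' phrasing and explicit Cram\'er--Wold reduction are cosmetic reformulations of exactly this argument; the paper writes the truncation directly as $\Upsilon_t=Y_{t,m}+Z_{t,m}$ with $Y_{t,m}(l)=\sum_{j=1}^m c_{j,l}(\theta_0,\Delta_t,\dots,\Delta_{t-j+1})\epsilon_t\epsilon_{t-j}$, exploiting that $\epsilon_t(\theta_0)=\epsilon_t$ so only the gradient needs truncation.
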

{\bf Proof of Lemma \ref{NAscore}:} In view of Proposition \ref{prop_prelim}, it is easy to see that $$\sqrt{n}\nabla\left(Q_n-O_n\right)(\theta_0)=o_\P(1).$$
Thus $\nabla Q_n(\theta_0) $ and $\nabla O_n(\theta_0) $ have the same asymptotic distribution. Therefore, it remains to show that
$$\sqrt{n}\nabla
O_n(\theta_0) \stackrel{{\cal
D}}{\to}{\cal N}(0,I),\text{ as } n\to\infty.$$
For $l$, in $1$,\dots,$(p+q)K$ and $\theta\in \Theta$,
we have
\begin{equation}
\label{linearcombinationintheo2}
\frac{\partial
\epsilon_{t}(\theta)}{\partial \theta_{l}}=\sum_{i=1}^{\infty}c_{i,l}(\theta,\Delta_{t},\dots,\Delta_{t-i+1})\epsilon_{t-i},
\end{equation}
where the sequence $c_{i,l}(\theta,\Delta_{t},\dots,\Delta_{t-i+1})$
is such that
$\mathbb{E}\sup_{\theta\in\Theta}|(c_{i,l}(\theta,\Delta_{t},\dots,\Delta_{t-i+1}))^2\to
0$ at a geometric rate as $i\to\infty$ (see Lemma
\ref{c_i^2_expo_decrease}). Moreover, note that
\begin{eqnarray*}
\sqrt{n}\frac{\partial O_n(\theta)}{\partial\theta_l}
=\frac{1}{\sqrt{n}}\sum_{t=1}^{n}Y_t(l)(\theta)
=\frac{1}{\sqrt{n}}\sum_{t=1}^{n}\sum_{i=0}^{\infty}c_{i}(\theta,\Delta_{t},\dots,\Delta_{t-i+1})\epsilon_{t-i}\sum_{j=1}^{\infty}c_{j,l}(\theta,\Delta_{t},\dots,\Delta_{t-j+1})\epsilon_{t-j}.
\end{eqnarray*}
Since $ \nabla \epsilon_{t}(\theta_0) $ belongs to the Hilbert space ${\cal H}_\epsilon(t-1)$, the random variables $\epsilon_{t}(\theta_0)$ and $ \nabla \epsilon_{t}(\theta_0) $ are orthogonal and it is easy to verify that $\e\left[\sqrt{n}\nabla O_n(\theta_0)\right]=0$.
Now, we have for all $m$
\begin{eqnarray*}
\sqrt{n}\frac{\partial O_n(\theta_0)}{\partial\theta_l}=\frac{1}{\sqrt{n}}\sum_{t=1}^{n}Y_{t,m}(l)+\frac{1}{\sqrt{n}}\sum_{t=1}^{n}Z_{t,m}(l)
\end{eqnarray*}
where
\begin{eqnarray*}
Y_{t,m}(l)&=&\sum_{j=1}^{m}c_{j,l}(\theta_0,\Delta_{t},\dots,\Delta_{t-j+1})\epsilon_t\epsilon_{t-j}\\
Z_{t,m}(l)&=&\sum_{j=m+1}^{\infty}c_{j,l}(\theta_0,\Delta_{t},\dots,\Delta_{t-j+1})\epsilon_t\epsilon_{t-j}.
\end{eqnarray*}
 Let
 \begin{align*}
 Y_{t,m}&:=Y_{t,m}(\theta_0)=\left(Y_{t,m}(1),\dots,Y_{t,m}((p+q)K)\right)'\text{ and
 }\\
 Z_{t,m}&:=Z_{t,m}(\theta_0)=\left(Z_{t,m,}(1),\dots,Z_{t,m}((p+q)K)\right)'.
 \end{align*}
The processes $(Y_{t,m})_{t}$ and $(Z_{t,m})_{t}$ are stationary and
centered. Moreover, under Assumption {\bf (A2)} and  $m$ fixed, the
process $Y=(Y_{t,m})_{t}$ is strong mixing (see
\cite{davidson1994}, Theorem 14.1 p. 210), with mixing coefficients
$\alpha_Y(h)\leq \alpha_{\Delta,\epsilon}\left(\max\{0,h-m\}\right)
\leq
\alpha_{\Delta}\left(\max\{0,h-m+1\}\right)+\alpha_{\epsilon}\left(\max\{0,h-m\}\right)$,
by independence of $(\Delta_t)_{t\in\nbZ}$ and $(\epsilon_t)_{t\in\nbZ}$. Applying the central
limit theorem (CLT) for mixing processes (see \cite{herr}) we
directly obtain
$$\frac{1}{\sqrt{n}}\sum_{t=1}^{n}Y_{t,m}\stackrel{{\cal D}}{\to}{\cal N}(0,I_m),\quad I_m=\sum_{h=-\infty}^{\infty}
\cov\left(Y_{t,m},Y_{t-h,m}\right).$$
In the strong noise case, the infinite sum in $I_m$ reduces to one term corresponding to $h=0$, and $I_m$ simply equals $\cov\left(Y_{t,m},Y_{t,m}\right)$.

As in \cite{fz98} (see Lemma 3), we can show that $I=\lim_{m\to\infty}I_m$ exists. Since
$\|Z_{t,m}\|_2\to 0$ at an exponential rate when $m\to\infty$, using
the arguments given in \cite{fz98} (see Lemma 4), we show that
\begin{equation}
\label{Zt}
\lim_{m\to\infty}\limsup_{n\to\infty}\P\left\{\left\|n^{-1/2}\sum_{t=1}^{n}Z_{t,m}\right\|>\varepsilon\right\}=0
\end{equation}
for every $\varepsilon>0$ (see the following lemma \ref{lemsup}).
From a standard result (see {\em e.g.} \cite{broc-d}, Proposition
6.3.9), we deduce that
$$\frac{1}{\sqrt{n}}\sum_{t=1}^{n} \nabla O_n(\theta_0) =
\frac{1}{\sqrt{n}}\sum_{t=1}^{n}Y_{t,m}+\frac{1}{\sqrt{n}}\sum_{t=1}^{n}Z_{t,m}\stackrel{{\cal D}}{\to}{\cal N}(0,I),$$
which completes the proof. \zak

\begin{lemme}
\label{lemsup}
Under the assumptions of Theorem \ref{CLT_theorem}, (\ref{Zt}) holds, that is
$$\lim_{m\to\infty}\limsup_{n\to\infty}\P\left\{\left\|n^{-1/2}\sum_{t=1}^{n}Z_{t,m}\right\|>\varepsilon\right\}=0.
$$
\end{lemme}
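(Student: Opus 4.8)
The plan is to deduce \eqref{Zt} from a bound on the second moment of $n^{-1/2}\sum_{t=1}^n Z_{t,m}$ via Chebyshev's inequality, in the spirit of Lemma~4 of \cite{fz98}. First I would observe that each coordinate $Z_{t,m}(l)$ is centered: by independence of $(\Delta_t)$ and $(\epsilon_t)$ one has $\mathbb{E}[Z_{t,m}(l)]=\sum_{j>m}\mathbb{E}[c_{j,l}(\theta_0,\dots)]\,\mathbb{E}[\epsilon_t\epsilon_{t-j}]$, and $\mathbb{E}[\epsilon_t\epsilon_{t-j}]=0$ for $j>m\ge1$ since $(\epsilon_t)$ is a weak white noise. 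Hence
$$\mathbb{E}\left\|n^{-1/2}\sum_{t=1}^n Z_{t,m}\right\|^2=\sum_{l=1}^{(p+q)K}\var\left(n^{-1/2}\sum_{t=1}^n Z_{t,m}(l)\right),$$
and by stationarity of $(Z_{t,m}(l))_t$ the $l$-th term equals $\frac1n\sum_{k=-(n-1)}^{n-1}(n-|k|)\cov(Z_{t,m}(l),Z_{t-k,m}(l))$. Since $(n-|k|)/n\le1$, it suffices to show that the long-run variance $V_m(l):=\sum_{k\in\nbZ}|\cov(Z_{t,m}(l),Z_{t-k,m}(l))|$ is finite and tends to $0$ as $m\to\infty$; then $\limsup_n\P\{\|n^{-1/2}\sum_t Z_{t,m}\|>\varepsilon\}\le\varepsilon^{-2}\sum_l V_m(l)\to0$, which is \eqref{Zt}.

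Next I would expand the covariance. Writing $Z_{t-k,m}(r)=\sum_{j'>m}c_{j',r}(\theta_0,\Delta_{t-k},\dots)\epsilon_{t-k}\epsilon_{t-k-j'}$ and using independence of the $\Delta$-coefficients from the noise, one gets, as in \eqref{difference_strong_weak_noise},
$$\cov(Z_{t,m}(l),Z_{t-k,m}(r))=\sum_{j>m}\sum_{j'>m}\Big\{\mathbb{E}[c_{j,l}c_{j',r}]\,\cov(\epsilon_t\epsilon_{t-j},\epsilon_{t-k}\epsilon_{t-k-j'})+\cov(c_{j,l},c_{j',r})\,\mathbb{E}[\epsilon_t\epsilon_{t-j}]\,\mathbb{E}[\epsilon_{t-k}\epsilon_{t-k-j'}]\Big\}.$$
The crucial simplification is that, because $j,j'>m\ge1$, the weak white noise property forces $\mathbb{E}[\epsilon_t\epsilon_{t-j}]=\mathbb{E}[\epsilon_{t-k}\epsilon_{t-k-j'}]=0$, so the second family of terms vanishes and only the first survives. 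By Cauchy--Schwarz and the exponential decay \eqref{coeff_c_i_expo_decrease} from Lemma~\ref{c_i^2_expo_decrease} one has $|\mathbb{E}[c_{j,l}c_{j',r}]|\le C\rho^{(j+j')/2}$ for some $0<\rho<1$.

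It then remains to control, for fixed $j,j'>m$, the quantity $\sum_{k\ge0}|\cov(\epsilon_t\epsilon_{t-j},\epsilon_{t-k}\epsilon_{t-k-j'})|$, which I would split according to whether the index blocks $\{t-j,t\}$ and $\{t-k-j',t-k\}$ overlap. For $0\le k\le j$ they may overlap and I bound the covariance crudely by $\mathbb{E}|\epsilon_0|^4<\infty$ (finite by {\bf (A3)}), contributing at most $(j+1)\,\mathbb{E}|\epsilon_0|^4$. For $k>j$ the blocks are separated by a gap $k-j$, so Davydov's inequality (Lemma~\ref{Davydov}) with exponents $p=q=2+\nu$, $r=(2+\nu)/\nu$ gives $|\cov(\cdot)|\le K_0\|\epsilon_t\epsilon_{t-j}\|_{2+\nu}\|\epsilon_{t-k}\epsilon_{t-k-j'}\|_{2+\nu}\{\alpha_\epsilon(k-j)\}^{\nu/(2+\nu)}$, where $\|\epsilon_t\epsilon_{t-j}\|_{2+\nu}\le\|\epsilon_0\|_{2\nu+4}^2<\infty$ by Cauchy--Schwarz and {\bf (A3)}, and $\sum_{h\ge1}\alpha_\epsilon(h)^{\nu/(2+\nu)}<\infty$ by {\bf (A2)}. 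Hence $\sum_{k\ge0}|\cov(\epsilon_t\epsilon_{t-j},\epsilon_{t-k}\epsilon_{t-k-j'})|\le C'(1+j)$ uniformly in $j'$, so summing over $j,j'>m$ against $\rho^{(j+j')/2}$,
$$\sum_{k\ge0}|\cov(Z_{t,m}(l),Z_{t-k,m}(l))|\le C''\Big(\sum_{j>m}(1+j)\rho^{j/2}\Big)\Big(\sum_{j'>m}\rho^{j'/2}\Big),$$
a product of tails of convergent series, which tends to $0$ as $m\to\infty$; the sum over $k<0$ is treated symmetrically. This yields $V_m(l)\to0$ and completes the proof. The main obstacle is precisely this last step: organising the double summation over $(j,j')$ and the lag $k$ so that the overlap region (handled by a brute fourth-moment bound) and the separated region (handled by Davydov plus the summable mixing condition) combine into a bound that both converges in $k$ and carries the tail factor from $j,j'>m$ responsible for the vanishing as $m\to\infty$.
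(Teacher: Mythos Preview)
Your proof is correct and follows the same high-level strategy as the paper (Chebyshev plus showing that the long-run variance $\sum_{k}|\cov(Z_{t,m}(l),Z_{t-k,m}(l))|$ tends to $0$ as $m\to\infty$), but the decomposition you use is genuinely different. The paper keeps $Z_{t,m}$ intact and, for each lag $h$, splits it as $Z_{t,m}=Z_{t,m}^{h^-}+Z_{t,m}^{h^+}$, where $Z_{t,m}^{h^-}$ truncates the inner sum at $j\le[h/2]$; it then applies Davydov's inequality to $\cov(Z_{t,m}^{h^-}(l),Z_{t-h,m}(l))$ using the mixing of the \emph{joint} process $(\Delta_t,\epsilon_t)$, so both $\alpha_\epsilon$ and $\alpha_\Delta$ enter the final bound $\kappa m\rho^m+\sum_{h\ge m}\{\kappa\rho^{h+m}+\kappa\rho^m[\alpha_\epsilon^{\nu/(2+\nu)}([h/2])+\alpha_\Delta^{\nu/(2+\nu)}([h/2])]\}$. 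You instead expand the product into the double sum over $j,j'>m$ first, use the independence of $(\Delta_t)$ from $(\epsilon_t)$ to factor, and observe that the weak-white-noise property kills the $\cov(c_{j,l},c_{j',r})\,\e[\epsilon_t\epsilon_{t-j}]\,\e[\epsilon_{t-k}\epsilon_{t-k-j'}]$ terms; this leaves only the noise covariances to control, so that $\alpha_\Delta$ never appears. Your route is therefore slightly more elementary here (only Assumption~{\bf (A2)} on $(\epsilon_t)$ is used, not on $(\Delta_t)$), at the cost of a double sum and a Tonelli interchange; the paper's route is closer to the blockwise mixing arguments of \cite{fz98} and generalises more readily to situations where the coefficients and the noise cannot be decoupled.
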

{\bf Proof of Lemma \ref{lemsup}:}
For $l=1,\dots,(p+q)K$, by stationarity we have
\begin{eqnarray*}
\var\left(\frac{1}{\sqrt{n}}\sum_{t=1}^{n}Z_{t,m}(l)\right)&=&\frac{1}{n}\sum_{t,s=1}^{n}\mbox{Cov}(Z_{t,m}(l),Z_{s,m}(l))\\
&=&\frac{1}{n}\sum_{|h|<n}(n-|h|)\mbox{Cov}(Z_{t,m}(l),Z_{t-h,m}(l))\\
&\leq& \sum_{h=-\infty}^{\infty}\left|\mbox{Cov}(Z_{t,m}(l),Z_{t-h,m}(l))\right|.
\end{eqnarray*}
Consider first the case $h\geq0$. Because
$\mathbb{E}\sup_{\theta\in\Theta}(c_{j,l}(\theta_0,\Delta_{t},\dots,\Delta_{t-j+1}))^2\leq
\kappa\rho^{j}$ (see \ref{coeff_c_i_expo_decrease}), using also
$\e|\epsilon_t|^4<\infty$, for $[h/2]\leq m$, it follows from the
H\"{o}lder inequality that
\begin{equation}
\label{firstmajo}
\sup_h\left|\mbox{Cov}(Z_{t,m}(l),Z_{t-h,m}(l))\right|=\sup_h\left|\e(Z_{t,m}(l)Z_{t-h,m}(l))\right|\leq \kappa\rho^m.
\end{equation}
Let $h>0$ such that $[h/2]>m$. Write
$$Z_{t,m}=Z_{t,m}^{h^-}(l)+Z_{t,m}^{h^+}(l),$$
where
$$
Z_{t,m}^{h^-}(l)=\sum_{j=m+1}^{[h/2]}c_{j,l}(\theta_0,\Delta_{t},\dots,\Delta_{t-j+1})\epsilon_t\epsilon_{t-j},
\quad Z_{t,m}^{h^+}(l)=\sum_{j=[h/2]+1}^{\infty}c_{j,l}(\theta_0,\Delta_{t},\dots,\Delta_{t-j+1})\epsilon_t\epsilon_{t-j}.
$$
Note that $Z_{t,m}^{h^-}(l)$ belongs to the $\sigma$-field generated by $\{\Delta_{t},\dots,\Delta_{t-[h/2]+1},\epsilon_t,\epsilon_{t-1},\dots,\epsilon_{t-[h/2]}\}$ and
that $Z_{t-h,m}(l)$ belongs to the $\sigma$-field generated by $\{\Delta_{t-h},\Delta_{t-h-1},\dots,\epsilon_{t-h},\epsilon_{t-h-1},\dots\}$.
Note also that, by {\bf (A3)}, $\e|Z_{t,m}^{h^-}(l)|^{2+\nu}<\infty$
and $\e|Z_{t-h,m}(l)|^{2+\nu}<\infty$. The $\alpha-$mixing property
and Lemma \ref{Davydov} then entail that
\begin{eqnarray}
\label{secondmajo}
\left|\mbox{Cov}(Z_{t,m}^{h^-}(l),Z_{t-h,m}(l))\right|&\leq &\kappa_1\sum_{j=m+1}^{[h/2]}\sum_{j'=m+1}^{\infty}\left\|c_{j',l}(\theta_0,\Delta_{t-h},\dots,\Delta_{t-h-j'+1})\epsilon_t\epsilon_{t-j'}\right\|_{2+\nu}\nonumber\\&&\times\left\|c_{j,l}(\theta_0,\Delta_{t},\dots,\Delta_{t-j+1})\epsilon_t\epsilon_{t-j}\right\|_{2+\nu}\left[\alpha_{\Delta,\epsilon}([h/2])\right]^{\nu/(2+\nu)}\nonumber\\&\leq& \nonumber\kappa_2\sum_{j=m+1}^{[h/2]}\sum_{j'=m+1}^{\infty}\rho^j\rho^{j'}\left[\alpha_{\epsilon}^{\nu/(2+\nu)}([h/2])+\alpha_{\Delta}^{\nu/(2+\nu)}([h/2])\right]\\&\leq& \kappa\rho^{m}\left[\alpha_{\epsilon}^{\nu/(2+\nu)}([h/2])+\alpha_{\Delta}^{\nu/(2+\nu)}([h/2])\right].
\end{eqnarray}
By the argument used to show (\ref{firstmajo}), we also have
\begin{equation}
\label{thirdmajo}
\left|\mbox{Cov}(Z_{t,m}^{h^+}(l),Z_{t-h,m}(l))\right|\leq \kappa\rho^h\rho^m.
\end{equation}
In view of (\ref{firstmajo}), (\ref{secondmajo}) and
(\ref{thirdmajo}), we obtain
$$\sum_{h=0}^{\infty}\left|\mbox{Cov}(Z_{t,m}(l),Z_{t-h,m}(l))\right|\leq
\kappa m\rho^m+\sum_{h=m}^{\infty}\left\{\kappa\rho^h\rho^m+\kappa\rho^{m}\left[\alpha_{\epsilon}^{\nu/(2+\nu)}([h/2])+\alpha_{\Delta}^{\nu/(2+\nu)}([h/2])\right]\right\}\to 0$$
as $m\to\infty$ by {\bf (A2)}. This implies that
\begin{equation}
\label{markov}
\sup_n\var\left(\frac{1}{\sqrt{n}}\sum_{t=1}^{n}Z_{t,m}(l)\right)\xrightarrow[m\to\infty]{}0.
\end{equation}
We have the same bound for $h<0$. The conclusion follows from (\ref{markov}).
\zak
\begin{lemme}
\label{lem1}
Under the assumptions of Theorem \ref{CLT_theorem}, almost surely
$$\nabla^2
Q_n(\theta_0) \longrightarrow J,\quad n\to \infty ,$$
where $J$ given by \eqref{expression_J} exists  and is invertible.
\end{lemme}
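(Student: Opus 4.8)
The plan is to transfer the statement from $Q_n$ to $O_n$ and then apply the ergodic theorem. By Point~2 of Proposition~\ref{prop_prelim}, $\sup_{\theta\in\Theta}\|\nabla^2(Q_n(\theta)-O_n(\theta))\|$ converges a.s.\ to $0$; in particular $\nabla^2 Q_n(\theta_0)-\nabla^2 O_n(\theta_0)\to 0$ a.s. Hence it suffices to prove that $\nabla^2 O_n(\theta_0)\to J$ almost surely, after which the convergence of $\nabla^2 Q_n(\theta_0)$ follows, and then to check that $J$ exists and is invertible.

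Differentiating twice the summand of $O_n$ via $\nabla^2[\epsilon_t^2(\theta)]=2\,\nabla\epsilon_t(\theta)[\nabla\epsilon_t(\theta)]'+2\,\epsilon_t(\theta)\nabla^2\epsilon_t(\theta)$, I would write
$$\nabla^2 O_n(\theta_0)=\frac1n\sum_{t=1}^n\Big(\nabla\epsilon_t(\theta_0)[\nabla\epsilon_t(\theta_0)]'+\epsilon_t(\theta_0)\nabla^2\epsilon_t(\theta_0)\Big).$$
The summand is a measurable functional of the stationary ergodic pair $((\epsilon_s)_{s\le t},(\Delta_s)_{s\le t})$ (ergodicity of the relevant functionals following from {\bf (A1)}, exactly as in the proof of Proposition~\ref{theo_consistency}), and it is integrable since $\|\sup_{\theta\in\Theta}\|\nabla^j\epsilon_0(\theta)\|\|_4<\infty$ for $j=1,2$ by Point~4 of Lemma~\ref{lemme_prelim}, together with $\epsilon_0\in L^2$. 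The ergodic theorem then yields a.s.\ convergence to $\e(\nabla\epsilon_t(\theta_0)[\nabla\epsilon_t(\theta_0)]')+\e(\epsilon_t\,\nabla^2\epsilon_t(\theta_0))=J+\e(\epsilon_t\,\nabla^2\epsilon_t(\theta_0))$, where the first term is $J$ by \eqref{expression_J} and exists by the same moment bound.

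It then remains to kill the cross term. Since $c_0\equiv 1$, differentiating \eqref{dse_eps_theta} twice shows that $\nabla^2\epsilon_t(\theta_0)=\sum_{i\ge 1}\nabla^2 c_i(\theta_0,\Delta_t,\dots,\Delta_{t-i+1})\,\epsilon_{t-i}$, so $\nabla^2\epsilon_t(\theta_0)$ is a function of $\{\Delta_s,\ s\le t\}$ and $\{\epsilon_{t-i},\ i\ge1\}$ only, i.e.\ it lies in the space orthogonal to $\epsilon_t$. Interchanging expectation and sum (justified by the moment bounds) and using the independence of $(\Delta_t)$ and $(\epsilon_t)$ from {\bf (A1)} together with the weak white noise property $\e(\epsilon_t\epsilon_{t-i})=0$ for $i\ge1$, each term equals $\e(\nabla^2 c_i)\,\e(\epsilon_t\epsilon_{t-i})=0$; hence $\e(\epsilon_t\,\nabla^2\epsilon_t(\theta_0))=0$ and $\nabla^2 O_n(\theta_0)\to J$ a.s.

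The main obstacle is the invertibility of $J$. I would argue by contradiction: if $\lambda'J\lambda=0$ for some $\lambda\neq 0$, then $\e[(\lambda'\nabla\epsilon_t(\theta_0))^2]=0$, so $\lambda'\nabla\epsilon_t(\theta_0)=\sum_{i\ge1}\big(\lambda'\nabla c_i(\theta_0,\Delta_t,\dots,\Delta_{t-i+1})\big)\epsilon_{t-i}=0$ a.s. The coefficient sequence $(\lambda'\nabla c_i)_i$ belongs to ${\cal H}$ (it is independent of $(\epsilon_t)$ and square-summable in $L^2$, thanks to the exponential decay of the gradient coefficients used in the proof of Lemma~\ref{existenceI}), so the uniqueness argument of Proposition~\ref{prop_stationary} — namely $\sigma^2\e(\sum_i d_i^2)=0$ forces $d_i=0$ — gives $\lambda'\nabla c_i(\theta_0,\Delta_t,\dots,\Delta_{t-i+1})=0$ a.s.\ for every $i\ge1$. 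The delicate step is to deduce $\lambda=0$ from these identities: this is an identifiability property resting on the affine (companion) dependence of $\Phi(s,\theta)$ and $B(s,\theta)$ on $\theta$, on the fact that each state $s\in{\cal S}$ is visited with positive probability (ergodicity in {\bf (A1)}), and on the coprimeness of the AR and MA parts of the representation at $\theta_0$. Reading off successively the lowest-order coefficients $\nabla c_1,\nabla c_2,\dots$ and separating the blocks of $\lambda$ associated with $\underline{a_i}$ and $\underline{b_j}$ on each state, one shows that all components of $\lambda$ must vanish, which yields $\lambda=0$ and hence the invertibility of $J$.
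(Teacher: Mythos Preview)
Your argument for the convergence $\nabla^2 Q_n(\theta_0)\to J$ is essentially the paper's: transfer to $O_n$ via Proposition~\ref{prop_prelim}, apply the ergodic theorem to the summand $\nabla\epsilon_t[\nabla\epsilon_t]'+\epsilon_t\nabla^2\epsilon_t$, and kill the cross term by noting that $\nabla^2\epsilon_t(\theta_0)$ lies in the linear past ${\cal H}_\epsilon(t-1)$ and is therefore orthogonal to $\epsilon_t$.

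Where you and the paper diverge is the invertibility of $J$. You push everything down to the MA$(\infty)$ level, obtaining $\lambda'\nabla c_i(\theta_0,\Delta_t,\dots,\Delta_{t-i+1})=0$ a.s.\ for every $i\ge 1$, and then propose to ``read off'' the blocks of $\lambda$ from the low-order $\nabla c_i$ using affinity and a coprimeness-type condition. The paper instead stays at the ARMA level: it differentiates the defining recursion \eqref{def_inov_theta} in the direction $\bc$ and, using $\sum_k c_k\,\partial\epsilon_t(\theta_0)/\partial\theta_k=0$ for all $t$, obtains the finite relation
\[
\sum_{i=1}^p (g_i^a)^*(\Delta_t,\theta_0)\,X_{t-i}=\sum_{j=1}^q (g_j^b)^*(\Delta_t,\theta_0)\,\epsilon_{t-j},
\]
with $(g_i^a)^*=\sum_k c_k\,\partial g_i^a/\partial\theta_k$ and similarly for $(g_j^b)^*$. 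This would produce an ARMARC representation of lower order, which the identifiability of Proposition~\ref{prop_stationary} rules out; since the $g_i^a,g_j^b$ are linear coordinate functions of $\theta$, $(g_i^a)^*=(g_j^b)^*=0$ forces $\bc=0$ immediately. This buys a two-line conclusion from a finite system, whereas your route requires unwinding the products of companion matrices in \eqref{expression_eps_theta} to extract $\nabla c_1,\nabla c_2,\dots$ and then an inductive identifiability argument that you have only asserted. Note also that ``coprimeness of the AR and MA parts'' is not an explicit hypothesis here; the paper invokes only the uniqueness statement of Proposition~\ref{prop_stationary}, so if you want to complete your version you should phrase the identifiability input in those terms rather than importing a classical ARMA assumption.
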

{\bf Proof of Lemma \ref{lem1}:} For all $l$, $r$ in
$1,\dots,(p+q)K$, in view of Proposition \ref{prop_prelim}, we have
almost surely
$$\left|\frac{\partial^2
}{\partial\theta_l\partial\theta_r}\left(Q_n(\theta_0)-O_n(\theta_0)\right)\right|\to0,
\text{ as } t\to\infty.$$ Thus ${\partial^2
Q_n(\theta_0)}/{\partial\theta_l\partial\theta_r}$ and ${\partial^2
O_n(\theta_0)}/{\partial\theta_l\partial\theta_r}$ have almost
surely the same asymptotic distribution. From \eqref{dse_eps_theta}
and \eqref{coeff_c_i_expo_decrease}, there exists a sequence $\left(
c_{i,l,r}(\theta,\Delta_{t-1},\dots,\Delta_{t-i})\right)_{i\in\mathbb{N}}$
such that
\begin{equation}
\label{linearcombinationsecondorder} \frac{\partial^2
\epsilon_{t}(\theta)}{\partial \theta_{l}\partial
\theta_{r}}=\sum_{i=1}^{\infty}c_{i,l,r}(\theta,\Delta_{t},\dots,\Delta_{t-i+1})\epsilon_{t-i}
\text{ with
}\mathbb{E}(c_{i,l,r}(\theta,\Delta_{t},\dots,\Delta_{t-i+1}))^2\leq
C\rho^i, \,\forall i.
\end{equation}
This implies that ${\partial^2
\epsilon_{t}(\theta)}/{\partial \theta_{l}\partial \theta_{r}}$ belongs to $L^2$. On the other hand, we have
\begin{eqnarray*}
\frac{\partial^2 O_n(\theta)}{\partial\theta_l\partial\theta_r}&=&
\frac{1}{n}\sum_{t=1}^n\epsilon_t(\theta)\frac{\partial^2 \epsilon_t(\theta)}{\partial \theta_l\partial \theta_r}+\frac{1}{n}\sum_{t=1}^n\frac{\partial\epsilon_t(\theta)}{\partial \theta_l}\frac{\partial \epsilon_t(\theta)}{\partial \theta_r}\\
&\longrightarrow &\e\left(\epsilon_t(\theta)\frac{\partial^2 \epsilon_t(\theta)}{\partial \theta_l\partial \theta_r}\right)+\e\left(\frac{\partial\epsilon_t(\theta)}{\partial \theta_l}\frac{\partial \epsilon_t(\theta)}{\partial \theta_r}\right),\text{ as } n\to\infty,
\end{eqnarray*}
by the ergodic theorem.
Using the uncorrelatedness between $\epsilon_t(\theta_0)$ and
the linear past ${\cal H}_\epsilon(t-1)$,
 $\partial \epsilon_t(\theta_0)/\partial \theta_l\in {\cal H}_\epsilon(t-1)$, and  $\partial^2 \epsilon_t(\theta_0)/\partial \theta_l\partial \theta_r\in {\cal H}_\epsilon(t-1)$, we have
\begin{eqnarray}
\e\left(\frac{\partial^2
O_n(\theta_0)}{\partial\theta_l\partial\theta_r}\right)=
\e\left(\frac{\partial \epsilon_t(\theta_0)}{\partial
\theta_l}\frac{\partial \epsilon_t(\theta_0)}{\partial
\theta_r}\right)=J(l,r). \label{expressionJ}
\end{eqnarray}
Therefore, $J$ is the covariance matrix of $\partial
\epsilon_t(\theta_0)/\partial \theta$. If $J$ is singular, then
there exists a vector $\bc=(c_1,\dots,c_{(p+q)K})'\ne 0$ such that
$\bc'J\bc=0$. Thus we have
\begin{align}\label{conditionInvJ}
\sum_{k=1}^{(p+q)K}c_k\frac{\partial \epsilon_t(\theta_0)}{\partial
\theta_k}=0,\,a.s.
\end{align}
Differentiating the two sides of \eqref{def_inov_theta} yields
\begin{align*}
-\sum_{i=1}^p(g_i^{a})^*(\Delta_t,\theta_0)
X_{t-i}=\sum_{k=1}^{(p+q)K}c_k\frac{\partial
\epsilon_t(\theta_0)}{\partial \theta_k}-\sum_{j=1}^q
g_j^b(\Delta_t,\theta_0)\sum_{k=1}^{(p+q)K}c_k\frac{\partial
\epsilon_{t-j}(\theta_0)}{\partial \theta_k}-\sum_{j=1}^q
(g_j^{b})^*(\Delta_t,\theta_0)\epsilon_{t-j}(\theta_0)
\end{align*}
where
\begin{align*}
(g_i^{a})^*(\Delta_t,\theta_0)=\sum_{k=1}^{(p+q)K}c_k\frac{\partial
g_i^a(\Delta_t,\theta_0)}{\partial \theta_k}\text{ and }
(g_j^{b})^*(\Delta_t,\theta_0)=\sum_{k=1}^{(p+q)K}c_k\frac{\partial
g_j^b(\Delta_t,\theta_0)}{\partial \theta_k}.
\end{align*}
Because \eqref{conditionInvJ} is satisfied for all $t$, we have
\begin{align*}
\sum_{i=1}^p(g_i^{a})^*(\Delta_t,\theta_0) X_{t-i}=\sum_{j=1}^q
(g_j^{b})^*(\Delta_t,\theta_0)\epsilon_{t-j}(\theta_0).
\end{align*}
The latter equation yields a ARMARC$(p-1,q-1)$ representation at best. The identifiability assumption (see Proposition \ref{prop_stationary})
excludes the existence of such representation.

Thus
\begin{align*}
(g_i^{a})^*(\Delta_t,\theta_0)=\sum_{k=1}^{(p+q)K}c_k\frac{\partial
g_i^a(\Delta_t,\theta_0)}{\partial \theta_k}=0\text{ and }
(g_j^{b})^*(\Delta_t,\theta_0)=\sum_{k=1}^{(p+q)K}c_k\frac{\partial
g_j^b(\Delta_t,\theta_0)}{\partial \theta_k}=0
\end{align*}
and the conclusion follows.
\zak

\noindent{\bf Proof of Theorem \ref{CLT_theorem}:} For all
$i,j,k=1,\dots,K(p+q)$ we have
\begin{align*}
\frac{\partial^3O_n(\theta)}{\partial\theta_i\partial\theta_j\partial\theta_k}&=
\frac{1}{n}\sum_{t=1}^n\left\{\epsilon_t(\theta)\frac{\partial^3\epsilon_t(\theta)}{\partial\theta_i\partial\theta_j\partial\theta_k}
\right\}+\frac{1}{n}\sum_{t=1}^n\left\{\frac{\partial
\epsilon_t(\theta)}{\partial\theta_i}
\frac{\partial^2\epsilon_t(\theta)}{\partial\theta_j\partial\theta_k}
\right\}\\
&+\frac{1}{n}\sum_{t=1}^n\left\{\frac{\partial^2\epsilon_t(\theta)}{\partial\theta_i\partial\theta_j}\frac{\partial
\epsilon_t(\theta)}{\partial\theta_k}\right\}+\frac{1}{n}\sum_{t=1}^n\left\{\frac{\partial
\epsilon_t(\theta)}{\partial\theta_j}
\frac{\partial^2\epsilon_t(\theta)}{\partial\theta_i\partial\theta_k}\right\}.
\end{align*}
Using the ergodic theorem, the Cauchy-Schwarz inequality and Lemma
\ref{lemme_prelim}, we obtain
\begin{align}\label{der3Ofini}
\sup_n\sup_{\theta\in\Theta}\left|\frac{\partial^3O_n(\theta)}{\partial\theta_i\partial\theta_j\partial\theta_k}\right|<+\infty.
\end{align}
In view of Proposition \ref{prop_prelim}, we have almost surely
$$\sup_{\theta\in\Theta}\left|\frac{\partial^3
}{\partial\theta_i\partial\theta_j\partial\theta_k}\left(Q_n(\theta)-O_n(\theta)\right)\right|\longrightarrow 0,
\text{ as } n\to\infty.$$ Thus ${\partial^3
Q_n(\theta)}/{\partial\theta_i\partial\theta_j\partial\theta_k}$ and
${\partial^2
O_n(\theta)}/{\partial\theta_i\partial\theta_j\partial\theta_k}$
have almost surely the same asymptotic distribution.
 In view of Theorem \ref{theo_consistency} and {\bf (A4)}, we have almost surely
$\hat{\theta}_n\longrightarrow \theta_0\in \stackrel{\circ}{\Theta}$. Thus
$\nabla Q_n(\hat{\theta}_n)=0_{\nbR^{(p+q)K}}$ for sufficiently
large $n$, and a Taylor expansion gives for all $r\in\{1,...,(p+q)K \}$,
\begin{equation}
\label{taylorintheo2} 0=\sqrt{n} \frac{\partial}{\partial \theta_r}
Q_n(\theta_0) +
\nabla\frac{\partial}{\partial \theta_r} Q_n(\theta_{n,r}^*) \sqrt{n}\left(\hat{\theta}_n-\theta_0\right),
\end{equation}
where $\theta_{n,r}^*$ lies on the segment in $\nbR^{(p+q)K}$ with endpoints $\hat{\theta}_n$ and $\theta_0$.
Using again a Taylor expansion, Theorem \ref{theo_consistency_vrai}
and \eqref{der3Ofini}, we obtain for all $l=1,\dots,(p+q)K$,
\begin{eqnarray*}
\left|\frac{\partial^2
Q_n(\theta_{n,r}^*)}{\partial \theta_l\partial
\theta_r}-\frac{\partial^2
Q_n(\theta_0)}{\partial \theta_l\partial
\theta_r}\right|&\leq&\sup_n\sup_{\theta\in\Theta}\left\|\nabla\left( \frac{\partial^2
}{\partial \theta_l\partial
\theta_r}Q_n(\theta)\right)\right\|\left\|\theta_{n,r}^*-\theta_0\right\|
\\&\longrightarrow & 0 \text{ a.s. as }n\to\infty.
\end{eqnarray*}
This, along with \eqref{taylorintheo2}, implies that, as $n\to\infty$
$$\sqrt{n}\left(\hat{\theta}_n-\theta_0\right)=-\left[ \nabla^2
Q_n(\theta_0) \right]^{-1}\sqrt{n}\frac{\partial
Q_n(\theta_0)}{\partial \theta}+o_\p(1).$$
From Lemma \ref{NAscore} and Lemma \ref{lemsup}, we obtain that
$\sqrt{n}(\hat{\theta}_n-\theta_0)$ has a limiting normal distribution with mean $0$ and covariance matrix $J^{-1}IJ^{-1}$.\zak
\subsection{Proofs of Theorem \ref{estimationI}}
The proof of Theorem~\ref{estimationI} is based on a series of lemmas.

Consider the regression of $\Upsilon_t$ on $\Upsilon_{t-1},\dots,\Upsilon_{t-r}$
defined by
\begin{equation}\label{regp}
  \Upsilon_t=\sum_{i=1}^{r}\Phi_{r,i}\Upsilon_{t-i}+u_{r,t},\qquad
\end{equation}
where $u_{r,t}$ is orthogonal to $\left\{\Upsilon_{t-1} \dots \Upsilon_{t-r}\right\}$ for
the $L^2$ inner product.
If $\Upsilon_{1},\dots,\Upsilon_{n}$ were observed, the least squares estimators of
$\underline{{\mathbf{\Phi}}}_{r}=\left(\Phi_{r,1}\cdots \Phi_{r,r}\right)$ and
$\Sigma_{u_r}=\mbox{Var}(u_{r,t})$ would be given by
$$\underline{\breve{\mathbf{\Phi}}}_{r}=\hat{\Sigma}_{{\Upsilon},\underline{{\Upsilon}}_{r}}
\hat{\Sigma}_{\underline{{\Upsilon}}_{r}}^{-1}\qquad\mbox{and}\qquad
\hat{\Sigma}_{\breve{u}_r}=\frac{1}n\sum_{t=1}^n
\left({\Upsilon}_t-\underline{\breve{\mathbf{\Phi}}}_{r}\underline{{\Upsilon}}_{r,t}\right)
\left({\Upsilon}_t-\underline{\breve{\mathbf{\Phi}}}_{r}\underline{{\Upsilon}}_{r,t}\right)'$$
where $\underline{{\Upsilon}}_{r,t}=({\Upsilon}_{t-1}' \cdots
{\Upsilon}_{t-r}')',$
$$\hat{\Sigma}_{{\Upsilon},\underline{{\Upsilon}}_{r}}=\frac{1}{n}\sum_{t=1}^n{\Upsilon}_t\underline{{\Upsilon}}_{r,t}',\qquad
\hat{\Sigma}_{\underline{{\Upsilon}}_{r}}=
\frac{1}{n}\sum_{t=1}^n\underline{{\Upsilon}}_{r,t}\underline{{\Upsilon}}_{r,t}',
$$
with by convention
${\Upsilon}_t=0$ when $t\leq 0$, and assuming
$\hat{\Sigma}_{\underline{{\Upsilon}}_{r}}$ is non singular (which holds true asymptotically).

Actually, we just observe $X_1,\dots,X_n$. The residuals  $\hat\epsilon_t:=e_t(\hat\theta_n)$ are then available
for $t=1,\dots,n$ and the vectors $\hat\Upsilon_t$ obtained by replacing $\theta_0$ by $\hat\theta_n$ in \eqref{upsilon}
are  available for $t=1,\dots,n$. We therefore define the least squares estimators of
$\underline{{\mathbf{\Phi}}}_{r}=\left(\Phi_{r,1}\cdots \Phi_{r,r}\right)$ and
$\Sigma_{u_r}=\mbox{Var}(u_{r,t})$ by
$$\underline{\hat{\mathbf{\Phi}}}_{r}=\hat{\Sigma}_{\hat{\Upsilon},\underline{\hat{\Upsilon}}_{r}}
\hat{\Sigma}_{\underline{\hat{\Upsilon}}_{r}}^{-1}\qquad\mbox{and}\qquad
\hat{\Sigma}_{\hat{u}_r}=\frac{1}n\sum_{t=1}^n
\left(\hat{\Upsilon}_t-\underline{\hat{\mathbf{\Phi}}}_{r}\underline{\hat{\Upsilon}}_{r,t}\right)
\left(\hat{\Upsilon}_t-\underline{\hat{\mathbf{\Phi}}}_{r}\underline{\hat{\Upsilon}}_{r,t}\right)'$$
where $\underline{\hat{\Upsilon}}_{r,t}=(\hat{\Upsilon}_{t-1}' \cdots
\hat{\Upsilon}_{t-r}')',$
$$\hat{\Sigma}_{\hat{\Upsilon},\underline{\hat{\Upsilon}}_{r}}=\frac{1}{n}\sum_{t=1}^n\hat{\Upsilon}_t\underline{\hat{\Upsilon}}_{r,t}',\qquad
\hat{\Sigma}_{\underline{\hat{\Upsilon}}_{r}}=
\frac{1}{n}\sum_{t=1}^n\underline{\hat{\Upsilon}}_{r,t}\underline{\hat{\Upsilon}}_{r,t}',
$$
with by convention
$\hat{\Upsilon}_t=0$ when $t\leq 0$, and assuming
$\hat{\Sigma}_{\underline{\hat{\Upsilon}}_{r}}$ is non singular (which holds true asymptotically).

We specify a bit more the matrix norm defined at the end of Section \ref{modelassumption} and we use in the sequel the multiplicative matrix norm defined by
\begin{equation}\label{def_norm_matrix}
\|A\|=\sup_{\|x\|\leq 1}\|Ax\|=\varrho^{1/2}(A'\bar{A}),
\end{equation}
where $A$ is a $\nbC^{d_1\times d_2}$ matrix,
$\|x\|^2=x' \bar{x}$
 is the Euclidean norm of the vector $x\in \nbC^{d_2\times 1}$, and $\varrho(\cdot)$ denotes the spectral radius. This norm
satisfies
\begin{equation}
\label{proprinorm}
\|A\|^2\leq \sum_{i,j}a_{i,j}^2, \text{ when }A \text{ is a }\nbR^{d_1\times d_2}\text{ matrix}
\end{equation}
 with obvious notations. This choice of the norm is crucial for the following lemma to hold (with e.g. the Euclidean norm, this result is not valid).
Let
\begin{eqnarray*}
{\Sigma}_{{\Upsilon},\underline{\Upsilon}_{r}}&=&\mathbb{E}{\Upsilon}_{t}\underline{\Upsilon}_{r,t}',
\quad{\Sigma}_{{\Upsilon}}=\mathbb{E}{\Upsilon}_{t}{\Upsilon}_{t}',
\quad
{\Sigma}_{\underline{\Upsilon}_{r}}=\mathbb{E}\underline{\Upsilon}_{r,t}\underline{\Upsilon}_{r,t}',\quad
\hat{\Sigma}_{\hat{\Upsilon}}=\frac{1}{n}\sum_{t=1}^n\hat{\Upsilon}_t\hat{\Upsilon}_{t}'.
\end{eqnarray*}
In the sequel, $C$ and $\rho$ denote generic constant such as $K>0$ and $\rho\in(0,1)$, whose exact values are unimportant.

\begin{lemme}\label{lem4}
Under the assumptions of Theorem~\ref{estimationI},
\begin{equation*}
\sup_{r\geq
1}\max\left\{\left\|{\Sigma}_{{\Upsilon},\underline{\Upsilon}_{r}}\right\|,\left\|{\Sigma}_{\underline{\Upsilon}_{r}}\right\|,
\left\|{\Sigma}_{\underline{\Upsilon}_{r}}^{-1}\right\|\right\}<
\infty.
\end{equation*}
\end{lemme}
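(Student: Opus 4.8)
The plan is to exploit the block-Toeplitz structure of $\Sigma_{\underline{\Upsilon}_r}$ and to control its extreme eigenvalues, uniformly in $r$, through the spectral density of the stationary process $(\Upsilon_t)$. Write $d:=(p+q)K$ and let $\Gamma(h):=\e(\Upsilon_t\Upsilon_{t-h}')$ be the autocovariance sequence. The AR$(\infty)$ assumption together with $\|\Phi_i\|=o(i^{-2})$ forces $\sum_i\|\Phi_i\|<\infty$, hence an MA$(\infty)$ representation with absolutely summable coefficients and therefore $\sum_h\|\Gamma(h)\|<\infty$; consequently the matrix-valued spectral density $f(\lambda)=\frac{1}{2\pi}\sum_{h\in\nbZ}\Gamma(h)e^{-ih\lambda}$ is a continuous, Hermitian, positive-semidefinite function on $[-\pi,\pi]$. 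Since the $(j,k)$ block of $\Sigma_{\underline{\Upsilon}_r}$ is $\Gamma(k-j)$, a direct computation gives, for any $x=(x_1',\dots,x_r')'\in\nbC^{rd}$ and $g(\lambda):=\sum_{j=1}^r e^{ij\lambda}x_j$, the identity $x^*\Sigma_{\underline{\Upsilon}_r}x=\int_{-\pi}^\pi g(\lambda)^*f(\lambda)g(\lambda)\,d\lambda$, together with $\int_{-\pi}^\pi\|g(\lambda)\|^2\,d\lambda=2\pi\|x\|^2$ by orthogonality of the $e^{ij\lambda}$. This yields the $r$-free sandwich
\[
2\pi\inf_\lambda\lambda_{\min}(f(\lambda))\,\|x\|^2\le x^*\Sigma_{\underline{\Upsilon}_r}x\le 2\pi\sup_\lambda\lambda_{\max}(f(\lambda))\,\|x\|^2,
\]
where $\lambda_{\min}$, $\lambda_{\max}$ denote the smallest and largest eigenvalues.

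Next I would show that $f$ is bounded above and below, uniformly in $\lambda$. As $\sum_i\|\Phi_i\|<\infty$, the series $\mathbf{\Phi}(e^{-i\lambda})=\mathrm{I}_d+\sum_{i\ge1}\Phi_i e^{-i\lambda}$ converges uniformly, so $\lambda\mapsto\mathbf{\Phi}(e^{-i\lambda})$ is continuous and hence bounded in norm by some $M<\infty$ on the compact circle; because the roots of $\det\mathbf{\Phi}(z)=0$ lie outside the closed unit disk, $\mathbf{\Phi}(e^{-i\lambda})$ is invertible for every $\lambda$, with continuous, hence bounded, inverse. From $f(\lambda)=\frac{1}{2\pi}\mathbf{\Phi}^{-1}(e^{-i\lambda})\Sigma_u(\mathbf{\Phi}^{-1}(e^{-i\lambda}))^*$ (the frequency-$\lambda$ analogue of \eqref{Idensitespectrale}) the upper bound $\sup_\lambda\lambda_{\max}(f(\lambda))<\infty$ is immediate. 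For the lower bound, a unit vector $v$ satisfies $v^*f(\lambda)v\ge\frac{1}{2\pi}\lambda_{\min}(\Sigma_u)\,\|(\mathbf{\Phi}^{-1}(e^{-i\lambda}))^*v\|^2\ge\frac{\lambda_{\min}(\Sigma_u)}{2\pi M^2}$, which is strictly positive since $\Sigma_u$ is non-singular; thus $\inf_\lambda\lambda_{\min}(f(\lambda))>0$.

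Combining these with the sandwich gives at once $\sup_r\|\Sigma_{\underline{\Upsilon}_r}\|=\sup_r\lambda_{\max}(\Sigma_{\underline{\Upsilon}_r})\le 2\pi\sup_\lambda\lambda_{\max}(f)<\infty$ and $\sup_r\|\Sigma_{\underline{\Upsilon}_r}^{-1}\|=\sup_r\lambda_{\min}(\Sigma_{\underline{\Upsilon}_r})^{-1}\le\{2\pi\inf_\lambda\lambda_{\min}(f)\}^{-1}<\infty$. For the cross-covariance term I would observe that $\Sigma_{\Upsilon,\underline{\Upsilon}_r}=\e(\Upsilon_t\underline{\Upsilon}_{r,t}')$ is an off-diagonal block of the covariance matrix of the $r+1$ consecutive values $(\Upsilon_t',\Upsilon_{t-1}',\dots,\Upsilon_{t-r}')'$, which by stationarity obeys the same sandwich as $\Sigma_{\underline{\Upsilon}_{r+1}}$. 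Since extracting a submatrix amounts to pre- and post-multiplying by coordinate projections of norm $\le 1$, the spectral norm cannot increase, so $\|\Sigma_{\Upsilon,\underline{\Upsilon}_r}\|\le 2\pi\sup_\lambda\lambda_{\max}(f)$, again uniformly in $r$.

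The delicate point is the uniform lower bound on $\lambda_{\min}(\Sigma_{\underline{\Upsilon}_r})$, that is, the uniform invertibility: this is exactly where the non-singularity of $\Sigma_u$ and the boundedness of $\mathbf{\Phi}$ on the unit circle enter, and it is what forces the use of the spectral norm \eqref{def_norm_matrix} rather than, say, the Frobenius norm, whose value on $\Sigma_{\underline{\Upsilon}_r}^{-1}$ would generally grow with $r$ (in agreement with the remark in the text). I expect the only remaining care to concern the interchange of summation and integration in the quadratic-form identity, which is justified by the absolute summability of $\Gamma$, and the harmless bookkeeping of the phase convention in the definition of $f$; everything else follows directly from the eigenvalue sandwich.
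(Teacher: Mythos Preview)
Your proposal is correct and follows essentially the same route as the paper: both use the block-Toeplitz structure of $\Sigma_{\underline{\Upsilon}_r}$ and the quadratic-form identity $x^*\Sigma_{\underline{\Upsilon}_r}x=\int g(\lambda)^*f(\lambda)g(\lambda)\,d\lambda$ to sandwich the eigenvalues by the extremes of the spectral density, and both handle $\Sigma_{\Upsilon,\underline{\Upsilon}_r}$ by viewing it as a block of $\Sigma_{\underline{\Upsilon}_{r+1}}$.

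The differences are minor but worth noting. First, the paper obtains the absolute summability of the autocovariances $\Gamma(h)$ from Lemma~\ref{existenceI} (via the mixing assumptions on $(\epsilon_t)$ and $(\Delta_t)$), whereas you derive it from the AR$(\infty)$ hypothesis with $\|\Phi_i\|=o(i^{-2})$ and Wiener-type inversion; both routes are valid here. Second, and more substantively, the paper disposes of the uniform lower bound on $\lambda_{\min}(\Sigma_{\underline{\Upsilon}_r})$ in one sentence (``by similar arguments'', citing Grenander--Szeg\H{o}), while you actually supply the argument via the factorisation $f(\lambda)=\frac{1}{2\pi}\mathbf{\Phi}^{-1}(e^{-i\lambda})\Sigma_u(\mathbf{\Phi}^{-1}(e^{-i\lambda}))^*$, the continuity and invertibility of $\mathbf{\Phi}$ on the unit circle, and the non-singularity of $\Sigma_u$. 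Your version is therefore more self-contained on the point you rightly flag as delicate.
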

{\bf Proof.} The proof is an extension of Section 5.2 of \cite{Grenander58}. 
 We readily have
$$\|{\Sigma}_{\underline{\Upsilon}_{r}}x\|
\leq
\|{\Sigma}_{\underline{\Upsilon}_{r+1}}(x',
0_{(p+q)K}')'\|\quad\mbox{ and
}\quad\|{\Sigma}_{\underline{\Upsilon}_{r}}x\| \leq
\|{\Sigma}_{\underline{\Upsilon}_{r+1}}(0_{(p+q)K}',x')'\|$$
for any $x\in\mathbb{R}^{K(p+q)r}$ and  $0_{(p+q)K}=(0,\dots,0)'\in\mathbb{R}^{(p+q)K}$.
Therefore
$$
0<\left\|\mbox{Var}\left({\Upsilon}_{t}\right)\right\|=
\left\|{\Sigma}_{\underline{\Upsilon}_{1}}\right\|\leq
\left\|{\Sigma}_{\underline{\Upsilon}_{2}}\right\|\leq\cdots$$
and
$$
\left\|{\Sigma}_{{\Upsilon},\underline{\Upsilon}_{r}}\right\|\leq
\left\|{\Sigma}_{\underline{\Upsilon}_{r+1}}\right\|,
$$
so that it suffices to prove that $ \sup_{r\ge 1}\left\|{\Sigma}_{\underline{\Upsilon}_{r}}\right\|$ and $\sup_{r\ge 1}\left\|{\Sigma}_{\underline{\Upsilon}_{r}}^{-1}\right\|$ are finite to prove the result. Let us write matrix ${\Sigma}_{\underline{\Upsilon}_{r}}$ in blockwise form
$$
{\Sigma}_{\underline{\Upsilon}_{r}}=\left[ C(i-j)\right]_{i,j=1,...,r},\quad C(k)=\e (\Upsilon_{0}\Upsilon_{k}')\in \nbR^{K(p+q)\times K(p+q)},\ k\in\nbZ .
$$
Let now $f:\nbR\longrightarrow \nbC^{K(p+q)\times K(p+q)}$  be the spectral density of $(\Upsilon_t)_{t\in \nbZ}$ defined by
$$
f(\omega)=\frac{1}{2\pi} \sum_{k=-\infty}^\infty C(k) e^{i\omega k},\quad \omega\in \nbR .
$$
A direct consequence of \eqref{upsilon} and Lemma \ref{existenceI} is that  $f(\omega)$ is absolutely summable, and that $\sup_{\omega\in\nbR}\|f(\omega)\|<+\infty$, for any norm $\|.\|$ on $\nbC^{K(p+q)\times K(p+q)}$ (in particular, one which is independent from $r\ge 1$). Another consequence is that we have the inversion formula
\begin{equation}\label{inversion_formula}
C(k)=\int_{-\pi}^\pi f(x) e^{-ikx}dx,\quad \forall k\in\nbZ .
\end{equation}
Last, it is easy to check that $f(\omega)$ is an hermitian matrix for all $\omega\in\nbR$, i.e. $\overline{f(\omega)}=f(\omega) '$, where $\bar{z}$ is the conjugate of any vector or matrix $z$ with entries in $\nbC$. Let then $\delta^{(r)}=\left({\delta^{(r)}_1} ',...,{\delta^{(r)}_r} '\right)\in \nbR^{rK(p+q)\times 1}$ be an eigenvector for ${\Sigma}_{{\underline{\Upsilon}_{r}}}$, with $\delta^{(r)}_j \in \nbR^{K(p+q)\times 1}$, $j=1,...,r$, such that $\| {\delta^{(r)}}\|=1$ and
\begin{equation}\label{Parseval0}
{\delta^{(r)}} ' {\Sigma}_{{\underline{\Upsilon}_{r}}} \delta^{(r)}= \|{\Sigma}_{{\underline{\Upsilon}_{r}}}\|=\varrho\left( {\Sigma}_{{\underline{\Upsilon}_{r}}}\right),
\end{equation}
where $\|{\Sigma}_{{\underline{\Upsilon}_{r}}}\|$ is the norm of matrix ${\Sigma}_{{\underline{\Upsilon}_{r}}}$ defined in \eqref{def_norm_matrix}. We then check that
\begin{equation}\label{Parseval}
{\delta^{(r)}} ' {\Sigma}_{{\underline{\Upsilon}_{r}}} \delta^{(r)}=\sum_{i,j=1}^r {\delta^{(r)}_i} ' C(i-j) {\delta^{(r)}_j}=\int_{-\pi}^\pi \left(\sum_{m=1}^r  \delta^{(r)}_m e^{i(m-1)x}\right)' f(x) \overline{\left(\sum_{m=1}^r  \delta^{(r)}_m e^{i(m-1)x}\right)}dx ,
\end{equation}
the last equality a direct consequence of \eqref{inversion_formula}. $f(x)$ being hermitian, $(X,Y)\in \nbC^{K(p+q)\times 1}\times \nbC^{K(p+q)\times 1}\mapsto X' f(x) \bar{Y}$ defines a semi definite non negative bilinear form, hence we have for all $x\in \nbR$ and $X\in \nbC^{K(p+q)\times 1}$:
$$
0\le X' f(x) \bar{X}\le \| f(x) \|. X'\bar{X} \le \sup_{\omega\in\nbR}\|f(\omega)\| . X'\bar{X} .
$$
Let us point out that $\sup_{\omega\in\nbR}\|f(\omega)\|$ is a quantity which is independent from $r\ge 1$. We deduce from \eqref{Parseval} and the previous inequality that
\begin{equation}\label{Parseval2}
{\delta^{(r)}} ' {\Sigma}_{{\underline{\Upsilon}_{r}}} \delta^{(r)}\le \sup_{\omega\in\nbR}\|f(\omega)\| \int_{-\pi}^\pi \left(\sum_{m=1}^r  \delta^{(r)}_m e^{i(m-1)x}\right)'  \overline{\left(\sum_{m=1}^r  \delta^{(r)}_m e^{i(m-1)x}\right)}dx .
\end{equation}
A short computation yields that
$$\frac{1}{2\pi}\int_{-\pi}^\pi \left(\sum_{m=1}^r  \delta^{(r)}_m e^{i(m-1)x}\right)'  \overline{\left(\sum_{m=1}^r  \delta^{(r)}_m e^{i(m-1)x}\right)}dx = \sum_{m=1}^r {\delta^{(r)}_m} ' \delta^{(r)}_m=\| {\delta^{(r)}}\|^2=1,$$
which, coupled with \eqref{Parseval0} and \eqref{Parseval2}, yields that $ \|{\Sigma}_{{\underline{\Upsilon}_{r}}}\|\le 2\pi \sup_{\omega\in\nbR}\|f(\omega)\|<+\infty$, an upper bound independent from $r\ge 1$. By similar arguments, the smallest eigenvalue of $
{\Sigma}_{\underline{\Upsilon}_{r}}$ is
greater than a positive constant independent of $r$. Using the
fact that
$\|{\Sigma}_{\underline{\Upsilon}_{r}}^{-1}\|$
is equal to the inverse of the smallest eigenvalue of $
{\Sigma}_{\underline{\Upsilon}_{r}}$, the
proof  is completed.
 \zak
The following lemma is necessary in the sequel. 
\begin{lemme}\label{lemme_prelimbis}
Let us suppose that {\bf (A1)} and that Stationarity condition {\bf (A5a)} for $\nu=6$
$$
{\bf (A6)}\limsup_{t\to\infty}\frac{1}{t}\ln  \nbE\left( \sup_{\theta\in \Theta} \left| \left| \prod_{i=1}^t \Phi(\Delta_i,\theta)\right|\right|^{32}\right)<0,\quad \limsup_{t\to\infty}\frac{1}{t}\ln  \nbE\left( \left|\left|\prod_{i=1}^t \Psi(\Delta_i)\right|\right|^{32}\right)<0
$$ hold. We assume that $\epsilon_t\in L^{4\nu+8}$. Sequences $(\epsilon_t(\theta))_{t\in\nbZ}$ and $(e_t(\theta))_{t\in\nbZ}$ satisfy
\begin{enumerate}
\item $\left|\left| \sup_{\theta\in \Theta} |\epsilon_0(\theta)|\right| \right|_{16}<+\infty$ and $\sup_{t\ge 0}\left|\left| \sup_{\theta\in \Theta} |e_t(\theta)|\right| \right|_{16}<+\infty$,
\item $\left|\left| \sup_{\theta\in \Theta} |\epsilon_t(\theta)-e_t(\theta)|\right| \right|_4$ tends to $0$ exponentially fast as $t\to \infty$,
\item For all $\alpha>0$, $t^\alpha \sup_{\theta\in \Theta} |\epsilon_t(\theta)-e_t(\theta)|\longrightarrow 0$ a.s. as $t\to\infty$,
\item For all $j=1, 2,3$, $\left|\left| \sup_{\theta\in \Theta} ||\nabla^j\epsilon_0(\theta)||\right| \right|_{16}<+\infty$, $\sup_{t\ge 0}\left|\left| \sup_{\theta\in \Theta} ||\nabla^j e_t(\theta)||\right| \right|_{16}<+\infty$ and we have $t^\alpha\left|\left| \sup_{\theta\in \Theta} ||\nabla (e_t- \epsilon_t)(\theta)||\right| \right|_{16/5}\longrightarrow 0$ , as $t\to\infty$ for all $\alpha>0$.
\end{enumerate}
\end{lemme}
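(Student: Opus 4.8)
The plan is to observe that Lemma \ref{lemme_prelimbis} is the exact analogue of Lemma \ref{lemme_prelim} with every $L^p$ norm raised, and to rerun the proof of Lemma \ref{lemme_prelim} verbatim while tracking these higher moments. The key point is that assumption {\bf (A6)} is nothing but {\bf (A5b)} specialised to $\nu=6$ (indeed $4\nu+8=32$ and $2\nu+4=16$ when $\nu=6$), so that Lemma \ref{c_i^2_expo_decrease} applies with $\nu=6$ and yields the exponential decay of $\nbE(\sup_{\theta}[c_i(\theta,\Delta_i,\dots,\Delta_1)]^{16})$, of $\nbE(\sup_{\theta}\|\nabla^j c_i(\theta,\dots)\|^{16})$ for $j=1,2,3$ (the case $j=1$ being obtained by the same submultiplicativity and Cauchy--Schwarz argument as the cases $j=2,3$ treated there), and of the corresponding quantities for the approximating coefficients $c_i^e$. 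These are exactly the $L^{16}$ counterparts of the $L^{4}$ estimates used in Lemma \ref{lemme_prelim}; combined with the hypothesis $\epsilon_t\in L^{32}\subset L^{16}$ they supply everything needed to upgrade each step.

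First I would prove Point 1. Using the decomposition \eqref{dse_eps_theta}, independence of $(\epsilon_t)$ from $(\Delta_t)$, stationarity and Minkowski's inequality exactly as in Point 1 of Lemma \ref{lemme_prelim}, one gets $\|\sup_{\theta}|\epsilon_0(\theta)|\|_{16}\le \sum_{i\ge 0}\|\sup_{\theta}|c_i(\theta,\dots)|\|_{16}\,\|\epsilon_0\|_{16}$, which converges thanks to the exponential decay above and to $\epsilon_0\in L^{16}$; the bound for $\sup_{t\ge 0}\|\sup_\theta|e_t(\theta)|\|_{16}$ follows in the same way from \eqref{dse_e_t_theta} and the $c_i^e$ estimates. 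The finiteness assertions in Point 4, namely $\|\sup_\theta\|\nabla^j\epsilon_0(\theta)\|\|_{16}<\infty$ and $\sup_{t\ge 0}\|\sup_\theta\|\nabla^j e_t(\theta)\|\|_{16}<\infty$ for $j=1,2,3$, are obtained identically by differentiating \eqref{dse_eps_theta} termwise and invoking the derivative-coefficient estimates of Lemma \ref{c_i^2_expo_decrease}.

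Next I would treat Point 2. Starting from the representation \eqref{differenceZ} of $Z_t^e(\theta)-Z_t(\theta)$ as the product $\prod_{j=0}^{t-r-1}\Phi(\Delta_{t-j},\theta)$ times a fixed linear combination of the initial terms, I would apply Minkowski's and H{\"o}lder's inequalities, now controlling the matrix product in $L^{32}$ (hence in any lower norm) via {\bf (A6)} and the initial terms in $L^{16}$ via Point 1; this gives $\|\sup_\theta\|Z_t^e(\theta)-Z_t(\theta)\|\|_4\le C\rho^t$, i.e.\ the exponential $L^4$ decay of $\sup_\theta|\epsilon_t(\theta)-e_t(\theta)|$. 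Point 3 is then immediate: since {\bf (A6)} implies {\bf (A5a)} (by Jensen's inequality the exponent-$32$ condition forces the exponent-$8$ one), Lemma \ref{lemme_prelim} applies and its Point 3, which is the very same statement, may simply be invoked.

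The last and most delicate step is the remaining assertion of Point 4, $t^\alpha\|\sup_\theta\|\nabla(e_t-\epsilon_t)(\theta)\|\|_{16/5}\to 0$. I would differentiate the recursion for $Z_t^e-Z_t$ as in \eqref{proof_lemma_prelim_derive}, writing $\nabla[Z_t^e-Z_t]=\sum_{k}\prod_{j=0}^{k-1}\Phi(\Delta_{t-j},\theta)\,\nabla\Phi(\Delta_{t-k},\theta)\,[Z_{t-k}^e-Z_{t-k}]$, and bound its $L^{16/5}$ norm as in \eqref{lemm_prelim_ineq_derive_proof}. The crucial bookkeeping is the H{\"o}lder split $5/16=1/16+1/4$, which lets me control $\prod_{j=0}^{k-1}\Phi$ in $L^{16}$ (exponentially small in $k$ by {\bf (A6)}) against $Z_{t-k}^e-Z_{t-k}$ in $L^4$ (exponentially small in $t-k$ by the new Point 2), so that $\sum_{k}\kappa\rho_1^{k}\,t^{\alpha}\rho_2^{t-k}\le \kappa' t^{1+\alpha}\rho^{t}\to 0$ for some $\rho<1$, closing the argument as the Toeplitz/dominated-convergence reasoning did before. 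The main obstacle is precisely this choice of exponents: it is the Cauchy--Schwarz separation of the $\Phi$- and $\Psi$-products inside Lemma \ref{c_i^2_expo_decrease} that doubles $2\nu+4=16$ into $4\nu+8=32$, and the extra derivative in this last norm forces $\prod\Phi$ to be estimated in a strictly higher norm than the undifferentiated quantities require, which is exactly why the strengthened assumption {\bf (A6)} together with $\epsilon_t\in L^{32}$ is imposed.
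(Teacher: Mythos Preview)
Your proposal is correct and follows exactly the route the paper takes: the paper's own proof of Lemma \ref{lemme_prelimbis} consists of the single sentence ``similar to the proofs of Lemmas \ref{c_i^2_expo_decrease} and \ref{lemme_prelim}'', and you have simply unpacked that sentence, correctly identifying {\bf (A6)} as {\bf (A5b)} with $\nu=6$ and tracking the upgraded exponents (in particular the H{\"o}lder split $5/16=1/16+1/4$ replacing $5/8=1/8+1/2$) through the argument.
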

{\bf Proof of Lemma \ref{lemme_prelimbis}} is similar to the proofs of Lemmas \ref{c_i^2_expo_decrease} and \ref{lemme_prelim}.\zak

Denote by $\Upsilon_t(i)$ the $i$-th element of $\Upsilon_t.$
\begin{lemme}\label{lemsurprise!}
Let $(\epsilon_t)$ be a sequence of centered and uncorrelated variables, with $\e\left|\epsilon_t\right|^{8+4\nu}<\infty$ and $\sum_{h=0}^\infty\left[\alpha_\epsilon(h)\right]^{\nu/(2+\nu)}<\infty$
for some $\nu>0$. Then there exits a finite constant $C_1$ such that for
$m_1, m_2=1,\dots,(p+q)K$ and all $s\in \mathbb{Z}$,
\begin{equation*}
\sum_{h=-\infty}^{\infty}\left|\mbox{Cov}\left\{\Upsilon_{1}(m_1)\Upsilon_{1+s}(m_2),
\Upsilon_{1+h}(m_1)\Upsilon_{1+s+h}(m_2)\right\}\right|<C_1.
\end{equation*}
\end{lemme}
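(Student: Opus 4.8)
The plan is to exploit the representation, valid at $\theta=\theta_0$, already used in Lemma \ref{existenceI}: since $\Upsilon_t=Y_t(\theta_0)$ and $\epsilon_t(\theta_0)=\epsilon_t$, relation \eqref{linearcombinationintheo2} gives
\[
\Upsilon_t(m)=\epsilon_t\,\frac{\partial\epsilon_t(\theta_0)}{\partial\theta_m}=\sum_{i\ge1}c_{i,m}(\theta_0,\Delta_t,\dots,\Delta_{t-i+1})\,\epsilon_t\epsilon_{t-i},
\]
with $\mathbb{E}\sup_\theta(c_{i,m})^{2\nu+4}\le C\rho^i$ by \eqref{coeff_c_i_expo_decrease}. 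In particular $\mathbb{E}\Upsilon_t(m)=\sum_{i\ge1}\mathbb{E}[c_{i,m}]\,\mathbb{E}[\epsilon_t\epsilon_{t-i}]=0$, because $(\epsilon_t)$ is uncorrelated and independent of $(\Delta_t)$ by {\bf (A1)}. Setting $A=\Upsilon_1(m_1)$, $B=\Upsilon_{1+s}(m_2)$, $C=\Upsilon_{1+h}(m_1)$, $D=\Upsilon_{1+s+h}(m_2)$, all four variables are centered, so the moment--cumulant formula reduces to
\[
\cov(AB,CD)=\mathrm{cum}(A,B,C,D)+\mathrm{cum}(A,C)\,\mathrm{cum}(B,D)+\mathrm{cum}(A,D)\,\mathrm{cum}(B,C).
\]
I would bound the three contributions to $\sum_{h}|\cov(AB,CD)|$ separately.

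The two products of second-order cumulants are dispatched directly by Lemma \ref{existenceI}. Indeed $\mathrm{cum}(A,C)=\cov(\Upsilon_1(m_1),\Upsilon_{1+h}(m_1))$ and, by stationarity, $\mathrm{cum}(B,D)=\cov(\Upsilon_1(m_2),\Upsilon_{1+h}(m_2))$ depend on $h$ only; since each of these autocovariance sequences is absolutely summable (Lemma \ref{existenceI}), one factor is bounded and
\[
\sum_{h}\bigl|\mathrm{cum}(A,C)\,\mathrm{cum}(B,D)\bigr|\le\Bigl(\sup_h\bigl|\cov(\Upsilon_1(m_2),\Upsilon_{1+h}(m_2))\bigr|\Bigr)\sum_{h}\bigl|\cov(\Upsilon_1(m_1),\Upsilon_{1+h}(m_1))\bigr|<\infty,
\]
a bound free of $s$. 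For the second product, with $d_k:=\cov(\Upsilon_1(m_1),\Upsilon_{1+k}(m_2))$ one checks $\mathrm{cum}(A,D)=d_{s+h}$ and $\mathrm{cum}(B,C)=d_{s-h}$, whence by Cauchy--Schwarz
\[
\sum_{h}\bigl|\mathrm{cum}(A,D)\,\mathrm{cum}(B,C)\bigr|=\sum_{h}|d_{s+h}|\,|d_{s-h}|\le\sum_{k}d_k^2<\infty,
\]
again uniformly (indeed independently) in $s$, the finiteness of $\sum_k d_k^2$ following from $\sum_k|d_k|<\infty$ (Lemma \ref{existenceI}, applied to the cross-components $m_1,m_2$).

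It remains to bound $\sum_{h}|\mathrm{cum}(A,B,C,D)|$ uniformly in $s$, which I expect to be the crux. Inserting the representation above expresses this fourth cumulant as a fourfold series in the lag indices $(i,j,i',j')$; factoring out the $\Delta$-coefficients by independence ({\bf (A1)}) leaves joint fourth-order quantities in eight noise variables anchored, up to the backward shifts $i,j,i',j'$, at the four times $1,\,1+s,\,1+h,\,1+s+h$. As in the proof of Lemma \ref{existenceI}, I would split each series according to whether one of $i,j,i',j'$ exceeds a threshold---controlled by the exponential decay $\rho^{\,\cdot}$ of the coefficients together with $\mathbb{E}|\epsilon_t|^{8}<\infty$---or all stay below it, in which case the eight anchored indices fall into two temporally separated clusters and Davydov's inequality (Lemma \ref{Davydov}) applies with exponent $\nu/(2+\nu)$, the products of four noise terms lying in $L^{2+\nu}$ precisely because $\mathbb{E}|\epsilon_t|^{8+4\nu}<\infty$ and the products of two coefficients lying in $L^{2+\nu}$ by \eqref{coeff_c_i_expo_decrease}.

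The structural point, and the source of the uniformity in $s$, is that such a fourth cumulant becomes negligible once its time arguments split into two groups that drift apart: the configuration $\{1,1+s,1+h,1+s+h\}$ fails to split only when $h$ is close to one of the three values $0,\,s,\,-s$, so $|\mathrm{cum}(A,B,C,D)|\le\psi\bigl(\min(|h|,|h-s|,|h+s|)\bigr)$ for a summable $\psi$ built from $\rho^{\,\cdot}$, $\alpha_\epsilon^{\nu/(2+\nu)}$ and $\alpha_\Delta^{\nu/(2+\nu)}$ (summable by {\bf (A2)}). Since $\psi\ge0$,
\[
\sum_{h\in\mathbb{Z}}\psi\bigl(\min(|h|,|h-s|,|h+s|)\bigr)\le\sum_{h}\bigl[\psi(|h|)+\psi(|h-s|)+\psi(|h+s|)\bigr]=3\sum_{h}\psi(|h|)<\infty,
\]
independently of $s$, and combining the three contributions yields the constant $C_1$. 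The main obstacle is exactly this fourth-cumulant term: one must verify that its decay is governed by $\min(|h|,|h-s|,|h+s|)$ rather than by $|h|$ alone, and that the possible interleaving of the four anchor times---which would obstruct a single application of Davydov to the raw covariance---is harmless once one passes to the cumulant, where only genuine splittings of the time set contribute.
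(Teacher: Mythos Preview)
Your approach via the moment--cumulant identity is a genuinely different organization from the paper's. The paper works with the covariance directly: it expands $\Upsilon_t\Upsilon_{t+s}$ as a double series, then uses independence of $(\Delta_t)$ and $(\epsilon_t)$ to split each term of the covariance as $E[cccc]\,\mathrm{Cov}(\epsilon^{(4)},\epsilon^{(4)})+\mathrm{Cov}(cc,cc)\,E[\epsilon^{(4)}]E[\epsilon^{(4)}]$, handling the $\Delta$-covariance piece by a Davydov-type splitting (the $v_1,\dots,v_5$ of the proof) and deferring the $\epsilon$-covariance piece to an external result (Corollary~A.3 in Francq--Zako\"{\i}an). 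Your reduction of the two second-order contributions to Lemma~\ref{existenceI} is correct and tidier than anything the paper does explicitly, and your device for uniformity in $s$---bounding by $\psi(\min(|h|,|h-s|,|h+s|))$ and then $\sum_h\psi(\min)\le 3\sum_k\psi(k)$---is a clean way to make the $s$-independence transparent.

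The step that needs repair is the fourth cumulant. Multilinearity does give $\mathrm{cum}_4(A,B,C,D)=\sum_{i,j,i',j'}\mathrm{cum}_4(V_i^{1},V_j^{1+s},V_{i'}^{1+h},V_{j'}^{1+s+h})$ with $V_i^{t}=c_{i,m}^{t}\,\epsilon_t\epsilon_{t-i}$, but ``factoring out the $\Delta$-coefficients by independence'' does not reduce each summand to a single fourth-order quantity in the noise: for centered $V_k=c_kU_k$ with $(c_k)\perp(U_k)$ one has
\[
\mathrm{cum}_4(V_1,\dots,V_4)=E[c_1c_2c_3c_4]\,\mathrm{cum}_4(U_1,\dots,U_4)+\sum_{\text{three pairings}}\mathrm{Cov}(c_ac_b,c_cc_d)\,E[U_aU_b]E[U_cU_d],
\]
so three $\Delta$-covariance correction terms survive and must be controlled separately (this is essentially the paper's $v_1,\dots,v_5$ analysis, now reappearing inside your cumulant). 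A cleaner fix is to bypass the factorization entirely: truncate each $V$ to a function of $(\epsilon_r,\Delta_r)_{r\in[t-M,t]}$, observe that the maximal gap among the ordered anchors $\{1,1+s,1+h,1+s+h\}$ always dominates $\min(|h|,|h-s|,|h+s|)$ (a short case check), and bound $\mathrm{cum}_4$ directly from its moment expansion by applying Davydov for the jointly mixing process $(\epsilon_t,\Delta_t)$ across that gap---which requires Davydov on each of the three pair-covariances in the expansion, not just one. Your moment bookkeeping ($\epsilon\in L^{8+4\nu}$ for products of four noise terms, coefficients in $L^{2\nu+4}$ for products of two) is then exactly what is needed.
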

{\bf Proof.} Recall that
\begin{eqnarray}\label{der-eps}\frac{\partial\epsilon_t(\theta_0)}{\partial\theta_l}&=&\sum_{i=0}^\infty c_{i,l}(\theta_0,\Delta_{t},\dots,\Delta_{t-i+1}) \epsilon_{t-i},\text{ for }l=1,\dots,(p+q)K,
\end{eqnarray}
where $c_i(\theta_0,\Delta_{t},\dots,\Delta_{t-i+1})$ is defined by \eqref{expression_eps_theta} and $c_{i,l}(\theta_0,\Delta_{t},\dots,\Delta_{t-i+1})={\partial c_i(\theta_0,\Delta_{t},\dots,\Delta_{t-i+1})}/{\partial \theta_l}$, and
with the following upper bound holding thanks to \eqref{coeff_c_i_e_expo_decrease}:
\begin{eqnarray*}
\mathbb{E}\sup_{\theta\in\Theta}(c_i(\theta,\Delta_{t},\dots,\Delta_{t-i+1}))^2\leq C\rho^i \text{ and }\mathbb{E}\sup_{\theta\in\Theta}(
c_{i,l}(\theta,\Delta_{t},\dots,\Delta_{t-i+1}))^2\leq C\rho^i,\quad \forall i.
\end{eqnarray*}
Let
\begin{eqnarray}\label{gammaCov}
\gamma_{i,j,i',j',s,h}(m_1,m_2)(\theta_0)&=& \e\left[c_{i,m_1}(\theta_0,\Delta_{t},\dots,\Delta_{t-i+1})c_{j,m_2}(\theta_0,\Delta_{t+s},\dots,\Delta_{t+s-j+1})
\right.\nonumber\\&&\left.\times c_{i',m_1}(\theta_0,\Delta_{t+h},\dots,\Delta_{t+h-i'+1})c_{j',m_2}(\theta_0,\Delta_{t+s+h},\dots,\Delta_{t+s+h-j'+1})\right]\nonumber\\&&\times\mbox{Cov}\left(\epsilon_{t}\epsilon_{t-i}
\epsilon_{t+s}\epsilon_{t+s-j},\epsilon_{t+h}\epsilon_{t+h-i'}\epsilon_{t+s+h}\epsilon_{t+s+h-j'}\right)\nonumber\\&&+
\mbox{Cov}\left(c_{i,m_1}(\theta_0,\Delta_{t},\dots,\Delta_{t-i+1})c_{j,m_2}(\theta_0,\Delta_{t+s},\dots,\Delta_{t+s-j+1}),
\right.\nonumber\\&&\left. c_{i',m_1}(\theta_0,\Delta_{t+h},\dots,\Delta_{t+h-i'+1})c_{j',m_2}(\theta_0,\Delta_{t+s+h},\dots,\Delta_{t+s+h-j'+1})\right)
\nonumber\\&&\times\e\left[\epsilon_{t}\epsilon_{t-i}
\epsilon_{t+s}\epsilon_{t+s-j}\right]\e\left[\epsilon_{t+h}\epsilon_{t+h-i'}\epsilon_{t+s+h}\epsilon_{t+s+h-j'}\right].
\end{eqnarray}
The Cauchy-Schwarz inequality implies that
\begin{eqnarray}
\nonumber&&\left|\e[c_{i,m_1}(\theta_0,\Delta_{t},\dots,\Delta_{t-i+1})c_{j,m_2}(\theta_0,\Delta_{t+s},\dots,\Delta_{t+s-j+1})
\right.\\&&\left.\times c_{i',m_1}(\theta_0,\Delta_{t+h},\dots,\Delta_{t+h-i'+1})c_{j',m_2}(\theta_0,\Delta_{t+s+h},\dots,\Delta_{t+s+h-j'+1})]\right|\leq
C\rho^{i+j+i'+j'}.\label{c_igama-Cauchy-Schw}
\end{eqnarray}
In view of \eqref{der-eps} and \eqref{gammaCov}, we have
\begin{eqnarray*}
&&\sum_{h=-\infty}^{\infty}\mbox{Cov}\left\{\Upsilon_{1}(m_1)\Upsilon_{1+s}(m_2),
\Upsilon_{1+h}(m_1)\Upsilon_{1+s+h}(m_2)\right\}\\
&=& \sum_{h=-\infty}^{\infty}\sum_{i=0}^\infty\sum_{j=0}^\infty\sum_{i'=0}^\infty\sum_{j'=0}^\infty\gamma_{i,j,i',j',s,h}(m_1,m_2)(\theta_0).
\end{eqnarray*}
Without loss of generality, we can take the supremum
over the integers $s>0$, and consider the sum for positive $h$. Let  $m_0=m_1\wedge m_2$
and $Y_{t,h_1}=\epsilon_{t}\epsilon_{t-h_1}-\mathbb{E}(\epsilon_{t}\epsilon_{t-h_1})$.
We first suppose that $h\geq0$. It follows that
\begin{eqnarray*}
&&\sum_{i=0}^\infty\sum_{j=0}^\infty\sum_{i'=0}^\infty\sum_{j'=0}^\infty\left|\mbox{Cov}\left(c_{i,m_1}(\theta_0,\Delta_{t},\dots,\Delta_{t-i+1})c_{j,m_2}(\theta_0,\Delta_{t+s},\dots,\Delta_{t+s-j+1}),
\right.\right.\\&&\left.\left. c_{i',m_1}(\theta_0,\Delta_{t+h},\dots,\Delta_{t+h-i'+1})c_{j',m_2}(\theta_0,\Delta_{t+s+h},\dots,\Delta_{t+s+h-j'+1})\right)\right|\\&\leq&v_1+v_2+v_3+v_4+v_5,
\end{eqnarray*}
where
\begin{eqnarray*}
v_1=v_1(h)&=&\sum_{i>[h/2]}\sum_{j=0}^\infty\sum_{i'=0}^\infty\sum_{j'=0}^\infty\left|\mbox{Cov}\left(\mathbf{c}^{t}_{i,m_1}\mathbf{c}^{t+s}_{j,m_2},\mathbf{c}^{t+h}_{i',m_1}\mathbf{c}^{t+h+s}_{j',m_2}
\right)\right|,\\
v_2=v_2(h)&=&\sum_{i=0}^\infty\sum_{j>[h/2]}\sum_{i'=0}^\infty\sum_{j'=0}^\infty\left|\mbox{Cov}\left(\mathbf{c}^{t}_{i,m_1}\mathbf{c}^{t+s}_{j,m_2},\mathbf{c}^{t+h}_{i',m_1}\mathbf{c}^{t+h+s}_{j',m_2}\right)\right|\\
v_3=v_3(h)&=&\sum_{i=0}^\infty\sum_{j=0}^\infty\sum_{i'>[h/2]}\sum_{j'=0}^\infty\left|\mbox{Cov}\left(\mathbf{c}^{t}_{i,m_1}\mathbf{c}^{t+s}_{j,m_2},\mathbf{c}^{t+h}_{i',m_1}\mathbf{c}^{t+h+s}_{j',m_2}\right)\right|,\\
v_4=v_4(h)&=&\sum_{i=0}^\infty\sum_{j=0}^\infty\sum_{i'=0}^\infty\sum_{j'>[h/2]}\left|\mbox{Cov}\left(\mathbf{c}^{t}_{i,m_1}\mathbf{c}^{t+s}_{j,m_2},\mathbf{c}^{t+h}_{i',m_1}\mathbf{c}^{t+h+s}_{j',m_2}\right)\right|,
\\v_5=v_5(h)&=&\sum_{i=0}^{[h/2]}\sum_{j=0}^{[h/2]}\sum_{i'=0}^{[h/2]}\sum_{j'=0}^{[h/2]}\left|\mbox{Cov}\left(\mathbf{c}^{t}_{i,m_1}\mathbf{c}^{t+s}_{j,m_2},\mathbf{c}^{t+h}_{i',m_1}\mathbf{c}^{t+h+s}_{j',m_2}\right)\right|,
\end{eqnarray*}
where $$\mathbf{c}^{t}_{i_1,m}=c_{i_1,m}(\theta_0,\Delta_{t},\dots,\Delta_{t-i_1+1}).$$
One immediate remark is that $\mathbf{c}^{t}_{i_1,m}$ is measurable with respect to $\Delta_r$, $r\in \{ t,...,t-i_1+1\}$. Since
\begin{eqnarray*}
&&\left|\mbox{Cov}\left(\mathbf{c}^{t}_{i,m_1}\mathbf{c}^{t+s}_{j,m_2},\mathbf{c}^{t+h}_{i',m_1}\mathbf{c}^{t+h+s}_{j',m_2}\right)\right|\leq C\rho^{i+i'+j+j'},
\end{eqnarray*}
we have
\begin{eqnarray*}
v_1&=&\sum_{i>[h/2]}\sum_{j=0}^\infty\sum_{i'=0}^\infty\sum_{j'=0}^\infty\left|\mbox{Cov}\left(\mathbf{c}^{t}_{i,m_1}\mathbf{c}^{t+s}_{j,m_2},\mathbf{c}^{t+h}_{i',m_1}\mathbf{c}^{t+h+s}_{j',m_2}\right)\right|
\leq \kappa_1\rho^{h/2},
\end{eqnarray*}
for some positive constant $\kappa_1$. Using the same arguments we obtain that $v_i$, $i=2,3,4$ are bounded by $\kappa_i\rho^{h/2}$. The $\alpha-$mixing property (see Theorem 14.1 in \cite{davidson1994}, p. 210) and Lemmas \ref{Davydov} and \ref{lemme_prelimbis}, entail that
\begin{eqnarray*}
v_5&=&\sum_{i=0}^{[h/2]}\sum_{j=0}^{[h/2]}\sum_{i'=0}^{[h/2]}\sum_{j'=0}^{[h/2]}\left|\mbox{Cov}\left(
\mathbf{c}^{t}_{i,m_1}\mathbf{c}^{t+s}_{j,m_2},\mathbf{c}^{t+h}_{i',m_1}\mathbf{c}^{t+h+s}_{j',m_2}\right)\right|
\\
&\leq& \sum_{k=1}^{4}\sum_{(i,j,i',j')\in\mathcal{C}_k}\kappa_6
\left\|\mathbf{c}^{t}_{i,m_1}\mathbf{c}^{t+s}_{j,m_2}\right\|_{2+\nu}
\left\|\mathbf{c}^{t+h}_{i',m_1}\mathbf{c}^{t+h+s}_{j',m_2}\right\|_{2+\nu}
\left\{\alpha\left(\mathbf{c}^{t}_{i,m_1}\mathbf{c}^{t+s}_{j,m_2},\mathbf{c}^{t+h}_{i',m_1}\mathbf{c}^{t+h+s}_{j',m_2}\right)\right\}^{\nu/(2+\nu)},
\end{eqnarray*}
where $\alpha(U,V)$ denotes the strong mixing coefficient between
the $\sigma-$field generated by the random variable $U$ and that generated by $V$ and where
\begin{eqnarray*}
\mathcal{C}_1= \mathcal{C}_1(h)&=&\left\{(i,j,i',j')\in\{0,1,\dots,[h/2]\}^4: i\geq j-s,\;j'\leq i'+s\right\},\\
\mathcal{C}_2= \mathcal{C}_2(h)&=&\left\{(i,j,i',j')\in\{0,1,\dots,[h/2]\}^4: i\geq j-s,\;j'\geq i'+s\right\},\\
\mathcal{C}_3= \mathcal{C}_3(h)&=&\left\{(i,j,i',j')\in\{0,1,\dots,[h/2]\}^4: i\leq j-s,\;j'\leq i'+s\right\},\\
\mathcal{C}_4= \mathcal{C}_4(h)&=&\left\{(i,j,i',j')\in\{0,1,\dots,[h/2]\}^4: i\leq j-s,\;j'\geq i'+s\right\}.
\end{eqnarray*}
We check easily that $\mathbf{c}^{t}_{i,m_1}\mathbf{c}^{t+s}_{j,m_2}$ and $\mathbf{c}^{t+h}_{i',m_1}\mathbf{c}^{t+h+s}_{j',m_2}$ are respectively measurable with respect to $\Delta_r$, $r\in \{t-i+1,...,t+s\}$ and $\Delta_r$, $r\in \{t-i'+h+1,...,t+h+s\}$ when  $(i,j,i',j')\in\mathcal{C}_1$. 
We have $t-i+1\leq t+s-j+1$, $t+h-i'+1\leq t+h+s-j'+1$ and we thus deduce that
\begin{eqnarray*}
\left|\alpha\left(\mathbf{c}^{t}_{i,m_1}\mathbf{c}^{t+s}_{j,m_2},\mathbf{c}^{t+h}_{i',m_1}\mathbf{c}^{t+h+s}_{j',m_2}\right)\right|&\leq&\alpha_{\Delta}\left(h-i'-s+1\right),\quad\forall h\geq i'+s-1,
\\
\left|\alpha\left(\mathbf{c}^{t}_{i,m_1}\mathbf{c}^{t+s}_{j,m_2},\mathbf{c}^{t+h}_{i',m_1}\mathbf{c}^{t+h+s}_{j',m_2}\right)\right|&\leq&\alpha_{\Delta}\left(-i-h-s+1\right),\quad\forall h\leq -i-s+1,
\\
\left|\alpha\left(\mathbf{c}^{t}_{i,m_1}\mathbf{c}^{t+s}_{j,m_2},\mathbf{c}^{t+h}_{i',m_1}\mathbf{c}^{t+h+s}_{j',m_2}\right)\right|&\leq&\alpha_{\Delta}\left(0\right)\leq 1/4,\quad\forall h= -i-s+1,\dots ,i'+s-1.
\end{eqnarray*}
Note also that, by the H\"older inequality,
$$\left\|\mathbf{c}^{t}_{i,m_1}\mathbf{c}^{t+s}_{j,m_2}\right\|_{2+\nu}\leq\left\|\mathbf{c}^{t}_{i,m_1}\right\|_{4+2\nu}\left\|\mathbf{c}^{t+s}_{j,m_2}\right\|_{4+2\nu}\leq C\rho^{i+j}.$$
Therefore
\begin{eqnarray*}
&&\sum_{h=0}^\infty\sum_{(i,j,i',j')\in\mathcal{C}_1}\left\|\mathbf{c}^{t}_{i,m_1}\mathbf{c}^{t+s}_{j,m_2}\right\|_{2+\nu}
\left\|\mathbf{c}^{t+h}_{i',m_1}\mathbf{c}^{t+h+s}_{j',m_2}\right\|_{2+\nu}
\left\{\alpha\left(\mathbf{c}^{t}_{i,m_1}\mathbf{c}^{t+s}_{j,m_2},\mathbf{c}^{t+h}_{i',m_1}\mathbf{c}^{t+h+s}_{j',m_2}\right)\right\}^{\nu/(2+\nu)},
\\&\leq&C^2\sum_{i,j,i',j'=0}^\infty \rho^{i+j+i'+j'}\left(i'+2s-1+i+\sum_{r=0}^{\infty}\alpha_{\Delta}^{\nu/(2+\nu)}\left(r \right)\right)<\infty.
\end{eqnarray*}
Continuing in this way, we obtain that $\sum_{h=0}^{\infty}v_5(h)<\infty$. It follows that
\begin{eqnarray}\nonumber
&&\sum_{h=0}^{\infty}\sum_{i=0}^\infty\sum_{j=0}^\infty\sum_{i'=0}^\infty\sum_{j'=0}^\infty\left|\mbox{Cov}\left(c_{i,m_1}(\theta_0,\Delta_{t},\dots,\Delta_{t-i+1})c_{j,m_2}(\theta_0,\Delta_{t+s},\dots,\Delta_{t+s-j+1}),
\right.\right.\\
\nonumber&&\left.\left. c_{i',m_1}(\theta_0,\Delta_{t+h},\dots,\Delta_{t+h-i'+1})c_{j',m_2}(\theta_0,\Delta_{t+s+h},\dots,\Delta_{t+s+h-j'+1})\right)\right|\\
&\leq & \sum_{h=0}^{\infty} \sum_{i=1}^5 v_i(h)<\infty .
\label{somme4c_i}
\end{eqnarray}
The
same bounds clearly holds for
\begin{eqnarray*}\nonumber
&&\sum_{h=-\infty}^{0}\sum_{i=0}^\infty\sum_{j=0}^\infty\sum_{i'=0}^\infty\sum_{j'=0}^\infty\left|\mbox{Cov}\left(c_{i,m_1}(\theta_0,\Delta_{t-1},\dots,\Delta_{t-i})c_{j,m_2}(\theta_0,\Delta_{t+s-1},\dots,\Delta_{t+s-j}),
\right.\right.\\ \nonumber&&\left.\left. c_{i',m_1}(\theta_0,\Delta_{t+h},\dots,\Delta_{t+h-i'+1})c_{j',m_2}(\theta_0,\Delta_{t+s+h},\dots,\Delta_{t+s+h-j'+1})\right)\right|<\infty,
\end{eqnarray*}
which shows that
\begin{eqnarray*}
&&\sum_{h=-\infty}^{\infty}\sum_{i=0}^\infty\sum_{j=0}^\infty\sum_{i'=0}^\infty\sum_{j'=0}^\infty\left|\mbox{Cov}\left(c_{i,m_1}(\theta_0,\Delta_{t},\dots,\Delta_{t-i+1})c_{j,m_2}(\theta_0,\Delta_{t+s},\dots,\Delta_{t+s-j+1}),
\right.\right.\\&&\left.\left. c_{i',m_1}(\theta_0,\Delta_{t+h},\dots,\Delta_{t+h-i'+1})c_{j',m_2}(\theta_0,\Delta_{t+s+h},\dots,\Delta_{t+s+h-j'+1})\right)\right|<\infty.
\end{eqnarray*}
A slight extension of Corollary A.3 in \cite{FZ2010_book} shows that
\begin{eqnarray}
\sum_{h=-\infty}^{\infty}\left|\mbox{Cov}\left(Y_{1,i}Y_{1+s,j},
Y_{1+h,i'}Y_{1+s+h,j'}\right)\right|<\infty.\label{corolA3FZ}
\end{eqnarray}
Because, by Cauchy-Schwarz inequality
\begin{eqnarray*}
\left|\e\left[\epsilon_{t}\epsilon_{t-i}
\epsilon_{t+s}\epsilon_{t+s-j}\right]\right|\leq\e\left|\epsilon_t\right|^4<\infty
\end{eqnarray*}
by the assumption that $\e\left|\epsilon_t\right|^{8+4\nu}<\infty$ and in view of \eqref{c_igama-Cauchy-Schw} it follows that
\begin{eqnarray*}
&&\sum_{h=-\infty}^{\infty}\left|\mbox{Cov}\left\{\Upsilon_{1}(m_1)\Upsilon_{1+s}(m_2),
\Upsilon_{1+h}(m_1)\Upsilon_{1+s+h}(m_2)\right\}\right|\\
&\leq& \kappa\sum_{i=0}^\infty\sum_{j=0}^\infty\sum_{i'=0}^\infty\sum_{j'=0}^\infty\rho^{i+j+i'+j'}\sum_{h=-\infty}^{\infty}\left|\mbox{Cov}\left(Y_{1,i}Y_{1+s,j},
Y_{1+h,i'}Y_{1+s+h,j'}\right)\right|\\&&+ \kappa'\sum_{i=0}^\infty\sum_{j=0}^\infty\sum_{i'=0}^\infty\sum_{j'=0}^\infty\sum_{h=-\infty}^{\infty}\left|\mbox{Cov}\left(c_{i,m_1}(\theta_0,\Delta_{t},\dots,\Delta_{t-i+1})c_{j,m_2}(\theta_0,\Delta_{t+s},\dots,\Delta_{t+s-j+1}),
\right.\right.\\&&\left.\left. c_{i',m_1}(\theta_0,\Delta_{t+h},\dots,\Delta_{t+h-i'+1})c_{j',m_2}(\theta_0,\Delta_{t+s+h},\dots,\Delta_{t+s+h-j'+1})\right)\right|
\end{eqnarray*}
The conclusion follows from \eqref{somme4c_i} and \eqref{corolA3FZ}.\zak

Let $\hat{\Sigma}_{\Upsilon}$ be the matrix obtained by replacing $\hat{\Upsilon}_t$ by $\Upsilon_t$ in $\hat{\Sigma}_{\hat{\Upsilon}}$.
\begin{lemme}\label{lem3}
Under the assumptions of Theorem~\ref{estimationI}, $
\sqrt{r}\|\hat{\Sigma}_{\underline{\Upsilon}_{r}}-
{\Sigma}_{\underline{\Upsilon}_{r}}\|$,
$\sqrt{r}\|\hat{\Sigma}_{{\Upsilon}}-
{\Sigma}_{{\Upsilon}}\|,$
 and $\sqrt{r}\|\hat{\Sigma}_{{\Upsilon},\underline{\Upsilon}_{r}}-
{\Sigma}_{{\Upsilon},\underline{\Upsilon}_{r}}\| $ tend to zero
in probability as $n\to\infty$ when $r=\mathrm{o}(n^{1/3})$.
\end{lemme}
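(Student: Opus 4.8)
The plan is to control each of the three differences by a crude entrywise (Frobenius) estimate, which is legitimate precisely because of the choice of the operator norm \eqref{def_norm_matrix}: by \eqref{proprinorm} it is dominated by the sum of squared entries. Writing the blocks of $\hat{\Sigma}_{\underline{\Upsilon}_{r}}-{\Sigma}_{\underline{\Upsilon}_{r}}$ in terms of the scalar components $\Upsilon_t(m)$ gives
\begin{equation*}
\|\hat{\Sigma}_{\underline{\Upsilon}_{r}}-{\Sigma}_{\underline{\Upsilon}_{r}}\|^2\le \sum_{i,j=1}^{r}\sum_{m_1,m_2=1}^{(p+q)K}\left(\frac{1}{n}\sum_{t=1}^n \Upsilon_{t-i}(m_1)\Upsilon_{t-j}(m_2)-\e\left[\Upsilon_{t-i}(m_1)\Upsilon_{t-j}(m_2)\right]\right)^2,
\end{equation*}
and analogous inequalities for $\hat{\Sigma}_{{\Upsilon},\underline{\Upsilon}_{r}}-{\Sigma}_{{\Upsilon},\underline{\Upsilon}_{r}}$ and $\hat{\Sigma}_{{\Upsilon}}-{\Sigma}_{{\Upsilon}}$, where the outer indices range over the block structures of sizes $(p+q)K\times r(p+q)K$ and $(p+q)K\times (p+q)K$ respectively. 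Taking expectations and splitting each entry into a zero-mean fluctuation plus a deterministic bias (coming from the convention $\Upsilon_t=0$ for $t\le 0$) reduces the problem to (i) bounding the variance of each normalized sample mean of products $\Upsilon_{t-i}(m_1)\Upsilon_{t-j}(m_2)$ and (ii) bounding the boundary bias.

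For step (i), stationarity yields for fixed $i,j,m_1,m_2$
\begin{equation*}
\var\left(\frac{1}{n}\sum_{t=1}^n \Upsilon_{t-i}(m_1)\Upsilon_{t-j}(m_2)\right)\le \frac{1}{n}\sum_{h=-\infty}^\infty\left|\cov\left(\Upsilon_{1}(m_1)\Upsilon_{1+s}(m_2),\Upsilon_{1+h}(m_1)\Upsilon_{1+s+h}(m_2)\right)\right|
\end{equation*}
with $s=i-j$. The essential point is that Lemma \ref{lemsurprise!} bounds this series by a constant $C_1$ \emph{uniformly in the lag} $s\in\nbZ$, so each such variance is at most $C_1/n$ regardless of $i$ and $j$. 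Since $\hat{\Sigma}_{\underline{\Upsilon}_{r}}-{\Sigma}_{\underline{\Upsilon}_{r}}$ has $\mathrm{O}(r^2)$ scalar entries, $\hat{\Sigma}_{{\Upsilon},\underline{\Upsilon}_{r}}-{\Sigma}_{{\Upsilon},\underline{\Upsilon}_{r}}$ has $\mathrm{O}(r)$ of them, and $\hat{\Sigma}_{{\Upsilon}}-{\Sigma}_{{\Upsilon}}$ has $\mathrm{O}(1)$, the fluctuation parts satisfy $\e\|\cdot\|^2\le Cr^2/n$, $Cr/n$ and $C/n$ respectively. Multiplying by $r$ and applying Markov's inequality gives $r\,\e\|\hat{\Sigma}_{\underline{\Upsilon}_{r}}-{\Sigma}_{\underline{\Upsilon}_{r}}\|^2\le Cr^3/n\to 0$ under $r=\mathrm{o}(n^{1/3})$, while the two other expressions tend to $0$ a fortiori (they require only $r^2/n\to0$ and $r/n\to0$). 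This yields convergence to $0$ in probability of $\sqrt r$ times each norm.

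For step (ii), in the block indexed by $(i,j)$ the truncation omits at most $\max(i,j)\le r$ summands, each of bounded stationary expectation, so the bias of that block has norm $\mathrm{O}(r/n)$; summing the squared biases over the $\mathrm{O}(r^2)$ blocks and multiplying by $r$ produces a term of order $r^5/n^2=\mathrm{o}(1)$ under $r=\mathrm{o}(n^{1/3})$, hence negligible, and the same holds for the smaller matrices. The main difficulty is thus already absorbed into the previously established Lemma \ref{lemsurprise!}: it is the \emph{uniformity in $s$} of the covariance bound that lets the crude entrywise summation survive against the growing number $\mathrm{O}(r^2)$ of blocks, and the scaling $r=\mathrm{o}(n^{1/3})$ is exactly the threshold at which $r^3/n\to0$. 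The remaining steps are routine second-moment bookkeeping.
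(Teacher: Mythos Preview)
Your argument is correct and follows essentially the same route as the paper: bound the operator norm by the Frobenius norm via \eqref{proprinorm}, control each entry's variance by $C_1/n$ through the uniform-in-$s$ covariance bound of Lemma~\ref{lemsurprise!}, sum over the $\mathrm{O}(r^2)$ scalar entries, and conclude by Markov's inequality that $r\,\e\|\hat{\Sigma}_{\underline{\Upsilon}_{r}}-{\Sigma}_{\underline{\Upsilon}_{r}}\|^2=\mathrm{O}(r^3/n)\to 0$. Your treatment is in fact slightly more careful than the paper's, since you explicitly separate out the deterministic boundary bias coming from the convention $\Upsilon_t=0$ for $t\le 0$; the paper silently treats $(Z_t)$ as stationary, which sweeps that $\mathrm{O}(r/n)$ edge effect under the rug.
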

{\bf Proof.}  For $1\leq m_1,m_2\leq K(p+q)$ and $1\leq r_1,r_2\leq
r$, the element of the $\left\{(r_1-1)(p+q)K+m_1\right\}$-th row and
$\left\{(r_2-1)(p+q)K+m_2\right\}$-th column of
$\hat{\Sigma}_{\underline{\Upsilon}_{r}}
$ is of the form $n^{-1}\sum_{t=1}^nZ_t$ where $Z_t:=Z_{t,r_1,r_2}(m_1,m_2)=\Upsilon_{t-r_1}(m_1)\Upsilon_{t-r_2}(m_2).$
 By stationarity of
$\left(Z_t\right)$, we have
\begin{eqnarray}
\mbox{Var}\left(\frac{1}{n}\sum_{t=1}^nZ_t\right)=\frac{1}{n^{2}}\sum_{h=-n+1}^{n-1}\left(n-|h|\right)\mbox{Cov}\left(Z_t,Z_{t-h}\right)
\leq
\frac{C_1}{n},\label{eqm}
\end{eqnarray}
where,  by Lemma \ref{lemsurprise!},  $C_1$ is a constant independent of $r_1,r_2,m_1,m_2$ and $r,n$.
Now using the Tchebychev inequality,  we have
\begin{eqnarray*}
\forall \beta>0, \quad \mathbb{P}\left\{\sqrt{r}\|\hat{\Sigma}_{\underline{\Upsilon}_{r}}-
{\Sigma}_{\underline{\Upsilon}_{r}}\|> \beta\right\}\leq
\frac{1}{\beta^2}\mathbb{E}\left\{r\|\hat{\Sigma}_{\underline{\Upsilon}_{r}}-
{\Sigma}_{\underline{\Upsilon}_{r}}\|^2\right\}.
\end{eqnarray*}
In view of (\ref{proprinorm}) and (\ref{eqm}) we have
\begin{eqnarray*}
&&\mathbb{E}\left\{r\|\hat{\Sigma}_{\Upsilon}-
{\Sigma}_{{\Upsilon}}\|^2 \right\}\leq
\mathbb{E}\left\{r\|\hat{\Sigma}_{{\Upsilon},\underline{\Upsilon}_{r}}-
{\Sigma}_{{\Upsilon},\underline{\Upsilon}_{r}}\|^2 \right\}
\\&&\leq \mathbb{E}\left\{r\|\hat{\Sigma}_{\underline{\Upsilon}_{r}}-
{\Sigma}_{\underline{\Upsilon}_{r}}\|^2\right\}\leq r \sum_{m_1,m_2=1}^{K(p+q)r}\mbox{Var}\left(\frac{1}{n}\sum_{t=1}^nZ_t\right)\leq
\frac{C_1K^2(p+q)^2r^3}{n}=\mathrm{o}(1)
\end{eqnarray*}
 as $n\to\infty$ when
$r=\mathrm{o}(n^{1/3})$. Hence, when $r=\mathrm{o}(n^{1/3})$
\begin{eqnarray*}
\sqrt{r}\|\hat{\Sigma}_{\underline{\Upsilon}_{r}}-
{\Sigma}_{\underline{\Upsilon}_{r}}\|&=&\mathrm{o}_{\mathbb{P}}(1),
\\\sqrt{r}\|\hat{\Sigma}_{{\Upsilon}}-
{\Sigma}_{{\Upsilon}}\|&=&\mathrm{o}_{\mathbb{P}}(1)\text{ and }\sqrt{r}\|\hat{\Sigma}_{{\Upsilon},\underline{\Upsilon}_{r}}-
{\Sigma}_{{\Upsilon},\underline{\Upsilon}_{r}}\|=\mathrm{o}_{\mathbb{P}}(1).
\end{eqnarray*}
The proof is complete.\zak
We now show that the previous lemma applies when $\Upsilon_t$ is replaced by $\hat{\Upsilon}_t$.
\begin{lemme}\label{lem3bis}
Under the assumptions of Theorem~\ref{estimationI}, $
\sqrt{r}\|\hat{\Sigma}_{\underline{\hat{\Upsilon}}_{r}}-
{\Sigma}_{\underline{\Upsilon}_{r}}\|$,
$\sqrt{r}\|\hat{\Sigma}_{{\hat{\Upsilon}}}-
{\Sigma}_{\Upsilon}\|,$
 and $\sqrt{r}\|\hat{\Sigma}_{{\hat{\Upsilon}},\underline{\hat{\Upsilon}}_{r}}-
{\Sigma}_{{\Upsilon},\underline{\Upsilon}_{r}}\| $ tend to zero
in probability as $n\to\infty$ when $r=\mathrm{o}(n^{1/3})$.
\end{lemme}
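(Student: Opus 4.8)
The plan is to obtain Lemma~\ref{lem3bis} from Lemma~\ref{lem3} by controlling only the plug-in error created when $\Upsilon_t=\epsilon_t(\theta_0)\nabla\epsilon_t(\theta_0)$ is replaced by $\hat{\Upsilon}_t=e_t(\hat{\theta}_n)\nabla e_t(\hat{\theta}_n)$. For each of the three matrices I would write, e.g.,
$$\hat{\Sigma}_{\underline{\hat{\Upsilon}}_{r}}-{\Sigma}_{\underline{\Upsilon}_{r}}=\bigl(\hat{\Sigma}_{\underline{\hat{\Upsilon}}_{r}}-\hat{\Sigma}_{\underline{\Upsilon}_{r}}\bigr)+\bigl(\hat{\Sigma}_{\underline{\Upsilon}_{r}}-{\Sigma}_{\underline{\Upsilon}_{r}}\bigr).$$
Lemma~\ref{lem3} already gives that $\sqrt r$ times the second bracket is $\mathrm{o}_{\mathbb{P}}(1)$, so it remains to treat the first bracket, and likewise for $\hat{\Sigma}_{\hat{\Upsilon}}-\hat{\Sigma}_{\Upsilon}$ and $\hat{\Sigma}_{\hat{\Upsilon},\underline{\hat{\Upsilon}}_r}-\hat{\Sigma}_{\Upsilon,\underline{\Upsilon}_r}$. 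As in Lemmas~\ref{lem4} and~\ref{lem3}, for the operator norm \eqref{def_norm_matrix} these last two differences are blocks of $\hat{\Sigma}_{\underline{\hat{\Upsilon}}_{r+1}}-\hat{\Sigma}_{\underline{\Upsilon}_{r+1}}$, hence dominated in norm by it, so I would concentrate on proving $r\,\|\hat{\Sigma}_{\underline{\hat{\Upsilon}}_{r}}-\hat{\Sigma}_{\underline{\Upsilon}_{r}}\|^2\to 0$ in probability.

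Using the norm property \eqref{proprinorm}, I would bound $\|\hat{\Sigma}_{\underline{\hat{\Upsilon}}_{r}}-\hat{\Sigma}_{\underline{\Upsilon}_{r}}\|^2$ by the sum of the squares of its entries, a generic entry being $D=\frac1n\sum_{t=1}^n[\hat{\Upsilon}_{t-r_1}(m_1)\hat{\Upsilon}_{t-r_2}(m_2)-\Upsilon_{t-r_1}(m_1)\Upsilon_{t-r_2}(m_2)]$ with $1\le r_1,r_2\le r$ and $1\le m_1,m_2\le(p+q)K$. Factoring the summand as $(\hat{\Upsilon}_{t-r_1}(m_1)-\Upsilon_{t-r_1}(m_1))\hat{\Upsilon}_{t-r_2}(m_2)+\Upsilon_{t-r_1}(m_1)(\hat{\Upsilon}_{t-r_2}(m_2)-\Upsilon_{t-r_2}(m_2))$ and applying the Cauchy--Schwarz inequality in $t$ reduces the whole problem to a bound, uniform in the shift, on $\frac1n\sum_t(\hat{\Upsilon}_t(m)-\Upsilon_t(m))^2$, the remaining averages $\frac1n\sum_t\hat{\Upsilon}_t(m)^2$ and $\frac1n\sum_t\Upsilon_t(m)^2$ being $\mathrm{O}_{\mathbb{P}}(1)$ by the ergodic theorem and the $L^{16}$ moment bounds of Lemma~\ref{lemme_prelimbis}.

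To bound $\frac1n\sum_t(\hat{\Upsilon}_t(m)-\Upsilon_t(m))^2$ I would split $\hat{\Upsilon}_t-\Upsilon_t$ into a residual-approximation term $e_t(\hat{\theta}_n)\nabla e_t(\hat{\theta}_n)-\epsilon_t(\hat{\theta}_n)\nabla\epsilon_t(\hat{\theta}_n)$ and an estimation term $\epsilon_t(\hat{\theta}_n)\nabla\epsilon_t(\hat{\theta}_n)-\epsilon_t(\theta_0)\nabla\epsilon_t(\theta_0)$. For the first term, Points~2 and~4 of Lemma~\ref{lemme_prelimbis}, together with a H\"older argument pairing the $L^{16}$ bounds on $\sup_\theta|e_t|$ and $\sup_\theta\|\nabla\epsilon_t\|$ with the fast-decaying $L^4$ and $L^{16/5}$ norms of $\sup_\theta|e_t-\epsilon_t|$ and $\sup_\theta\|\nabla(e_t-\epsilon_t)\|$, make the summands summable in $t$, so that $\mathbb{E}[\frac1n\sum_t\sup_{\theta\in\Theta}\|e_t(\theta)\nabla e_t(\theta)-\epsilon_t(\theta)\nabla\epsilon_t(\theta)\|^2]=\mathrm{O}(n^{-1})$. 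For the second term a mean value expansion in $\theta$ bounds it by $\|\hat{\theta}_n-\theta_0\|$ times $\sup_{\theta\in\Theta}$ of an expression built from $\epsilon_t(\theta)$, $\nabla\epsilon_t(\theta)$ and $\nabla^2\epsilon_t(\theta)$; since $\|\hat{\theta}_n-\theta_0\|^2=\mathrm{O}_{\mathbb{P}}(n^{-1})$ by Theorem~\ref{CLT_theorem} and the ergodic average of the square of that supremum converges a.s. to a finite limit (finiteness again from Lemma~\ref{lemme_prelimbis}), this term is $\mathrm{O}_{\mathbb{P}}(n^{-1})$ as well. Hence each $D$ is $\mathrm{O}_{\mathbb{P}}(n^{-1/2})$, uniformly over the $((p+q)Kr)^2$ entries, so that $\|\hat{\Sigma}_{\underline{\hat{\Upsilon}}_{r}}-\hat{\Sigma}_{\underline{\Upsilon}_{r}}\|^2=\mathrm{O}_{\mathbb{P}}(r^2/n)$ and $r\|\hat{\Sigma}_{\underline{\hat{\Upsilon}}_{r}}-\hat{\Sigma}_{\underline{\Upsilon}_{r}}\|^2=\mathrm{O}_{\mathbb{P}}(r^3/n)=\mathrm{o}_{\mathbb{P}}(1)$ when $r=\mathrm{o}(n^{1/3})$, which is the asserted conclusion.

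The main obstacle will be making the $\mathrm{O}_{\mathbb{P}}(n^{-1})$ rate genuinely \emph{uniform} over the growing family of $\mathrm{O}(r^2)$ entries, whose time shifts $r_1,r_2$ themselves increase with $n$ up to $r=\mathrm{o}(n^{1/3})$: the random $\hat{\theta}_n$ sits inside Cesàro averages, so the uniform-in-$\theta$ moment bounds of Lemma~\ref{lemme_prelimbis} and the tightness of $\sqrt n(\hat{\theta}_n-\theta_0)$ coming from Theorem~\ref{CLT_theorem} must be combined to dominate all shifts by a single ergodic average, and one has to check that the boundary effects created by the shifts (and by the convention $\hat{\Upsilon}_t=\Upsilon_t=0$ for $t\le 0$) remain negligible throughout the regime $r=\mathrm{o}(n^{1/3})$.
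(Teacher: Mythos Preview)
Your proposal is correct and follows the same three-way decomposition as the paper: (i) a residual-approximation error passing from $e_t$ to $\epsilon_t$ at the random point $\hat\theta_n$, (ii) an estimation error from $\hat\theta_n$ to $\theta_0$, and (iii) the empirical-versus-population term supplied by Lemma~\ref{lem3}. The key ingredients you invoke---Lemma~\ref{lemme_prelimbis} for (i), a Taylor expansion together with the $\sqrt n$-tightness of $\hat\theta_n$ from Theorem~\ref{CLT_theorem} for (ii)---are exactly those used in the paper.

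The only difference is organizational. The paper introduces an intermediate matrix $\hat\Sigma_{\underline\Upsilon_{r,n}}$ (replacing $e_t(\hat\theta_n)$ by $\epsilon_t(\hat\theta_n)$) and works directly on the four-fold product entries, bounding each via H\"older into four terms $\mathcal A^j$ for step (i) and four terms $\mathcal B^j$ for step (ii), then summing over the $\mathrm O(r^2)$ entries; the boundary shifts produce an explicit $\mathrm O(r/n)$ contribution in each $\mathcal A^j$. You instead apply Cauchy--Schwarz in $t$ first, reducing every entry to the single shift-free average $n^{-1}\sum_{s=1}^n(\hat\Upsilon_s(m)-\Upsilon_s(m))^2$. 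With the convention $\hat\Upsilon_s=\Upsilon_s=0$ for $s\le 0$, the uniformity over shifts you flag as the main obstacle is in fact immediate via the monotonicity
\[
\frac1n\sum_{t=1}^n\bigl(\hat\Upsilon_{t-r_1}(m)-\Upsilon_{t-r_1}(m)\bigr)^2\le\frac1n\sum_{s=1}^n\bigl(\hat\Upsilon_s(m)-\Upsilon_s(m)\bigr)^2,
\]
so no boundary bookkeeping is needed. Both routes give the same binding rate $r\|\hat\Sigma_{\underline{\hat\Upsilon}_r}-\hat\Sigma_{\underline\Upsilon_r}\|^2=\mathrm O_{\mathbb P}(r^3/n)$.
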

{\bf Proof.} We first show that the replacement of the unknown initial values $\{X_u,\;u\leq 0\}$ by zero is asymptotically unimportant. Let $\hat{\Sigma}_{\underline{\Upsilon}_{r,n}}$ be the matrix obtained by replacing $e_t(\hat\theta_n)$ by $\epsilon_t(\hat\theta_n)$ in
$\hat{\Sigma}_{{\hat{\underline{\Upsilon}}_r}}$. We start by evaluating $\mathbb{E}\|\hat{\Sigma}_{{\hat{\underline{\Upsilon}}_r}}-\hat{\Sigma}_{\underline{\Upsilon}_{r,n}}\|^2$.
We first note that
$$\hat{\Sigma}_{{\hat{\underline{\Upsilon}}_r}}-\hat{\Sigma}_{\underline{\Upsilon}_{r,n}}=\left[\frac{1}{n}\sum_{t=1}^na_{t-i,t-i',m_1,m_2}(\hat\theta_n)\right]$$
for $i,i'=1,\dots,r$ and $m_1,m_2=1,\dots,K(p+q)$ and where $$a_{t-i,t-i',m_1,m_2}(\hat\theta_n)=e_{t-i}(\hat\theta_n)
e_{t-i'}(\hat\theta_n)\frac{\partial e_{t-i}(\hat\theta_n)}{\partial \theta_{m_1}}\frac{\partial e_{t-i'}(\hat\theta_n)}{\partial \theta_{m_2}}
-\epsilon_{t-i}(\hat\theta_n)
\epsilon_{t-i'}(\hat\theta_n)\frac{\partial \epsilon_{t-i}(\hat\theta_n)}{\partial \theta_{m_1}}\frac{\partial \epsilon_{t-i'}(\hat\theta_n)}{\partial \theta_{m_2}}.$$
 Using \eqref{proprinorm}, we have
\begin{eqnarray*}
\| \hat{\Sigma}_{{\hat{\underline{\Upsilon}}_r}}-\hat{\Sigma}_{\underline{\Upsilon}_{r,n}}\|^2&\leq&\sum_{i,i'=1}^r\sum_{m_1,m_2=1}^{K(p+q)}
\left[\frac{1}{n}\sum_{t=1}^na_{t-i,t-i',m_1,m_2}(\hat\theta_n)\right]^2.
\end{eqnarray*}
We thus deduce the following $L^2$ estimate:
\begin{eqnarray*}
\mathbb{E}\| \hat{\Sigma}_{{\hat{\underline{\Upsilon}}_r}}-\hat{\Sigma}_{\underline{\Upsilon}_{r,n}}\|^2&\leq&\sum_{i,i'=1}^r\sum_{m_1,m_2=1}^{K(p+q)}\left\|
\frac{1}{n}\sum_{t=1}^na_{t-i,t-i',m_1,m_2}(\hat\theta_n)\right\|_2^2\\&\leq&\sum_{i,i'=1}^r\sum_{m_1,m_2=1}^{K(p+q)}\frac{1}{n}\sum_{t=1}^n\left\|
a_{t-i,t-i',m_1,m_2}(\hat\theta_n)\right\|_2^2,
\end{eqnarray*}
by Minkowski's inequality. Thanks to H\"{o}lder's inequality:
$$\left\|
a_{t-i,t-i',m_1,m_2}(\hat\theta_n)\right\|_2\leq\sum_{j=1}^4 {\cal A}^j_{t-i,t-i',m_1,m_2},\text{ with}$$
\begin{eqnarray*}
{\cal A}^1_{t-i,t-i',m_1,m_2}&=&\left\|\sup_{\theta\in\Theta}\left|e_{t-i}(\theta)-\epsilon_{t-i}(\theta)\right|\right\|_4
\sup_{t\geq0}\left\|\sup_{\theta\in\Theta}\left|e_{t}(\theta)\right|\right\|_{12}\left(\sup_{t\geq0}\left\|\sup_{\theta\in\Theta}\left\|\frac{\partial e_{t}(\theta)}{\partial \theta}\right\|\right\|_{12}\right)^2\\
{\cal A}^2_{t-i,t-i',m_1,m_2}&=&\left\|\sup_{\theta\in\Theta}\left|\epsilon_{t}(\theta)\right|\right\|_{12}\left\|\sup_{\theta\in\Theta}\left|e_{t-i'}(\theta)-\epsilon_{t-i'}(\theta)\right|\right\|_4
\left(\sup_{t\geq0}\left\|\sup_{\theta\in\Theta}\left\|\frac{\partial e_{t}(\theta)}{\partial \theta}\right\|\right\|_{12}\right)^2
\\
{\cal A}^3_{t-i,t-i',m_1,m_2}&=&\left(\left\|\sup_{\theta\in\Theta}\left|\epsilon_{t}(\theta)\right|\right\|_{16}\right)^2\left\|\sup_{\theta\in\Theta}\left\|\frac{\partial }{\partial \theta}\left(e_{t-i}(\theta)-\epsilon_{t-i}(\theta)\right)\right\|\right\|_{16/5}
\sup_{t\geq0}\left\|\sup_{\theta\in\Theta}\left\|\frac{\partial e_{t}(\theta)}{\partial \theta}\right\|\right\|_{16}
\\
{\cal A}^4_{t-i,t-i',m_1,m_2}&=&\left(\left\|\sup_{\theta\in\Theta}\left|\epsilon_{t}(\theta)\right|\right\|_{16}\right)^2\left\|\sup_{\theta\in\Theta}\left\|\frac{\partial \epsilon_{t}(\theta)}{\partial \theta}\right\|\right\|_{16}
\left\|\sup_{\theta\in\Theta}\left\|\frac{\partial }{\partial \theta}\left(e_{t-i'}(\theta)-\epsilon_{t-i'}(\theta)\right)\right\|\right\|_{16/5}.
\end{eqnarray*}
We deal with ${\cal A}^1_{t-i,t-i',m_1,m_2}$ and ${\cal A}^2_{t-i,t-i',m_1,m_2}$, as ${\cal A}^3_{t-i,t-i',m_1,m_2}$ and ${\cal A}^4_{t-i,t-i',m_1,m_2}$ are dealt with similarly. In view of Lemma~\ref{lemme_prelimbis},  we have
\begin{eqnarray*}
\frac{1}{n}\sum_{t=1}^n{\cal A}^1_{t-i,t-i',m_1,m_2}&\leq&\kappa_1\frac{1}{n}\sum_{t=1}^n\left\|\sup_{\theta\in\Theta}\left|e_{t-i}(\theta)-\epsilon_{t-i}(\theta)\right|\right\|_4
\\&\leq&\frac{\kappa_1}{n}\left(\sum_{t=1}^{n-r}\left\|\sup_{\theta\in\Theta}\left|e_{t}(\theta)-\epsilon_{t}(\theta)\right|\right\|_4+r\left\|\sup_{\theta\in\Theta}
\left|\epsilon_{0}(\theta)\right|\right\|_4\right)=\mathrm{O}\left(\frac{1}{n}+\frac{r}{n}\right)=\mathrm{O}\left(\frac{r}{n}\right),
\end{eqnarray*}
independent from $i$, $i'$, $m_1$ and $m_2$. Similarly, we have
\begin{eqnarray*}
\frac{1}{n}\sum_{t=1}^n{\cal A}^3_{t-i,t-i',m_1,m_2}&\leq&\kappa_3\frac{1}{n}\sum_{t=1}^n\left\|\sup_{\theta\in\Theta}\left\|\frac{\partial }{\partial \theta}\left(e_{t-i}(\theta)-\epsilon_{t-i}(\theta)\right)\right\|\right\|_{16/5}
\\&\leq&\kappa_3\frac{1}{n}\left(\sum_{t=1}^{n-r}\left\|\sup_{\theta\in\Theta}\left\|\frac{\partial }{\partial \theta}\left(e_{t}(\theta)-\epsilon_{t}(\theta)\right)\right\|\right\|_{16/5}\right.\\&&\left.+r\left\|\sup_{\theta\in\Theta}\left\|\frac{\partial \epsilon_{0}(\theta)}{\partial \theta}\right\|\right\|_{16/5}\right)=\mathrm{O}\left(\frac{1}{n}+\frac{r}{n}\right)=\mathrm{O}\left(\frac{r}{n}\right),
\end{eqnarray*}
because $\sum_{t=1}^{\infty}\left\|\sup_{\theta\in\Theta}\left\|{\partial\left(e_{t}(\theta)-\epsilon_{t}(\theta)\right) }/{\partial \theta}\right\|\right\|_{16/5}<\infty$ and $\left\|\sup_{\theta\in\Theta}\left\|{\partial \epsilon_{0}(\theta)}/{\partial \theta}\right\|\right\|_{16/5}<\infty$ (see Lemma~\ref{lemme_prelimbis}, Point 4).
Gathering ${\cal A}^1_{t-i,t-i',m_1,m_2}$, ${\cal A}^2_{t-i,t-i',m_1,m_2}$,  ${\cal A}^3_{t-i,t-i',m_1,m_2}$ and ${\cal A}^4_{t-i,t-i',m_1,m_2}$, we arrive at
\begin{eqnarray*}
\mathbb{E}\| \hat{\Sigma}_{{\hat{\underline{\Upsilon}}_r}}-\hat{\Sigma}_{\underline{\Upsilon}_{r,n}}\|^2&\leq&\sum_{i,i'=1}^r\sum_{m_1,m_2=1}^{K(p+q)}\left(
\frac{1}{n}\sum_{t=1}^n\sum_{j=1}^4 {\cal A}^j_{t-i,t-i',m_1,m_2}\right)^2=\mathrm{O}\left(r^2\left\{\frac{r}{n}\right\}^2\right)=\mathrm{O}\left(\frac{r^4}{n^2}\right).
\end{eqnarray*}
We thus deduce that
\begin{equation}
\label{resinterm1}\sqrt{r}\|\hat{\Sigma}_{{\hat{\underline{\Upsilon}}_r}}-
\hat{\Sigma}_{\underline{\Upsilon}_{r,n}}\|=\mathrm{o}_{\mathbb{P}}(1),\text{ when }r=r(n)=\mathrm{o}\left(n^{2/5}\right).
\end{equation}
We now prove that $$\sqrt{r}\|\hat{\Sigma}_{\underline{\Upsilon}_{r,n}}-\hat{\Sigma}_{{{\underline{\Upsilon}}_r}}\|=\mathrm{o}_{\mathbb{P}}(1),\text{ when }r=r(n)=\mathrm{o}\left(n^{1/3}\right).$$

Taylor expansions around $\theta_0$ yield
\begin{equation}
\label{taylor-res}
\left|\epsilon_t(\hat\theta_n)-{\epsilon}_t(\theta_0)\right|\leq
r_t\left\|\hat\theta_n-\theta_0\right\|,\quad \left|\frac{\partial \epsilon_t(\hat\theta_n)}{\partial \theta_m}- \frac{\partial \epsilon_t(\theta_0)}{\partial \theta_m}\right|\leq
s_t(m)\left\|\hat\theta_n-\theta_0\right\|
\end{equation}
with
$r_t=\sup_{\theta\in\Theta}\left\|{\partial{\epsilon}_t({\theta})}/{\partial
\theta}\right\|$, $s_{t}(m)=\sup_{\theta\in\Theta}\left\|{\partial^2{\epsilon}_t({\theta})}/{\partial
\theta\partial \theta_m}\right\|$
where $m=m_1=m_2$. Define $Z_t$ as in the proof of Lemma~\ref{lem3}, and let $Z_{t,n}$ be obtained by replacing $\Upsilon_t(m)$ by
$\Upsilon_{t,n}(m)=\epsilon_t(\hat\theta_n)\partial \epsilon_t(\hat\theta_n)/\partial \theta_m$ in $Z_t$.
Using \eqref{taylor-res}, for $i,i'=1,\dots,r$ and $m_1,m_2=1,\dots,K(p+q)$, we have
 \begin{eqnarray}
\label{taylor-res-ecart}
\left|\epsilon_{t-i}(\hat\theta_n)
\epsilon_{t-i'}(\hat\theta_n)\frac{\partial \epsilon_{t-i}(\hat\theta_n)}{\partial \theta_{m_1}}\frac{\partial \epsilon_{t-i'}(\hat\theta_n)}{\partial \theta_{m_2}}
-
\epsilon_{t-i}(\theta_0)
\epsilon_{t-i'}(\theta_0)\frac{\partial \epsilon_{t-i}(\theta_0)}{\partial \theta_{m_1}}\frac{\partial \epsilon_{t-i'}(\theta_0)}{\partial \theta_{m_2}}\right|
\leq\sum_{j=1}^4 {\cal B}^j_{t-i,t-i',m_1,m_2},
\end{eqnarray}
with
\begin{eqnarray*}
{\cal B}^1_{t-i,t-i',m_1,m_2}&=&r_{t-i}\left\|\hat\theta_n-\theta_0\right\|\sup_{\theta\in\Theta}\left|\epsilon_{t-i'}(\theta)\right|
\sup_{\theta\in\Theta}\left|\frac{\partial \epsilon_{t-i}(\theta)}{\partial \theta_{m_1}}\right|
\sup_{\theta\in\Theta}\left|\frac{\partial \epsilon_{t-i'}(\theta)}{\partial \theta_{m_2}}\right|\\
{\cal B}^2_{t-i,t-i',m_1,m_2}&=&r_{t-i'}\left\|\hat\theta_n-\theta_0\right\|\sup_{\theta\in\Theta}\left|\epsilon_{t-i}(\theta)\right|
\sup_{\theta\in\Theta}\left|\frac{\partial \epsilon_{t-i}(\theta)}{\partial \theta_{m_1}}\right|
\sup_{\theta\in\Theta}\left|\frac{\partial \epsilon_{t-i'}(\theta)}{\partial \theta_{m_2}}\right|
\\
{\cal B}^3_{t-i,t-i',m_1,m_2}&=&s_{t-i}(m_1)\left\|\hat\theta_n-\theta_0\right\|\sup_{\theta\in\Theta}\left|\epsilon_{t-i}(\theta)\right|
\sup_{\theta\in\Theta}\left|\epsilon_{t-i'}(\theta)\right|
\sup_{\theta\in\Theta}\left|\frac{\partial \epsilon_{t-i'}(\theta)}{\partial \theta_{m_2}}\right|
\\
{\cal B}^4_{t-i,t-i',m_1,m_2}&=&s_{t-i'}(m_2)\left\|\hat\theta_n-\theta_0\right\|\sup_{\theta\in\Theta}\left|\epsilon_{t-i}(\theta)\right|
\sup_{\theta\in\Theta}\left|\epsilon_{t-i'}(\theta)\right|
\sup_{\theta\in\Theta}\left|\frac{\partial \epsilon_{t-i}(\theta)}{\partial \theta_{m_1}}\right|.
\end{eqnarray*}
We deal with ${\cal B}^1_{t-i,t-i',m_1,m_2}$ and ${\cal B}^2_{t-i,t-i',m_1,m_2}$, as ${\cal B}^3_{t-i,t-i',m_1,m_2}$ and ${\cal B}^4_{t-i,t-i',m_1,m_2}$ are dealt with similarly.
We note first that, for all $i=1,\dots,r$,
\begin{eqnarray}\nonumber
\frac{1}{n}\sum_{t=1}^n\sup_{\theta\in\Theta}\left|\epsilon_{t-i}(\theta)\right|^4&=&\frac{1}{n}\sum_{t=1-i}^{n-i}\sup_{\theta\in\Theta}\left|\epsilon_{t}(\theta)\right|^4
=\frac{1}{n}\sum_{t=1-i}^{0}\sup_{\theta\in\Theta}\left|\epsilon_{t}(\theta)\right|^4+\frac{1}{n}\sum_{t=1}^{n-i}\sup_{\theta\in\Theta}\left|\epsilon_{t}(\theta)\right|^4
\\ \nonumber&\leq&\frac{r}{n}\frac{1}{r}\sum_{t=1-r}^{0}\sup_{\theta\in\Theta}\left|\epsilon_{t}(\theta)\right|^4+\frac{1}{n}\sum_{t=1}^{n}\sup_{\theta\in\Theta}\left|\epsilon_{t}(\theta)\right|^4
\\&=&\left(\frac{r}{n}+1\right)\left(\left\|\sup_{\theta\in\Theta}\left|\epsilon_{0}(\theta)\right|\right\|^4_4+\mathrm{o}_{a.s.}(1)\right),\label{O-de-un-eps4}
\end{eqnarray}
by the ergodic theorem. Similarly to \eqref{O-de-un-eps4}, we have
\begin{eqnarray}
\frac{1}{n}\sum_{t=1}^n\sup_{\theta\in\Theta}\left|\frac{\partial \epsilon_{t-i}(\theta)}{\partial \theta_{m}}\right|^4\leq\left(\frac{r}{n}+1\right)\left(\left\|\sup_{\theta\in\Theta}\left|\frac{\partial \epsilon_{0}(\theta)}{\partial \theta_{m}}\right|\right\|^4_4+\mathrm{o}_{a.s.}(1)\right).\label{O-de-un-grad.eps4}
\end{eqnarray}
By the Cauchy-Schwarz inequality and using \eqref{O-de-un-eps4} and \eqref{O-de-un-grad.eps4}, we have
\begin{eqnarray*}
\sum_{i,i'=1}^r\sum_{m_1,m_2=1}^{K(p+q)}\frac{1}{n}\sum_{t=1}^n{\cal B}^1_{t-i,t-i',m_1,m_2}&\leq & r^2\left\|\hat\theta_n-\theta_0\right\|\left(\frac{r}{n}+1\right)^3\left(\kappa_1+\mathrm{o}_{a.s.}(1)\right)
\\&=&r^2\left\|\hat\theta_n-\theta_0\right\|\mathrm{O}(1)\left(\kappa_1+\mathrm{o}_{a.s.}(1)\right),
\end{eqnarray*}
when $r=\mathrm{o}\left(n^{1/3}\right)$ and for some constant $\kappa_1>0$. Similar inequalities
hold for ${\cal B}^j_{t-i,t-i',m_1,m_2}$, for $j=2, 3, 4$. We thus deduce from \eqref{proprinorm} and \eqref{taylor-res-ecart} that
\begin{eqnarray}\label{presquefini}
r\|\hat{\Sigma}_{\underline{\Upsilon}_{r,n}}-\hat{\Sigma}_{{{\underline{\Upsilon}}_r}}\|^2&\leq &r^3\left\|\hat\theta_n-\theta_0\right\|^2\mathrm{O}_{\mathbb{P}}(1).
\end{eqnarray}

Since $\sqrt{n}\left(\hat\theta_n-\theta_0\right)$ converges in distribution, a tightness argument yields
$\left\|\hat\theta_n-\theta_0\right\|=\mathrm{O}_{\mathbb{P}}\left(n^{-1/2}\right)$ and hence from \eqref{presquefini}, we obtain for $r=\mathrm{o}(n^{1/3})$
\begin{equation}
\label{resinterm2}\sqrt{r}\|\hat{\Sigma}_{\underline{\Upsilon}_{r,n}}-
\hat{\Sigma}_{\underline{\Upsilon}_r}\|=\mathrm{o}_{\mathbb{P}}(1).\end{equation}
By Lemma~\ref{lem3} , (\ref{resinterm1}) and (\ref{resinterm2}) show that $\sqrt{r}\|\hat{\Sigma}_{{\hat{\underline{\Upsilon}}_r}}-
{\Sigma}_{\underline{\Upsilon}_r}\|=\mathrm{o}_{\mathbb{P}}(1)$. The other results are obtained similarly.
\zak

Write $\underline{\mathbf{\Phi}}_{r}^*=\left(\Phi_{1}\cdots \Phi_{r}\right)$ where
the $\Phi_{i}$'s are defined by (\ref{arinfty}).
\begin{lemme}\label{lem5}
Under the assumptions of Theorem~\ref{estimationI},
\begin{equation*}
\sqrt{r}\left\|\underline{\mathbf{\Phi}}_{r}^*-\underline{\mathbf{\Phi}}_{r}\right\|\to 0,
\end{equation*}
as $r\to\infty$.
\end{lemme}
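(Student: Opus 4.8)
The plan is to derive the claim from the defining normal equations of the two coefficient families together with a tail estimate on $\|\Phi_j\|$. Writing $C(k)=\mathbb{E}(\Upsilon_0\Upsilon_k')$ as in the proof of Lemma~\ref{lem4}, the least squares regression \eqref{regp} is characterised by $\underline{\mathbf{\Phi}}_r\,\Sigma_{\underline{\Upsilon}_r}=\Sigma_{\Upsilon,\underline{\Upsilon}_r}$. On the other hand, projecting $\Upsilon_t$ onto its whole past and using that the innovation $u_t$ in \eqref{arinfty} is orthogonal to $\{\Upsilon_{t-j},\,j\ge 1\}$, the $\mathrm{AR}(\infty)$ coefficients satisfy $\mathbb{E}(\Upsilon_t\underline{\Upsilon}_{r,t}')=\sum_{j\ge 1}\Phi_j\,\mathbb{E}(\Upsilon_{t-j}\underline{\Upsilon}_{r,t}')$; splitting the sum at $j=r$ gives $\underline{\mathbf{\Phi}}_r^*\,\Sigma_{\underline{\Upsilon}_r}=\Sigma_{\Upsilon,\underline{\Upsilon}_r}-R_r$, where $R_r:=\sum_{j>r}\Phi_j\,\mathbb{E}(\Upsilon_{t-j}\underline{\Upsilon}_{r,t}')$ is a block row whose $i$-th block is $\sum_{j>r}\Phi_j C(i-j)$.

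Subtracting the two systems, the difference $\Delta_r:=\underline{\mathbf{\Phi}}_r^*-\underline{\mathbf{\Phi}}_r$ satisfies $\Delta_r\,\Sigma_{\underline{\Upsilon}_r}=-R_r$, so that $\Delta_r=-R_r\,\Sigma_{\underline{\Upsilon}_r}^{-1}$. By submultiplicativity of the subordinate norm \eqref{def_norm_matrix} and Lemma~\ref{lem4}, which provides a bound on $\|\Sigma_{\underline{\Upsilon}_r}^{-1}\|$ that is uniform in $r$, this yields $\|\Delta_r\|\le \|R_r\|\,\|\Sigma_{\underline{\Upsilon}_r}^{-1}\|\le C\|R_r\|$ for some constant $C$ independent of $r$.

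The main obstacle is the uniform bound on $\|R_r\|$, simultaneously in $r$ and in the tail index $j$. For $x\in\nbR^{K(p+q)r}$ with $\|x\|\le 1$ I would write $R_r x=\sum_{j>r}\Phi_j\,\mathbb{E}\!\left(\Upsilon_{t-j}\,(\underline{\Upsilon}_{r,t}'x)\right)$ and apply the Cauchy--Schwarz inequality to each term,
\[
\left\|\mathbb{E}\!\left(\Upsilon_{t-j}\,(\underline{\Upsilon}_{r,t}'x)\right)\right\|\le \left(\tr C(0)\right)^{1/2}\left(x'\Sigma_{\underline{\Upsilon}_r}x\right)^{1/2}\le \left(\tr C(0)\right)^{1/2}\,\|\Sigma_{\underline{\Upsilon}_r}\|^{1/2}=:M,
\]
which is finite and independent of $j$ and $r$ precisely because $\|\Sigma_{\underline{\Upsilon}_r}\|$ is bounded uniformly in $r$ by Lemma~\ref{lem4} (itself resting on the boundedness of the spectral density of $(\Upsilon_t)$). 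It is here that the choice of the subordinate norm \eqref{def_norm_matrix}, rather than a Frobenius-type norm growing with $r$, is essential. Taking the supremum over $\|x\|\le 1$ gives $\|R_r\|\le M\sum_{j>r}\|\Phi_j\|$, whence $\|\Delta_r\|\le C'\sum_{j=r+1}^\infty\|\Phi_j\|$.

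Finally I would invoke the hypothesis $\|\Phi_i\|=o(i^{-2})$ of Theorem~\ref{estimationI}. Since $\sum_{j>r}j^{-2}\le 1/r$, this gives $\sum_{j=r+1}^\infty\|\Phi_j\|=o(r^{-1})$, and therefore $\sqrt{r}\,\|\Delta_r\|\le C'\sqrt{r}\cdot o(r^{-1})=o(r^{-1/2})\to 0$ as $r\to\infty$, which is the desired conclusion.
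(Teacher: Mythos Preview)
Your argument is correct and follows the same skeleton as the paper: derive the normal-equations identity $\underline{\mathbf{\Phi}}_r^*-\underline{\mathbf{\Phi}}_r=-R_r\,\Sigma_{\underline{\Upsilon}_r}^{-1}$ (the paper writes $R_r$ as $\Sigma_{u_r^*,\underline{\Upsilon}_r}$), invoke Lemma~\ref{lem4} for the uniform bound on $\|\Sigma_{\underline{\Upsilon}_r}^{-1}\|$, and control the tail $\sum_{j>r}\|\Phi_j\|$ via $\|\Phi_i\|=o(i^{-2})$.

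The one genuine difference is in how you bound $\|R_r\|$. The paper bounds each block $\mbox{Cov}(\Upsilon_{t-r-h},\underline{\Upsilon}_{r,t})$ by the Frobenius-type inequality \eqref{proprinorm}, picking up an extra factor $\sqrt{r}$ and obtaining $\|R_r\|=O(\sqrt{r})\sum_{j>r}\|\Phi_j\|$; this yields $\sqrt{r}\,\|\Delta_r\|=O(r)\cdot o(r^{-1})=o(1)$. You instead exploit the subordinate norm directly: for $\|x\|\le 1$ you bound $\|\mathbb{E}(\Upsilon_{t-j}\,\underline{\Upsilon}_{r,t}'x)\|$ by $(\tr C(0))^{1/2}(x'\Sigma_{\underline{\Upsilon}_r}x)^{1/2}$, and then use the \emph{other} half of Lemma~\ref{lem4} (the uniform bound on $\|\Sigma_{\underline{\Upsilon}_r}\|$) to make this constant in $r$. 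This gives the sharper $\|R_r\|\le M\sum_{j>r}\|\Phi_j\|$ and hence $\sqrt{r}\,\|\Delta_r\|=o(r^{-1/2})$. Your route is slightly more economical and yields a better rate, at the cost of using both conclusions of Lemma~\ref{lem4} rather than only the invertibility part.
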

{\bf Proof.} Recall that by (\ref{arinfty}) and (\ref{regp})
\begin{eqnarray*}  \Upsilon_t&=&\underline{\mathbf{\Phi}}_{r}\underline{\Upsilon}_{r,t}+u_{r,t}
=\underline{\mathbf{\Phi}}_{r}^*\underline{\Upsilon}_{r,t}+\sum_{i=r+1}^\infty
\Phi_i{\Upsilon}_{t-i}+u_t
:=\underline{\mathbf{\Phi}}_{r}^*\underline{\Upsilon}_{r,t}+u_{r,t}^*.
\end{eqnarray*}
Hence, using the orthogonality conditions in (\ref{arinfty}) and
(\ref{regp})
\begin{eqnarray}\label{r1}
\underline{\mathbf{\Phi}}_{r}^*-\underline{\mathbf{\Phi}}_{r}&=&
-{\Sigma}_{u_r^*,\underline{\Upsilon}_{r}}
{\Sigma}_{\underline{\Upsilon}_{r}}^{-1}
\end{eqnarray}
where
${\Sigma}_{u_r^*,\underline{\Upsilon}_{r}}=\mathbb{E}u_{r,t}^*\underline{\Upsilon}_{r,t}'$.
Using arguments and notations of the proof of Lemma~\ref{lemsurprise!}, there exists a constant $C_2$ independent of $s$ and $m_1,m_2$ such that
$$\mathbb{E}\left|{\Upsilon}_{1}(m_1){\Upsilon}_{1+s}(m_2)\right|\leq C_1\sum_{h_1,h_2=0}^{\infty}\rho^{h_1+h_2}\|\epsilon_1\|_{4}^4\leq C_2.$$
By the Cauchy-Schwarz inequality and (\ref{proprinorm}), we then have
$$\left\|\mbox{Cov}\left({\Upsilon}_{t-r-h},\underline{\Upsilon}_{r,t}\right)\right\|
\leq C_2r^{1/2}K(p+q).$$
Thus,
\begin{eqnarray}
\nonumber \|{\Sigma}_{u_r^*,\underline{\Upsilon}_{r}}\|&=&
\|\sum_{i=r+1}^\infty
\Phi_i\mathbb{E}{\Upsilon}_{t-i}\underline{\Upsilon}_{r,t}'\|\leq
\sum_{h=1}^\infty \|\Phi_{r+h}\|\left\|
\mbox{Cov}\left({\Upsilon}_{t-r-h},\underline{\Upsilon}_{r,t}\right)\right\|\\
&=&\mathrm O(1)r^{1/2}\sum_{h=1}^\infty \|\Phi_{r+h}\|.
\label{r2}\end{eqnarray} Note that the assumption
$\|\Phi_i\|=\mathrm o\left(i^{-2}\right)$ entails $ r\sum_{h=1}^\infty
\|\Phi_{r+h}\|=\mathrm o(1)$ as $r\to\infty$. The lemma therefore follows
from (\ref{r1}), (\ref{r2}) and Lemma \ref{lem4}. \zak

The following lemma is similar to Lemma 3 in \cite{berk}.
\begin{lemme}\label{lem6}
Under the assumptions of Theorem~\ref{estimationI},
\begin{eqnarray*}
\sqrt{r}\|\hat{\Sigma}_{\underline{\hat{\Upsilon}}_{r}}^{-1}-
{\Sigma}_{\underline{\Upsilon}_{r}}^{-1}\|
&=&\mathrm o_\mathbb P(1)
\end{eqnarray*}
as $n\to\infty$ when $r=\mathrm o(n^{1/3})$ and $r\to\infty$.
\end{lemme}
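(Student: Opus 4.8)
The plan is to reduce this statement about inverses to the already-established statement about the matrices themselves, via the standard resolvent-type identity
$$
\hat{\Sigma}_{\underline{\hat{\Upsilon}}_{r}}^{-1}- {\Sigma}_{\underline{\Upsilon}_{r}}^{-1} = \hat{\Sigma}_{\underline{\hat{\Upsilon}}_{r}}^{-1}\left( {\Sigma}_{\underline{\Upsilon}_{r}}-\hat{\Sigma}_{\underline{\hat{\Upsilon}}_{r}}\right) {\Sigma}_{\underline{\Upsilon}_{r}}^{-1}.
$$
Taking the submultiplicative norm \eqref{def_norm_matrix} on both sides would then yield
$$
\sqrt{r}\left\|\hat{\Sigma}_{\underline{\hat{\Upsilon}}_{r}}^{-1}- {\Sigma}_{\underline{\Upsilon}_{r}}^{-1}\right\| \leq \left\|\hat{\Sigma}_{\underline{\hat{\Upsilon}}_{r}}^{-1}\right\| \cdot \sqrt{r}\left\| {\Sigma}_{\underline{\Upsilon}_{r}}-\hat{\Sigma}_{\underline{\hat{\Upsilon}}_{r}}\right\| \cdot \left\|{\Sigma}_{\underline{\Upsilon}_{r}}^{-1}\right\|,
$$
so that it suffices to prove that the three factors on the right are respectively $\mathrm{O}_{\mathbb{P}}(1)$, $\mathrm{o}_{\mathbb{P}}(1)$, and $\mathrm{O}(1)$. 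The middle factor is exactly $\mathrm{o}_{\mathbb{P}}(1)$ by Lemma~\ref{lem3bis}, and the last factor is bounded by a constant independent of $r$ by Lemma~\ref{lem4}. Thus the only point requiring work is the uniform control of $\|\hat{\Sigma}_{\underline{\hat{\Upsilon}}_{r}}^{-1}\|$.

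For that step I would exploit that $\hat{\Sigma}_{\underline{\hat{\Upsilon}}_{r}}$ is, being an empirical covariance matrix, real symmetric and positive semidefinite, so that $\|\hat{\Sigma}_{\underline{\hat{\Upsilon}}_{r}}^{-1}\|$ equals the inverse of its smallest eigenvalue $\lambda_{\min}(\hat{\Sigma}_{\underline{\hat{\Upsilon}}_{r}})$. Writing $M := \sup_{r\ge 1}\|{\Sigma}_{\underline{\Upsilon}_{r}}^{-1}\| < \infty$ from Lemma~\ref{lem4}, one has $\lambda_{\min}({\Sigma}_{\underline{\Upsilon}_{r}})\ge 1/M$ uniformly in $r$. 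Weyl's eigenvalue perturbation inequality for symmetric matrices then gives
$$
\left| \lambda_{\min}(\hat{\Sigma}_{\underline{\hat{\Upsilon}}_{r}}) - \lambda_{\min}({\Sigma}_{\underline{\Upsilon}_{r}})\right| \leq \left\|\hat{\Sigma}_{\underline{\hat{\Upsilon}}_{r}}-{\Sigma}_{\underline{\Upsilon}_{r}}\right\|,
$$
whose right-hand side is $\mathrm{o}_{\mathbb{P}}(1)$ by Lemma~\ref{lem3bis}. Consequently the event $\{\|\hat{\Sigma}_{\underline{\hat{\Upsilon}}_{r}}-{\Sigma}_{\underline{\Upsilon}_{r}}\| \leq 1/(2M)\}$ has probability tending to $1$, and on this event $\lambda_{\min}(\hat{\Sigma}_{\underline{\hat{\Upsilon}}_{r}}) \ge 1/(2M)$, hence $\|\hat{\Sigma}_{\underline{\hat{\Upsilon}}_{r}}^{-1}\| \leq 2M$. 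This establishes $\|\hat{\Sigma}_{\underline{\hat{\Upsilon}}_{r}}^{-1}\| = \mathrm{O}_{\mathbb{P}}(1)$, and combining the three bounds completes the argument.

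The hard part will be precisely this uniform-in-$r$ control of the operator norm of the inverse: one cannot simply invoke continuity of matrix inversion, because the dimension $rK(p+q)$ of these matrices diverges with $n$, so any bound with an $r$-dependent constant would be fatal. What saves the argument is that Lemma~\ref{lem4} furnishes a lower bound on the smallest eigenvalue of the \emph{population} matrices ${\Sigma}_{\underline{\Upsilon}_{r}}$ that is uniform in $r$, and that this lower bound can be transported to the empirical matrices via eigenvalue perturbation at the negligible cost of the $\mathrm{o}_{\mathbb{P}}(1)$ discrepancy already quantified in Lemma~\ref{lem3bis}. The choice of the spectral norm \eqref{def_norm_matrix}, rather than an entrywise or Frobenius norm, is essential here, since it is the submultiplicativity together with its identification with eigenvalues for symmetric matrices that makes both the resolvent bound and the Weyl argument go through.
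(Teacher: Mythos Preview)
Your argument is correct and follows the same overall strategy as the paper: use the resolvent identity $A^{-1}-B^{-1}=A^{-1}(B-A)B^{-1}$, then invoke Lemma~\ref{lem4} for the uniform bound on $\|{\Sigma}_{\underline{\Upsilon}_{r}}^{-1}\|$ and Lemma~\ref{lem3bis} for the $\mathrm{o}_{\mathbb{P}}(1)$ control of $\sqrt{r}\|\hat{\Sigma}_{\underline{\hat{\Upsilon}}_{r}}-{\Sigma}_{\underline{\Upsilon}_{r}}\|$. The only difference is in how the remaining factor $\|\hat{\Sigma}_{\underline{\hat{\Upsilon}}_{r}}^{-1}\|$ is handled. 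The paper does not isolate this quantity; instead it rewrites the resolvent identity self-referentially, iterates it into a Neumann series
\[
\left\|\hat{\Sigma}_{\underline{\hat{\Upsilon}}_{r}}^{-1}-{\Sigma}_{\underline{\Upsilon}_{r}}^{-1}\right\|
\le \left\|{\Sigma}_{\underline{\Upsilon}_{r}}^{-1}\right\|\sum_{i\ge 1}\left(\left\|\hat{\Sigma}_{\underline{\hat{\Upsilon}}_{r}}-{\Sigma}_{\underline{\Upsilon}_{r}}\right\|\left\|{\Sigma}_{\underline{\Upsilon}_{r}}^{-1}\right\|\right)^{i},
\]
and then splits according to whether the ratio is below $1$. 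Your Weyl-perturbation argument accomplishes the same thing more directly, exploiting the symmetry of the empirical Gram matrix to transport the uniform lower bound on $\lambda_{\min}({\Sigma}_{\underline{\Upsilon}_{r}})$ to $\lambda_{\min}(\hat{\Sigma}_{\underline{\hat{\Upsilon}}_{r}})$ on a high-probability event. Both routes rest on the same two lemmas and the same spectral-norm submultiplicativity; yours is arguably the cleaner of the two, while the paper's Neumann-series version has the minor advantage of not needing to appeal to symmetry.
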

{\bf Proof.}
We have
\begin{eqnarray}
\left\|\hat{\Sigma}_{\underline{\hat{\Upsilon}}_{r}}^{-1}-
{\Sigma}_{\underline{\Upsilon}_{r}}^{-1}\right\|
\nonumber
&=&\left\|\left\{\hat{\Sigma}_{\underline{\hat{\Upsilon}}_{r}}^{-1}-
{\Sigma}_{\underline{\Upsilon}_{r}}^{-1}+
{\Sigma}_{\underline{\Upsilon}_{r}}^{-1}\right\}\left\{
{\Sigma}_{\underline{\Upsilon}_{r}}-
\hat{\Sigma}_{\underline{\hat{\Upsilon}}_{r}}\right\}
{\Sigma}_{\underline{\Upsilon}_{r}}^{-1}\right\|\\
&\leq&\left(\left\|\hat{\Sigma}_{\underline{\hat{\Upsilon}}_{r}}^{-1}-
{\Sigma}_{\underline{\Upsilon}_{r}}^{-1}\right\|+\left\|{\Sigma}_{\underline{\Upsilon}_{r}}^{-1}\right\|\right)
\left\|\hat{\Sigma}_{\underline{\hat{\Upsilon}}_{r}}-
{\Sigma}_{\underline{\Upsilon}_{r}}\right\|
\left\|{\Sigma}_{\underline{\Upsilon}_{r}}^{-1}\right\|.
\nonumber
\end{eqnarray}
Iterating this inequality, we obtain
\begin{eqnarray*}
\left\|\hat{\Sigma}_{\underline{\hat{\Upsilon}}_{r}}^{-1}-
{\Sigma}_{\underline{\Upsilon}_{r}}^{-1}\right\|
&\leq&\left\|{\Sigma}_{\underline{\Upsilon}_{r}}^{-1}\right\|
\sum_{i=1}^{\infty}\left\|\hat{\Sigma}_{\underline{\hat{\Upsilon}}_{r}}-
{\Sigma}_{\underline{\Upsilon}_{r}}\right\|^i
\left\|{\Sigma}_{\underline{\Upsilon}_{r}}^{-1}\right\|^i.
\nonumber
\end{eqnarray*}
Thus, for every $\varepsilon>0$,
\begin{eqnarray*}
&&\mathbb P\left(\sqrt{r}\left\|\hat{\Sigma}_{\underline{\hat{\Upsilon}}_{r}}^{-1}-
{\Sigma}_{\underline{\Upsilon}_{r}}^{-1}\right\|>\varepsilon\right)
\\&\leq&
\mathbb P\left(\sqrt{r}\frac{\left\|{\Sigma}_{\underline{\Upsilon}_{r}}^{-1}
\right\|^2
\left\|\hat{\Sigma}_{\underline{\hat{\Upsilon}}_{r}}-
{\Sigma}_{\underline{\Upsilon}_{r}}\right\|
}{1-
\left\|\hat{\Sigma}_{\underline{\hat{\Upsilon}}_{r}}-
{\Sigma}_{\underline{\Upsilon}_{r}}\right\|
\left\|{\Sigma}_{\underline{\Upsilon}_{r}}^{-1}\right\|}
>\varepsilon
\mbox{ and }\left\|\hat{\Sigma}_{\underline{\hat{\Upsilon}}_{r}}-
{\Sigma}_{\underline{\Upsilon}_{r}}\right\|
\left\|{\Sigma}_{\underline{\Upsilon}_{r}}^{-1}\right\|<1\right)\\
&&+\mathbb P\left(\sqrt{r}\left\|\hat{\Sigma}_{\underline{\hat{\Upsilon}}_{r}}-
{\Sigma}_{\underline{\Upsilon}_{r}}\right\|
\left\|{\Sigma}_{\underline{\Upsilon}_{r}}^{-1}\right\|\geq
1\right)\\
&\leq&
\mathbb P\left(
\sqrt{r}\left\|\hat{\Sigma}_{\underline{\hat{\Upsilon}}_{r}}-
{\Sigma}_{\underline{\Upsilon}_{r}}\right\|
>\frac{\varepsilon}{
\left\|{\Sigma}_{\underline{\Upsilon}_{r}}^{-1}\right\|^2+\varepsilon
r^{-1/2}\left\|{\Sigma}_{\underline{\Upsilon}_{r}}^{-1}\right\|}
\right)\\
&&+\mathbb P\left(\sqrt{r}\left\|\hat{\Sigma}_{\underline{\hat{\Upsilon}}_{r}}-
{\Sigma}_{\underline{\Upsilon}_{r}}\right\|
\geq
\left\|{\Sigma}_{\underline{\Upsilon}_{r}}^{-1}\right\|^{-1}\right)=\mathrm o(1)
\end{eqnarray*}
by Lemmas \ref{lem3} and \ref{lem4}. This establishes Lemma \ref{lem6}.
\zak

\begin{lemme}\label{lem7}
Under the assumptions of Theorem~\ref{estimationI},
$$
\sqrt{r}\left\|\underline{\hat{\mathbf{\Phi}}}_{r}-\underline{\mathbf{\Phi}}_{r}\right\|=\mathrm o_\mathbb P(1)$$
as $r\to\infty$ and $r=\mathrm o(n^{1/3})$.
\end{lemme}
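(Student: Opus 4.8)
The plan is to write both $\underline{\hat{\mathbf{\Phi}}}_{r}$ and $\underline{\mathbf{\Phi}}_{r}$ as products of estimated and population covariance blocks and then exploit the standard perturbation identity for a product of the form $\widehat{A}\widehat{B}^{-1}-AB^{-1}$. Recall that $\underline{\mathbf{\Phi}}_{r}={\Sigma}_{{\Upsilon},\underline{\Upsilon}_{r}}{\Sigma}_{\underline{\Upsilon}_{r}}^{-1}$ is the solution of the normal equations associated with the regression \eqref{regp}, while by definition $\underline{\hat{\mathbf{\Phi}}}_{r}=\hat{\Sigma}_{\hat{\Upsilon},\underline{\hat{\Upsilon}}_{r}}\hat{\Sigma}_{\underline{\hat{\Upsilon}}_{r}}^{-1}$. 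Writing $A_r={\Sigma}_{{\Upsilon},\underline{\Upsilon}_{r}}$, $\widehat{A}_r=\hat{\Sigma}_{\hat{\Upsilon},\underline{\hat{\Upsilon}}_{r}}$, $B_r={\Sigma}_{\underline{\Upsilon}_{r}}$ and $\widehat{B}_r=\hat{\Sigma}_{\underline{\hat{\Upsilon}}_{r}}$, the first step is the algebraic decomposition
\begin{equation*}
\underline{\hat{\mathbf{\Phi}}}_{r}-\underline{\mathbf{\Phi}}_{r}=\widehat{A}_r\widehat{B}_r^{-1}-A_rB_r^{-1}=(\widehat{A}_r-A_r)\widehat{B}_r^{-1}+A_r(\widehat{B}_r^{-1}-B_r^{-1}),
\end{equation*}
from which, using submultiplicativity of the norm \eqref{def_norm_matrix}, I would obtain
\begin{equation*}
\sqrt{r}\,\|\underline{\hat{\mathbf{\Phi}}}_{r}-\underline{\mathbf{\Phi}}_{r}\|\le\big(\sqrt{r}\,\|\widehat{A}_r-A_r\|\big)\,\|\widehat{B}_r^{-1}\|+\|A_r\|\,\big(\sqrt{r}\,\|\widehat{B}_r^{-1}-B_r^{-1}\|\big).
\end{equation*}

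Next I would control each factor using the previously established lemmas, all valid under $r=\mathrm{o}(n^{1/3})$, $r\to\infty$. Lemma \ref{lem3bis} gives $\sqrt{r}\,\|\widehat{A}_r-A_r\|=\mathrm{o}_{\mathbb{P}}(1)$ and Lemma \ref{lem6} gives $\sqrt{r}\,\|\widehat{B}_r^{-1}-B_r^{-1}\|=\mathrm{o}_{\mathbb{P}}(1)$. Lemma \ref{lem4} provides the uniform (in $r$) bounds $\sup_{r\ge1}\|A_r\|<\infty$ and $\sup_{r\ge1}\|B_r^{-1}\|<\infty$. The only remaining quantity is $\|\widehat{B}_r^{-1}\|$, which I would bound by the triangle inequality $\|\widehat{B}_r^{-1}\|\le\|B_r^{-1}\|+\|\widehat{B}_r^{-1}-B_r^{-1}\|=\mathrm{O}(1)+\mathrm{o}_{\mathbb{P}}(1)=\mathrm{O}_{\mathbb{P}}(1)$, again invoking Lemmas \ref{lem4} and \ref{lem6}.

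Combining these facts, the first term on the right-hand side is $\mathrm{o}_{\mathbb{P}}(1)\cdot\mathrm{O}_{\mathbb{P}}(1)=\mathrm{o}_{\mathbb{P}}(1)$ and the second is $\mathrm{O}(1)\cdot\mathrm{o}_{\mathbb{P}}(1)=\mathrm{o}_{\mathbb{P}}(1)$, whence $\sqrt{r}\,\|\underline{\hat{\mathbf{\Phi}}}_{r}-\underline{\mathbf{\Phi}}_{r}\|=\mathrm{o}_{\mathbb{P}}(1)$, completing the proof. Since the heavy analytic work (the uniform spectral bounds of Lemma \ref{lem4}, the rates $\sqrt{r}\|\hat\Sigma-\Sigma\|=\mathrm{o}_{\mathbb{P}}(1)$ from Lemmas \ref{lem3} and \ref{lem3bis}, themselves resting on the long-run covariance summability of Lemma \ref{lemsurprise!}, and the inversion estimate of Lemma \ref{lem6}) is already in place, I expect no genuine obstacle here: the argument is essentially the assembly of these pieces through the product perturbation bound. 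The only point requiring slight care is verifying that $\|\widehat{B}_r^{-1}\|=\mathrm{O}_{\mathbb{P}}(1)$ \emph{uniformly} in $r$ rather than for each fixed $r$, which is precisely what the $\sqrt{r}$-rate in Lemma \ref{lem6} secures.
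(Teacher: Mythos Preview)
Your proposal is correct and follows essentially the same route as the paper: the same product perturbation identity $\widehat{A}_r\widehat{B}_r^{-1}-A_rB_r^{-1}=(\widehat{A}_r-A_r)\widehat{B}_r^{-1}+A_r(\widehat{B}_r^{-1}-B_r^{-1})$, the same control of $\|\widehat{B}_r^{-1}\|=\mathrm O_\mathbb P(1)$ via the triangle inequality and Lemmas~\ref{lem4} and~\ref{lem6}, and the same appeal to Lemmas~\ref{lem4}, \ref{lem3bis} and~\ref{lem6} for the remaining factors. If anything, your citation of Lemma~\ref{lem3bis} (rather than Lemma~\ref{lem3}) is the more precise one, since the empirical covariances here are built from the $\hat\Upsilon_t$'s.
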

{\bf Proof.}
By the triangle
inequality and Lemmas \ref{lem4} and \ref{lem6}, we have
\begin{equation}\label{r5}
\left\|\hat{\Sigma}_{\underline{\hat{\Upsilon}}_{r}}^{-1}\right\|
\leq
\left\|\hat{\Sigma}_{\underline{\hat{\Upsilon}}_{r}}^{-1}-
{\Sigma}_{\underline{\Upsilon}_{r}}^{-1}\right\|+
\left\|
{\Sigma}_{\underline{\Upsilon}_{r}}^{-1}\right\|=\mathrm O_\mathbb P(1).
\end{equation}
Note that the orthogonality conditions in (\ref{regp}) entail that
 $\underline{\mathbf{\Phi}}_{r}={\Sigma}_{{\Upsilon},\underline{\Upsilon}_{r}}
{\Sigma}_{\underline{\Upsilon}_{r}}^{-1}$.
By Lemmas \ref{lem4}, \ref{lem3}, \ref{lem6}, and (\ref{r5}), we then have
\begin{eqnarray*}\nonumber
&&\sqrt{r}\left\|\underline{\hat{\mathbf{\Phi}}}_{r}-\underline{\mathbf{\Phi}}_{r}\right\|
=\sqrt{r}\left\|\hat{\Sigma}_{\hat{\Upsilon},\underline{\hat{\Upsilon}}_{r}}
\hat{\Sigma}_{\underline{\hat{\Upsilon}}_{r}}^{-1}
-{\Sigma}_{{\Upsilon},\underline{\Upsilon}_{r}}
{\Sigma}_{\underline{\Upsilon}_{r}}^{-1}\right\|\\
&=&
\sqrt{r}\left\|\left(\hat{\Sigma}_{\hat{\Upsilon},\underline{\hat{\Upsilon}}_{r}}
-{\Sigma}_{{\Upsilon},\underline{\Upsilon}_{r}}\right)
\hat{\Sigma}_{\underline{\hat{\Upsilon}}_{r}}^{-1}
+{\Sigma}_{{\Upsilon},\underline{\Upsilon}_{r}}
\left(\hat{\Sigma}_{\underline{\hat{\Upsilon}}_{r}}^{-1}
-{\Sigma}_{\underline{\Upsilon}_{r}}^{-1}\right)\right\|=\mathrm o_\mathbb P(1).
\end{eqnarray*}\zak
\noindent{\bf Proof of Theorem~\ref{estimationI}.} In view of (\ref{Idensitespectrale}),
it suffices to show that
$\underline{\hat{\mathbf{\Phi}}}_r(1)\to \underline{{\mathbf{\Phi}}}(1)$ and
$\hat{\Sigma}_{u_r}\to {\Sigma}_{u}$ in
probability. Let the $r\times 1$ vector ${\bf 1}_r=(1,\dots,1)'$ and the
$r(p+q)K\times (p+q)K$ matrix ${\bf E}_r=\mathbb{I}_{(p+q)K}\otimes {\bf 1}_r$,
where $\otimes$ denotes the matrix Kronecker product and $\mathbb{I}_d$ the $d\times d$ identity matrix.
Using
(\ref{proprinorm}), and Lemmas \ref{lem5}, \ref{lem7}, we obtain
\begin{eqnarray*}\left\|\underline{\hat{\mathbf{\Phi}}}_r(1)-\underline{{\mathbf{\Phi}}}(1)\right\|&\leq &\left\|\sum_{i=1}^r\left(\hat{
\Phi}_{r,i}-\Phi_{r,i}\right)\right\|+\left\|\sum_{i=1}^r\left(
{\Phi}_{r,i}-\Phi_{i}\right)\right\|+\left\|\sum_{i=r+1}^{\infty}\Phi_{i}\right\|\\
&=&\left\|\left(\underline{\hat{\mathbf{\Phi}}}_{r}-\underline{{\mathbf{\Phi}}}_{r}\right){\bf E}_r\right\|
+\left\|\left(\underline{{\mathbf{\Phi}}}_{r}^*-\underline{{\mathbf{\Phi}}}_{r}\right){\bf E}_r\right\|
+\left\|\sum_{i=r+1}^{\infty}\Phi_{i}\right\|\\
&\leq&\sqrt{(p+q)K}\sqrt{r}\left\{\left\|\underline{\hat{\mathbf{\Phi}}}_{r}-\underline{{\mathbf{\Phi}}}_{r}\right\|
+\left\|\underline{{\mathbf{\Phi}}}_{r}^*-\underline{{\mathbf{\Phi}}}_{r}\right\|\right\}
+\left\|\sum_{i=r+1}^{\infty}\Phi_{i}\right\|\\
&=&\mathrm o_\mathbb P(1).\end{eqnarray*}
Now note that
$$\hat{\Sigma}_{u_r}=\hat{\Sigma}_{{\hat{\Upsilon}}}
-\underline{\hat{\mathbf{\Phi}}}_{r}\hat{\Sigma}_{{\hat{\Upsilon}},\underline{\hat{\Upsilon}}_r}'$$
and, by (\ref{arinfty})
\begin{eqnarray*}
{\Sigma}_{u}&=&\mathbb Eu_tu_t'=\mathbb Eu_t\Upsilon_t'=\mathbb E\left\{\left(\Upsilon_t-\sum_{i=1}^{\infty}\Phi_i\Upsilon_{t-i}\right)
\Upsilon_t'\right\}\\&=&
{\Sigma}_{{{\Upsilon}}}-\sum_{i=1}^{\infty}\Phi_i\mathbb E{\Upsilon}_{t-i}{\Upsilon}_{t}'
={\Sigma}_{\Upsilon}-\underline{{\mathbf{\Phi}}}_{r}^*{\Sigma}_{{{\Upsilon}},{\underline{\Upsilon}_{r}}}'
-\sum_{i=r+1}^{\infty}\Phi_i\mathbb E{\Upsilon}_{t-i}{\Upsilon}_{t}'.
\end{eqnarray*}
Thus,
\begin{eqnarray}
\left\|\hat{\Sigma}_{u_r}-{\Sigma}_{u}\right\|
\nonumber
&= &\left\|\hat{\Sigma}_{\hat{\Upsilon}}
-{\Sigma}_{\Upsilon}-
\left(\underline{\hat{\mathbf{\Phi}}}_{r}-\underline{{\mathbf{\Phi}}}_{r}^*\right)
\hat{\Sigma}_{{\hat{\Upsilon}},\underline{\hat{\Upsilon}}_r}'\right.\\
\nonumber
&&\left.
-
\underline{{\mathbf{\Phi}}}_{r}^*\left(\hat{\Sigma}_{{\hat{\Upsilon}},\underline{\hat{\Upsilon}}_r}'-{\Sigma}_{{{\Upsilon}},{\underline{\Upsilon}_{r}}}'\right)
+\sum_{i=r+1}^{\infty}\Phi_i\mathbb E{\Upsilon}_{t-i}{\Upsilon}_{t}'\right\|\\
\nonumber
&\leq&\left\|\hat{\Sigma}_{\hat{\Upsilon}}
-{\Sigma}_{\Upsilon}\right\|+\left\|\left(\underline{\hat{\mathbf{\Phi}}}_{r}-\underline{{\mathbf{\Phi}}}_{r}^*\right)
\left(\hat{\Sigma}_{{\hat{\Upsilon}},\underline{\hat{\Upsilon}}_r}'-{\Sigma}_{{{\Upsilon}},\underline{{\Upsilon}}_r}'\right)\right\|\\
&&+\left\|\left(\underline{\hat{\mathbf{\Phi}}}_{r}-\underline{{\mathbf{\Phi}}}_{r}^*\right){\Sigma}_{{{\Upsilon}},\underline{{\Upsilon}}_r}'\right\|
+\left\|\underline{{\mathbf{\Phi}}}_{r}^*\left(\hat{\Sigma}_{{\hat{\Upsilon}},\underline{\hat{\Upsilon}}_r}'-
{\Sigma}_{{{\Upsilon}},{\underline{\Upsilon}_{r}}}'\right)\right\|
\nonumber\\
&&
+\left\|\sum_{i=r+1}^{\infty}\Phi_i\mathbb E{\Upsilon}_{t-i}{\Upsilon}_{t}'\right\|.
\label{inegaga}
\end{eqnarray}
In the right-hand side of this inequality, the first norm
is $\mathrm o_\mathbb P(1)$ by Lemma \ref{lem3}. By Lemmas \ref{lem5} and
\ref{lem7}, we have
$\|\underline{\hat{\mathbf{\Phi}}}_{r}-\underline{{\mathbf{\Phi}}}_{r}^*\|=\mathrm o_\mathbb P(r^{-1/2})=\mathrm o_\mathbb P(1)$, and by Lemma \ref{lem3},
$\|\hat{\Sigma}_{{\hat{\Upsilon}},\underline{\hat{\Upsilon}}_r}'-{\Sigma}_{{{\Upsilon}},\underline{{\Upsilon}}_r}'\|=\mathrm o_\mathbb P(r^{-1/2})=\mathrm o_\mathbb P(1)$.
Therefore the second norm in the right-hand side of
(\ref{inegaga}) tends to zero in probability. The third norm tends
to zero in probability because
$\|\underline{\hat{\mathbf{\Phi}}}_{r}-\underline{{\mathbf{\Phi}}}_{r}^*\|=\mathrm o_\mathbb P(1)$ and, by
Lemma \ref{lem4},
$\|{\Sigma}_{{{\Upsilon}},\underline{{\Upsilon}}_r}'\|=\mathrm O(1)$. The
fourth norm tends to zero in probability because, in view of Lemma \ref{lem3},
$\|\hat{\Sigma}_{{\hat{\Upsilon}},\underline{\hat{\Upsilon}}_r}'-
{\Sigma}_{{{\Upsilon}},{\underline{\Upsilon}_{r}}}'\|=\mathrm o_\mathbb P(1)$,
and, in view of (\ref{proprinorm}),
$\|\underline{{\mathbf{\Phi}}}_{r}^*\|^2\leq \sum_{i=1}^\infty
\mbox{Tr}(\Phi_i\Phi_i')<\infty$.
Clearly, the last norm  tends to zero, which completes the
proof.\zak

{\bf Acknowledgements.} We sincerely thank the
anonymous reviewers and Editor in Chief for helpful remarks. The authors wish to acknowledge the support from the "S\'{e}ries temporelles et valeurs extr\^{e}mes : th\'{e}orie et applications en mod\'{e}lisation et estimation des risques" Projet R\'{e}gion grant No OPE-2017-0068.


\end{document}